\newtheorem{theorem}{Theorem}[section]
\newtheorem{proposition}[theorem]{Proposition}
\newtheorem{corollary}[theorem]{Corollary}
\newtheorem{lemma}[theorem]{Lemma}
\newtheorem{remark}[theorem]{Remark}
\newtheorem{assumption}{Assumption}
\newcommand\N{\mathbb{N}}
\newcommand\R{\mathbb{R}}
\newcommand\Z{\mathbb{Z}}
\newcommand{\supp}{{\rm supp}{\hspace{.05cm}}}
\def\R {\mathbb{R}}
\def\N {\mathbb{N}}
\def\d{{\rm d}}
\def\i{{\rm i}}
\def\k{{ \textbf{k}}}
\def \and {{\qquad\text{and}\qquad}}
\numberwithin{equation}{section}
\theoremstyle{definition}
\title[Pointwise estimates for higher order schr\"{o}dinger equations]
{POINTWISE ESTIMATES FOR THE FUNDAMENTAL solutions OF HIGHER ORDER Schr\"{o}dinger equations with finite rank perturbations}
\author{Xinyi Chen,\, Han Cheng,\, Shanlin Huang}
\address{Xinyi Chen, School of Mathematics and Statistics, Huazhong University of Science and Technology, Wuhan 430074, Hubei, PR China }
\email{xinyi\_c@hust.edu.cn}
\address{Han Cheng,  Institute of Applied Physics and Computational Mathematics, Beijing, 100088, China.}
\email{chh@hust.edu.cn}
\address{Shanlin Huang, 
	School of Mathematics (Zhuhai), Sun Yat-sen University, Zhuhai 519082, Guangdong, China}
\email{shanlin\_huang@hust.edu.cn}
\subjclass[2010]{35J10; 81Q15; 42B37}
\keywords{Fundamental solution,  higher order Schr\"{o}dinger equation, finite rank perturbation,  resolvent expansion}
\begin{document}
	
	\begin{abstract}
		This  paper is dedicated to studying  pointwise estimates of the fundamental solution for the  higher order Schr\"{o}dinger equation:
		$$\i{\partial}_{t}u(x,t)=Hu(x,t),\ \ \ t\in \mathbb{R},\ x\in {\mathbb{R}}^{n},$$
		where the Hamiltonian $H$ is defined as
		$$H={(-\Delta)}^{m}+\displaystyle\sum_{j=1}^{N}  \langle\cdotp ,{\varphi }_{j}  \rangle{\varphi }_{j},$$
		with each $\varphi_j$  ($1\le j\le N$) satisfying certain smoothness and decay conditions.
		We show that for any positive integer $m>1$ and  spatial dimension $n\ge 1$, 
		${e}^{-\i tH}P_{ac}(H)$ 
		has an integral kernel $K(t,x,y)$ satisfying the following pointwise estimate:
		$$\left |K(t,x,y)\right |\lesssim |t|^{-\frac{n}{2m}}(1+|t|^{-\frac{1}{2m}}\left | x-y\right |)^{-\frac{n(m-1)}{2m-1}} ,\ \ t\ne 0,\ x,y\in {\mathbb{R}}^{n}.$$
		This estimate is consistent with the upper bounds in the free case. As an application, we derive 
		$L^p-L^q$ decay estimates for the propagator ${e}^{-\i tH}P_{ac}(H)$, where the  pairs $(1/p, 1/q)$ lie within  a quadrilateral region in the plane.
	\end{abstract}
	\maketitle
	
	
	\tableofcontents

	
	
	\section{Introduction and main results}\label{sec1}

	\subsection{Background and motivation}\label{sec1.0}\
	
	In this paper, we investigate pointwise estimates of fundamental solutions for higher order Schr\"{o}dinger equations under finite rank perturbations. The equations are given by
	\begin{equation}\label{equ1.1.1}
		\i\partial_tu(x, t)=H_N u(x, t), \quad t\in \mathbb{R},\,\,x\in \mathbb{R}^n,
	\end{equation}
	where
	\begin{equation}\label{eq-high-per}
		H_N=(-\Delta)^{m}+\sum_{j=1}^NP_j, \,\,\,\,\,\,\, P_j=\langle\cdot\,, \varphi_j\rangle \varphi_j,
	\end{equation}
	and  $\varphi_j\in L^2$, $1\le j\le N$. Here and in the rest of the paper, $\langle\cdot\,, \cdot\rangle$ denotes the usual inner product in $L^2(\mathbb{R}^n)$.
	
	The fundamental solution is a basic tool in partial differential equations and it plays a central role in developing  potential and regularity theory
	for solutions of related PDEs.
	We mention in particular the recent paper \cite{MM} where fundamental solution is  used in the   regularity problem of poly-harmonic  boundary value problems and \cite{BHM} for applications to  the  theory of layer potentials.
	
	In the second order case, it is well known that the fundamental solution of the free Schr\"{o}dinger equation is given by
	\begin{equation}\label{eq1.1.2}
		e^{\i t \Delta}(x, y):=(4\pi \i t)^{-\frac{n}{2}}e^{\frac{\i|x-y|^2}{4t}},\qquad t\ne 0,\,\,x, y\in \mathbb{R}^n.
	\end{equation}
	It implies immediately the following  dispersive estimate
	\begin{equation}\label{eq-disp}
		\|e^{\i t\Delta}\|_{L^1-L^{\infty}}\leq C|t|^{-\frac{n}{2}},\qquad\,\,t\ne 0.
	\end{equation}
	Motivated by nonlinear problems, the extension of \eqref{eq-disp} to the Schr\"{o}dinger operators $-\Delta+V(x)$ has evoked considerable interest in the past three decades,  and we refer to \cite{JSS,Ya,RS} for pioneering  works on this theme as well as the surveys \cite{Sch07,Sch21}.
	%
	
	Higher order Schr\"{o}dinger equations are important in modeling spinless, weakly relativistic particles and quantum mechanical systems. Notably, the bi-Laplacian case was introduced by Karpman-Shagalov \cite{KS} to investigate the stabilizing role of higher-order dispersive effects on soliton instabilities in light propagation, For further related research, we refer to \cite{CLM,CM}.
	
	From the  perspective  of fundamental solutions,   the higher-order case does not admit a simple closed-form expression as \eqref{eq1.1.2}. 
	A further  distinction between  higher-order and second-order
	Schr\"{o}dinger equation is the presence of additional  spatial decay.
	Indeed, it is known that (see e.g. in \cite{Mi}):
	\begin{equation}\label{eq-high-free-p}
		|e^{-\i t (-\Delta)^m}(x, y)|\le C|t|^{-\frac{n}{2 m}}\left(1+|t|^{-\frac{1}{2 m}}|x-y|\right)^{-\frac{n(m-1)}{2 m-1}},\qquad t\ne 0,\,\,x, y\in \mathbb{R}^n.
	\end{equation}
	We refer to \cite{HHZ,KPV,Mi} for fundamental solution estimates of $e^{\i t P(D)}$  with general elliptic operator $P(D)$.

	On the other hand, the theory of finite rank perturbations originates from Weyl's seminal work in
	1910  \cite{Wey}, where these perturbations were first utilized to determine the spectrum of Sturm-Liouville operators.
	They also  arise in  various  mathematical physics  problems,
	for example, Simon and Wolff \cite{SW} found a relationship between rank one perturbation and discrete random Schr\"{o}dinger operators, and applied it to the Anderson localization problem.
	It deserves to mention that despite the formal simplicity, the study of this restrictive perturbation is very rich and interesting, we refer to \cite{Simon} for the spectral analysis and the survey paper \cite{Liaw} for other problems.
	
	Nier and Soffer \cite[Theorem 1.1]{NS} first investigated 
	$L^p$  decay estimates in the second order case.  Specifically, they showed that when  $m=1$ in \eqref{eq-high-per} and $n\ge 3$,
	\begin{equation}\label{eq1.3.1}
		\|e^{-itH}\|_{L^p-L^{p'}}\leq C(N, n, \varphi_1,\cdots, \varphi_N)\cdot t^{-n(\frac{1}{p}-\frac{1}{2})},\quad\,\,\,\,t>0,\quad \,1<p\leq 2
	\end{equation}
	holds under suitable conditions on each $\varphi_j$. This was  later extended 
	in \cite{CHZ}   to include the endpoint case $p=1$ and lower spatial dimensions $n=1, 2$.
	
	{\bf Aim and motivation.}  In light of the existing results for the second order case mentioned above,  the following question arises naturally:  Is the pointwise estimate \eqref{eq-high-free-p} still valid in the presence of finite rank perturbations?
	Our goal is to  establish the pointwise estimates of the  fundamental solution of \eqref{equ1.1.1}-\eqref{eq-high-per} for all spatial dimensions $n\ge 1$ and for  every pair of positive integers $m>1$ and $N\ge 1$.

	\subsection{Rank one perturbations}\label{sec1.1}\
	
	We begin by considering rank one perturbations ($N=1$ in \eqref{eq-high-per}). More precisely, let
	\begin{equation}\label{eq1.1}
		{H}={H}_{0}+\alpha \left \langle \cdotp ,\varphi \right \rangle\varphi ,\ \ {H}_{0}={(-\Delta)}^{m},\ \ \alpha >0,
	\end{equation}
	where $\varphi\in L^2$.
	Before stating our assumptions on $\varphi$, we highlight the crucial role played by the following  function
	\begin{equation}\label{eq1.1.1-F(z)}
		F(z):=\langle((-\Delta)^m-z)^{-1}\varphi,\varphi\rangle=\int{\frac{d\mu_{H_0}^{\varphi}(\lambda)}{\lambda-z}},
	\end{equation}
	where $z\in \mathbb{C}\setminus[0, \infty)$. This can be viewed as the Borel transform of the spectral measure $d\mu_{H_0}^{\varphi}$ for $\varphi$ (here $\mu_{H_0}^{\varphi}((a, b))=\langle E_{(a,b)}(H_0)\varphi,\varphi\rangle$). Moreover, if $\varphi\in L^{2}_{\sigma}$  with $\sigma>\frac12$, where $L^{2}_{\sigma}:=\{(1+|x|)^{-\sigma}f:\, f\in L^2\}$, then by the well-know limiting absorption principle (see e.g. in \cite{KK}), the boundary values
	\begin{equation}\label{eq1.11.1}
		F^\pm(\lambda):=\langle((-\Delta)^m-\lambda\mp \i0)^{-1}\varphi,\varphi\rangle,
	\end{equation}
	are defined when $\lambda\ne 0$ and coincide on $(-\infty, 0)$.
	
	\begin{assumption}\label{assumption-1}\,\, Let  $\alpha>0$ and let $\varphi$ be a real-valued, $L^2$-normalized  function on $\mathbb{R}^n$ such that the following conditions hold:
		
		\noindent\emph{(\romannumeral1) } (Decay and smoothness) \,\,There exists a constant $M>0$ such that
		\begin{equation}\label{eq1.2}
			|\varphi(x)|\leq M\langle x \rangle^{-\delta},
		\end{equation}
		where
		\begin{equation}\label{eq2}
			\delta >\left\{
			\begin{aligned}
				&\ n+3/2,   &n \ge 2m,\\
				&\ 2m+1,  &1\le n<2m.
			\end{aligned}
			\right.
		\end{equation}
		Moreover, when  $n>4m-1$, we assume that $\varphi(x)\in C^{\beta_0}(\mathbb{R}^n)$,  where $\beta_0=[\frac{n+5}{4}]-m$, and
		\begin{equation}\label{eq1.3}
			|\partial^{\beta}_x\varphi(x)|\leq M\langle x \rangle^{-\delta}, \,\,\,\,\,|\beta|\leq \beta_0,\,\,\,\,\,  \delta>n+\frac{3}{2}.
		\end{equation}
		
		\noindent\emph{(\romannumeral2) } (Spectral assumption) \,\, There is an absolute constant $c_0>0$ such that
		\begin{equation}\label{eq1.4}
			|1+\alpha F^\pm(\lambda^{2m})|\ge c_0,\,\,\,\,\,\,\text{for all}\,\,\,\lambda>0,
		\end{equation}
		where $F^\pm(\lambda^{2m})$ is given by \eqref{eq1.11.1}.
	\end{assumption}
	
	Nier and Soffer \cite[Lemma 2.6]{NS} showed that the assumption \eqref{eq1.4}, together with some decay condition on $\varphi$, yields that the spectrum of $H$ is absolutely continuous for all $n\ge 3$. Furthermore, it is proved in \cite[Lemma B.1]{CHZ} that this is true for all $n\ge1$.
	
	We state our first  result as  follows.
	
	\begin{theorem}\label{thm-1.1}
		Let $n\ge 1$ and let $H$ be given by \eqref{eq1.1}. Suppose that Assumption \ref{assumption-1} is satisfied, then there exists an absolute constant $C=C(\alpha ,\varphi)>0$ such that ${e}^{-\i tH}$ has integral kernel satisfying
		\begin{equation}\label{eq1.5}
			\left |K(t,x,y)\right |\le C|t|^{-\frac{n}{2m}}(1+|t|^{-\frac{1}{2m}}|x-y|)^{-\frac{n(m-1)}{2m-1}} ,\ \ t\ne 0,\,\,x, y\in \mathbb{R}^n.
		\end{equation}
	\end{theorem}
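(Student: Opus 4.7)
The plan is to combine Stone's formula with the Aronszajn--Krein rank-one resolvent identity. Writing $R(z)=(H-z)^{-1}$, $R_0(z)=(H_0-z)^{-1}$, one has
\begin{equation*}
R(z)=R_0(z)-\frac{\alpha}{1+\alpha F(z)}\,R_0(z)\varphi\otimes R_0(\bar z)\varphi,
\end{equation*}
so via Stone's formula the kernel splits as $K(t,x,y)=e^{-\i tH_0}(x,y)+K_1(t,x,y)$. The free piece already verifies \eqref{eq1.5} by \eqref{eq-high-free-p}, so the real task is to prove the same bound for $K_1$. Since $\varphi$ is real, $R_0^-(\lambda)\varphi=\overline{R_0^+(\lambda)\varphi}$ and $F^-=\overline{F^+}$; after the substitution $\lambda=\mu^{2m}$ on $(0,\infty)$ the correction reads
\begin{equation*}
K_1(t,x,y)=-\frac{2m\alpha}{\pi}\int_0^\infty e^{-\i t\mu^{2m}}\mu^{2m-1}\,\Im\!\left[\frac{R_0^+(\mu^{2m})\varphi(x)\,R_0^+(\mu^{2m})\varphi(y)}{1+\alpha F^+(\mu^{2m})}\right]\d\mu,
\end{equation*}
which is a one-dimensional oscillatory integral with phase $-t\mu^{2m}$.

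Next I would insert a smooth Littlewood--Paley cutoff $1=\chi_L(\mu)+\chi_H(\mu)$ separating $\mu\lesssim1$ from $\mu\gtrsim1$, and treat $K_1^{\mathrm{low}}$ and $K_1^{\mathrm{high}}$ separately. The essential input is sharp pointwise control of the kernel $R_0^\pm(\mu^{2m})(x,y)$ and its $\mu$-derivatives, obtained from the partial fraction decomposition
\begin{equation*}
\frac{1}{\xi^{2m}-(\mu\pm\i0)^{2m}}=\frac{1}{m\mu^{2m-2}}\sum_{k=0}^{m-1}\frac{c_k}{\xi^{2}-\mu^{2}e^{2\pi\i k/m}\mp\i0},
\end{equation*}
which reduces the polyharmonic resolvent to a finite combination of second order resolvents with explicit Hankel/modified Bessel kernels. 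Combined with \eqref{eq1.2}--\eqref{eq1.3}, this furnishes both the size estimate and the $C^{\beta_0}$-regularity in $\mu$ of $R_0^+(\mu^{2m})\varphi(x)$ needed in the oscillatory analysis. The spectral assumption \eqref{eq1.4} and standard smooth-functional-calculus arguments give analogous bounds on the derivatives of $(1+\alpha F^+(\mu^{2m}))^{-1}$.

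For $K_1^{\mathrm{high}}$ I would integrate by parts repeatedly in $\mu$ using the identity $e^{-\i t\mu^{2m}}=(2m\i t\mu^{2m-1})^{-1}\pl_\mu e^{-\i t\mu^{2m}}$; the smoothness condition \eqref{eq1.3} when $n>4m-1$ is designed precisely so that $\beta_0=[(n+5)/4]-m$ integrations by parts are admissible, producing the temporal decay $|t|^{-n/(2m)}$. To extract the additional spatial factor $(1+|t|^{-1/(2m)}|x-y|)^{-n(m-1)/(2m-1)}$, one must combine the internal oscillations of $R_0^+(\mu^{2m})\varphi(x)$ (inherited from the asymptotics $e^{\pm\i\mu|x-z|}$ of the Bessel building blocks) and similarly in $y$ with the outer phase $-t\mu^{2m}$, performing a stationary-phase analysis analogous to the one yielding \eqref{eq-high-free-p} in \cite{Mi}. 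For $K_1^{\mathrm{low}}$ I would Taylor expand both the amplitude $R_0^+(\mu^{2m})\varphi$ and the factor $1/(1+\alpha F^+(\mu^{2m}))$ in powers of $\mu$, using \eqref{eq2} to absorb the low-energy singularities of the resolvent kernel when $n<2m$, and then estimating the resulting integrals by van der Corput.

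The main obstacle will be the high-energy stationary-phase step: the sharp spatial exponent $n(m-1)/(2m-1)$ arises from the degeneracy of the Hessian of $x\cdot\xi-t|\xi|^{2m}$ on the critical sphere, so once the inner spatial integrations against $\varphi$ have been performed one must still recognize and exploit the interference between the phases $\mu(|x-z|+|y-w|)$ (after asymptotic expansion of the two Bessel kernels) and $-t\mu^{2m}$. Organizing the resulting stationary points, handling the transition regions $|x-y|\sim|t|^{1/(2m)}$, and simultaneously exploiting the smoothness/decay hypotheses on $\varphi$ without losing any power in $\mu$ is the delicate point; the rest of the argument is essentially bookkeeping.
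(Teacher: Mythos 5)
Your opening skeleton (Stone's formula, Aronszajn--Krein, splitting into low and high frequency, and the splitting identity for the polyharmonic resolvent) coincides with the paper's, but the proposal goes off the rails in the two places where the actual work is done. In the high-energy regime the spatial factor $(1+|t|^{-1/2m}|x-y|)^{-n(m-1)/(2m-1)}$ does not come from an $n$-dimensional stationary-phase analysis of the Hessian of $x\cdot\xi - t|\xi|^{2m}$ as in Miyachi's free estimate. Once you insert $R_0^\pm(\lambda^{2m})\varphi$, the oscillation you are left with is the scalar phase $e^{\pm\i\lambda(|x-x_1|+|x_2-y|)}$ sitting inside a genuinely one-dimensional $\lambda$-integral. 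The paper exploits exactly this, first writing the kernel via Lemma~\ref{lm-expansion-high} as $e^{\pm\i\lambda|x-y|}$ times an amplitude of symbol type, then performing a \emph{space-time} decomposition in $(x_1,x_2)$ according to whether $|t|^{-1/2m}(|x-x_1|+|x_2-y|)$ exceeds $1$ or not (see \eqref{eq-chi_0}--\eqref{eq-chi_0-1119}), and finally invoking the one-dimensional oscillatory-integral bounds of Lemmas~\ref{lm2.4}--\ref{lm2.3}. This is where the exponent $\mu_b=\frac{m-1-b}{2m-1}$ arises. Moreover, when $n>4m-1$ the extra smoothness \eqref{eq1.3} on $\varphi$ is exploited by integrating by parts in the \emph{spatial} variables $x_1,x_2$ against the phase $e^{\i\lambda(|x-x_1|+|x_2-y|)}$ (via $L_{x_1}^*,L_{x_2}^*$) to knock the singularity $|x-x_1|^{-(n-2m)}$ down to a subcritical power, not by $\beta_0$ extra temporal integrations by parts as you propose; temporal integration by parts alone does not remove that spatial singularity.

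The low-energy proposal has a more serious gap. For $n\le 2m$, a naive Taylor expansion of $(1+\alpha F^\pm(\lambda^{2m}))^{-1}$ in $\lambda$ is not available because $F^\pm(\lambda^{2m})\sim a_0^\pm\lambda^{n-2m}|\!\int\!\varphi|^2$ blows up as $\lambda\to0$ whenever $\int\varphi\ne 0$; the behavior of $(1+\alpha F^\pm)^{-1}$ therefore depends on the order of vanishing moments of $\varphi$, and one gains a factor $\lambda^{n-2m+2\mathbf{k}_0}$ (or a logarithm when $n$ is even), as quantified in Lemmas~\ref{lm3.4}--\ref{lm3.7}. Without tracking these moment conditions, the product $\lambda^{2m-1}(1+\alpha F^\pm)^{-1}(R_0^\pm\varphi)(x)(R_0^\pm\varphi)(y)$ is not integrable near $\lambda=0$, and van der Corput cannot be applied. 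The paper resolves this by matching the gain from $F^\pm$ against a corresponding loss in $R_0^\pm(\lambda^{2m})\varphi$, after first isolating the oscillation as $R_0^\pm(\lambda^{2m})\varphi=e^{\pm\i\lambda|x|}W^\pm(\lambda,x)$ (Propositions~\ref{lm-res-low-nl2m-1}, \ref{lm-n<2m-odd-vanish}, \ref{lm-n=2m-odd-vanish}) and, in the low-oscillation region $|t|^{-1/2m}(|x|+|y|)\le1$, by using the algebraic identity in \eqref{eq.l7} to extract the extra decay hidden in $R_0^+-R_0^-$. None of this machinery is sketched in your proposal, and the cases $n<2m$ odd, $n<2m$ even and $n=2m$ genuinely require different cancellation structures.
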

	We make the following remarks related to Theorem \ref{thm-1.1}.
	
	As far as we are aware, Theorem  \ref{thm-1.1} seems to provide the first pointwise estimates for the fundamental solution  of ${e}^{-\i tH}$ ($H$ is given by \eqref{eq1.1}) in any dimension $n\ge1$. Moreover, the upper bound in \eqref{eq1.5} is also sharp compared to the free case \eqref{eq-high-free-p}.
	
	
	As an immediate application, Theorem \ref{thm-1.1}  implies the following dispersive estimate
	\begin{equation}\label{equ-disp-h}
		\|e^{-\i tH}\|_{L^1-L^{\infty}}\lesssim|t|^{-\frac{n}{2m}},\qquad\,\,t\ne 0,
	\end{equation}
	as well as the  Strichartz-type estimate (see e.g. in \cite{KT}): For $n>2m$, and let $\frac{2m}{q}+\frac{n}{r}=\frac{n}{2}$ and $2\le q\le \infty$, then
	\begin{equation}\label{equ-stri}
		\|e^{-\i tH}u_0\|_{L_t^qL_x^r(\mathbb{R}\times \R^n)}\lesssim \|u_0\|_{L^2(\R^n)}.
	\end{equation}
	
	In recent years, there has been a surge of interest in the dispersive estimates for  higher order Schr\"{o}dinger operators $H=(-\Delta)^{m}+V$.
	Local dispersive estimates have been established by  Feng, Soffer, and Yao  \cite{FSY}, as well as Feng, Soffer, Wu, and Yao  \cite{FSWY}.
	For  fourth-order Schr\"{o}dinger equations, Erdo\v{g}an, Green and Toprak \cite{EGT} derived global dispersive estimates for three-dimensional cases, with Green and Toprak \cite{GT} later extending these results to four-dimensional scenarios. Subsequent work by Soffer, Wu, and Yao \cite{SWY}, along with Li, Soffer, and Yao \cite{LSY}, demonstrated dispersive bounds in 1-D and 2-D settings, respectively. Moreover, Erdo\v{g}an, Goldberg, and Green \cite{EGG822}  established dispersive estimates for scaling-critical potentials under the condition   $2m<n<4m$ and $m>1$. For $n>2m$, Erdo\v{g}an and Green \cite{EG22,EG23}   obtained dispersive estimates by examining  the $L^p$ boundedness of wave operators. Most recently,  pointwise estimates for the fundamental solution of $e^{-\i tH}$ in odd dimensions were proved in \cite{CHHZ}.

	We briefly outline the strategy employed in the proof of Theorem \ref{thm-1.1}.
	Inspired by the work on Schr\"{o}dinger operators, our starting point is to represent the propagator  via the Stone's formula. The peculiarity of the rank one perturbation is the so-called Aronszajn-Krein formula (see \eqref{A-K-for}).  As usual, we treat the high and low  energy parts respectively. One of the key ingredients throughout the proof is to  take suitable  space-time decompositions when estimating  various oscillatory integrals  appeared in both the high and low  energy regimes. This strategy ultimately leads to a simplification of the problem to the analysis  of certain 1-dim oscillatory integrals, as detailed in Lemmas \ref{lm2.3} and \ref{lm2.4}. 
	
	In the high energy part, we  write the kernel of $e^{-\i tH}-e^{\i tH_{0}}$ in the form
	\begin{equation}\label{eq1.6.1}
		\int_{\mathbb{R}^{2n}}{I_n(t, |x-x_1|, |x_2-y|)G_1(x, x_1)G_2(x_2, y)\,\d x_1\d x_2},
	\end{equation}
	where $I_n(t,|x-x_1|,|x_2-y|)$  stands for the following type of oscillatory integrals
	\begin{equation}\label{eq1.6.2}
		\int_{r_0}^{\infty} e^{-{\rm i}t{\lambda }^{2m}\pm {\rm i}\lambda \left (| x-{x}_{1}|+|{x}_{2}-y|\right )}\frac{\psi(\lambda,|x-x_1|, |x_2-y|)}{1+\alpha F^{\pm}(\lambda^{2m})}\d \lambda,
	\end{equation}
	where $r_0>0$ is some fixed constant. 
	The explicit expression of $G_1, G_2,$ and $\psi$ depend on the dimension.
	Since the behavior of the free resolvent differs in different dimensions, we shall divide the analysis into two different cases: $n<4m$ and $n\ge 4m$. When $n\ge 4m$,   the  magnitude of  $\lambda$  in \eqref{eq1.6.2} may become excessively large. To overcome this difficulty,  we borrow ideas from \cite{EG10, CHZ} and employ integration by parts arguments with respect to the oscillating term $e^{\i \lambda(|x-x_1|+ |x_2-y|)}$.

	In the low-energy regime, the symbol $|\xi|^{2m}$ ($m>1$) is degenerate at $\xi=0$  when compared to the the second order case, and the asymptotic behavior of the resolvent $R_0^{\pm}(\lambda^{2m})$ at zero energy is significantly more intricate. Further, the function $\frac{1}{1+\alpha F^{\pm}(\lambda^{2m})}$ exhibits distinct asymptotic behaviors near zero,
	depending on whether $n>2m$ and $n\le 2m$. It deserves to mention that when $n\le 2m$, the behavior is also determined by the vanishing moment conditions of $\varphi$, particularly whether  $\int_{\mathbb{R}^d}{x^{\gamma}\varphi}=0$ for some $\gamma\in \N_0^{n}$. 
	To establish the asymptotic properties in this case, we employ appropriate expansions of the free resolvent (see Lemma \ref{lm3.4}-\ref{lm3.7}). Moreover, in the process of  estimating the fundamental solution, we shall rewrite $R_0^{\pm}(\lambda^{2m})\varphi$ in the following form  
	\begin{equation*}
		(R_0^{\pm}(\lambda^{2m}) \varphi)(x)=e^{ \pm \i\lambda|x|}W^\pm(\lambda, x),
	\end{equation*}
	where  we isolate the oscillatory component $e^{ \pm \i\lambda|x|}$. This separation allows us to derive pointwise estimates for $W^\pm$ and its derivatives (see Proposition \ref{lm-res-low-nl2m-1}, \ref{lm-n<2m-odd-vanish} and \ref{lm-n=2m-odd-vanish}). Additionally, it reduces to  estimate certain one dimensional oscillatory integrals, which can be treated in a unified manner. 

	\subsection{Finite rank perturbations and $L^p-L^q$ estimates}\label{sec1.2}\
	
	Now we address the same problem for finite rank perturbations \eqref{eq-high-per}. Denote by 
	$$
	P=\sum_{j=1}^NP_j, \,\,\,\,\,\,\, P_j=\langle\cdot\,, \varphi_j\rangle \varphi_j.
	$$
	We assume that
	$\varphi_1,\cdots, \varphi_N$ are $L^2$-normalized and mutually orthogonal, i.e.,
	\begin{equation}\label{eq1.5.1}
		\langle\varphi_i, \varphi_j\rangle=\delta_{ij},\,\,\,\qquad 1\leq i, j\leq N,
	\end{equation}
	where $\delta_{ij}$ denotes the Kronecker delta function.  In addition to the decay and smoothness condition on each $\varphi_j$, we  also require a spectral assumption analogous to that of \eqref{eq1.4}. Instead of the scalar function defined in \eqref{eq1.11.1},  we define the following $N\times N$ matrix
	\begin{equation}\label{eq1.11}
		F_{N\times N}(z)=(f_{ij}(z))_{N\times N},\qquad f_{ij}(z)=\langle((-\Delta)^m-z)^{-1}\varphi_i, \varphi_j\rangle,\,\,\, z\in \mathbb{C}\setminus[0, \infty).
	\end{equation}
	Note that if  $\varphi_i\in L^{2}_{\sigma}$ ( $1\le i\le N$) for some  $\sigma>\frac12$, then it is meaningful to define the boundary value
	\begin{equation}\label{eq1.12}
		F_{N\times N}^{\pm}(\lambda^{2m}):=F_{N\times N}(\lambda^{2m}\pm \i 0),\qquad \lambda>0.
	\end{equation}
	In view of \eqref{eq1.4}, it is natural to make the following assumption:
	For $n\ge1$, there exists some absolute constant ${c}_{0}>0$ such that
	\begin{equation}\label{eq-low-spec}
		\left | \det(I+{F}_{N\times N}^{\pm }({\lambda }^{m}))\right |\ge {c}_{0},\ \ \ \text{for  any} \,\, \lambda>0.
	\end{equation}
	
	Based on the above assumptions, we have
	\begin{theorem}\label{cor-finite-rank}
		Let $n\ge 1$ and let $H_N$ be given by \eqref{eq-high-per}.  Assume that \eqref{eq1.5.1} holds and each $\varphi_i$ ($1\le i\le N$) satisfies the smoothness and decay condition \emph{(\romannumeral1)} in Assumption \ref{assumption-1}, as well as   the spectral assumption  \eqref{eq-low-spec}.
		Then there exists a  positive constant $C=C(N,\varphi_1,\cdots,\varphi_N)$ such that  ${e}^{-\i tH_N}$ has integral kernel satisfying
		\begin{align}\label{equ-point-fin}
			\left |K(t,x,y)\right |\le  C|t|^{-\frac{n}{2m}}(1+|t|^{-\frac{1}{2m}}\left | x-y\right |)^{-\frac{n(m-1)}{2m-1}} ,\ \ t\ne 0,\,\,x, y\in \mathbb{R}^n.
		\end{align}
	\end{theorem}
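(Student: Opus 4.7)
The natural approach is to reduce Theorem \ref{cor-finite-rank} to the rank-one analysis of Theorem \ref{thm-1.1} via a finite-rank analogue of the Aronszajn--Krein formula. The second resolvent identity together with the Sherman--Morrison--Woodbury identity yields, for $z\in \mathbb{C}\setminus [0,\infty)$,
\begin{equation*}
(H_N-z)^{-1}=(H_0-z)^{-1}-\sum_{i,j=1}^N c_{ij}(z)\,\bigl((H_0-z)^{-1}\varphi_i\bigr)\otimes \bigl((H_0-\bar z)^{-1}\varphi_j\bigr),
\end{equation*}
where $\bigl(c_{ij}(z)\bigr)_{N\times N}=\bigl(I+F_{N\times N}(z)\bigr)^{-1}$ with $F_{N\times N}$ as in \eqref{eq1.11}. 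Substituting this into Stone's formula expresses the kernel of $e^{-\i tH_N}P_{ac}(H_N)-e^{-\i tH_0}$ as a finite sum of $N^2$ terms, each having exactly the same structure \eqref{eq1.6.1}--\eqref{eq1.6.2} as in the rank-one case, but with the scalar multiplier $\frac{\alpha}{1+\alpha F^\pm(\lambda^{2m})}$ replaced by
\begin{equation*}
c_{ij}^\pm(\lambda^{2m}):=\bigl[(I+F_{N\times N}^\pm(\lambda^{2m}))^{-1}\bigr]_{ij}.
\end{equation*}

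Since $e^{-\i tH_0}$ already satisfies the desired bound \eqref{equ-point-fin} by \eqref{eq-high-free-p}, it suffices to prove the same estimate for each of the $N^2$ summands above. By Cramer's rule, $c_{ij}^\pm$ equals a cofactor of $I+F_{N\times N}^\pm$ divided by $\det\bigl(I+F_{N\times N}^\pm(\lambda^{2m})\bigr)$, whose modulus is bounded below by $c_0>0$ thanks to the spectral assumption \eqref{eq-low-spec}. Each matrix entry $f_{ij}^\pm(\lambda^{2m})=\langle R_0^\pm(\lambda^{2m})\varphi_i,\varphi_j\rangle$ is a bilinear form of precisely the same type as the scalar $F^\pm$ in \eqref{eq1.11.1}, built from two functions $\varphi_i,\varphi_j$ that both verify the decay and smoothness condition (i) of Assumption \ref{assumption-1}. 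Consequently, the high-energy bounds and the low-energy asymptotic expansions established for $F^\pm$ and $\frac{1}{1+\alpha F^\pm}$ in the rank-one analysis (the analogues of Lemmas \ref{lm3.4}--\ref{lm3.7} and of the relevant high-energy symbol estimates) carry over verbatim to every $f_{ij}^\pm$, and then to $c_{ij}^\pm$ through the algebraic operations of forming cofactors, sums, products, and the reciprocal of a uniformly non-vanishing determinant.

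With these substitutions in place, the high-energy and low-energy contributions of each $(i,j)$-summand can be handled by exactly the same space--time decompositions and reductions to one-dimensional oscillatory integrals (Lemmas \ref{lm2.3} and \ref{lm2.4}, together with the $W^\pm$-type pointwise bounds from Propositions \ref{lm-res-low-nl2m-1}, \ref{lm-n<2m-odd-vanish}, and \ref{lm-n=2m-odd-vanish}) as in the proof of Theorem \ref{thm-1.1}. Summing the $N^2$ contributions yields \eqref{equ-point-fin} with a constant depending on $N$ and on $\varphi_1,\ldots,\varphi_N$.

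The main obstacle is verifying that the symbol estimates and asymptotic expansions really do survive the matrix inversion. The numerator cofactors of $I+F_{N\times N}^\pm(\lambda^{2m})$ are polynomial expressions in the $f_{ij}^\pm$, so their $\lambda$-expansions are obtained by multiplying and adding the corresponding scalar expansions and remain in the same symbol class. The uniform lower bound on the determinant produced by \eqref{eq-low-spec} then preserves this class under repeated differentiation via the Leibniz and quotient rules, which is precisely what the oscillatory integral lemmas require. The orthogonality normalization \eqref{eq1.5.1} is not essential to the analytic estimates but conveniently fixes the identity component of $I+F_{N\times N}^\pm$ and prevents degeneracies among the $\varphi_j$.
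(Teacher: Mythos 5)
Your reduction to a finite--rank Aronszajn--Krein formula and Stone's formula is exactly the paper's starting point, and your treatment of the high--energy regime and the low--energy regime for $n>2m$ via Cramer's rule and the determinant lower bound from \eqref{eq-low-spec} matches Sections \ref{sec5.1.1} and \ref{sec5.1.2}. However, your argument has a genuine gap in the low--energy analysis when $n\le 2m$, which is where the paper invests the bulk of its effort (Sections \ref{sec-fin-rank-odd-low} and \ref{sec-fin-rank-odd-even}).

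When $n\le 2m$ the entries $f_{ij}^\pm(\lambda^{2m})$ blow up near $\lambda=0$ at a rate governed by the vanishing--moment orders of $\varphi_i$ and $\varphi_j$, and the cofactors of $I+F_{N\times N}^\pm$, being degree-$(N-1)$ polynomials in the $f_{ij}^\pm$, blow up even faster. The spectral assumption \eqref{eq-low-spec} only gives $|\det(I+F_{N\times N}^\pm)|\ge c_0$; it says nothing about the \emph{growth rate} of the determinant as $\lambda\to 0$. Consequently the naive quotient--rule estimate $|c_{ij}^\pm|\le |\mathrm{adj}_{ij}|/c_0$ blows up rather than decays, so the claim that the cofactors ``remain in the same symbol class'' as $\frac{1}{1+\alpha F^\pm}$ and that the determinant lower bound then ``preserves this class under Leibniz and quotient rules'' is false. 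In fact the needed decay of $c_{ij}^\pm$ near zero encodes algebraic cancellations between the cofactors and the determinant that are sensitive to how the vanishing moments of the various $\varphi_j$ interleave, and these cancellations can also occur only after summing over $i,j$ rather than termwise (for instance, when several $\varphi_j$ share the same first nonvanishing moment, the entries $c_{ij}^\pm$ individually fail to decay while the combination $\sum_{i,j}c_{ij}^\pm\,R_0^\pm\varphi_i\otimes R_0^\pm\varphi_j$ does). The paper resolves this by decomposing $PL^2=\bigoplus_{j\in J}Q_jL^2$ according to vanishing--moment order \eqref{equ-5.27}--\eqref{eq-projection-Qj} and proving Propositions \ref{Thm-I+PRP} and \ref{Thm-I+PRP-even}: the leading block operator $D^\pm$ in that decomposition is shown invertible through a Gram--matrix argument built on the coefficients $A_{\alpha,\beta}$ in \eqref{eq5.48}, and only then does a Neumann series yield the precise asymptotic structure of the inverse. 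Your proposal has no analogue of this decomposition, and without it the $n\le 2m$ case is not established.
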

	The basic strategy in our proof of Theorem \ref{cor-finite-rank} is to simplify the problem to the rank one case by examining the resolvent $R^{\pm}(\lambda^{2m})=(H-(\lambda^{2m} \pm \i 0))^{-1}$, However, unlike Theorem \ref{thm-1.1}, the main challenge here lies in  dealing with the inverse of the  $N\times N$ matrix $I+{F}_{N\times N}^{\pm }({\lambda }^{m})$, particularly when $n\le 2m$. Observe that  this is equivalent to examining the inverse of the operator $I+P{R}_{0}^{\pm}({\lambda }^{2m})P$ on the finite dimensional space  spanned by $\{\varphi_1,\cdots, \varphi_N\}$ near zero energy.  Our approach introduces a novel decomposition, expressing the space as a direct sum: 
	\begin{align*}
		PL^2={\bigoplus }_{j\in J}Q_jL^2, \,\,\,\,\  J=
		\begin{cases}
			\ \{0,\, 1,...,m-\frac{n+1}{2},m-\frac{n}{2}\},   &\mbox{if $n$ is odd},\\
			\ \{0,\, 1,...,m-\frac{n}{2},m-\frac{n}{2}+1\}, &\mbox{if $n$ is even}, 
		\end{cases}  
	\end{align*}
	where $Q_j$ denotes certain  orthogonal projection (see \eqref{equ-5.27}-\eqref{eq-projection-Qj}) which is designed to capture the vanishing conditions of $\varphi_j$, $1\le j\le N$.
	It turns out that the problem is ultimately transformed into considering the Gram matrix associated with the coefficients of ${R}_{0}^{\pm}({-1})(x-y)$ (see Propositions \ref{Thm-I+PRP} and \ref{Thm-I+PRP-even}).
	
	We mention that analogous  difficulties  are encountered in the context of  higher order Schr\"{o}dinger operators $H=(-\Delta)^{m}+V$. Notably, for the case where $m=2$ and $1\le n\le 4$,  the asymptotic behavior of the perturbed resolvent has been  individually established in \cite{EGT,GT,SWY,LSY}. We adopt the idea in \cite{CHHZ}, where the matrix-based approach is introduced and  a unified treatment for odd dimensions is presented.

	Given that the  pointwise bounds in \eqref{eq1.5} exhibit additional  decay with respect to   $|x-y|$ when $m>1$, we are able to further establish the $L^p-L^q$ type estimate for $e^{-\i tH}P_{ac}(H)$, where $p, q$ can be exponents that do not form a dual pair. More precisely, set
	\begin{align}\label{eq3.7}
		\square_{ABCD}=\{\mbox{$(p, q)$}\in\mathbb{R}_+^2;\ \mbox{$({\frac 1p},{\frac 1q})$}\ {\rm lies\
			in\ the\ closed\ quadrilateral \ ABCD}\}.
	\end{align}
	Here $A=(\frac12,\frac12)$, $B=(1,\frac{1}{\tau_m})$, $C=(1,0)$ and $D=(\frac{1}{\tau'_m},0)$, where $\tau_m=\frac{2m-1}{m-1}$, see Figure \ref{fig1} below.
	Moreover, we denote by $H^1$ the Hardy space on $\R^n$ and by BMO the space of functions with bounded mean oscillation on $\R^n$. Let $(p, q)\in \square_{ABCD}$ and denote by
	\begin{align}\label{equ3.6}
		L^p_*-L^q_*=
		\begin{cases}
			L^1-L^{\tau_m,\infty}\, \text{or}\, H^1-L^{\tau_m}&\text{if}\,\, (p,q)=(1, \tau_m),\\[4pt]
			L^{\tau'_m,1}-L^\infty\,  \text{or}\, L^{\tau_m'}-\text{BMO}&\text{if}\,\,(p,q)=(\tau'_m,\infty),\\[4pt]
			L^p-L^q&\text{otherwise}.
		\end{cases}
	\end{align}
	\begin{figure}[h]
		\centering
		\begin{tikzpicture}[scale=5.0]
			\draw [<->,] (0,1.14) node (yaxis) [right] {$\frac{1}{q}$}
			|- (1.24,0) node (xaxis) [right] {$\frac{1}{p}$};


			\draw (0.80,0) coordinate (a_1)node[below]{\color{blue}{\small{$D=(\frac{1}{\tau_m'},0)$}}};
			\draw[dashed](1,0) coordinate (a_3)node[below=3.5mm] [right=0.1mm] {\small{\color{blue}{C=(1,0)}}} -- (1,1) coordinate (a_4);
			\draw[dashed](1,0) coordinate (a_3) -- (0,1) coordinate (a_5)node[left=0.9mm] {$1$};
			\draw[dashed](1,1) coordinate (a_3) -- (0,1) coordinate (a_5);
			\draw (1,0.2) coordinate (a_5)node[right]{\color{blue}{$B=(1,\frac{1}{\tau_m})$}} -- (0.56,0.444) coordinate (a_2);
			
			\fill[blue]  (0.5,0.5)coordinate (a_6) circle (0.2pt) node [above=3.3mm] [right=0.1mm] {$(\frac12, \frac12)$} node[above=0.3mm]{{A}} ;
			\fill[blue]  (0.80,0) circle (0.2pt);
			\fill[blue] (1,0) circle (0.2pt);
			\fill[blue]  (0.56,0.444) circle (0.2pt);
			\fill[blue]  (1,0.2) circle (0.2pt);
			
			\draw[fill=gray!20]  (0.5,0.5)--(0.80,0) -- (1,0) -- (1,0.2) -- cycle;
			\draw [densely dotted] (0.5,0.5) -- (1,0.2);
			\draw [densely dotted] (0.5,0.5) -- (0.8,0);
			\end{tikzpicture}
			\caption{The  $L^p-L^q$ estimates}\label{fig1}
		\end{figure}
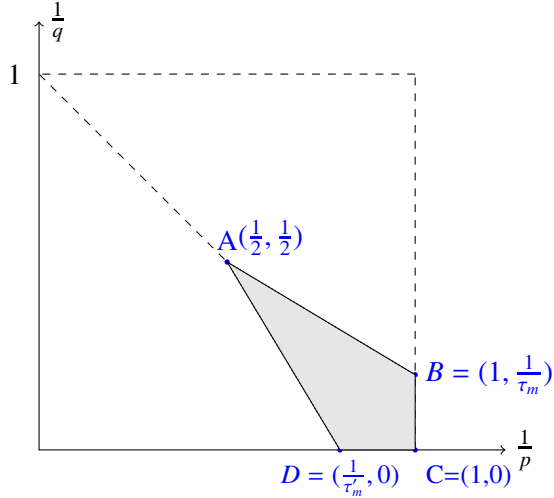
		
		\begin{corollary}\label{cor-Lp}
			Let the assumption of Theorem  \ref{cor-finite-rank} be satisfied and  $(p, q)\in \square_{ABCD}$, then
			\begin{equation}\label{equ-Lp-Lq}
				\|e^{-\i tH_N}P_{ac}(H)\|_{L^p_*-L^q_*}\lesssim|t|^{-\frac{n}{2m}(\frac{1}{p}-\frac{1}{q})},\qquad\,\,t\ne 0,
			\end{equation}
			where $(p, q)\in \square_{ABCD}$ and $L^p_*-L^q_*$ is given by \eqref{equ3.6}.
		\end{corollary}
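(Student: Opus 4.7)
Write $T(t):=e^{-\i tH_N}P_{ac}(H)$ and set
\[
k_t(z):=|t|^{-\frac{n}{2m}}\bigl(1+|t|^{-\frac{1}{2m}}|z|\bigr)^{-\frac{n(m-1)}{2m-1}},
\]
so that Theorem~\ref{cor-finite-rank} is the pointwise kernel bound $|K(t,x,y)|\lesssim k_t(x-y)$. A dilation $w=|t|^{-1/(2m)}z$ reduces every Lebesgue/Lorentz norm of $k_t$ to that of the dimensionless profile $h(w)=(1+|w|)^{-n(m-1)/(2m-1)}$, whose critical integrability exponent on $\R^n$ is exactly $\tau_m=(2m-1)/(m-1)$. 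Consequently $h\in L^{\tau_m,\infty}(\R^n)\cap\bigcap_{r>\tau_m}L^r(\R^n)$, and
\[
\|k_t\|_{L^{\tau_m,\infty}}\lesssim |t|^{-\frac{n}{2m}\cdot\frac{1}{\tau_m'}},\qquad \|k_t\|_{L^r}\lesssim |t|^{-\frac{n}{2m}(1-\frac{1}{r})},\ \ r\in(\tau_m,\infty].
\]

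\textbf{Vertex bounds.} At $A=(\tfrac12,\tfrac12)$, self-adjointness of $H_N$ gives the unitary bound $\|T(t)\|_{L^2\to L^2}\le 1$. At $C=(1,0)$, the sup bound $\|k_t\|_\infty\lesssim|t|^{-n/(2m)}$ yields $\|T(t)\|_{L^1\to L^\infty}\lesssim|t|^{-n/(2m)}$. At $B=(1,1/\tau_m)$, O'Neil's weak-type Young inequality applied to the convolution comparison produces
\[
\|T(t)f\|_{L^{\tau_m,\infty}}\lesssim \|k_t\|_{L^{\tau_m,\infty}}\|f\|_{L^1}\lesssim |t|^{-\frac{n}{2m}(1-\frac{1}{\tau_m})}\|f\|_{L^1}.
\]
The bound at $D=(1/\tau_m',0)$ is obtained from $T^*(t)=T(-t)$ (which has the same kernel bound under $t\mapsto -t$) together with the Lorentz Hölder pairing $|\langle f,g\rangle|\lesssim\|f\|_{L^{\tau_m,\infty}}\|g\|_{L^{\tau_m',1}}$.

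\textbf{Edges and interior.} Along $\overline{BC}$ (i.e.\ $p=1$, $q\in(\tau_m,\infty)$), strong Young with $\|k_t\|_{L^q}$ gives the desired bound directly; $\overline{CD}$ follows by duality applied to $T^*$. The oblique edges $\overline{AB}$ and $\overline{AD}$ are then reached by Marcinkiewicz interpolation between the strong $L^2\to L^2$ bound at $A$ and the respective weak endpoint at $B$ or $D$, producing strong $L^p\to L^q$ bounds on the open edges with the claimed exponent $-\frac{n}{2m}(1/p-1/q)$ (forced by the scaling $z\mapsto|t|^{1/(2m)}w$). Interior points of $\square_{ABCD}$ are then filled in by Riesz--Thorin interpolation among the established edge bounds.

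\textbf{Hardy/BMO endpoints and main obstacle.} The only delicate point is the sharpening at $B$ from $L^1\to L^{\tau_m,\infty}$ to $H^1\to L^{\tau_m}$; the corresponding statement $L^{\tau_m'}\to\mathrm{BMO}$ at $D$ then follows automatically by $H^1$--BMO duality applied to $T^*(t)$. One handles this endpoint via the atomic decomposition of $H^1$: for an atom $a$ supported in a ball $B(x_0,r)$ with $\int a=0$, the cancellation combined with a quantitative smoothness/gradient estimate on $k_t$ at scale $|t|^{1/(2m)}$ (extractable from the resolvent expansions underlying Theorem~\ref{cor-finite-rank}) yields a strong $L^{\tau_m}$-bound on $T(t)a$ in the standard Calder\'on--Zygmund fashion. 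This is the genuine technical obstacle: away from $B,D$ the proof is a routine combination of scaling, Young's inequality, and real/complex interpolation, and the specific appearance of $\tau_m=(2m-1)/(m-1)$ is dictated by the critical spatial decay in the pointwise bound, which puts $k_t$ exactly on the boundary of $L^{\tau_m}$.
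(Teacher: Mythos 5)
Your argument follows the same route as the paper: reduce via scaling to the radial profile $h(w)=(1+|w|)^{-n/\tau_m}$, obtain the $\overline{BC}$ edge from Young's inequality, the vertex $B$ from weak Young/O'Neil, the interior of $ABC$ from Riesz--Thorin against unitarity at $A$, the $\overline{AB}$ edge from Marcinkiewicz, and the triangle $ADC$ from duality. All of these steps are correct and parallel the paper's proof exactly, including the identification of $\tau_m=\frac{2m-1}{m-1}$ as the critical exponent.

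The one genuine gap is the one you yourself flag: the $H^1\to L^{\tau_m}$ option (and, dually, $L^{\tau_m'}\to\mathrm{BMO}$) in \eqref{equ3.6} does \emph{not} follow from the pointwise kernel bound of Theorem~\ref{cor-finite-rank} alone. Pointwise domination by a Riesz kernel preserves $L^1\to L^{\tau_m,\infty}$ and all the strong $L^p\to L^q$ bounds with $p>1$, but it does \emph{not} preserve $H^1\to L^q$ mapping: the positive operator $f\mapsto |\cdot|^{-n/\tau_m}*|f|$ is dominated by the Riesz kernel, yet for a standard atom $a$ with $|a|\sim r^{-n}\mathbf{1}_{B_r}$ its image has tail comparable to $|x|^{-n/\tau_m}$, which lies precisely outside $L^{\tau_m}$. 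Recovering this endpoint requires exploiting the cancellation $\int a=0$, hence a H\"older or gradient estimate on $K(t,x,\cdot)$ at scale $|t|^{1/(2m)}$, as you observe — but such a regularity estimate is not contained in Theorem~\ref{cor-finite-rank}, and neither you nor the paper actually derives it. (The paper attributes the $H^1$ endpoint to ``the boundedness of the Riesz potential,'' which presupposes exactly the kernel regularity that is in question.) In short, your write-up is complete and coincides with the paper's for every $L^p_*\to L^q_*$ variant in \eqref{equ3.6} except the $H^1$ and BMO ones, and on those it has the same status as the published argument: the correct repair is named, but the cancellation step is left unproved.
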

		
		
		\subsection{Notation}\
		
		Throughout the paper, $C(\cdots)$ and $C_{\{\cdot\}}$ will denote positive constants depending on what are enclosed in the brackets and they may differ on different occasions. And we use $a\pm:= a\pm \epsilon$ for some small but fixed $\epsilon >0$. For $a,b \in \mathbb{R}^+$, $a \lesssim b$ (or $a \gtrsim b$) means that there exists some constant $C$ such that $a\le Cb$ (or $a \ge Cb$). For $l>0$, we denote by $[l]$ the greatest integer at most $l$.
		The rest of the paper is organized as follows:
		In section \ref{sec2}, we present the spectral representation of the propagator $e^{-\i tH}$, expansions of the free resolvent, as well as the oscillatory integrals which will be frequently used later.
		Section \ref{sec3} and \ref{sec4} are devoted to the proof of  Theorem \ref{thm-1.1}. In Section  \ref{sec5} and \ref{sec6}, we give the proof of Theorem \ref{cor-finite-rank} and Corollary \ref{cor-Lp} respectively.
		
		\section{Preliminaries}\label{sec2}
		\subsection{Spectral representation}\label{sec2.1}\
		
		Inspired by the dispersive estimates for Schr\"{o}dinger operators $-\Delta+V$ (see e.g. \cite{RS,Sch}), we begin with the  spectral representation of the propagator as given  by Stone's formula:
		\begin{align}\label{eq3.1}
			\left  \langle e^{-\i tH}P_{ac}f, g\right\rangle=\frac{1}{2\pi \i}\int_0^{\infty}{e^{-\i t\lambda}\left\langle [R^{+}_{\alpha}(\lambda)-R^{-}_{\alpha}(\lambda)]f,\, g\right\rangle\,\d\lambda},
		\end{align}
		Here, $f, g$ are Schwartz functions, $P_{ac}$ denotes the projection onto the absolutely continuous spectrum of $H$, and ${R}^{\pm}_{\alpha}({\lambda })={(H-\lambda\mp\i0)}^{-1}$.
		
		Compared  with the case of Schr\"{o}dinger operators $(-\Delta)^m+V$, the main difference here is the following Aronszajn-Krein formula (see \cite{Simon}): For
		$z\in \mathbb{C}\backslash[0,\infty )$, one has
		\begin{equation}\label{A-K-for}
			R_{\alpha}(z)=R_{0}(z)-\frac{\alpha}{1+\alpha F(z)}R_{0}(z)\varphi\big\langle R_{0}(z)\cdot, \varphi\big\rangle,
		\end{equation}
		where $R_0(z)=((-\Delta)^m-z)^{-1}$ is the free resolvent and $F(z)$ is given by \eqref{eq1.1.1-F(z)}.
		Plugging the Aronszajn-Krein formula \eqref{A-K-for} into \eqref{eq3.1} and  noting that
		\begin{equation*}
			{e}^{-{\rm i}t{H}_{0}}=\frac{m}{\pi {\rm i}}\int_{0}^{\infty }{e}^{-{\rm i}t{\lambda }^{2m}}( {R}_{0}^{+}({\lambda }^{2m})-{R}_{0}^{-}({\lambda }^{2m}) ){\lambda }^{2m-1}\rm d \lambda,
		\end{equation*}
		then the change of variables $\lambda\rightarrow\lambda^{2m}$ in \eqref{eq3.1} leads to
		\begin{equation}\label{equ-rank1-st}
			\begin{aligned}
				{e}^{-{\rm i}tH}-{e}^{-{\rm i}t{H}_{0}}
				&=-\frac{m}{\pi {\rm i}}\int_{0}^{\infty }{e}^{-{\rm i}t{\lambda }^{2m}}\frac{\alpha }{1+\alpha{F}^{+}({\lambda }^{2m})}{R}_{0}^{+}({\lambda }^{2m})\varphi \left\langle {R}_{0}^{+}({\lambda }^{2m})\cdotp ,\varphi\right\rangle {\lambda }^{2m-1}\rm d \lambda \\
				&+\frac{m}{\pi {\rm i}}\int_{0}^{\infty }{e}^{-{\rm i}t{\lambda }^{2m}}\frac{\alpha }{1+\alpha{F}^{-}({\lambda }^{2m})}{R}_{0}^{-}({\lambda }^{2m})\varphi\left \langle {R}_{0}^{-}({\lambda }^{2m})\cdotp ,\varphi\right \rangle {\lambda }^{2m-1}\rm d \lambda\\
				&:={K}_{n}^{+}-{K}_{n}^{-}.
			\end{aligned}
		\end{equation}
		By \eqref{eq-high-free-p}, it suffices to deal with the kernel of ${K}_{n}^{+}-{K}_{n}^{-}$.
		To this end, we first fix a small $0<\lambda_0<1$, and choose a smooth cut-off function $\chi (\lambda )$ such that
		\begin{equation}\label{equ-chi}
			\chi(\lambda)=1\,\,\,\,\,\text{if}\,\,\,\lambda<\frac{\lambda_0}{2}; \qquad \chi(\lambda)=0\,\,\,\,\,\text{if}\,\,\,\lambda>\lambda_0.
		\end{equation}
		We then split ${K}_{n}^{\pm}$ into the high energy part
		\begin{equation}\label{equ-2-high}
			{K}_{n}^{\pm ,h}=-\frac{m\alpha }{\pi {\rm i}}\int_{0}^{+\infty }{e}^{-{\rm i}t{\lambda }^{2m}}\frac{1-\chi (\lambda )}{1+\alpha {F}^{\pm }({\lambda }^{2m})}{R}_{0}^{\pm }({\lambda }^{2m})\varphi \left\langle {R}_{0}^{\pm }({\lambda }^{2m})\cdotp ,\varphi\right\rangle  {\lambda }^{2m-1}\rm d \lambda,
		\end{equation}
		and the low energy part
		\begin{equation}\label{equ-2-low}
			{K}_{n}^{\pm ,l}=-\frac{m\alpha }{\pi {\rm i}}\int_{0}^{+\infty }{e}^{-{\rm i}t{\lambda }^{2m}}\frac{\chi (\lambda )}{1+\alpha {F}^{\pm }({\lambda }^{2m})}{R}_{0}^{\pm }({\lambda }^{2m})\varphi\left \langle {R}_{0}^{\pm }({\lambda }^{2m})\cdotp ,\varphi\right\rangle  {\lambda }^{2m-1}\rm d \lambda.
		\end{equation}
		We shall establish pointwise estimates for the kernel of ${K}_{n}^{\pm ,h}$ and ${K}_{n}^{+, l}-{K}_{n}^{-, l}$ in Section \ref{sec3} and \ref{sec4} respectively.
		In particular, Theorem \ref{thm-1.1} follows from Theorem \ref{thm3.1} and \ref{thm3.2} immediately.
		
		\subsection{Resolvent expansion}\label{sec2.2}\
		
		We first collect some basic results about the free resolvent $R_0(z)$, which is used to establish estimates of  $(1+\alpha {F}^{\pm }({\lambda }^{2m}))^{-1}$ that appeared both in \eqref{equ-2-high} and \eqref{equ-2-low}.
		
		When $\lambda>0$, the well known limiting absorption principle implies that the weak* limits $R_0^{\pm}(\lambda^{2m}):= R_0(\lambda^{2m}\pm \i 0)=w*-\lim_{\epsilon\downarrow0}R_0(\lambda^{2m}\pm\i\epsilon)$ exist as bounded operators from weighted space $L_{\sigma}^2$ to $L_{-\sigma}^2$ with $\sigma>\frac12$.
		Further, we have the splitting identity  (see \cite{FSWY,H-Y-Z})
		\begin{equation}\label{equ2.1.1.1}
			{R}_{0}^{\pm }(\lambda^{2m})=\frac{1}{m{\lambda }^{2m}}\displaystyle\sum_{k\in {I}^{\pm }}^{}{{\lambda }_{k}}^{2}\mathfrak{R}_0^{\pm}({{\lambda }_{k}}^{2})=\frac{1}{m{\lambda }^{2m-2}}\displaystyle\sum_{k\in {I}^{\pm}}
			\omega_{k}\mathfrak{R}_0^{\pm}(\omega_{k}{\lambda }^{2}),
		\end{equation}
		where $\mathfrak{R}_0(z)=(-\Delta-z)^{-1}$, ${\lambda }_{k}=\lambda{e}^{\frac{{\rm i}k\pi}{m}}$, $\omega_{k}=\mbox{exp}({\rm i}2\pi k/m)$ are the $\rm {m}^{th}$ roots of unity, and
		\begin{equation*}
			{I}^{+}=\{0,1,2,\cdots,m-1\}, \qquad  {I}^{-}=\{1,2,3,\cdots,m\}.
		\end{equation*}
		From \eqref{equ2.1.1.1}, we see  that the kernel of ${R}_{0}^{\pm }(\lambda^{2m})$ can be expressed as linear combinations of the resolvent kernel of the Laplacian.
		Let $R_0(z)(x-y)$ (resp. $\mathfrak{R}_0(z)(x-y)$) be the integral kernel of $R_0(z)$ (resp. $\mathfrak{R}_0(z)$),
		recall that
		\begin{equation}\label{eq2.1}
			\mathfrak{R}_0^{\pm }(\lambda_k^2)(x-y)=\frac{\pm \rm i}{2\lambda_k }{e}^{\pm {\rm i}\lambda_k |x-y|},\ \ \ n = 1,
		\end{equation}
		and
		\begin{equation}\label{eq2.2}
			\mathfrak{R}_0^{\pm }(\lambda_k^2)(x-y)=\frac{\pm \rm i}{4}(\frac{\lambda_k }{2\pi \left | x-y\right |})^{\frac{n}{2}-1}{H}_{\frac{n}{2}-1}^{\pm }(\lambda_k \left | x-y\right |),\ \ \ n\ge 2,
		\end{equation}
		where ${H}_{\frac{n}{2}-1}^{\pm }(z)={J}_{\frac{n}{2}-1}^{\pm }(z)\pm {\rm i}{Y}_{\frac{n}{2}-1}^{\pm }(z)$ are the Hankel functions of order $\frac{n}{2}-1$.
		In particular, when $n \ge 3$ is odd, we can rewrite \eqref{eq2.2} as
		\begin{equation}\label{eq.2.3}
			\mathfrak{R}_0^{\pm }(\lambda_k^2)(x-y)={C}_{n}\frac{{e}^{\pm {\rm i}\lambda_k \left | x-y\right |}}{{\left | x-y\right |}^{n-2}}\displaystyle\sum_{l=0}^{\frac{n-3}{2}}\frac{\left ( n-3-l\right )!}{l!\left ( \frac{n-3}{2}-l\right )!}{\left ( \mp 2 {\rm i}\lambda_k \left | x-y\right |\right )}^{l}.\tag{\ref{eq2.2}'}
		\end{equation}
		
		We note that the asymptotic property of Hankel function ${H}_{\frac{n}{2}-1}^{\pm }(z)$ at infinity shows that ${H}_{\frac{n}{2}-1}^{\pm }(z)$ can be expressed as
		\begin{equation}\label{eq-asmptotic-hankel}
			{H}_{\frac{n}{2}-1}^{\pm }(z)=e^{\pm\i z}\psi_>^\pm (z),\qquad z\gtrsim 1,
		\end{equation}
		where the functions  $\psi_>^\pm(z)$ satisfy for $l\in \N_0$,
		\begin{equation}\label{eq-est-hankel}
			\left| \frac{d^l}{dz^l}\psi_>^\pm (z)\right| \lesssim_l \langle z\rangle^{-\frac 12 -l},\quad \text{for}\   z\gtrsim 1.	
		\end{equation}
		
		
		Next, we present several lemmas that address different types of expansions related to ${R}_{0}^{\pm}({\lambda }^{2m})(x-y)$, ${R}_{0}(-1)(x-y)$ and ${R}_{0}^{+}({\lambda }^{2m})(x-y)-{R}_{0}^{-}({\lambda }^{2m})(x-y)$, as detailed in Lemma \ref{lm2.0}--\ref{lm-expansion-high}. The proofs of these lemmas are given in Appendix \ref{app-01}.
		
		\begin{lemma}\label{lm2.0}
			Let $\lambda>0$ and  $1\le n\le 2m$.
			
			\noindent (\romannumeral1)\, If $n$ is odd, then
			\begin{equation}\label{eq.2-low-odd}
				{R}_{0}^{\pm }({\lambda }^{2m})(x-y)=\displaystyle\sum_{j=0}^{m-\frac{n+1}{2}}{a}_{j}^{\pm} {\lambda }^{n-2m+2j} {\left | x-y\right |}^{2j} +{b}_{0}{\left | x-y\right |}^{2m-n}+\lambda^{n-2m}{r}_{2m-n+1}^{\pm}(\lambda|x-y|),
			\end{equation}
			where ${a}_{j}^{\pm}\in \mathbb{C}\setminus\mathbb{R},~ b_0\in \R$. Moreover, ${r}_{2m-n+1}^{\pm}(z)\in C^\infty(\R)$ satisfies 
			\begin{equation}\label{eq3.5.0-rem}
				\left|\frac{{d}^{l}}{d{z}^{l}}{r}_{2m-n+1}^{\pm }(z) \right|\lesssim 
				\begin{cases}
					{\left| z\right |}^{2m-n+1-l}, &\quad \text{for} \ 0\le l\le 2m-\frac{n-1}{2},\\  
					{\left| z\right |}^{2m-n+1-l}+|z|^{-\frac{n-1}{2}}, &\quad \text{for} \ l> 2m-\frac{n-1}{2}.
				\end{cases}
			\end{equation}
			
			\noindent (\romannumeral2)\, If $n$ is even, then
			\begin{align}\label{eq.2-low-even}
				&{R}_{0}^{\pm }(\lambda^{2m})(x-y)\nonumber\\ 
				=~&\displaystyle\sum_{j=0}^{m-\frac{n}{2}}{a}_{j}^{\pm }{\lambda }^{n-2m+2j}{\left | x-y\right |}^{2j}+{b}_{0}{\left | x-y\right |}^{2m-n}\log{\left ( \lambda \left | x-y\right |\right )}+\lambda^{n-2m}{r}_{2m-n+1}^{\pm}(\lambda|x-y|), 
			\end{align}
			where ${a}_{j}^{\pm}\in \mathbb{C}\setminus\mathbb{R},~ b_0\in \R$. Moreover, the remainder term ${r}_{2m-n+1}^{\pm }(z)\in C^\infty(\R\setminus\{0\})$ and it satisfies \eqref{eq3.5.0-rem}.
		\end{lemma}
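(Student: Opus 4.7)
\medskip

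\textbf{Proof plan for Lemma \ref{lm2.0}.} The starting point is the splitting identity \eqref{equ2.1.1.1}, which reduces the problem to analyzing the second-order free resolvent $\mathfrak{R}_0^\pm(\omega_k\lambda^2)(x-y)$ at the shifted spectral points $\lambda_k^2=\omega_k\lambda^2$, $k\in I^\pm$. Since $R_0^\pm(\lambda^{2m})(x-y)=\lambda^{n-2m}G^\pm(\lambda|x-y|)$ by scaling, it suffices to expand the scale function $G^\pm(z)$ at $z=0$, and the lemma will read
$$G^\pm(z)=\sum_{j=0}^{m-\frac{n+1}{2}}a_j^\pm z^{2j}+b_0 z^{2m-n}+r^\pm_{2m-n+1}(z)\quad(n\text{ odd}),$$
with the corresponding logarithmic version for even $n$.

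First I would expand $\mathfrak{R}_0^\pm(\lambda_k^2)(x-y)$ for small argument. For $n=1$ use \eqref{eq2.1} and Taylor-expand $e^{\pm\i\lambda_k|x-y|}$; for odd $n\ge 3$ use the elementary closed form \eqref{eq.2.3} and again expand the exponential (this produces only finitely many genuinely singular-in-$\lambda_k$ pieces, multiplied by power series in $\lambda_k|x-y|$). For even $n$ use \eqref{eq2.2} together with the classical expansion of the integer-order Hankel function $H_{n/2-1}^\pm(z)=J_{n/2-1}(z)\pm\i Y_{n/2-1}(z)$ near $z=0$, in which $Y_{n/2-1}$ contributes the $\log(z/2)\cdot J_{n/2-1}(z)$ piece responsible for the logarithmic term in \eqref{eq.2-low-even}. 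In every case one writes $\mathfrak{R}_0^\pm(\lambda_k^2)(x-y)$ as a finite linear combination of monomials $\lambda_k^p|x-y|^{p-(n-2)}$ (with an extra $\log(\lambda_k|x-y|)$ factor when $n$ is even) plus a Taylor remainder controlled uniformly in $k$.

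Next I would substitute into \eqref{equ2.1.1.1}, set $\zeta_k=e^{\i\pi k/m}$ so that $\omega_k=\zeta_k^2$ and $\lambda_k^p=\lambda^p\zeta_k^p$, and collect terms. The key algebraic input is the root-of-unity identity
$$\sum_{k\in I^\pm}\omega_k\lambda_k^p=\lambda^p\sum_{k\in I^\pm}\zeta_k^{p+2},$$
whose inner sum vanishes whenever $p+2$ is even but not divisible by $2m$, equals $\pm m$ when $p+2\equiv 0\pmod{2m}$, and equals an explicit nonzero constant when $p+2$ is odd. After dividing by $m\lambda^{2m-2}$, the surviving odd-$p$ contributions (for $p=n-2,n,\dots,2m-n-2$ in the odd-$n$ case) assemble into the polynomial part $\sum_j a_j^\pm\lambda^{n-2m+2j}|x-y|^{2j}$, whose coefficients $a_j^\pm$ lie in $\C\setminus\R$ since each comes with an $\pm\i$ factor from \eqref{eq2.1}/\eqref{eq.2.3} that survives. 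The exceptional even index $p=2m-2$ yields the $\lambda$-independent term $b_0|x-y|^{2m-n}$. In the even-$n$ case the $\log(\lambda_k|x-y|)=\log\lambda+\log\zeta_k+\log|x-y|$ splits additively; the $\log\zeta_k$ piece, once weighted by $\omega_k$ and summed, is absorbed into an $a_j^\pm$-type constant, while the $\log\lambda+\log|x-y|=\log(\lambda|x-y|)$ piece packages into the claimed $b_0|x-y|^{2m-n}\log(\lambda|x-y|)$ term.

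Finally I would establish the remainder bound \eqref{eq3.5.0-rem} on $r^\pm_{2m-n+1}(z)$. For small $z$ it comes from Taylor's theorem applied at the Bessel-function level: the first neglected monomial is of order $z^{2m-n+1}$, so $|r^{(l)}(z)|\lesssim |z|^{2m-n+1-l}$ as long as derivatives of the Taylor remainder remain polynomial in $z$. Once the order $l$ of differentiation exceeds $2m-\frac{n-1}{2}$, the polynomial description breaks down and one must invoke the large-$z$ asymptotics \eqref{eq-asmptotic-hankel}--\eqref{eq-est-hankel} of the Hankel function, which contribute the extra $|z|^{-(n-1)/2}$ correction in the second line of \eqref{eq3.5.0-rem}; a cutoff/interpolation between the two regimes at $z\sim 1$ closes the bound globally. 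The main obstacle in carrying the plan out cleanly is bookkeeping: one must check that every "non-remainder'' monomial $\lambda^p|x-y|^{p-(n-2)}$ in the Bessel expansion (and, for even $n$, every logarithmic cross-term) lands in exactly the right slot after the weighted sum over $I^\pm$, with no parasitic singular-in-$|x-y|$ pieces outside the single $b_0|x-y|^{2m-n}$ term; the orthogonality identity above is precisely what kills the unwanted contributions.
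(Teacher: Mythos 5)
Your proposal follows essentially the same route as the paper's appendix proof: reduce via the splitting identity \eqref{equ2.1.1.1} to the second-order resolvent $\mathfrak{R}_0^{\pm}(\lambda_k^2)$, expand that near $z=0$ (Taylor of the exponential in the odd explicit form \eqref{eq.2.3}, Hankel/Bessel expansion \eqref{eq2.2} when $n$ is even), use the root-of-unity orthogonality $\sum_k\zeta_k^{p+2}$ to kill parasitic monomials and isolate the $a_j^\pm$ and $b_0$ pieces, and then patch with the large-argument Hankel asymptotics \eqref{eq-asmptotic-hankel}--\eqref{eq-est-hankel} to get the remainder bound \eqref{eq3.5.0-rem} in both the $z\lesssim 1$ and $z\gtrsim 1$ regimes. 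This is exactly what the paper does (its Lemma \ref{lem-expan-R- small} and the subsequent two-region argument), so the proposal is correct and does not diverge in method.
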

		
		\begin{lemma}\label{lm-finiterank-5.5}
			Let $1\le n\le 2m$. 
			If $n$ is odd, one has
			\begin{equation}
				R_0(-1)(x-y)=\sum\limits_{|\alpha+\beta|\le 2m-n-1} A_{\alpha,\beta}x^\alpha y^\beta+b_0{|x-y|}^{2m-n}+\tilde{r}_{2m-n+1}(|x-y|).
			\end{equation}
			If $n$ is even, one has
			\begin{equation}\label{eq5.47}
				\begin{split}
					&R_0(-1)(x-y)\\
					=&\sum\limits_{|\alpha+\beta|\le 2m-n-2}A_{\alpha,\beta}x^\alpha y^\beta+{a}_{m-\frac{n}{2}}{|x-y|}^{2m-n}+b_0\log(|x-y|){|x-y|}^{2m-n}
					+\tilde{r}_{2m-n+1}(|x-y|).
				\end{split}
			\end{equation}
			In the above expressions,  $A_{\alpha,\beta}$ is defined as 
			\begin{equation}\label{eq-def-A}
				A_{\alpha,\beta}= 
				\begin{cases}
					a_{\frac{|\alpha|+|\beta|}{2}}^+ e^{\frac{\i\pi(n-2m+|\alpha|+|\beta|)}{m}}C_{\alpha,\beta}(-1)^{|\beta|}=  a_{\frac{|\alpha|+|\beta|}{2}}^- e^{\frac{-\i\pi(n-2m+|\alpha|+|\beta|)}{m}} C_{\alpha,\beta}(-1)^{|\beta|},\ &\text{if}\, |\alpha|+|\beta|\, \text{is even}, \\ 
					0,\quad  &\text{else},
				\end{cases}
			\end{equation}
			where the constant  $C_{\alpha,\beta}$ is from the expansion
			\begin{equation}\label{eq-expan-xy}
				|x-y|^{2j}= \sum_{|\alpha|+|\beta|=2j}C_{\alpha,\beta}(-1)^{|\beta|}x^{\alpha}y^{\beta}. 
			\end{equation}
			
			Furthermore, in (\romannumeral1) and (\romannumeral2), $A_{\alpha,\beta}$ has the form
			\begin{align}\label{eq5.48}
				A_{\alpha,\beta}=\frac{(-1)^{|\beta|}\i^{|\alpha|+|\beta|}}{(2\pi)^n\alpha!\beta!}\int_{\R^n}\frac{\xi^{\alpha+\beta}}{1+|\xi|^{2m}}\,\d\xi.
			\end{align}
		\end{lemma}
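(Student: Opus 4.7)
My plan is to derive the expansion by specializing Lemma \ref{lm2.0} to the complex spectral parameter $z=-1$, and to verify the closed-form coefficient formula \eqref{eq5.48} via a direct Fourier computation.

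First, since the integral kernel $R_0(z)(x-y)$ of $((-\Delta)^m-z)^{-1}$ is jointly analytic in $z\in\CC\setminus[0,\infty)$, the expansions \eqref{eq.2-low-odd} and \eqref{eq.2-low-even} of Lemma \ref{lm2.0} (initially stated for $\lambda>0$) extend by analytic continuation to any $\lambda$ with $\lambda^{2m}\in\CC\setminus[0,\infty)$, the $+$ (resp.\ $-$) branch corresponding to $\lambda$ in the upper (resp.\ lower) half-plane. Setting $\lambda^{2m}=-1$ produces an expansion of $R_0(-1)(x-y)$ whose polynomial-in-$|x-y|$ part has the form $\sum_{2j\le 2m-n-1} c_j^{\pm}|x-y|^{2j}$ (odd $n$) or $\sum_{2j\le 2m-n-2} c_j^{\pm}|x-y|^{2j}$ (even $n$), followed by a $|x-y|^{2m-n}$ term (with an additional $|x-y|^{2m-n}\log|x-y|$ piece for even $n$) and a smooth remainder $\tilde r_{2m-n+1}(|x-y|)$. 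Using the multinomial identity \eqref{eq-expan-xy} to rewrite each $|x-y|^{2j}$ as a bilinear polynomial in $x$ and $y$ yields the first claim, with $A_{\alpha,\beta}$ equal to the product of $c_{(|\alpha|+|\beta|)/2}^{\pm}$, $C_{\alpha,\beta}$, and $(-1)^{|\beta|}$; this is precisely \eqref{eq-def-A}, and the independence of $R_0(-1)$ from the branch forces the stated equality between the ``$+$'' and ``$-$'' representations.

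To verify the closed-form expression \eqref{eq5.48}, I would use the Fourier representation
$$R_0(-1)(x-y)=(2\pi)^{-n}\int_{\R^n}\frac{e^{\i\xi\cdot(x-y)}}{1+|\xi|^{2m}}\,\d\xi,$$
and expand $e^{\i\xi\cdot(x-y)}=e^{\i\xi\cdot x}e^{-\i\xi\cdot y}$ in a joint Taylor series at $\xi=0$, truncated at order $N=2m-n-1$ (odd $n$) or $N=2m-n-2$ (even $n$). The $(\alpha,\beta)$-term is exactly $\tfrac{(-1)^{|\beta|}\i^{|\alpha|+|\beta|}}{\alpha!\,\beta!}\xi^{\alpha+\beta}x^\alpha y^\beta$, and in this range the accompanying integrals $\int_{\R^n}\xi^{\alpha+\beta}(1+|\xi|^{2m})^{-1}\,\d\xi$ converge absolutely, producing \eqref{eq5.48}.

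The main obstacle is pinning down the remainder cleanly. For the Fourier approach one must show that the Taylor tail contributes exactly $b_0|x-y|^{2m-n}$ (with an extra $\log|x-y|$ factor for even $n$) plus a smooth piece of order $|x-y|^{2m-n+1}$; this reduces to a scaling argument $\xi=\eta/|x-y|$ together with careful power-counting, the logarithm in the even case arising from the resonance between the exponent $2m-n$ and integer-order cancellation. The corresponding issue for the analytic-continuation route is to track how the split $\log(\lambda|x-y|)=\log\lambda+\log|x-y|$ in \eqref{eq.2-low-even} distributes the constant-in-$|x-y|$ piece between the pure $|x-y|^{2m-n}$ and the $|x-y|^{2m-n}\log|x-y|$ contributions appearing in \eqref{eq5.47}.
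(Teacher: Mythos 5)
Your proposal is correct and takes essentially the same approach as the paper. The paper evaluates $R_0(-1)$ by rewriting the splitting identity \eqref{equ2.1.1.1} at the shifted spectral parameter $-\lambda^{2m}$ (so that each $\lambda_k^2$ is replaced by $e^{\pm\pi\i/m}\lambda_k^2$) and then setting $\lambda=1$ in the resulting expansion; this is the same thing as your analytic continuation of Lemma~\ref{lm2.0} to $\lambda^{2m}=-1$, just phrased differently, and in particular the branch-independence you note is exactly what yields the two equivalent forms in \eqref{eq-def-A}. For the closed-form coefficients \eqref{eq5.48}, the paper differentiates the Fourier representation $R_0(-1)(x-y)=(2\pi)^{-n}\int_{\R^n}\frac{e^{\i(x-y)\cdot\xi}}{1+|\xi|^{2m}}\,\d\xi$ under the integral sign and takes $x,y\to 0$, justified by dominated convergence since $\xi^{\alpha+\beta}/(1+|\xi|^{2m})\in L^1$ in the admissible range; your Taylor-expansion-in-$(x,y)$ of $e^{\i\xi\cdot x}e^{-\i\xi\cdot y}$ and term-by-term integration is the same calculation. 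The two subtleties you flag — the remainder estimate and the redistribution of the $\log\lambda\cdot|x-y|^{2m-n}$ piece into the pure $a_{m-n/2}|x-y|^{2m-n}$ coefficient of \eqref{eq5.47} in the even case — are both genuine and are handled in the paper by direct reference to the structure of Lemma~\ref{lm2.0}'s proof, so your identification of where the work lies is accurate.
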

		
		\begin{lemma}\label{lem-expansion-with-theta}
			Let $\lambda>0$ and $1\le n\le 2m-1$. If $\theta=0,\cdots,2m-n$ when $n$ is odd and $\theta=0,\cdots,2m-n-1$ when $n$ is even, one has
			\begin{equation}\label{eq.2-low-odd-1}
				{R}_{0}^{\pm }({\lambda }^{2m})(x-y)=\displaystyle\sum_{j=0}^{[\frac{\theta-1}{2}]}{a}_{j}^{\pm} {\lambda }^{n-2m+2j} {\left | x-y\right |}^{2j}+\lambda^{n-2m}{r}_{\theta}^{\pm}(\lambda|x-y|),
			\end{equation}	
			where the remainders  ${r}_{\theta}^{\pm}(z)\in C^\infty(\mathbb{R})$  satisfy that
			\begin{equation}\label{eq-est-rthe-1}
				\left|\frac{d^l}{dz^l}{r}_{\theta}^{\pm}(z)\right|
				\lesssim_l
				\begin{cases}
					|z|^{\theta-l}, &\quad l\le \theta+\frac{n-1}{2},\\
					|z|^{\theta-l}+|z|^{-\frac{n-1}{2}}, &\quad l> \theta+\frac{n-1}{2}.
				\end{cases}
			\end{equation}
		\end{lemma}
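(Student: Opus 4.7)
The plan is to mirror a truncated version of the proof of Lemma \ref{lm2.0}, exploiting the fact that for $\theta$ in the allowed range the main polynomial part cuts off strictly before the boundary contributions ($b_0|x-y|^{2m-n}$ or $b_0\log(\lambda|x-y|)|x-y|^{2m-n}$) that appear in Lemma \ref{lm2.0}. First I would apply the splitting identity \eqref{equ2.1.1.1} to write
$$R_0^{\pm}(\lambda^{2m})(x-y) = \frac{1}{m\lambda^{2m-2}} \sum_{k \in I^{\pm}} \omega_k\, \mathfrak{R}_0^{\pm}(\omega_k\lambda^2)(x-y),$$
reducing the problem to a finite combination of Laplacian resolvent kernels, which are given by \eqref{eq2.1} for $n=1$ and by Hankel functions \eqref{eq2.2} for $n\ge 2$ (with the explicit form \eqref{eq.2.3} in odd dimensions $n\ge 3$).

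Next I would expand each $\mathfrak{R}_0^{\pm}(\omega_k\lambda^2)(x-y)$ in powers of $\lambda$ using the classical series of $J_\nu$ and $Y_\nu$ with $\nu=n/2-1$ (or by Taylor-expanding $e^{\pm\i\lambda_k|x-y|}$ in the odd-dimensional case), keeping only the terms through $j=[(\theta-1)/2]$. Summing over $k\in I^{\pm}$, the factors $\omega_k\lambda_k^{n-2+2j}$ combine with the Bessel coefficients and select precisely the surviving powers via the root-of-unity identities to produce the polynomial part $\sum_{j=0}^{[(\theta-1)/2]} a_j^{\pm}\lambda^{n-2m+2j}|x-y|^{2j}$. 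The constraint $\theta \le 2m-n$ (odd $n$) or $\theta \le 2m-n-1$ (even $n$) is exactly what guarantees that no logarithmic or $|x-y|^{2m-n}$ term intrudes into the polynomial part; everything past the truncation is absorbed into the remainder, which after factoring $\lambda^{n-2m}$ becomes a function $r_\theta^{\pm}(z)$ of $z=\lambda|x-y|$.

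To verify \eqref{eq-est-rthe-1} I would split into the regions $|z|\lesssim 1$ and $|z|\gtrsim 1$. In the small-$z$ regime, $r_\theta^{\pm}(z)$ is the Taylor tail of a convergent power series beginning at order $z^\theta$, so term-wise differentiation yields $(d/dz)^l r_\theta^{\pm}(z)=O(|z|^{\theta-l})$. In the large-$z$ regime, the Hankel asymptotic \eqref{eq-asmptotic-hankel}--\eqref{eq-est-hankel} supplies an oscillatory factor $e^{\pm\i z}$ times a symbol of order $-(n-1)/2$, whose $l$-th derivative is $O(|z|^{-(n-1)/2})$ once the $z^{n/2-1}$ prefactor of \eqref{eq2.2} is properly absorbed (the $e^{\pm\i z}$ derivatives contribute only $O(1)$ factors), while the subtracted polynomial tail of degree at most $\theta-1$ contributes $O(|z|^{\theta-l})$ or vanishes once $l$ exceeds its degree. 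Adding these two contributions yields the stated piecewise bound with crossover precisely at $l=\theta+(n-1)/2$, where $|z|^{\theta-l}$ and $|z|^{-(n-1)/2}$ match.

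I expect the main obstacle to be the careful bookkeeping at the large-$z$ threshold $l=\theta+(n-1)/2$, in particular tracking how the $z^{n/2-1}$ prefactor in \eqref{eq2.2} combines with the symbol $\psi_>^{\pm}$ of \eqref{eq-est-hankel} to produce the $|z|^{-(n-1)/2}$ saturation rather than the naive $\langle z\rangle^{-1/2}$ one would read off directly from \eqref{eq-est-hankel}. A secondary subtlety, in even dimensions, is to confirm that the logarithmic contributions from the $Y_\nu$ series remain harmless under the tighter restriction $\theta\le 2m-n-1$, so that any $\log$-type terms enter only into $r_\theta^{\pm}$ and at an order consistent with the stated derivative bounds \eqref{eq-est-rthe-1}.
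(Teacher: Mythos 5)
Your proposal takes the same route as the paper's appendix proof: the paper explicitly defines $r_\theta^\pm$ as the tail $\sum_{j>[\frac{\theta-1}{2}]}G_j^{1,\pm}$ of the series from Lemma \ref{lem-expan-R- small} (obtained precisely via the splitting identity \eqref{equ2.1.1.1}, the Bessel/Hankel expansions of the Laplacian resolvent, and root-of-unity cancellations), and verifies \eqref{eq-est-rthe-1} by the same small-$z$ Taylor-tail argument combined with the large-$z$ Hankel asymptotics \eqref{eq-est-res-h-0}--\eqref{eq-est-res-1}. Your observations about the crossover at $l=\theta+\frac{n-1}{2}$ and the harmlessness of the even-dimensional $\log$ term under $\theta\le 2m-n-1$ are correct and exactly the bookkeeping points one must check.
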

		
		For the term ${R}_{0}^{+}({\lambda }^{2m})(x-y)-{R}_{0}^{-}({\lambda }^{2m})(x-y)$, it is showed in  \cite[Lemma 2.4]{EG23} that the following holds:
		\begin{lemma}\label{lem-expasion-R+-R-1}
			Let $n\ge 1$, we have
			\begin{equation}\label{eq-R+-R-}
				{R}_{0}^{+}(\lambda^{2m})(x-y)-{R}_{0}^{-}(\lambda^{2m})(x-y)=\lambda^{n-2m}e^{\i \lambda |x-y|}{U}_{0}^{+}(\lambda|x-y|)-\lambda^{n-2m}e^{- \i \lambda |x-y|}{U}_{0}^{- }(\lambda|x-y|),
			\end{equation}
			where ${U}_{0}^{\pm }\in C^\infty(\R)$ and they satisfy the following
			\begin{equation}\label{eq-est-psi0}
				\left|\frac{{d}^{l}}{dz^l}{U}_{0}^{\pm }(z) \right|\lesssim_l |z|^{-l},  \quad \text{for}\  \  z\in \R, \ \  l\in \N_0.
			\end{equation}	
		\end{lemma}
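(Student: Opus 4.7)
The plan is to reduce the difference $R_0^+(\lambda^{2m})-R_0^-(\lambda^{2m})$ to a single Bessel-function kernel via the splitting identity \eqref{equ2.1.1.1}, and then extract the oscillatory factors $e^{\pm\i\lambda|x-y|}$ by combining the Hankel asymptotic \eqref{eq-asmptotic-hankel}--\eqref{eq-est-hankel} with a smooth cutoff near the origin.

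First, I would apply \eqref{equ2.1.1.1} to each of $R_0^{\pm}(\lambda^{2m})$. For every $k\in\{1,\dots,m-1\}$ the argument $\omega_k\lambda^2$ lies strictly off the real axis, so $\mathfrak{R}_0^+(\omega_k\lambda^2)=\mathfrak{R}_0^-(\omega_k\lambda^2)$ by analytic continuation; these indices belong both to $I^+$ and $I^-$ and therefore cancel in the difference. Only the $k=0$ term from $I^+$ and the $k=m$ term from $I^-$ survive, giving
\begin{equation*}
R_0^+(\lambda^{2m})(x-y)-R_0^-(\lambda^{2m})(x-y)=\frac{1}{m\lambda^{2m-2}}\bigl[\mathfrak{R}_0^+(\lambda^2)-\mathfrak{R}_0^-(\lambda^2)\bigr](x-y).
\end{equation*}

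Next, I would compute this Laplacian-resolvent difference explicitly. For $n=1$, the kernel \eqref{eq2.1} yields $\tfrac{\i}{\lambda}\cos(\lambda|x-y|)$, which is immediately of the required shape with constants $U_0^{\pm}=\pm\tfrac{\i}{2m}$. For $n\ge 2$, using \eqref{eq2.2} and the identity $H_\nu^+(s)+H_\nu^-(s)=2J_\nu(s)$ with $\nu=n/2-1$, and setting $z=\lambda|x-y|$,
\begin{equation*}
R_0^+(\lambda^{2m})(x-y)-R_0^-(\lambda^{2m})(x-y)=\frac{\i\,\lambda^{n-2m}}{2m(2\pi)^{n/2-1}}\,z^{-\nu}J_\nu(z).
\end{equation*}
The problem thus reduces to writing $z^{-\nu}J_\nu(z)$ in the form $e^{\i z}U_0^+(z)-e^{-\i z}U_0^-(z)$ subject to \eqref{eq-est-psi0}. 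To achieve this globally, I would fix a smooth cutoff $\chi$ with $\chi\equiv 1$ on $|z|\le 1$ and $\chi\equiv 0$ on $|z|\ge 2$, and decompose $z^{-\nu}J_\nu(z)=\chi(z)\,z^{-\nu}J_\nu(z)+(1-\chi(z))\,z^{-\nu}J_\nu(z)$. On the support of $1-\chi$, the Hankel asymptotic \eqref{eq-asmptotic-hankel} yields $J_\nu(z)=\tfrac{1}{2}\bigl(e^{\i z}\psi_>^+(z)+e^{-\i z}\psi_>^-(z)\bigr)$, and the amplitude $\tfrac{1}{2}(1-\chi(z))\,z^{-\nu}\psi_>^\pm(z)$ inherits from \eqref{eq-est-hankel} the derivative bound $|z|^{-\nu-1/2-l}\lesssim |z|^{-l}$ on $|z|\gtrsim 1$. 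On the support of $\chi$, $z^{-\nu}J_\nu(z)$ is entire in $z^2$, so I would write $\chi(z)\,z^{-\nu}J_\nu(z)=e^{\i z}\bigl[\chi(z)\,z^{-\nu}J_\nu(z)\,e^{-\i z}\bigr]$ and absorb this smooth, compactly supported factor entirely into $U_0^+$. Adding the two contributions defines $U_0^\pm\in C^\infty(\R)$ and yields \eqref{eq-est-psi0}, since on $|z|\lesssim 1$ both $U_0^\pm$ are smooth and bounded (and $|z|^{-l}\ge 1$ makes the target estimate trivial), whereas on $|z|\gtrsim 1$ the Hankel bound is already sharper than needed.

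The main obstacle is essentially bookkeeping across the transition region: one must ensure that derivatives falling on $\chi$ do not spoil the estimate \eqref{eq-est-psi0}. This is automatic because $\chi'$ is supported on the compact annulus $1\le|z|\le 2$ on which both the Taylor representation of $z^{-\nu}J_\nu$ and the Hankel decomposition are simultaneously $C^\infty$-bounded, so any derivative produced by the cutoff contributes a bounded quantity supported on a compact set, which is dominated by $|z|^{-l}$.
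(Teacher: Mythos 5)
Your argument is correct, and it takes a genuinely different route from the one in the paper's Appendix \ref{app-01}. The paper simply defines $U_0^\pm(\lambda|x-y|):=e^{\mp\i\lambda|x-y|}\lambda^{2m-n}R_0^\pm(\lambda^{2m})(x-y)$, so that \eqref{eq-R+-R-} holds trivially, and then tries to verify \eqref{eq-est-psi0} by substituting the small-$z$ and large-$z$ expansions of each resolvent separately. You instead use the splitting identity \eqref{equ2.1.1.1} at the outset: the terms with $k\in\{1,\dots,m-1\}$ coincide for the $+$ and $-$ boundary values and cancel, leaving a single Laplacian resolvent jump $\mathfrak R_0^+(\lambda^2)-\mathfrak R_0^-(\lambda^2)$, which for $n\ge 2$ is a constant multiple of $z^{-\nu}J_\nu(z)$ with $\nu=\frac n2-1$ and $z=\lambda|x-y|$. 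You then peel off the phases with a smooth cutoff: near $z=0$ you absorb the entire function $z^{-\nu}J_\nu(z)$ into the $e^{\i z}$-amplitude, and away from $z=0$ you invoke the Hankel asymptotics \eqref{eq-asmptotic-hankel}--\eqref{eq-est-hankel}. This gives explicitly bounded amplitudes satisfying \eqref{eq-est-psi0}.

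In fact your approach handles a point that the appendix proof glosses over. When $n\ge 2m+1$ the expansion in Lemma \ref{lem-expan-R- small} contributes terms $b_{l-1}z^{2ml-n}$ with $2ml-n\le -1$; consequently $e^{\mp\i z}\lambda^{2m-n}R_0^\pm(\lambda^{2m})(x-y)$ is unbounded as $z\to 0$ (e.g.\ for $m=1$, $n=3$ it equals $\frac{1}{4\pi z}$), so the paper's bare definition of $U_0^\pm$ cannot satisfy $|U_0^\pm(z)|\lesssim 1$ near the origin. The individual singularities cancel only in the combination $e^{\i z}U_0^+(z)-e^{-\i z}U_0^-(z)$, and extracting bounded amplitudes really does require either exploiting this cancellation (as you do by passing to the single Bessel kernel first) or inserting a cutoff near $z=0$. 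Your construction does both cleanly, and in this respect is closer in spirit to the cited reference \cite{EG23}.

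Two small remarks on hygiene: (a) the cancellation argument for the $k\ne 0,m$ terms should be stated for $\lambda>0$, which is the regime in which $\omega_k\lambda^2\notin[0,\infty)$ for $1\le k\le m-1$ and $\omega_0=\omega_m=1$ — this is of course what you meant; (b) the statement asks for $U_0^\pm\in C^\infty(\R)$, but the argument $z=\lambda|x-y|$ is nonnegative, so defining $U_0^\pm$ for $z\ge 0$ and extending (e.g.\ using that $z^{-\nu}J_\nu(z)$ is even) is routine and the paper is no more careful about this than you are.
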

		
		We also recall the following representation of the resolvent kernel ${R}_{0}^{\pm}({\lambda }^{2m})(x-y)$, which will  be needed during the proof (see, e.g., in \cite[Lemma 3.2, Lemma 6.2]{EG22}).
		
		\begin{lemma}\label{lm-expansion-high}
			(\romannumeral 1) Let $1\le n\le 4m-1$, we have
			\begin{equation}\label{eq-expansion-high-1}
				{R}_{0}^{\pm}({\lambda }^{2m})(x-y)=\frac{\lambda ^{\frac{n+1}{2}-2m}}{|x-y|^{\frac{n-1}{2}}}e^{\pm \i \lambda |x-y|}U_{1}^{\pm}(\lambda |x-y|),
			\end{equation}
			where $U_{1}^{\pm}(z)$ satisfy
			\begin{equation}\label{eq-expansion-high-4}
				\left | \frac{d^l}{dz^l}U_{1}^{\pm}(z)\right | \lesssim_l\  |z|^{-l},\qquad l\in \mathbb{N}_0.
			\end{equation}
			
			(\romannumeral2)  Let $n>2m$, we have
			\begin{equation} \label{eq-decom-r-nl2m}
				{R}_{0}^{\pm}({\lambda }^{2m})(x-y)=\frac{e^{\pm \i \lambda |x-y|}}{|x-y|^{n-2m}}W_{0}^{\pm}(\lambda |x-y|)+\frac{\lambda^{\frac{n+1}{2}-2m}}{|x-y|^{\frac{n-1}{2}}}e^{\pm \i \lambda |x-y|}W_{1}^{\pm}(\lambda |x-y|),   
			\end{equation}
			where $\text{\supp}W_{0}^{\pm}(\cdot) \subset[0, 1]$, $\text{\supp}W_{1}^{\pm}(\cdot) \subset[\frac12, \infty)$. Moreover,  
			\begin{equation}\label{eq-expansion-high-6}
				\left | \frac{d^l}{dz^l}W_{0}^{\pm}(z)\right | \lesssim_l\  |z|^{-l},\,\,\,\,\,\,\,\mbox{and}\,\,\,\,\,\,\,\left | \frac{d^l}{dz^l}W_{1}^{\pm}(z)\right | \lesssim_l\  |z|^{-l}, \qquad l\in \mathbb{N}_0.
			\end{equation}
		\end{lemma}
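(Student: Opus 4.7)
My starting point is the splitting identity \eqref{equ2.1.1.1}, which writes $R_0^\pm(\lambda^{2m})$ as a finite linear combination of Laplace resolvents $\mathfrak{R}_0^\pm(\omega_k\lambda^2)$. The crucial observation is that among the $m$ roots $\lambda_k=\lambda e^{\i k\pi/m}$, only one (namely $k=0$ in the $R_0^+$ sum and $k=m$ in the $R_0^-$ sum) is real, while every other root has positive imaginary part; through the Hankel representations \eqref{eq2.2}, \eqref{eq.2.3} (and \eqref{eq2.1} when $n=1$) this translates into one oscillating contribution with phase $e^{\pm\i\lambda|x-y|}$ and $m-1$ exponentially small side-contributions in $z:=\lambda|x-y|$. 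A direct calculation further shows that both $G^\pm(z):=|x-y|^{n-2m}R_0^\pm(\lambda^{2m})(x-y)$ and $H^\pm(z):=|x-y|^{(n-1)/2}\lambda^{2m-(n+1)/2}R_0^\pm(\lambda^{2m})(x-y)=z^{2m-(n+1)/2}G^\pm(z)$ are in fact functions of $z$ alone, so the lemma reduces to one-variable estimates on $G^\pm$ and $H^\pm$.

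For part (\romannumeral 1), I set $U_1^\pm(z):=e^{\mp\i z}H^\pm(z)$. In the regime $z\gtrsim 1$, inserting \eqref{eq-asmptotic-hankel}--\eqref{eq-est-hankel} into \eqref{eq2.2} for each $k$ gives a single oscillating term whose phase cancels the one built into $U_1^\pm$, plus $m-1$ exponentially decaying terms; the derivative estimate $|\partial_z^l\psi_>^\pm(z)|\lesssim\langle z\rangle^{-1/2-l}$ then yields $|\partial_z^l U_1^\pm(z)|\lesssim|z|^{-l}$. In the regime $z\lesssim 1$, I use the polynomial representation \eqref{eq.2.3} (odd $n$) or the Frobenius series of $H^\pm_{n/2-1}$ (even $n$). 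The $k$-sum is essentially a discrete Fourier projection onto the roots of unity, and only those powers of $z$ for which the coefficients $\omega_k$ (together with phases arising from the Bessel/Hankel expansions) produce non-vanishing averages survive; a short bookkeeping shows that the first surviving term combines with the overall $z$-prefactor to give $G^\pm(z)=\text{const}+O(z^2)$ near the origin, hence $U_1^\pm(z)=e^{\mp\i z}z^{2m-(n+1)/2}G^\pm(z)$ is smooth there. Since $2m-(n+1)/2\ge 0$ exactly when $n\le 4m-1$, the bound $|\partial_z^l U_1^\pm(z)|\lesssim|z|^{-l}$ is satisfied for small $z$ as well, and patching with a smooth cutoff completes the proof.

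For part (\romannumeral 2), with $n>2m$, I fix a smooth cutoff $\eta$ supported in $[0,1]$ with $\eta\equiv 1$ on $[0,1/2]$, and put $W_0^\pm(z):=\eta(z)e^{\mp\i z}G^\pm(z)$ and $W_1^\pm(z):=(1-\eta(z))e^{\mp\i z}H^\pm(z)$, so that the two supports and the decomposition \eqref{eq-decom-r-nl2m} hold by construction. On $\supp\eta$, the same roots-of-unity cancellation as above shows that $G^\pm(z)$ is smooth at the origin with leading coefficient proportional to the Green function $c_n|x-y|^{2m-n}$ of $(-\Delta)^{-m}$ (which exists as a locally integrable function precisely when $n>2m$); hence $|\partial_z^l W_0^\pm(z)|\lesssim 1\lesssim|z|^{-l}$ on that support. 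On $\supp(1-\eta)$ the argument is identical to the high-$z$ part of (\romannumeral 1), giving the required $|z|^{-l}$ bound on $W_1^\pm$ directly from \eqref{eq-asmptotic-hankel}--\eqref{eq-est-hankel}.

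The main obstacle is the low-$z$ bookkeeping in part (\romannumeral 1), where one must verify — uniformly across odd and even $n$ and across all relevant orders of the Frobenius series — that the cancellation among the $m$ terms of \eqref{equ2.1.1.1} really produces a zero of order at least $2m-(n+1)/2$ at $z=0$. The toy case $n=5,m=2$ illustrates the miracle concretely: one finds $U_1^+(z)=C_5z^{-1}[(1-\i z)-(1+z)e^{-z-\i z}]$, and the bracket vanishes to order $z^2$ at the origin thanks to cancellation between the oscillating main term and the evanescent side term. Once this bookkeeping is carried out, the large-$z$ asymptotics and the cutoff-patching are routine.
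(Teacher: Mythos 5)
Your approach is the same as the paper's proof in Appendix \ref{app-01}: both use the splitting identity \eqref{equ2.1.1.1}, the small--argument series expansions of the Hankel/Bessel functions (which the paper packages in Lemma~\ref{lem-expan-R- small}) to handle $z=\lambda|x-y|\lesssim 1$, and the large--argument asymptotics \eqref{eq-asmptotic-hankel}--\eqref{eq-est-hankel} for $z\gtrsim 1$, then patch with a smooth cutoff for part~(\romannumeral2). However, your small-$z$ bookkeeping in part~(\romannumeral1) contains a false intermediate step. The claim that $G^\pm(z)=\text{const}+O(z^2)$ near the origin does not hold in the generality needed: for $1\le n<2m$ the $l$-sum in Lemma~\ref{lem-expan-R- small} is empty, so $G^\pm(z)\sim a_0^\pm z^{\,n-2m}$ is unbounded as $z\to0$; for $n>2m$ the correction beyond the constant is $O(z^{\,n-2m})$ (so only $O(z)$ when $n=2m+1$); and for even $n$ there are additional $z^a\log z$ terms. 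The subsequent assertion that $U_1^\pm(z)=e^{\mp\i z}z^{2m-(n+1)/2}G^\pm(z)$ is therefore \emph{smooth} at $z=0$ also fails for even $n$, where $2m-(n+1)/2$ is a half-integer and the logarithms persist. What actually makes the argument go through is to track the exponents directly in $H^\pm(z)=z^{2m-(n+1)/2}G^\pm(z)$: after the roots-of-unity projection, the most singular contribution to $H^\pm$ is $b_0\,z^{2m-(n+1)/2}$ when $n>2m$ (respectively $a_0^\pm z^{(n-1)/2}$ when $n\le 2m$), and both exponents are $\ge 0$ precisely when $n\le 4m-1$; any logarithmic corrections appear with strictly positive power of $z$. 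This yields $|\partial_z^l U_1^\pm(z)|\lesssim |z|^{-l}$ for small $z$ even though $U_1^\pm$ need not be $C^\infty$ at the origin. With that repair your proof coincides with the paper's.
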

		
		\subsection{Estimates for some (oscillatory) integrals}\
		
		The purpose of this subsection is to provide the estimates for some (oscillatory) integrals that will be frequently used in what follows. 
		More precisely, in order to obtain the point-wise estimate for the kernel of ${K}_{n}^{\pm ,h}$ and ${K}_{n}^{+, l}-{K}_{n}^{-, l}$ in \eqref{equ-2-high}-\eqref{equ-2-low}, we will reduce the analysis to the estimates of the following one dimensional oscillatory integrals over $\Omega=(0,{\lambda }_{0})$ or  $\Omega= ({\lambda }_{0},+\infty)$, with $\lambda_{0}>0$ being some fixed constant:
		\begin{equation}\label{eq-osci-1213}
			I(t,x)=\int_{\Omega }^{}e^{{\rm i}t{\lambda }^{2m}+{\rm i}\lambda x}\psi (\lambda,x ) \d \lambda .  
		\end{equation}
		We present two  such lemmas, which are not new and can be found in the existing literature, for example, in \cite{CHZ,CHHZ,HHZ}.
		For the sake of completeness, we provide a detailed proof of Lemma \ref{lm2.4} in Appendix \ref{app-4} ,  and the proof of Lemma \ref{lm2.3} can be derived in a similar manner.
		
		\begin{lemma}\label{lm2.4}
			Let  $\Omega=(0,{\lambda }_{0})$ in \eqref{eq-osci-1213}, where  $0 <{\lambda }_{0}< 1$ is a fixed constant. Assume that  $\psi (\cdot,x )\in {C}^{k}(\Omega)$, with $x\in\R^n$ being a parameter, and it satisfies for $\lambda\in \Omega$
			\begin{equation}\label{eq-psi-deri}
				\sup\limits_x\left | \partial_\lambda^l \psi(\lambda,x)\right |\lesssim{\lambda}^{b-l},\,\,\, 0\le l\le k.
			\end{equation}
			Define ${\mu }_{b}$ as
			\begin{equation}\label{eq-mu-b}
				{\mu }_{b}:=\frac{m-1-b}{2m-1}.
			\end{equation}
			Under these conditions, we have
			
			($i$) If $b\in [-\frac{1}{2},2km-1)$ and ${t}^{-\frac{1}{2m}}\left | x\right |> 1$,
			\begin{align}\label{eq-osc-g-1}
				\left | I(t,x)\right |\lesssim {\left | t\right |}^{-\frac{1+b}{2m}}({t}^{-\frac{1}{2m}}\left | x\right |)^{-{\mu }_{b}}.
			\end{align}
			
			($ii$) If $b\in (-1,2km-1)$ and ${t}^{-\frac{1}{2m}}|x|\le 1$,
			\begin{align}\label{eq-osc-l-1}
				\left | I(t,x)\right |\lesssim (1+{t}^{\frac{1}{2m}})^{-(1+b)}.
			\end{align}
		\end{lemma}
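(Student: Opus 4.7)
My approach is to first render the integral dimensionless by rescaling, and then handle the resulting oscillatory integral by combining trivial estimates, integration by parts, and classical stationary phase on appropriate pieces.

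\emph{Step 1 (rescaling).} Setting $\mu = t^{1/(2m)}\lambda$ and $y = t^{-1/(2m)}x$, a direct change of variables gives
\begin{equation*}
I(t,x) = t^{-\frac{1+b}{2m}}\, J(y), \qquad J(y) := \int_{0}^{L}\! e^{\i(\mu^{2m}+\mu y)}\,\eta(\mu)\,\d\mu,
\end{equation*}
with $L := \lambda_0 t^{1/(2m)}$ and $\eta(\mu) := t^{b/(2m)}\psi\bigl(t^{-1/(2m)}\mu,x\bigr)$, which inherits $|\partial_\mu^l \eta(\mu)| \lesssim \mu^{b-l}$ uniformly in $x$ and $t$. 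Assertions (i) and (ii) then reduce to $|J(y)|\lesssim |y|^{-\mu_b}$ for $|y|>1$ and $|J(y)|\lesssim \min\{1,\,L^{1+b}\}$ for $|y|\le 1$, respectively.

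\emph{Step 2 (non-stationary regions).} For (ii), I would split the $\mu$-domain at $\mu=2$, bounding the piece over $(0,2)$ trivially by $\int_0^2 \mu^b\,\d\mu\lesssim 1$ (using $b>-1$), and using $|\phi'(\mu)| = |2m\mu^{2m-1}+y|\gtrsim \mu^{2m-1}$ on $(2,L)$ to integrate by parts $k$ times with $\mathcal{L}=(\i\phi')^{-1}\partial_\mu$, producing an integrand of size $\mu^{b-k(2m-1)}$ (integrable thanks to $b<2km-1$) plus bounded boundary terms; the short case $L\le 2$ is handled directly by $L^{1+b}$. For (i) with $y>1$, the phase is monotone with $\phi'\ge y$; splitting at $\mu=1/y$, both the trivial bound on $(0,1/y)$ and IBP on $(1/y,L)$ yield $|J|\lesssim |y|^{-(1+b)}$, and the hypothesis $b\ge -1/2$ (equivalent to $1+b\ge \mu_b$) upgrades this to the desired $|y|^{-\mu_b}$.

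\emph{Step 3 (stationary phase for $y<-1$).} The delicate case is (i) with $y<-1$, where the phase has a unique positive critical point $\mu_c := (|y|/(2m))^{1/(2m-1)}$. I would partition $(0,L)$ into $(0,\mu_c/2)$, $(\mu_c/2,2\mu_c)\cap(0,L)$, and $(2\mu_c,L)\cap(0,L)$. On the first region $|\phi'|\ge |y|/2$ and IBP yields $\lesssim |y|^{-(1+b)}$; on the third, $|\phi'|\gtrsim \mu^{2m-1}$ and iterated IBP yields $\lesssim \mu_c^{b-(2m-1)} = |y|^{-(2m-1-b)/(2m-1)}$; both are dominated by $|y|^{-\mu_b}$ under $b\ge -1/2$ and $m>1$. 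On the critical strip the substitution $\mu=\mu_c\tau$ converts the piece into
\begin{equation*}
\mu_c^{1+b}\!\int_{\tau_1}^{\tau_2}\! e^{\i B\,\Phi(\tau)}\,\widetilde\eta(\tau)\,\d\tau,\qquad B := (2m)^{-\frac{1}{2m-1}}\,|y|^{\frac{2m}{2m-1}},\qquad \Phi(\tau) := \tfrac{\tau^{2m}}{2m}-\tau,
\end{equation*}
where $\Phi$ has a single non-degenerate critical point at $\tau=1$ (with $\Phi''(1)=2m-1$) and $|\partial_\tau^l\widetilde\eta|\lesssim \tau^{b-l}$ is smooth on the strip. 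Classical stationary phase produces a contribution of order $\mu_c^{1+b}B^{-1/2}$, which simplifies to $|y|^{-(m-1-b)/(2m-1)}=|y|^{-\mu_b}$, matching the target exactly.

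\emph{Main obstacle.} The delicate technical point is choosing the splitting thresholds so that IBP boundary contributions balance the trivial $L^1$-estimate of the amplitude near its singularity at $\mu=0$. The lower bound $b\ge -1/2$ in case (i) is sharp: it is precisely the threshold at which the non-stationary bound $|y|^{-(1+b)}$ is dominated by the stationary-phase bound $|y|^{-\mu_b}$, while the upper bound $b<2km-1$ controls how many integrations by parts one can afford before integrability is lost.
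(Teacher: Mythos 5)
Your plan reaches the same conclusion as the paper, but by a genuinely different organization. After rescaling to $\mu=t^{1/(2m)}\lambda$, $y=t^{-1/(2m)}x$, you explicitly locate the stationary point $\mu_c\sim|y|^{1/(2m-1)}$, partition at $\mu_c/2$ and $2\mu_c$, and invoke classical stationary phase on the critical strip; the dispersive bound $|y|^{-\mu_b}$ literally drops out of $\mu_c^{1+b}B^{-1/2}$. The paper never introduces $\mu_c$ or distinguishes the sign of $x/t$: it splits $\Omega$ into a phase-derivative-small shell $\Omega_1$ and its complement $\Omega_2$, and on $\Omega_2$ interpolates with a parameter $\theta\in[0,1]$ between the two bounds $|h|\gtrsim t\lambda^{2m-1}$ and $|h|\gtrsim|x|$ on the phase derivative, then optimizes $\theta$ and the inner cutoff scale $r$; on $\Omega_1$ it rescales and applies van der Corput directly. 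Your route is more transparent about where the exponent $\mu_b$ comes from; the paper's $\theta$-interpolation trades that transparency for uniformity (no case splits).

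Three points to correct or make explicit. First, each application of $\mathcal{L}^*$ in the region $|\phi'|\gtrsim\mu^{2m-1}$ costs a factor $\mu^{-2m}$, not $\mu^{-(2m-1)}$: the denominator $\phi'$ contributes $\mu^{-(2m-1)}$ and the extra derivative falling on the amplitude (or on $(\phi')^{-1}$) contributes another $\mu^{-1}$. So after $k$ iterations the integrand is $\lesssim\mu^{b-2mk}$, not $\mu^{b-k(2m-1)}$. Fortunately the hypothesis you quote, $b<2km-1$, is exactly $b-2mk<-1$, so your use of the assumption was already aligned with the correct exponent. Second, for $y>1$ and $b>k-1$ the intermediate bound $|J|\lesssim|y|^{-(1+b)}$ fails: the IBP contribution from the transition region $\mu\sim y^{1/(2m-1)}$ (where $\phi'$ crosses over from $\sim y$ to $\sim\mu^{2m-1}$) is of size $\approx y^{(b-2mk+1)/(2m-1)}$, strictly larger than $y^{-(1+b)}$ in that range. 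Since $(b-2mk+1)/(2m-1)\le-\mu_b$ for every $k\ge 1$, the final bound $|J|\lesssim|y|^{-\mu_b}$ still holds — but you should target $|y|^{-\mu_b}$ directly rather than pass through $|y|^{-(1+b)}$. Third, your hard cutoffs at $\mu_c/2$, $2\mu_c$ and the endpoint $\mu=L$ all generate IBP boundary terms that the sketch does not estimate. The lemma's conclusion actually fails for generic $\psi$ if the boundary term at $\lambda=\lambda_0$ is not suppressed (a term of order $|\psi(\lambda_0)/\phi'(\lambda_0)|\sim t^{-1}$ appears, which beats $t^{-(1+b)/(2m)}$ once $b>2m-1$); both your proof and the paper's tacitly rely on the fact that in every application $\psi$ contains the smooth cutoff $\chi$, hence vanishes to arbitrary order near $\lambda_0$, and the paper's smooth dyadic partitions $\eta_i^r$, $\Phi_i$ are specifically chosen so no boundary terms arise at the inner and critical scales. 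If you replace the hard cutoffs with mollified ones, your proposal is correct as written.
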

		
		\begin{lemma}\label{lm2.3}
			Let  $\Omega= ({\lambda }_{0},+\infty )$ in \eqref{eq-osci-1213}. Assume that  $\psi (\cdot,x )\in {C}^{k}(\Omega)$, with $x\in\R^n$ being a parameter, and it satisfies the estimate \eqref{eq-psi-deri}. Let ${\mu }_{b}$ be given by \eqref{eq-mu-b}. Then
			
			($i$) If $b\in [-\frac{1}{2},2km-1)$ and ${t}^{-\frac{1}{2m}}\left | x\right |> 1$,
			\begin{align}\label{eq-osc-h}
				\left | I(t,x)\right |\lesssim {\left | t\right |}^{-\frac{1+b}{2m}}({t}^{-\frac{1}{2m}}\left | x\right |)^{-{\mu }_{b}}.
			\end{align}
			
			($ii$) If $b\in [-\frac{1}{2},2km-1)$ and ${t}^{-\frac{1}{2m}}\left | x\right |\le 1$,
			\begin{align}\label{eq-osc-l}
				\left | I(t,x)\right |\lesssim {t}^{-\frac{1+b}{2m}}.
			\end{align}
			
		\end{lemma}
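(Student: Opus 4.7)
My plan is to reduce the estimate to a standard stationary phase / van der Corput type bound via successive scalings, then handle the distinct regimes with tailored arguments.

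\textbf{Scaling.} The substitution $\lambda=t^{-1/(2m)}\mu$ normalizes the coefficient of $\lambda^{2m}$ in the phase and gives
$$I(t,x)=t^{-(1+b)/(2m)}J(y,M),\qquad J(y,M):=\int_0^M e^{\i(\mu^{2m}+y\mu)}\tilde\psi(\mu)\,d\mu,$$
with $y=t^{-1/(2m)}x$, $M=\lambda_0 t^{1/(2m)}$, and $|\partial_\mu^l\tilde\psi|\lesssim\mu^{b-l}$. Case (ii) then reduces to $|J(y,M)|\lesssim\min(M^{1+b},1)$ for $|y|\le 1$, while case (i) reduces to $|J(y,M)|\lesssim|y|^{-\mu_b}$ for $|y|>1$.

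\textbf{Case (ii).} For $M\le 2$ the trivial bound $\int_0^M\mu^b\,d\mu\lesssim M^{1+b}$ uses only $b>-1$. For $M>2$, decompose $J=\int_0^1+\sum_{j\ge 0}\int_{2^j}^{2^{j+1}}$: since $|y|\le 1$, the phase derivative satisfies $|2m\mu^{2m-1}+y|\gtrsim 2^{j(2m-1)}$ on each dyadic annulus, so $k$ successive integrations by parts produce a contribution $\lesssim 2^{j(b+1-2mk)}$ on each annulus. The geometric sum converges precisely under the hypothesis $b<2mk-1$.

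\textbf{Case (i).} A further rescaling $\mu=|y|^{1/(2m-1)}\nu$ (with $\Lambda:=|y|^{2m/(2m-1)}$, $\epsilon=\mathrm{sgn}(y)$, and $N=M/|y|^{1/(2m-1)}$) reduces the goal, after the arithmetic identity $\mu_b+(1+b)/(2m-1)=m/(2m-1)$, to the sharp stationary phase estimate
$$\Bigl|\int_0^N e^{\i\Lambda\Phi_\epsilon(\nu)}\bar\psi(\nu)\,d\nu\Bigr|\lesssim\Lambda^{-1/2},\qquad\Phi_\epsilon(\nu)=\nu^{2m}+\epsilon\nu,$$
where $|\partial_\nu^l\bar\psi|\lesssim\nu^{b-l}$. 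When $\epsilon=-1$ there is a unique nondegenerate critical point $\nu_\ast=(2m)^{-1/(2m-1)}$ with $|\Phi''_\epsilon(\nu_\ast)|\asymp 1$; a classical stationary phase argument in a fixed neighborhood yields the desired $\Lambda^{-1/2}$ contribution. Away from $\nu_\ast$ and the origin, $|\Phi'_\epsilon|$ is bounded below by a positive constant on compacts or grows like $\nu^{2m-1}$ as $\nu\to\infty$, and iterated IBP combined with the same dyadic decomposition as in case (ii) yields contributions $\lesssim\Lambda^{-k}$.

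\textbf{Main obstacle.} The delicate piece is the neighborhood $(0,\eta)$ of the origin, where $\Phi''_\epsilon(0)=0$ rules out direct stationary phase and the amplitude $\bar\psi(\nu)\sim\nu^b$ may be integrably singular. I plan to rescale $u=\Lambda\nu$, which converts this region into
$$\Lambda^{-1}\int_0^{\Lambda\eta}e^{\i(u^{2m}/\Lambda^{2m-1}+\epsilon u)}\bar\psi(u/\Lambda)\,du,$$
with $|\bar\psi(u/\Lambda)|\lesssim(u/\Lambda)^b$. For $\eta$ small, the rescaled phase derivative is at least $1/2$, so standard arguments---a few IBPs for $b\ge 0$, and the uniform boundedness in $A$ of $\int_0^A u^b e^{\i u}\,du$ for $-1<b<0$---control the inner integral by $C\max(1,(\Lambda\eta)^b)$. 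Combining the factors gives a contribution of size $\lesssim\max(\Lambda^{-(1+b)},\Lambda^{-1})$, which meets the target $\Lambda^{-1/2}$ exactly when $b\ge-1/2$. This threshold is precisely the hypothesis imposed in case (i), and explains the stronger lower bound on $b$ there compared with case (ii).
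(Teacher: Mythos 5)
There is a genuine error in the proposal: you have proved the companion Lemma~\ref{lm2.4} (where $\Omega=(0,\lambda_0)$), not the stated Lemma~\ref{lm2.3} (where $\Omega=(\lambda_0,+\infty)$). After your normalizing substitution $\lambda=t^{-1/(2m)}\mu$, the domain $\lambda\in(\lambda_0,+\infty)$ maps to $\mu\in(M,+\infty)$ with $M=\lambda_0 t^{1/(2m)}$, so the rescaled integral should be $J(y,M)=\int_M^{\infty}e^{\i(\mu^{2m}+y\mu)}\tilde\psi(\mu)\,d\mu$, whereas you write $\int_0^M$. Several other pieces of your argument confirm the domain mix-up: your stated target for case (ii) is $|J|\lesssim\min(M^{1+b},1)$, which after undoing the scaling gives $(1+t^{1/(2m)})^{-(1+b)}$, the conclusion of Lemma~\ref{lm2.4}(ii), not the plain $t^{-(1+b)/(2m)}$ of Lemma~\ref{lm2.3}(ii); your ``trivial bound'' $\int_0^M\mu^b\,d\mu\lesssim M^{1+b}$ uses integrability at $\mu=0$ and requires only $b>-1$, which is precisely the hypothesis in Lemma~\ref{lm2.4}(ii) and does not match the $b\ge-\tfrac12$ hypothesis of Lemma~\ref{lm2.3}(ii); and the regime you single out as the ``main obstacle'' (the degenerate second derivative at $\nu=0$ for small $\nu$) arises because your domain goes down to $0$, which it does not for $\Omega=(\lambda_0,+\infty)$.

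The overall architecture--a first scaling by $t^{-1/(2m)}$, a second scaling by $|y|^{1/(2m-1)}$ when $|y|>1$ to isolate a fixed nondegenerate stationary point for $\Phi_{-1}(\nu)=\nu^{2m}-\nu$, dyadic integration by parts in the nonstationary region--does transfer to the correct domain, and is in spirit similar to the paper's treatment (Appendix~\ref{app-4}) via the $\Omega_1/\Omega_2$ split around the critical radius $|x/t|^{1/(2m-1)}$. But for $\Omega=(\lambda_0,+\infty)$ the crux is the tail $\mu\to+\infty$: the amplitude $\tilde\psi(\mu)\sim\mu^b$ may grow, and the hypothesis $b<2km-1$ enters to make $\sum_{j\ge J_0}2^{j(b+1-2mk)}$ a convergent, rather than merely finite, geometric series after $k$ integrations by parts. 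Your write-up never confronts this tail for case (ii), since your sum runs over the finitely many dyadic scales in $(1,M)$ and is cut off. You would need to redo Step (ii) with the piece $\int_{M}^{\max(M,1)}$ bounded directly (harmless since $b\ge-\tfrac12>-1$ keeps $\mu\mapsto\mu^b$ integrable near $0$, or is immaterial when $M\ge 1$) and the dyadic sum taken over all $j\ge J_0:=\max(0,\lfloor\log_2 M\rfloor)$; and in Step (i) account for the fact that the $\nu$--integral now lives on $(N,\infty)$, so the nontrivial endpoint is $\nu\to\infty$ (controlled by IBP since $\Phi'_\epsilon(\nu)\sim\nu^{2m-1}$ there) while the near-origin analysis you flagged is relevant only when $N$ is small.
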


		In addition to the aforementioned lemmas, We also need the following estimate, which can be seen in \cite[Lemma 3.8]{GV} or \cite[Lemma 6.3]{EG10}.
		\begin{lemma}\label{lm2.8}
			Let $n\ge 1$. Then there is some absolute constant $C>0$ such that
			\begin{equation}\label{eq2.20}
				\int_{\mathbb{R}^n}|x-y|^{-k}\langle y\rangle^{-l}\,dy\leq C\langle x\rangle^{-\min\{k,\, k+l-n\}},
			\end{equation}
			provided  $l\ge 0$, $0\le k<n$ and $k+l>n$. As a consequence, we also have
			\begin{equation}\label{eq2.21}
				\int_{\mathbb{R}^n}\int_{\mathbb{R}^n}|x-y|^{-\sigma}\langle x-\tau\rangle^{-\beta_1}\langle y\rangle^{-\beta_2}\,dxdy\leq C\langle\tau\rangle^{-\sigma},
			\end{equation}
			provided $\beta_1>n$,  $\beta_2>n$ and $0\le \sigma<n$.
		\end{lemma}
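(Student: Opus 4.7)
This is a standard convolution-type estimate, and the natural approach is to decompose the domain of integration into comparable pieces.

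For the first inequality, note that when $|x|\le 1$ the bound is trivial: $\langle x\rangle\sim 1$, and the integral is finite because $k<n$ gives local integrability near $y=x$ while $k+l>n$ gives integrability at infinity. For $|x|\ge 1$, I would split the $y$-integration into the three comparable regions $|y|\le |x|/2$ (where $|x-y|\gtrsim |x|$), $|x|/2\le |y|\le 2|x|$ (where $\langle y\rangle\sim\langle x\rangle$), and $|y|\ge 2|x|$ (where $|x-y|\gtrsim |y|$). In the first region the integral is dominated by $|x|^{-k}\int_{|y|\le |x|/2}\langle y\rangle^{-l}\,dy$; passing to polar coordinates this is $\lesssim \langle x\rangle^{-k}$ when $l>n$ and $\lesssim \langle x\rangle^{n-k-l}$ when $l<n$. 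In the third region a direct computation gives $\int_{|y|\ge 2|x|}|y|^{-k-l}\,dy\lesssim |x|^{n-k-l}$, since $k+l>n$. In the middle region we use $\langle y\rangle\sim\langle x\rangle$ together with $k<n$ to obtain $\langle x\rangle^{-l}\int_{|x-y|\le 3|x|}|x-y|^{-k}\,dy\lesssim \langle x\rangle^{n-k-l}$. Taking the worst of the three contributions produces the bound $\langle x\rangle^{-\min\{k,\,k+l-n\}}$ claimed in \eqref{eq2.20}.

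For the second inequality I would apply Fubini and integrate in $y$ first. The first inequality applied with $k=\sigma<n$ and $l=\beta_2>n$ yields
$$\int_{\R^n}|x-y|^{-\sigma}\langle y\rangle^{-\beta_2}\,dy\lesssim \langle x\rangle^{-\sigma},$$
since $\min\{\sigma,\sigma+\beta_2-n\}=\sigma$ when $\beta_2>n$. The problem therefore reduces to bounding $\int_{\R^n}\langle x\rangle^{-\sigma}\langle x-\tau\rangle^{-\beta_1}\,dx$. Translating $x\mapsto x+\tau$ and splitting once more into the three regions $|x|\le|\tau|/2$, $|\tau|/2\le|x|\le 2|\tau|$, and $|x|\ge 2|\tau|$, each piece is easily seen to be $\lesssim\langle\tau\rangle^{-\sigma}$ thanks to $\beta_1>n$ (in the outer regions one of the two weights is $\sim\langle\tau\rangle$, and the remaining factor is integrable; in the annulus $\langle x\rangle\sim\langle\tau\rangle$ and the residual integral $\int_{|x+\tau|\le 3|\tau|}\langle x+\tau\rangle^{-\sigma}\,dx\lesssim\langle\tau\rangle^{n-\sigma}$ combines with the $\langle\tau\rangle^{-\beta_1}$ factor to give $\langle\tau\rangle^{n-\sigma-\beta_1}\le\langle\tau\rangle^{-\sigma}$).

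The main obstacle is really just bookkeeping: one must track how the exponent $\min\{k,\,k+l-n\}$ arises as the worst of the three contributions and absorb borderline logarithmic factors (when $l$ is near $n$) into this exponent. No substantive new idea is required beyond the trichotomy of regions; the argument is completely elementary once the right decomposition is made, which is why the statement is typically merely cited, as done here via \cite{GV,EG10}.
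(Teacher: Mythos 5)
The paper does not prove this lemma at all: it cites it directly from \cite{GV,EG10}, so there is no internal proof to compare against. Your three-region decomposition ($|y|\le|x|/2$, $|y|\sim|x|$, $|y|\ge 2|x|$ after disposing of $|x|\le 1$) is precisely the standard argument behind those references, and the case analysis and exponents you compute are correct, as is the Fubini reduction and translated split for \eqref{eq2.21}.

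One caveat deserves flagging. Your remark that one can ``absorb borderline logarithmic factors (when $l$ is near $n$) into this exponent'' is not accurate at $l=n$ exactly: there the first region genuinely produces a contribution of order $\langle x\rangle^{-k}\log\langle x\rangle$, which is \emph{not} controlled by $\langle x\rangle^{-k}=\langle x\rangle^{-\min\{k,\,k+l-n\}}$. (For $l$ merely close to but unequal to $n$ there is no log, so ``near'' is not the issue; the problem is the equality case, which the stated hypotheses $l\ge 0$, $0\le k<n$, $k+l>n$ do permit when $k>0$.) The references \cite{GV,EG10} in fact state this case separately with the logarithmic factor, so \eqref{eq2.20} as written in the paper is slightly imprecise at $l=n$. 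This has no effect on the paper, since every application takes $l$ strictly on one side of $n$ (typically $l=\delta>n+\tfrac32$), and it does not affect the soundness of the rest of your argument, but the log cannot be ``absorbed'' and a careful statement would exclude $l=n$ or add the log case.
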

		
		\section{Proof of Theorem \ref{thm-1.1}: High energy estimate}\label{sec3}
		
		The main result in this section is the following theorem.
		
		\begin{theorem}\label{thm3.1}
			Under the assumptions of Theorem \ref{thm-1.1},  the high energy part ${K}_{n}^{\pm ,h}$, defined by \eqref{equ-2-high}, has integral kernel ${K}_{n}^{\pm ,h}(t,x,y)$ satisfying
			\begin{equation}\label{est-for-high-fi-thm3.1}
				\left | {K}_{n}^{\pm ,h}(t,x,y)\right |\le C {t}^{-\frac{n}{2m}}(1+{t}^{-\frac{1}{2m}}\left | x-y\right |)^{-\frac{n(m-1)}{2m-1}},\ \ \ t>0,\,x,y\in\mathbb{R}^n,
			\end{equation}
			for some positive constant $C>0$.
		\end{theorem}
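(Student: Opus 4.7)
The plan is to reduce the kernel $K_n^{\pm,h}(t,x,y)$ to a spatial integral whose integrand is a one-dimensional oscillatory integral in $\lambda$, and to estimate that inner integral by Lemma \ref{lm2.3}. Inserting the high-energy resolvent expansion from Lemma \ref{lm-expansion-high}(i) into the formula \eqref{equ-2-high}, writing $R_0^{\pm}(\lambda^{2m})\varphi(x)$ and $\langle R_0^{\pm}(\lambda^{2m})\cdot,\varphi\rangle(y)$ as convolutions of the resolvent kernel against $\varphi$, and exchanging the order of integration by Fubini, the kernel takes the form \eqref{eq1.6.1} with $G_1(x,x_1)=\varphi(x_1)|x-x_1|^{-(n-1)/2}$, $G_2(x_2,y)=\varphi(x_2)|x_2-y|^{-(n-1)/2}$, and inner integral $I_n(t,r_1,r_2)$ of the form \eqref{eq1.6.2} having phase $e^{-it\lambda^{2m}\pm i\lambda(r_1+r_2)}$ and amplitude
\begin{equation*}
\psi(\lambda,r_1,r_2)=(1-\chi(\lambda))\lambda^{n-2m}\frac{U_1^{\pm}(\lambda r_1)\,U_1^{\pm}(\lambda r_2)}{1+\alpha F^{\pm}(\lambda^{2m})}.
\end{equation*}

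To apply Lemma \ref{lm2.3} with $b=n-2m$, I would verify the hypothesis \eqref{eq-psi-deri}. The spectral assumption \eqref{eq1.4} gives $|1+\alpha F^{\pm}(\lambda^{2m})|\ge c_0$, and derivative bounds on $1/(1+\alpha F^{\pm}(\lambda^{2m}))$ follow from differentiating $F^{\pm}(\lambda^{2m})=\langle R_0^{\pm}(\lambda^{2m})\varphi,\varphi\rangle$ in $\lambda$: each derivative yields higher powers of the resolvent acting on $\varphi$, controllable in weighted $L^2$ spaces under the decay and smoothness conditions of Assumption \ref{assumption-1}. Combined with \eqref{eq-expansion-high-4} for $U_1^{\pm}$, one has $|\partial_\lambda^\ell \psi|\lesssim \lambda^{n-2m-\ell}$ uniformly in $r_1,r_2$. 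For $n\le 4m-1$, the value $b=n-2m$ already lies in the admissible range of Lemma \ref{lm2.3} for small $k$, yielding
\begin{equation*}
|I_n(t,r_1,r_2)|\lesssim t^{-(n-2m+1)/(2m)}\bigl(1+t^{-1/(2m)}(r_1+r_2)\bigr)^{-\mu_b},\quad \mu_b=\tfrac{3m-1-n}{2m-1}.
\end{equation*}
For $n\ge 4m$, the exponent $b=n-2m$ is too large, and I would follow the integration-by-parts scheme of \cite{EG10, CHZ} against the oscillating factor $e^{\pm i\lambda(r_1+r_2)}$; each step gains a factor $(r_1+r_2)^{-1}$ and lowers the effective exponent in $\lambda$, at the cost of an additional $\lambda$-derivative on the amplitude (allowed by \eqref{eq1.3}). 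After finitely many iterations the reduced integral falls within the range of Lemma \ref{lm2.3}; the accumulated $(r_1+r_2)^{-j}$ factors fit into the target estimate when $r_1+r_2$ is large, and the small-$(r_1+r_2)$ regime is handled separately.

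It remains to perform the spatial integration in $x_1,x_2$. I would split into the region $\{|x_1|,|x_2|\le |x-y|/4\}$ and its complement. On the first region the reverse triangle inequality gives $r_1+r_2\ge|x-y|-|x_1|-|x_2|\gtrsim|x-y|$, so the $(r_1+r_2)$-decay of $I_n$ translates into $|x-y|$-decay; the weights $|x-x_1|^{-(n-1)/2}|x_2-y|^{-(n-1)/2}$ against $\langle x_j\rangle^{-\delta}$ are integrable by Lemma \ref{lm2.8}. On the complement, the polynomial decay of $\varphi$ produces an acceptable error. The arithmetic of the exponents then matches the free-case bound \eqref{eq-high-free-p}, since the spatial prefactors combined with the oscillatory $(r_1+r_2)^{-\mu_b}$-decay produce precisely the exponent $n(m-1)/(2m-1)$. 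The regime $|x-y|\lesssim t^{1/(2m)}$ is handled via the small-argument bound of Lemma \ref{lm2.3}, which together with the spatial integration yields the $t^{-n/(2m)}$ factor.

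The main difficulty lies in the case $n\ge 4m$: performing repeated integration by parts requires precise bookkeeping of derivatives on $1/(1+\alpha F^{\pm}(\lambda^{2m}))$, which rests on the $C^{\beta_0}$ regularity and decay \eqref{eq1.3} of $\varphi$; the $(r_1+r_2)^{-j}$ gains must also be used compatibly with the subsequent spatial integration in the two regimes of $r_1+r_2$.
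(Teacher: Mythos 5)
The overall framework---writing the kernel as a spatial convolution against a one-dimensional oscillatory integral in $\lambda$, using Lemma~\ref{lm2.3} for the inner integral, and splitting near/far regimes in $|x-y|$---matches the paper. But the choice of $b$ in Lemma~\ref{lm2.3} for the case $n\le 4m-1$ is wrong, and this is not a cosmetic issue.

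You propose to apply Lemma~\ref{lm2.3} with $b = n - 2m$, based on the amplitude bound $|\partial_\lambda^\ell \psi| \lesssim \lambda^{n-2m-\ell}$. There are two problems. First, for $n < 2m$ one has $b = n - 2m \le -1$, which is outside the admissible range $b \ge -\tfrac12$ required by Lemma~\ref{lm2.3}; the lemma does not apply at all in low dimensions with your $b$. Second, even when $2m \le n < 4m - 1$, so that $b = n - 2m$ is admissible, the resulting exponents do not match the target. Carrying out the accounting explicitly: after applying Lemma~\ref{lm2.3} with $b = n - 2m$ on the region $t^{-1/(2m)}(r_1+r_2)>1$ and converting the weight via $|x-x_1|^{-(n-1)/2} = t^{-(n-1)/(4m)}\bigl(t^{-1/(2m)}|x-x_1|\bigr)^{-(n-1)/2}$, one gets
\begin{equation*}
t^{-\frac{1+b+\frac{n-1}{2}}{2m}}\bigl(t^{-\frac{1}{2m}}|x-x_1|\bigr)^{-\mu_b - \frac{n-1}{2}} = t^{-\frac{3n-4m+1}{4m}}\bigl(t^{-\frac{1}{2m}}|x-x_1|\bigr)^{-\frac{4m-1+n(2m-3)}{2(2m-1)}},
\end{equation*}
and comparing to the target $t^{-\frac{n}{2m}}\bigl(t^{-\frac{1}{2m}}|x-x_1|\bigr)^{-\frac{n(m-1)}{2m-1}}$, the ratio is $t^{\frac{4m-1-n}{4m}}\bigl(t^{-\frac{1}{2m}}|x-x_1|\bigr)^{-\frac{4m-1-n}{2(2m-1)}}$. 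For $n < 4m-1$ this exceeds $1$ whenever $1 < t^{-1/(2m)}|x-x_1| < t^{(2m-1)/(2m)}$, so your bound is strictly weaker than \eqref{est-for-high-fi-thm3.1} in that regime; your claim that ``the arithmetic of the exponents then matches the free-case bound'' does not hold. The two conditions only coincide at $n = 4m-1$, where $n-2m = (n-1)/2$.

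The fix, and what the paper actually does, is to observe that for $n \le 4m-1$ one has $n - 2m \le \tfrac{n-1}{2}$, so on $\{\lambda > \lambda_0/2\}$ the admissible upper bound on the amplitude can be taken as $\lambda^{(n-1)/2 - \ell}$, and one applies Lemma~\ref{lm2.3} with the \emph{larger} $b = \tfrac{n-1}{2}$ on the region $t^{-1/(2m)}(r_1+r_2)>1$ (and $b = n-1$ on the region $\le 1$). With $b=\tfrac{n-1}{2}$ one gets $t^{-(n+1)/(4m)}\cdot t^{-(n-1)/(4m)} = t^{-n/(2m)}$ and $\mu_b + \tfrac{n-1}{2} = \tfrac{n(m-1)}{2m-1}$, which is exactly the identity \eqref{equ-ident} that the spatial integration rests on. This choice also remains in the admissible range for all $n\ge 1$. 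Your $n\ge 4m$ sketch (spatial integration by parts against $e^{\pm \i\lambda(r_1+r_2)}$) is only one of the devices the paper uses there---the other kernel pieces are handled by repeated integration by parts against $e^{-\i t\lambda^{2m}}$ (gaining $t^{-1}\lambda^{-(2m-1)}$ per step), which you do not mention---but the principal gap is the exponent bookkeeping in the $n\le 4m-1$ case.
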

		
		In order to prove Theorem \ref{thm3.1}, we first prove some results for ${F}^{\pm }({\lambda }^{2m})$ (see \eqref{eq1.11.1}) and
		\begin{equation*}
			{F}_{\pm }^{\alpha ,h}(\lambda^{2m}):=\frac{1-\chi (\lambda )}{1+\alpha {F}^{\pm }({\lambda }^{2m})}.
		\end{equation*}
		
		\begin{lemma}\label{lm2.1}
			Let $n\ge 1$ and $\varphi$ satisfy Assumption  \ref{assumption-1}. For $0\le l\le [\frac{n}{2m}]+1$ and ${\lambda}_{0} >0$, we have
			\begin{equation}\label{equ2.1.1}
				\left|\frac{{d}^{l}}{d{\lambda }^{l}}{F}^{\pm }({\lambda }^{2m})\right|\lesssim{\lambda }^{1-2m}, \ \  \lambda > \frac{{\lambda }_{0}}{2}.
			\end{equation}
			Furthermore,
			\begin{equation}\label{equ2.2.1.1}
				\left | \frac{{d}^{l}}{d{\lambda }^{l}}{F}_{\pm }^{\alpha ,h}({\lambda }^{2m})\right |\lesssim 
				\begin{cases}
					C, &\quad \ l=0,\\  
					\lambda^{1-2m}, &\quad \ l\ge 1.
				\end{cases}
			\end{equation}
		\end{lemma}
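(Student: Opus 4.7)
The plan is to first establish the bound on $F^\pm(\lambda^{2m})$ and its $\lambda$-derivatives, and then deduce the bound on $F_\pm^{\alpha,h}(\lambda^{2m})$ via the chain rule together with the spectral assumption \eqref{eq1.4}. For $F^\pm(\lambda^{2m})=\langle R_0^\pm(\lambda^{2m})\varphi,\varphi\rangle$, I would start from the splitting identity \eqref{equ2.1.1.1}:
\begin{equation*}
F^\pm(\lambda^{2m}) = \frac{1}{m\lambda^{2m-2}}\sum_{k\in I^\pm}\omega_k\,\langle \mathfrak{R}_0^\pm(\omega_k\lambda^2)\varphi,\varphi\rangle.
\end{equation*}
For indices $k$ with $\omega_k\notin[0,\infty)$, the operator $\mathfrak{R}_0^\pm(\omega_k\lambda^2)$ is bounded on $L^2$ with norm $O(\lambda^{-2})$ by the spectral theorem, and its iterates admit analogous bounds; the resulting contribution is $O(\lambda^{-2m})$, which is absorbed in the target $\lambda^{1-2m}$ for $\lambda>\lambda_0/2$.

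The substantive work lies in the principal term $\langle\mathfrak{R}_0^\pm(\lambda^2)\varphi,\varphi\rangle$. Using \eqref{eq2.1}--\eqref{eq2.2} together with the high-energy Hankel asymptotic \eqref{eq-asmptotic-hankel}--\eqref{eq-est-hankel}, on the region $\lambda|x-y|\gtrsim 1$ the kernel has the form $\lambda^{(n-3)/2}|x-y|^{-(n-1)/2}e^{\pm i\lambda|x-y|}\psi_>^\pm(\lambda|x-y|)$, plus a locally integrable low-frequency piece on $\lambda|x-y|\lesssim 1$. Plugging this into the double integral and exploiting the oscillation of $e^{\pm i\lambda|x-y|}$ via integration by parts in the radial direction of $x-y$, each integration by parts yields a factor $\lambda^{-1}$ at the cost of passing one derivative onto $\varphi$; the decay and smoothness hypotheses \eqref{eq1.2}--\eqref{eq1.3} ensure all intermediate integrals converge. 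After the right number of integrations by parts one obtains $|\langle\mathfrak{R}_0^\pm(\lambda^2)\varphi,\varphi\rangle|\lesssim \lambda^{-1}$, and multiplication by $(m\lambda^{2m-2})^{-1}$ gives the claimed $\lambda^{1-2m}$ bound for $l=0$.

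For $l\ge 1$, I would use Fa\`a di Bruno with $z=\lambda^{2m}$ together with $\frac{d^k}{dz^k}R_0^\pm(z)=k!\,R_0^\pm(z)^{k+1}$ to write
\begin{equation*}
\frac{d^l}{d\lambda^l}F^\pm(\lambda^{2m}) = \sum_{k=1}^{l}c_{l,k,m}\,\lambda^{2mk-l}\,k!\,\langle R_0^\pm(\lambda^{2m})^{k+1}\varphi,\varphi\rangle.
\end{equation*}
The iterated high-energy resolvent bound $\|R_0^\pm(\lambda^{2m})^{k+1}\|_{L^2_\sigma\to L^2_{-\sigma}}\lesssim \lambda^{(k+1)(1-2m)}$, valid for $\sigma$ sufficiently large (which is precisely where the constraint $l\le [n/(2m)]+1$ interacts with the available weight/smoothness on $\varphi$), follows by iterating the argument sketched above, or equivalently from the standard limiting absorption principle of Agmon--Kato type combined with the splitting identity. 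Multiplying, each term in the Fa\`a di Bruno sum is bounded by $\lambda^{2mk-l}\cdot\lambda^{(k+1)(1-2m)}=\lambda^{1-2m+(k-l)}\lesssim \lambda^{1-2m}$ for $\lambda>\lambda_0/2$ and $1\le k\le l$.

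Finally, the bound on $F_\pm^{\alpha,h}(\lambda^{2m})=(1-\chi(\lambda))(1+\alpha F^\pm(\lambda^{2m}))^{-1}$ is a direct consequence of these estimates and \eqref{eq1.4}, which provides $|1+\alpha F^\pm(\lambda^{2m})|\ge c_0>0$ uniformly in $\lambda>0$, so every denominator that will arise is safely bounded away from zero. For $l=0$ this immediately yields $|F_\pm^{\alpha,h}|\le 1/c_0$. For $l\ge 1$, Fa\`a di Bruno applied to $u\mapsto (1+\alpha u)^{-1}$ composed with $F^\pm$ expresses the $l$-th derivative as a finite sum whose every term involves at least one derivative of either $\chi$ or $F^\pm$; derivatives of $\chi$ are supported in the compact set $\{\lambda_0/2<\lambda\le\lambda_0\}$, on which $\lambda^{1-2m}\gtrsim 1$ so the inequality is automatic, while derivatives of $F^\pm(\lambda^{2m})$ bring in the factor $\lambda^{1-2m}$ by the previous step. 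Hence every term is $\lesssim\lambda^{1-2m}$. The principal obstacle throughout is the iterated high-energy resolvent bound under the assumed weight/smoothness conditions on $\varphi$; once it is established, everything else is algebraic or follows directly from \eqref{eq1.4}.
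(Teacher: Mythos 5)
Your proposal is correct and follows essentially the same route as the paper: both use the splitting identity \eqref{equ2.1.1.1} to reduce to the second-order resolvent $\mathfrak{R}_0^{\pm}(\lambda_k^2)$, invoke its weighted $L^2_\sigma\to L^2_{-\sigma}$ bound (you re-derive it via Hankel asymptotics and integration by parts and phrase the derivative estimate as an iterated-resolvent bound $\|R_0^\pm(\lambda^{2m})^{k+1}\|_{L^2_\sigma\to L^2_{-\sigma}}\lesssim\lambda^{(k+1)(1-2m)}$, whereas the paper simply cites the limiting absorption principle from \cite{KK} in the equivalent form $\|\frac{d^l}{d\lambda^l}\mathfrak{R}_0^\pm(\lambda_k^2)\|_{L^2_\sigma\to L^2_{-\sigma}}\lesssim\lambda^{-1}$ for $\sigma>l+\frac12$), and then combine the spectral assumption \eqref{eq1.4} with a Fa\`a di Bruno/Leibniz expansion to control $F_\pm^{\alpha,h}$. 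This is the same proof with more of the standard LAP machinery unpacked.
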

		
		\begin{proof}
			For $l\ge 0$, the resolvent $\mathfrak{R}_0^{\pm}({\lambda }_k^{2})$ of the Laplacian  satisfies the following limiting absorption principle (see e.g. in \cite[p. 59]{KK}):
			\begin{equation*}
				{ \left \|\frac{{d}^{l}}{d{\lambda }^{l}}\mathfrak{R}_0^{\pm }({\lambda}_k^{2}) \right \|}_{{L}_{\sigma }^{2}-{L}_{-\sigma }^{2}}\le C(l,\,{\lambda }_{0}){\lambda }^{-1},\ \lambda > \frac{{\lambda }_{0}}{2},
			\end{equation*}
			provided $\sigma > l+\frac{1}{2}$. This, together with the splitting identity \eqref{equ2.1.1.1} as well as the decay assumption on $\varphi$ in Assumption  \ref{assumption-1}, yields \eqref{equ2.1.1}.
			
			To prove \eqref{equ2.2.1.1}, we first note that by the spectral assumption \eqref{eq1.4}, it follows that
			$$|{F}_{\pm }^{\alpha ,h}({\lambda }^{2m}) |\le 1/{{c}_{0}}.$$
			This prove \eqref{equ2.2.1.1} when $l=0$. Second, for any $l=1,2,\cdots$, we have
			\begin{equation}\label{equ2.2.1.2-leb}
				\frac{{d}^{l}}{d{\lambda }^{l}}\left(\frac{1}{1+\alpha {F}^{\pm }({\lambda }^{2m})}\right)=\displaystyle\sum_{w=1}^{l}\displaystyle\sum_{{\mu }_1+{\mu }_2+...+{\mu }_w=l}^{}{C}_{{\mu }_1{\mu }_2\cdots{\mu }_w}\frac{{\alpha }^{w}\textstyle\prod_{s=1}^{w}\frac{{d}^{{\mu }_{s}}}{d{\lambda }^{{\mu }_{s}}}{F}^{\pm }({\lambda }^{2m})}{(1+\alpha {F}^{\pm }({\lambda }^{2m}))^{w+1}},
			\end{equation}
			where ${\mu }_{1}, {\mu }_{2},\cdots, {\mu }_{w}\ge 1$ and ${C}_{{\mu }_{1}{\mu }_{2}\cdots{\mu }_{w}}$ are absolute constants. Then by \eqref{equ2.1.1} and the spectral assumption \eqref{eq1.4},
			$$\left| \frac{{d}^l}{d{\lambda }^l}\left(\frac{1}{1+\alpha {F}^{\pm }({\lambda }^{2m})}\right) \right|\lesssim{\lambda }^{1-2m},\quad \, \, \lambda > \frac{{\lambda }_{0}}{2}.$$
			This, together with the fact that $\left| \frac{{d}^l}{d{\lambda }^l}\left(1-\chi(\lambda)\right) \right|\lesssim_N {\lambda }^{-N}$($N\in \mathbb{N}_0$) when $\lambda > \frac{{\lambda }_{0}}{2}$, yields \eqref{equ2.2.1.1} when $l>0$. Therefore, \eqref{equ2.2.1.1} holds and  the proof is complete.
		\end{proof}
		
		Now we are in the position to prove Theorem  \ref{thm3.1}. Since the arguments proceed  differently between $n\le 4m-1$ and $n\ge 4m$,  we present the proof in subsection \ref{sec3.2} and \ref{sec3.3} respectively.
		
		\subsection{The proof of Theorem \ref{thm3.1} for $1\le n \le 4m-1$}\label{sec3.2}\
		
		By \eqref{equ-2-high} and \eqref{eq-expansion-high-1}, we have
		\begin{align}\label{equ3.9.1}
			{K}_{n}^{\pm ,h}(t,x,y)&=\int_{0}^{+ \infty }\int_{{\mathbb{R}}^{2n}}^{}{e}^{-{\rm i}t{\lambda }^{2m}\pm {\rm i}\lambda \left ( \left | x-{x}_{1}\right |+\left | {x}_{2}-y\right |\right )}{F}_{\pm }^{\alpha ,h}({\lambda }^{2m}){\lambda }^{n-2m}U_1^{\pm}(\lambda |x-x_1|)U_1^{\pm}(\lambda |x_2-y|)\nonumber\\
			&\times \frac{\varphi (x_1)}{{|x-x_1|}^{\frac{n-1}{2}}}\frac{\varphi (x_2)}{{|x_2-y|}^{\frac{n-1}{2}}}{\rm d} \lambda {\rm d}{x}_{1}{\rm d}{x}_{2}.
		\end{align}
		\begin{remark}\label{rmk-symm}
			Given the symmetrical properties of  $x$ and $y$ as presented in \eqref{equ3.9.1}, we may, without loss of generality, assume that $|x|\ge |y|$ for the rest of the proof.
		\end{remark}
		To proceed, for each fixed $t,x,y$, we define
		\begin{equation}\label{eq-chi_0}
			\Omega_1=\left \{ (x_1,x_2):{t}^{-\frac{1}{2m}}\big(| x-{x}_{1}|+|{x}_{2}-y|\big)\le 1 \right \},
		\end{equation}
		and
		\begin{equation}\label{eq-chi_0-1119} \Omega_2=\left \{ (x_1,x_2):{t}^{-\frac{1}{2m}}\big(|x-{x}_{1}|+|{x}_{2}-y|\big) > 1 \right \}.
		\end{equation}
		Thus, it follows that $${K}_{n}^{\pm ,h}(t,x,y)={K}_{n,1}^{\pm ,h}(t,x,y)+{K}_{n,2}^{\pm ,h}(t,x,y),$$
		where
		\begin{align}\label{equ3.9-high}
			{K}_{n,j}^{\pm ,h}(t,x,y)=\int_{\Omega_s}^{}{I}_{n}^{\pm ,h}(t,|x-x_1|,|x_2-y|)\frac{\varphi (x_1)}{{|x-x_1|}^{\frac{n-1}{2}}}\frac{\varphi (x_2)}{{|x_2-y|}^{\frac{n-1}{2}}}{\rm d}{x}_{1}{\rm d}{x}_{2}, \quad j=1,2,
		\end{align}
		and
		\begin{equation*}
			{I}_{n}^{\pm ,h}(t,|x-x_1|,|x_2-y|):=\int_{0}^{+ \infty }{e}^{-{\rm i}t{\lambda }^{2m}\pm {\rm i}\lambda \left (| x-{x}_{1}|+|{x}_{2}-y|\right )}{F}_{\pm }^{\alpha ,h}({\lambda }^{2m}){\lambda }^{n-2m}
			U_1^{\pm}(\lambda |x-x_1|)U_1^{\pm}(\lambda |x_2-y|)\d \lambda.
		\end{equation*}
		
		Since $n\le 4m-1$, one has $n-2m\leq \frac{n-1}{2}$, 
		then we derive from \eqref{equ2.2.1.1} and \eqref{eq-expansion-high-4} that for $0\le l\le \left[\frac{n}{2m}\right]+1$,
		\begin{equation}\label{equ3.12-psi}
			\left | \partial_\lambda^l\left({F}_{\pm }^{\alpha ,h}({\lambda }^{2m}){\lambda }^{n-2m}U_1^{\pm}(\lambda |x-x_1|)U_1^{\pm}(\lambda |x_2-y|)\right)\right |\lesssim \lambda ^{\frac{n-1}{2}-l},\ \ \lambda >\frac{\lambda_0}{2}.
		\end{equation}
		Given the symmetric roles of $x-x_1, y-x_2$ in 
		\eqref{equ3.9.1}, we can further assume that
		\begin{align}\label{eq-sym-xy}
			|x|\ge |y|\,\quad\mbox{and}\,\quad |x-{x}_{1}|\ge |{x}_{2}-y|.
		\end{align}
		
		\noindent{\bf Step 1:  Estimates for ${K}_{n,1}^{\pm ,h}(t,x,y)$.}
		
		Since ${t}^{-\frac{1}{2m}}(| x-{x}_{1}|+|{x}_{2}-y|)\le 1$  in the region $\Omega_1$. Thanks to \eqref{equ3.12-psi}, we apply \eqref{eq-osc-l} in Lemma \ref{lm2.3} with $b=n-1$ ($\lambda^{\frac{n-1}{2}}\lesssim\lambda^{n-1}$ when $\lambda >\frac{\lambda_0}{2}$).  Then it follows that 
		\begin{equation}\label{equ-high-n<4m-I-n1}
			\left | {I}_{n}^{\pm ,h}(t,|x-x_1|,|x_2-y|)\right |\lesssim {t}^{-\frac{n}{2m}}.
		\end{equation}
		
		\noindent ($i$) If ${t}^{-\frac{1}{2m}}|x-y|\le 1$, by \eqref{equ-high-n<4m-I-n1} and the inequality \eqref{eq2.20} in Lemma \ref{lm2.8}, we have
		\begin{equation*}\label{equ-high-n<4m-I-n1'}
			\left | {K}_{n,1}^{\pm ,h}(t,x,y)\right |\lesssim {t}^{-\frac{n}{2m}}\lesssim t^{-\frac{n}{2m}}(1+{t}^{-\frac{1}{2m}}| x-y|)^{-\frac{n(m-1)}{2m-1}}.
		\end{equation*}
		
		\noindent ($ii$)  If ${t}^{-\frac{1}{2m}}|x-y|>1$, we deduce the following
		\begin{align*}\label{equ-K_n1}
			|{K}_{n,1}^{\pm ,h}(t, x, y)|&\lesssim \int_{{\mathbb{R}}^{n}}\int_{{\mathbb{R}}^{n}}t^{-\frac{n}{2m}}(t^{-\frac{1}{2m}}|x-{x}_{1} |)^{-\frac{n(m-1)}{2m-1}}\frac{|\varphi(x_1)|}{{|x-x_1 |}^{\frac{n-1}{2}}}\frac{|\varphi ( {x}_{2})|}{{|{x}_{2}-y|}^{\frac{n-1}{2}}}{\rm d}{x}_{1}{\rm d}{x}_{2}\nonumber\\
			&\lesssim t^{-\frac{n}{2m}}({t}^{-\frac{1}{2m}}\langle x\rangle)^{-\frac{n(m-1)}{2m-1}}\nonumber\\
			&\lesssim {t}^{-\frac{n}{2m}}(1+{t}^{-\frac{1}{2m}}\left | x-y\right |)^{-\frac{n(m-1)}{2m-1}},
		\end{align*}
		where in the first inequality above,  we used \eqref{equ-high-n<4m-I-n1} and the fact that $t^{-\frac{1}{2m}}|x-{x}_{1}| \le 1$ in $\Omega_1$; in the second inequality, we employed the following  estimate
		\begin{equation*}
			\int_{\mathbb{R}^n}^{} |x-x_1|^{- \frac{n(m-1)}{2m-1}-\frac{n-1}{2}}|\varphi(x_1)| \d x_1  \lesssim \langle x\rangle^{-\frac{n(m-1)}{2m-1}-\frac{n-1}{2} }\lesssim \langle x\rangle^{- \frac{n(m-1)}{2m-1}},
		\end{equation*}
		which, in turn, is derived from  the decay assumption \eqref{eq1.2}-\eqref{eq2}, Lemma \ref{lm2.8}, and the fact that
		$\frac{n(m-1)}{2m-1}+\frac{n-1}{2}< n $;
		while in the last inequality, we utilized the 
		relationship that $ |x-y| \lesssim  \langle x\rangle$, which is a consequence of the assumption  $|x|\ge |y|$.
		
		Thus, by ($i$)  and ($ii$), it follows that 
		${K}_{n,1}^{\pm ,h}(t,x,y)$ satisfies the desired estimate \eqref{est-for-high-fi-thm3.1}.

		\noindent{\bf Step 2:  Estimates for ${K}_{n,2}^{\pm ,h}(t,x,y)$.}
		
		Recall that in the region $\Omega_2$, we have  ${t}^{-\frac{1}{2m}}(| x-{x}_{1}|+|{x}_{2}-y|)> 1$. Thanks to \eqref{equ3.12-psi}, we apply \eqref{eq-osc-h} in Lemma \ref{lm2.3} with $b=\frac{n-1}{2}$. Then it follows that 
		\begin{equation*}
			\begin{split}
				\left | {I}_{n}^{\pm ,h}(t,|x-x_1|,|x_2-y|)\right |\lesssim {t}^{-\frac{n+1}{4m}}({t}^{-\frac{1}{2m}}\left | x-{x}_{1}\right |)^{-\frac{m-1-\frac{n-1}{2}}{2m-1}}
			\end{split}
		\end{equation*}
		
		\noindent ($i$)  If ${t}^{-\frac{1}{2m}}|x-y|\le 1$,  inserting the above inequality into \eqref{equ3.9-high}, we deduce that
		\begin{align}\label{equ-K_n2}
			|{K}_{n,2}^{\pm ,h}(t, x, y)|&\lesssim
			\int_{{\mathbb R}^{n}}^{}\int_{{\mathbb R}^{n}}^{} {t}^{-\frac{n+1}{4m}}({t}^{-\frac{1}{2m}}\left | x-{x}_{1}\right |)^{-\frac{m-1-\frac{n-1}{2}}{2m-1}}\frac{|\varphi( {x}_{1})|}{{|x_1-x|}^{\frac{n-1}{2}}}\frac{|\varphi( {x}_{2})|}{{|x_2-y|}^{\frac{n-1}{2}}}{\rm d}{x}_{1}{\rm d}{x}_{2}\nonumber\\
			&=\int_{{\mathbb R}^{n}}^{}\int_{{\mathbb R}^{n}}^{}{t}^{-\frac{n}{2m}}(t^{-\frac{1}{2m}}|x-{x}_{1}|)^{-\frac{n(m-1)}{2m-1}}|\varphi ({x}_{1})|\frac{|\varphi( {x}_{2})|}{{|x_2-y|}^{\frac{n-1}{2}}}{\rm d}{x}_{1}{\rm d}{x}_{2}\nonumber\\
			&\lesssim{t}^{-\frac{n}{2m}}\lesssim{t}^{-\frac{n}{2m}}(1+{t}^{-\frac{1}{2m}}\left | x-y\right |)^{-\frac{n(m-1)}{2m-1}},
		\end{align}
		where in the  equality above, we used the  identity:
		\begin{equation}\label{equ-ident}
			\frac{m-1-\frac{n-1}{2}}{2m-1}+\frac{n-1}{2}=\frac{n(m-1)}{2m-1};
		\end{equation}
		the second inequality follows from Lemma \ref{lm2.8} and the fact that ${t}^{-\frac{1}{2m}}|x-{x}_{1}|\ge 1$ in $\Omega_2$; while in the last inequality, we used the assumption that ${t}^{-\frac{1}{2m}}|x-y|\le 1$.
		
		\noindent  ($ii$)  If ${t}^{-\frac{1}{2m}}|x-y|>1$,  we derive that
		\begin{align}\label{equ-K_n2-1}
			|{K}_{n,2}^{\pm ,h}(t, x, y)|&\lesssim\int_{{\mathbb R}^{n}}^{}\int_{{\mathbb R}^{n}}^{}{t}^{-\frac{n}{2m}}(t^{-\frac{1}{2m}}|x-{x}_{1}|)^{-\frac{n(m-1)}{2m-1}}|\varphi ({x}_{1})|\frac{|\varphi( {x}_{2})|}{{|x_2-y|}^{\frac{n-1}{2}}}{\rm d}{x}_{1}{\rm d}{x}_{2}\nonumber\\
			&\lesssim{t}^{-\frac{n}{2m}}(t^{-\frac{1}{2m}}\langle x\rangle)^{-\frac{n(m-1)}{2m-1}}\nonumber\\
			&\lesssim{t}^{-\frac{n}{2m}}(1+{t}^{-\frac{1}{2m}}\left | x-y\right |)^{-\frac{n(m-1)}{2m-1}},
		\end{align}
		where the above first inequality follows from the same as the second line in \eqref{equ-K_n2};
		in the second inequality, we utilized  the following estimate
		\begin{equation*}
			\int_{{\mathbb R}^{n}}^{}|x-{x}_{1}|^{-\frac{n(m-1)}{2m-1}}|\varphi ({x}_{1})|{\rm d}{x}_{1}\lesssim \langle x\rangle^{-\frac{n(m-1)}{2m-1}},
		\end{equation*}
		which follows from Lemma \ref{lm2.8} and the decay assumption \eqref{eq1.2}.
		
		By \eqref{equ-K_n2} and \eqref{equ-K_n2-1}, it follows that${K}_{n,2}^{\pm ,h}(t,x,y)$ also satisfies the desired estimate \eqref{est-for-high-fi-thm3.1}.
		Therefore,  combining  results from {\bf Step   1} and  {\bf Step   2}, we prove the pointwise estimate \eqref{est-for-high-fi-thm3.1} in dimensions $n\le 4m-1$.

		\subsection{The proof of Theorem \ref{thm3.1} for $n\geq 4m$}\label{sec3.3}\
		
		In view of  \eqref{equ-2-high} and the  form of the resolvent kernel given in \eqref{eq-decom-r-nl2m}, it suffices to consider the following four terms:
		\begin{align*}
			{K}_{n,1}^{\pm ,h}(t,x,y) := & \int_{0}^{+\infty }\int_{{\mathbb{R}}^{n}}^{}\int_{{\mathbb{R}}^{n}}^{}{e}^{-{\rm i}t{\lambda }^{2m}\pm {\rm i}\lambda( | x-{x}_{1}|+|{x}_{2}-y|)}{F}_{\pm }^{\alpha ,h} ( {\lambda }^{2m}){\lambda }^{2m-1}\\
			&\times W_{0}^{\pm}(\lambda |x-x_1|)W_{0}^{\pm}(\lambda |x_2-y|)
			\frac{\varphi (x_1)}{{\left | x-{x}_{1}\right |}^{n-2m}}\frac{\varphi (x_2)}{{\left | {x}_{2}-y\right |}^{n-2m}}{\rm d} \lambda {\rm d}{x}_{1}{\rm d}{x}_{2},
		\end{align*}
		\begin{align*}
			{K}_{n,2}^{\pm ,h}(t,x,y):= &  \int_{0}^{+\infty }\int_{{\mathbb{R}}^{n}}^{}\int_{{\mathbb{R}}^{n}}^{}{e}^{-{\rm i}t{\lambda }^{2m}\pm {\rm i}\lambda(| x-{x}_{1}|+|x_2-y|)}{F}_{\pm }^{\alpha ,h} ({\lambda }^{2m}){\lambda }^{\frac{n-1}{2}}\\
			&\times W_{1}^{\pm}(\lambda |x-x_1|)W_{0}^{\pm}(\lambda |x_2-y|) \frac{\varphi (x_1)}{{| x-{x}_{1}|}^{\frac{n-1}{2}}}\frac{\varphi (x_2)}{{|{x}_{2}-y|}^{n-2m}}{\rm d} \lambda {\rm d}{x}_{1}{\rm d}{x}_{2},
		\end{align*}
		\begin{align*}
			{K}_{n,3}^{\pm ,h} (t,x,y)& := \int_{0}^{+\infty }\int_{{\mathbb{R}}^{n}}^{}\int_{{\mathbb{R}}^{n}}^{}{e}^{-{\rm i}t{\lambda }^{2m}\pm {\rm i}\lambda(|x-x_1|+|{x}_{2}-y|)}{F}_{\pm }^{\alpha ,h} ( {\lambda }^{2m}){\lambda }^{\frac{n-1}{2}}\\
			&\times W_{0}^{\pm}(\lambda |x-x_1|)W_{1}^{\pm}(\lambda |x_2-y|) \frac{\varphi (x_1)}{{| x-{x}_{1}|}^{n-2m}}\frac{\varphi (x_2)}{{|{x}_{2}-y|}^{\frac{n-1}{2}}}{\rm d} \lambda {\rm d}{x}_{1}{\rm d}{x}_{2},
		\end{align*}
		and
		\begin{align*}
			{K}_{n,4}^{\pm ,h}\left ( t,x,y\right )&:= \int_{0}^{+\infty }\int_{{\mathbb{R}}^{n}}^{}\int_{{\mathbb{R}}^{n}}^{}{e}^{-{\rm i}t{\lambda }^{2m}\pm {\rm i}\lambda(|x-x_1|+|{x}_{2}-y|)}{F}_{\pm }^{\alpha ,h} ( {\lambda }^{2m}){\lambda }^{n-2m}\\
			&\times W_{1}^{\pm}(\lambda |x-x_1|)W_{1}^{\pm}(\lambda |x_2-y|) \frac{\varphi (x_1)}{{| x-{x}_{1}|}^{\frac{n-1}{2}}}\frac{\varphi (x_2)}{{|{x}_{2}-y|}^{\frac{n-1}{2}}}{\rm d} \lambda {\rm d}{x}_{1}{\rm d}{x}_{2}.
		\end{align*}
		
		In the following we also assume that \eqref{eq-sym-xy} holds and divide the proof into two steps.
		
		\noindent{\bf Step 1:  Estimates for ${K}_{n,j}^{\pm ,h}(t,x,y)$, $j=1,2,3$.}
		
		We shall reduce the analysis of ${K}_{n,j}^{\pm ,h}(t,x,y)\ (j=1,2,3)$ to the estimate of the following oscillatory integrals :
		\begin{equation}\label{n>4m-osc-form}
			I_{j}(t,|x-x_1|,|x_2-y|):=\int_0^\infty  e^{\i t\lambda^{2m}\pm \i \lambda(|x-x_1|+|x_2-y|)}{F}_{\pm }^{\alpha ,h}( {\lambda }^{2m})\psi_{j}(\lambda,|x-x_1|,|x_2-y|)\d\lambda,\quad t>0,
		\end{equation}
		where 
		$$\psi_{1}(\lambda,|x-x_1|,|x_2-y|)={\lambda }^{2m-1} W_{0}^{\pm}(\lambda |x-x_1|)W_{0}^{\pm}(\lambda |x_2-y|),$$
		$$\psi_{2}(\lambda,|x-x_1|,|x_2-y|)={\lambda }^{\frac{n-1}{2}} W_{1}^{\pm}(\lambda |x-x_1|)W_{0}^{\pm}(\lambda |x_2-y|),$$
		$$\psi_{3}(\lambda,|x-x_1|,|x_2-y|)={\lambda }^{\frac{n-1}{2}} W_{0}^{\pm}(\lambda |x-x_1|)W_{1}^{\pm}(\lambda |x_2-y|).$$
		By \eqref{eq-expansion-high-6}, it follows that $W_0^{\pm}(\cdot)$ and $W_1^{\pm}(\cdot)$ have the same estimate. Meanwhile, we have  $\lambda^{2m-1}\le \lambda^{\frac{n-1}{2}}$ for $\lambda>\lambda_0/2$. Thus we have
		\begin{equation}\label{eq-est-phi-1}
			\sup_{x,x_1,x_2,y}\left | \partial_\lambda^l \psi_j (\lambda,|x-x_1|,|x_2-y|)\right |\lesssim \lambda^{\frac{n-1}{2}-l},\quad  0\le l\le \left [\frac{n}{2m}\right]+1.
		\end{equation}
		We proceed to  prove the estimate specifically for ${K}_{n,1}^{\pm ,h}$, noting that the proofs for ${K}_{n,j}^{\pm ,h}$ ($j=2,3$) are the same. We shall discuss the following three cases separately.
		
		{\it Case 1: 
			$t^{-\frac{1}{2m}}|x-y|\le 1$ and  $t^{-\frac{1}{2m}}(|x-x_1|+|x_2-y|)\le 1$.} We perform integration by parts $n_0=\left[\frac{n}{2m}\right]$ times in the integral \eqref{n>4m-osc-form} by using the following identity:
		\begin{equation*}
			\frac{1}{2\i tm\lambda^{2m-1}}\cdot \frac{d}{d\lambda}e^{\i t\lambda^{2m}}=e^{\i t\lambda^{2m}}.
		\end{equation*}
		Then, we have
		\begin{equation*}
			\begin{split}
				I_1(t,|x-x_1|,|x_2-y|)=&\sum_{s_1+s_2+s_3=n_0} C_{s_1 s_2 s_3}t^{-n_0}(|x-x_1|+|x_2-y|)^{s_1}\int_\Omega e^{\i t\lambda^{2m}\pm \i \lambda(|x-x_1|+|x_2-y|)}\\
				&\qquad\cdot\frac{d^{s_2}}{d\lambda^{s_2}}{F}_{\pm }^{\alpha ,h} ( {\lambda }^{2m})\frac{d^{s_3}}{d\lambda^{s_3}}\psi_1(\lambda,|x-x_1|,|x_2-y|)\lambda^{-(2m-1)n_0} \d\lambda,
			\end{split}
		\end{equation*}
		and by \eqref{equ2.2.1.1} and \eqref{eq-est-phi-1}, it follows that for $0\le r\le 1$,
		\begin{equation*}
			\left | \frac{d^r}{d\lambda^r}\left ( \frac{d^{s_2}}{d\lambda^{s_2}}{F}_{\pm }^{\alpha ,h} ( {\lambda }^{2m})\frac{d^{s_3}}{d\lambda^{s_3}}\psi_1(\lambda,|x-x_1|,|x_2-y|)\lambda^{-(2m-1)n_0} \right )  \right |\lesssim \lambda^{\frac{n-1}{2}-(2m-1)n_0-s_3-r}.
		\end{equation*}
		
		Note that if $n\notin 2m\mathbb{N}$, in order to utilize Lemma \ref{lm2.3}, we observe that
		$$\frac{n-1}{2}-(2m-1)n_0-s_3\le n-1-2mn_0 < 2m-1.$$ 
		This allows us to apply \eqref{eq-osc-l} with $b=n-1-2mn_0$ and $ k=1$ and derive that
		\begin{equation}\label{eq-I(t,x)-l}
			\begin{split}
				\left|I_1(t,|x-x_1|,|x_2-y|)\right|\lesssim & \sum_{s_1=0}^{n_0} t^{-n_0}|x-x_1|^{s_1}t^{-\frac{1+n-1-2mn_0}{2m}} = \sum_{s_1=0}^{n_0}t^{-\frac{n}{2m}}|x-x_1|^{s_1}\\
				&\lesssim t^{-\frac{n}{2m}}(1+|x-x_1|^{\frac{n-1}{2}}),
			\end{split}
		\end{equation}
		where in the last inequality, we used the fact 
		$$\sum_{s_1=0}^{n_0}|x-x_1|^{s_1}\lesssim 1+|x-x_1|^{\frac{n-1}{2}}.$$
		This is valid since $n_0=\left[\frac{n}{2m}\right] \le \frac{n-1}{2}$.
		
		If $n\in 2m\mathbb{N}$, then $n_0=\frac{n}{2m}\in \mathbb{N}$. To apply Lemma \ref{lm2.3}, we note that
		$$\frac{n-1}{2}-(2m-1)n_0-s_3\le 0,$$ 
		thus, we apply \eqref{eq-osc-l} with $b=0,\ k=1$ to obtain that
		\begin{equation}\label{eq-I(t,x)-l'}
			\begin{split}
				|I_1(t,|x-x_1|,|x_2-y|)|\lesssim & \sum_{s_1=0}^{n_0}t^{-\frac{n}{2m}}|x-x_1|^{s_1}t^{-\frac{1}{2m}} \lesssim t^{-\frac{n}{2m}}\sum_{s_1=0}^{n_0}|x-x_1|^{s_1-1}\\
				\lesssim & t^{-\frac{n}{2m}}(|x-x_1|^{-1}+|x-x_1|^{\frac{n-1}{2}}),
			\end{split}
		\end{equation}
		where in the second inequality, we used the assumption $t^{-\frac{1}{2m}}(|x-x_1|+|x_2-y|)\le 1$ and $|x-x_1|\ge |x_2-y|$ in this case; while in the last inequality, we employed the inequality
		$$\sum_{s_1=0}^{n_0}|x-x_1|^{s_1-1}\lesssim |x-x_1|^{-1}+|x-x_1|^{\frac{n-1}{2}},$$ 
		which, in turn,  follows from the fact  $-1\le s_1-1\le\frac{n}{2m}-1 \le \frac{n-1}{2}$ in this case.
		
		Therefore, combining \eqref{eq-I(t,x)-l} and \eqref{eq-I(t,x)-l'}, we derive that
		\begin{align}\label{eq-kn1-1}
			\left|{K}_{n,1}^{\pm ,h} (t,x,y)  \right| &\lesssim \int_{{\mathbb{R}}^{n}}\int_{{\mathbb{R}}^{n}}  t^{-\frac{n}{2m}}(|x-x_1|^{-1}+|x-x_1|^{\frac{n-1}{2}})  \frac{|\varphi (x_1)|}{{|x-{x}_{1}|}^{n-2m}}\frac{|\varphi (x_2)|}{{\left | {x}_{2}-y\right |}^{n-2m}}{\rm d}{x}_{1}{\rm d}{x}_{2} \nonumber\\ 
			&\lesssim t^{-\frac{n}{2m}}\lesssim t^{-\frac{n}{2m}}(1+{t}^{-\frac{1}{2m}}| x-y|)^{-\frac{n(m-1)}{2m-1}},
		\end{align}
		where in the second inequality, we used the estimate
		\begin{equation*}
			\int_{{\mathbb{R}}^{n}}|x-x_1|^{-(1+n-2m)}|\varphi (x_1)|\d x_1\lesssim \langle x \rangle^{-(1+n-2m)} \lesssim C,
		\end{equation*}
		and 
		\begin{equation*}
			\int_{{\mathbb{R}}^{n}}|x-x_1|^{-(n-2m-\frac{n-1}{2})}|\varphi (x_1)|\d x_1\lesssim \langle x \rangle^{-(n-2m-\frac{n-1}{2})} \lesssim C,
		\end{equation*}
		which follow from Lemma \ref{lm2.8} as well as the facts that $0\le 1+n-2m<n$ and $0\le n-2m-\frac{n-1}{2}<n$; in the last inequality, we used the assumption that $t^{-\frac{1}{2m}}|x-y|\le 1$.

		{\it Case 2: $t^{-\frac{1}{2m}}|x-y|\le 1$ and  $t^{-\frac{1}{2m}}(|x-x_1|+|x_2-y|)>1$.} In this case, we apply \eqref{eq-osc-h} with $b=(2m-1)(\frac{n}{2m}-n_0)-\frac12$ and $k=1$ to deduce that
		\begin{equation*}\label{eq-I(t,x)-3}
			\begin{split}
				\left|I_1(t,|x-x_1|,|x_2-y|)\right|\lesssim &\sum_{s_1=0}^{n_0}t^{-\frac{n}{2m}}|x-x_1|^{s_1+\frac{n}{2m}-n_0-\frac 12}\lesssim t^{-\frac{n}{2m}}(1+|x-x_1|^{\frac{n-1}{2}}),
			\end{split}
		\end{equation*}
		where the second inequality is justified by the fact that $0\le s_1+\frac{n}{2m}-n_0-\frac 12\le \frac{n-1}{2}$. By employing the same arguments to that used in  \eqref{eq-kn1-1}, we deduce that
		\begin{equation}\label{eq-kn1-2}
			\begin{aligned}
				\left|{K}_{n,1}^{\pm ,h} (t,x,y)  \right|  \lesssim t^{-\frac{n}{2m}}(1+{t}^{-\frac{1}{2m}}|x-y|)^{-\frac{n(m-1)}{2m-1}}.
			\end{aligned}
		\end{equation}

		{\it Case 3: $t^{-\frac{1}{2m}}|x-y|> 1$ and $t^{-\frac{1}{2m}}(|x-x_1|+|x_2-y|)\le 1$.} We perform integration by parts $n_1=\left[\frac{n-1}{2(2m-1)}\right]$ times  in the integral \eqref{n>4m-osc-form} to obtain
		\begin{equation*}
			\begin{split}
				I_1(t,|x-x_1|,|x_2-y|)=&\sum_{s_1+s_2+s_3=n_1} C_{s_1 s_2 s_3}t^{-n_1}(|x-x_1|+|x_2-y|)^{s_1}\int_\Omega e^{\i t\lambda^{2m}\pm \i \lambda(|x-x_1|+|x_2-y|)}\\
				&\qquad\cdot\frac{d^{s_2}}{d\lambda^{s_2}}{F}_{\pm }^{\alpha ,h} ({\lambda }^{2m})\frac{d^{s_3}}{d\lambda^{s_3}}\psi_1(\lambda,|x-x_1|,|x_2-y|)\lambda^{-(2m-1)n_1} \d\lambda,
			\end{split}
		\end{equation*}
		and for $r=0, 1$, it follows that
		\begin{equation*}
			\left | \frac{d^r}{d\lambda^r}\left ( \frac{d^{s_2}}{d\lambda^{s_2}}{F}_{\pm }^{\alpha ,h} ({\lambda }^{2m})\frac{d^{s_3}}{d\lambda^{s_3}}\psi_1(\lambda,|x-x_1|,|x_2-y|)\lambda^{-(2m-1)n_1} \right )  \right |\lesssim \lambda^{\frac{n-1}{2}-(2m-1)n_1-s_3-r}.
		\end{equation*}
		Moreover,
		note that $\frac{n-1}{2}-(2m-1)n_1-s_3\le \frac{n-1}{2}-(2m-1)n_1< 2m-1$, this allows us to apply \eqref{eq-osc-l} with $b=\frac{n-1}{2}-(2m-1)n_1,\ k=1$ and derive the following: 
		\begin{equation}\label{eq-1149}
			\begin{split}
				\left|I_1(t,|x-x_1|,|x_2-y|)\right|\lesssim \sum_{s_1=0}^{n_1} t^{-n_1}|x-x_1|^{s_1}t^{-\frac{1+\frac{n-1}{2}-(2m-1)n_1}{2m}}=\sum_{s_1=0}^{n_1}t^{-\frac{\frac{n+1}{2}}{2m}} t^{-\frac{n_1}{2m}}|x-x_1|^{s_1}.
			\end{split}
		\end{equation}
		
		$(i)$ If $n_1\ge -\frac{m-1-\frac{n-1}{2}}{2m-1}$, we deduce that
		\begin{align*}
			\left|I_1(t,|x-x_1|,|x_2-y|)\right|&\lesssim\sum_{s_1=0}^{n_1}t^{-\frac{\frac{n+1}{2}}{2m}}|x-x_1|^{s_1-n_1}(t^{-\frac{1}{2m}}|x-x_1|)^{n_1}\nonumber\\
			&\lesssim  \sum_{s_1=0}^{n_1} t^{-\frac{\frac{n+1}{2}}{2m}}|x-x_1|^{s_1-n_1} (t^{-\frac{1}{2m}}|x-x_1|)^{-\frac{m-1-\frac{n-1}{2}}{2m-1}}, 
		\end{align*}
		where we used $n_1\ge -\frac{m-1-\frac{n-1}{2}}{2m-1}$ and $t^{-\frac{1}{2m}}|x-x_1|\le 1$ in this case. Thus, it follows that
		\begin{align}\label{eq-1162}
			&\left|{K}_{n,1}^{\pm ,h} (t,x,y)  \right| \nonumber\\
			\lesssim& \sum_{s_1=0}^{n_1} t^{-\frac{\frac{n+1}{2}}{2m}} \int_{{\mathbb{R}}^{n}}\int_{{\mathbb{R}}^{n}} |x-x_1|^{s_1-n_1} (t^{-\frac{1}{2m}}|x-x_1|)^{-\frac{m-1-\frac{n-1}{2}}{2m-1}} \frac{|\varphi (x_1)|}{{\left | x-{x}_{1}\right |}^{n-2m}}\frac{|\varphi (x_2)|}{{\left | {x}_{2}-y\right |}^{n-2m}}{\rm d}{x}_{1}{\rm d}{x}_{2} \nonumber\\ 
			\lesssim&  t^{-\frac{\frac{n+1}{2}}{2m}}({t}^{-\frac{1}{2m}}\langle x\rangle)^{-\frac{m-1-\frac{n-1}{2}}{2m-1}} \langle x\rangle^{-\frac{n-1}{2}} \nonumber\\
			\lesssim& t^{-\frac{n}{2m}}(1+{t}^{-\frac{1}{2m}}| x-y|)^{-\frac{n(m-1)}{2m-1}},
		\end{align}
		where in the second inequality, we observe that by the definition of $n_1$, we have $n_1\le \frac{n-1}{2(2m-1)}$. This implies that  $-1\le s_1-n_1-\frac{m-1-\frac{n-1}{2}}{2m-1}\le n-2m$. Thus the following inequality holds 
		\begin{equation*}
			0\le n_1-s_1+\frac{m-1-\frac{n-1}{2}}{2m-1}+n-2m<n.
		\end{equation*}
		This, together with Lemma \ref{lm2.8} and $n-2m\ge \frac{n-1}{2}$, yields
		\begin{align*}
			\int_{{\mathbb{R}}^{n}}|x-x_1|^{-(n_1-s_1+\frac{m-1-\frac{n-1}{2}}{2m-1}+n-2m)}|\varphi (x_1)|\d x_1\lesssim \langle x\rangle^{-(n_1-s_1)}\langle x\rangle^{-\frac{m-1-\frac{n-1}{2}}{2m-1}-\frac{n-1}{2}}
			\lesssim \langle x\rangle^{-\frac{m-1-\frac{n-1}{2}}{2m-1}-\frac{n-1}{2}};
		\end{align*}
		in the last inequality, we used the assumption $|x-y|\lesssim \langle x\rangle $ and the identity \eqref{equ-ident}.
		
		$(ii)$ If $n_1< -\frac{m-1-\frac{n-1}{2}}{2m-1}$, thanks to \eqref{eq-1149}, we have
		\begin{align}\label{eq-1177}
			\left|{K}_{n,1}^{\pm ,h} (t,x,y)  \right|&\lesssim \sum_{s_1=0}^{n_1} t^{-\frac{\frac{n+1}{2}}{2m}}t^{-\frac{n_1}{2m}} \int_{{\mathbb{R}}^{n}}\int_{{\mathbb{R}}^{n}} |x-x_1|^{s_1} \frac{|\varphi (x_1)|}{{|x-{x}_{1}|}^{n-2m}}\frac{|\varphi (x_2)|}{{|x_2-y|}^{n-2m}}{\rm d}{x}_{1}{\rm d}{x}_{2}\nonumber\\
			&\lesssim \sum_{s_1=0}^{n_1} t^{-\frac{\frac{n+1}{2}}{2m}}t^{-\frac{n_1}{2m}}\langle x \rangle^{-(n-2m-s_1)}\nonumber\\
			&\lesssim t^{-\frac{n}{2m}}(t^{-\frac{1}{2m}}\langle x \rangle)^{-(\frac{n-1}{2}-n_1)}\nonumber\\
			&\lesssim t^{-\frac{n}{2m}}(1+{t}^{-\frac{1}{2m}}| x-y|)^{-\frac{n(m-1)}{2m-1}},
		\end{align}
		in the second inequality, we used
		\begin{equation*}
			\int_{{\mathbb{R}}^{n}}|x-{x}_{1}|^{-(n-2m-s_1)}|\varphi (x_1)|\d x_1 \lesssim \langle x \rangle^{-(n-2m-s_1)},
		\end{equation*}
		which is deduced by Lemma \ref{lm2.8}, with the condition $0\le n-2m-s_1<n$; in the third inequality, we used  $n-2m-s_1\ge \frac{n-1}{2}-n_1$, which holds  when $n\ge 4m$; in the last inequality, we made use of $|x-y|\lesssim \langle x \rangle$ and
		$$-\left(\frac{n-1}{2}-n_1\right)\le -\frac{n(m-1)}{2m-1},$$
		which follows from the assumption $n_1< -\frac{m-1-\frac{n-1}{2}}{2m-1}$ and the identity \eqref{equ-ident}.
		
		It follows from \eqref{eq-1162} and \eqref{eq-1177} that
		\begin{equation}\label{eq-kn1-5}
			\left|{K}_{n,1}^{\pm ,h} (t,x,y)  \right|\lesssim t^{-\frac{n}{2m}}(1+{t}^{-\frac{1}{2m}}| x-y|)^{-\frac{n(m-1)}{2m-1}}.
		\end{equation}
		
		{\it Case 4: $t^{-\frac{1}{2m}}|x-y|> 1$ and $t^{-\frac{1}{2m}}(|x-x_1|+|x_2-y|)> 1$.} Following the same approach used in {\it Case 3}, we employ \eqref{eq-osc-h} with $b=\frac{n-1}{2}-(2m-1)n_1,\ k=1$ and obtain
		\begin{align*}
			\left|I_1(t,|x-x_1|,|x_2-y|)\right|&\lesssim \sum_{s_1=0}^{n_1} t^{-n_1}|x-x_1|^{s_1}t^{-\frac{1+\frac{n-1}{2}-(2m-1)n_1}{2m}}(t^{-\frac{1}{2m}}|x-x_1|)^{-\frac{m-1-\frac{n-1}{2}+(2m-1)n_1}{2m-1}}\nonumber\\
			& = \sum_{s_1=0}^{n_1} t^{-\frac{\frac{n+1}{2}}{2m}}|x-x_1|^{s_1-n_1} (t^{-\frac{1}{2m}}|x-x_1|)^{-\frac{m-1-\frac{n-1}{2}}{2m-1}}.
		\end{align*}
		Thus,  by employing the same  arguments as in \eqref{eq-1162}, we deduce that
		\begin{equation}\label{eq-kn1-3}
			\left|{K}_{n,1}^{\pm ,h} (t,x,y)  \right|\lesssim t^{-\frac{n}{2m}}(1+{t}^{-\frac{1}{2m}}| x-y|)^{-\frac{n(m-1)}{2m-1}}.
		\end{equation}
		This, together with \eqref{eq-kn1-1}, \eqref{eq-kn1-2} and \eqref{eq-kn1-5}, yields the desired estimate for ${K}_{n,1}^{\pm ,h} (t,x,y)$.

		\noindent{\bf Step 2:  Estimate for ${K}_{n,4}^{\pm ,h}(t,x,y)$.}
		
		For ${K}_{n,4}^{\pm ,h}( t,x,y)$, in contrast to ${K}_{n,j}^{\pm ,h}( t,x,y)$ for $j=1,2,3$, the main difference is that when $n\ge 4m$, $n-2m$ can exceed $\frac{n-1}{2}$, thus differing from the structure in \eqref{n>4m-osc-form}. In order to overcome this difficulty, we borrow ideas from  \cite{EG10, CHZ} and employ integration by parts arguments with respect to the oscillating term $e^{\i \lambda(|x-x_1|+ |x_2-y|)}$. To be more precise, let us define
		\begin{equation*}
			{L}_{{x}_{1}}:=\frac{1}{{\rm i}\lambda }\frac{{x}_{1}-x}{\left |{x}_{1}-x \right |}\cdotp {\nabla}_{{x}_{1}},\quad  \quad {L}_{{x}_{2}}:=\frac{1}{{\rm i}\lambda }\frac{{x}_{2}-y}{\left |{x}_{2}-y \right |}\cdotp {\nabla}_{{x}_{2}},
		\end{equation*}
		and their dual operators
		\begin{equation*}
			{L}_{{x}_{1}}^{*}=\frac{{\rm i}}{\lambda }{\nabla}_{{x}_{1}}\left ( \frac{{x}_{1}-x}{\left |{x}_{1}-x \right |}\right ),\quad  \quad  {L}_{{x}_{2}}^{*}=\frac{{\rm i}}{\lambda }{\nabla}_{{x}_{2}}\left ( \frac{{x}_{2}-y}{\left |{x}_{2}-y \right |}\right ),
		\end{equation*}
		Note that
		\begin{equation*}
			{L}_{{x}_{1}}{e}^{{\rm i}\lambda |x-x_1|}={e}^{\i\lambda |x-x_1|},\qquad
			{L}_{{x}_{2}}{e}^{{\rm i}\lambda |x_2-y|}={e}^{\i\lambda |x_2-y|}.
		\end{equation*}
		Set
		\begin{equation*}
			v=\left[\frac{n+5}{4}\right]-m.
		\end{equation*}
		We apply the operators ${L}_{{x}_{1}}$ and  ${L}_{{x}_{2}}$ to the terms ${e}^{{\rm i}\lambda  \left | x-{x}_{1}\right |}$  and ${e}^{{\rm i}\lambda \left | {x}_{2}-y\right |}$ $v$ times respectively. Then we can express ${K}_{n,4}^{\pm ,h}( t,x,y)$ as
		\begin{equation*}
			\begin{split}
				&\int_0^{+\infty} \int_{\mathbb{R}^n} \int_{\mathbb{R}^n} e^{-{\rm i} t \lambda^{2m} \pm {\rm i}\lambda(|x-x_1|+|x_2-y|)} F_{\pm}^{\alpha,h}(\lambda^{2m}) \lambda^{n-2m}\\
				&\cdotp{({L}_{{x}_{1}}^{*})}^{v}(\frac{W_{1}^{\pm}(\lambda|x-x_1|) \varphi(x_1)}{|x-x_1|^{\frac{n-1}{2}}}){({L}_{{x}_{2}}^{*})}^{v}(\frac{W_{1}^{\pm}(\lambda|x_2-y|) \varphi(x_2)}{|x_2-y|^{\frac{n-1}{2}}}){\rm d} \lambda {\rm d}x_1 {\rm d}x_2,
			\end{split}
		\end{equation*}
		which can be further written as a combination of
		\begin{equation*}
			\int_{\mathbb{R}^n} \int_{\mathbb{R}^n} I_{n, 4, \beta_1, \beta_2}^{\pm , h}\left(t,\left|x-x_1\right|,\left|x_2-y\right|\right){G}_1\left(x, x_1\right){G}_2\left(x_2, y\right) {\rm d} x_1 {\rm d} x_2 ,
		\end{equation*}
		where $0\le \left | {\beta }_{1}\right |,\left | {\beta }_{2}\right |\le n$ and
		\begin{align*}
			\begin{split}
				&I_{n, 1, \beta_1, \beta_2}^{ \pm, h}\left(t,\left|x-x_1\right|,\left|x_2-y\right|\right)=\int_{0}^{+\infty } e^{-{\rm i} t \lambda^{2 m} \pm {\rm i}\lambda\left(|x-x_1|+|x_2-y|\right)} F_{\pm}^{\alpha,h}(\lambda^{2m})\psi_4(\lambda,|x-x_1|,|x_2-y|)\rm d \lambda,
			\end{split}
		\end{align*}
		with
		\begin{equation*}
			\begin{split}
				\psi_4(\lambda,|x-x_1|,|x_2-y|)=&\lambda^{n-2m-2v}\cdot(\lambda|x-x_1|)^{|\beta_1|} (W_{1}^{\pm})^{ (|\beta_1|)}(\lambda|x-x_1|)\\
				&\times(\lambda|x_2-y|)^{|\beta_2|}(W_1^{\pm})^{(|\beta_2|)}(\lambda|x_2-y|).
			\end{split}
		\end{equation*}
		A direct computation yields that
		\begin{equation*}
			\begin{split}
				&\left|G_1\left(x, x_1\right)\right| \leq C \cdot M(\left|x-x_1\right|^{-n+2m}+\left|x-x_1\right|^{-\frac{n-1}{2}}) \cdot{\left \langle {x}_{1}\right \rangle}^{-\delta }, \delta>n+\frac{3}{2},\\
				&\left|G_2\left(x_2,y\right)\right| \leq C \cdot M(\left|x_2-y\right|^{-n+2m}+\left|x_2-y\right|^{-\frac{n-1}{2}}) \cdot{\left \langle {x}_{2}\right \rangle}^{-\delta }, \delta>n+\frac{3}{2}.
			\end{split}
		\end{equation*}
		Thanks to \eqref{eq-expansion-high-6}, \eqref{equ2.2.1.1} and the fact that $n-2m-2v\le \frac{n-1}{2}$, we deduce that
		\begin{equation*}
			\left | \partial_\lambda^l \psi_4(\lambda,|x-x_1|,|x_2-y|)\right |\lesssim \lambda^{\frac{n-1}{2}-l},\ 0\le l\le \left [\frac{n}{2m}\right]+1.
		\end{equation*}
		Now the condition in \eqref{n>4m-osc-form} is satisfied, then the result for ${K}_{n,4}^{\pm ,h}(t,x,y)$ can be derived using the same arguments as those presented in {\bf Step 1}.
		
		Therefore, we complete the proof of Theorem \ref{thm3.1}.
		\qed

		\begin{remark}\label{rmk3.1}
			We mention that during the proof in the high energy part, the only property that we need for $F^{\alpha, h}_{\pm}(\lambda^{2m})$ is \eqref{equ2.2.1.1}. This will be used in the analysis of finite rank perturbations.
		\end{remark}
		
		\section{Proof of Theorem \ref{thm-1.1}: Low energy estimate}\label{sec4}
		
		The main result in this section is the following theorem.
		\begin{theorem}\label{thm3.2}
			Under the assumptions of Theorem \ref{thm-1.1},  the low energy part ${K}_{n}^{\pm, l}$, defined by \eqref{equ-2-low}, has integral kernel ${K}_{n}^{\pm, l}(t,x,y)$ satisfying
			\begin{equation}\label{eq3.22.11}
				\left|({K}_{n}^{+,l}-{K}_{n}^{-,l})(t,x,y)\right| \le C{t}^{-\frac{n}{2m}}(1+{t}^{-\frac{1}{2m}}|x-y|)^{-\frac{n(m-1)}{2m-1}},\ \ \ t>0,\ x,y\in\mathbb{R}^n.
			\end{equation}
		\end{theorem}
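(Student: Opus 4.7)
\textbf{Proof proposal for Theorem \ref{thm3.2}.}
The plan is to reduce the low-energy analysis, by analogy with the high-energy case in Section \ref{sec3}, to one-dimensional oscillatory integrals of the form \eqref{eq-osci-1213} on $(0,\lambda_0)$ and then to apply Lemma \ref{lm2.4}. First, I would rewrite the kernel of the rank-one operator appearing in \eqref{equ-2-low} as the product $(R_0^\pm(\lambda^{2m})\varphi)(x)\,(R_0^\pm(\lambda^{2m})\varphi)(y)$, using that $\varphi$ is real and that the resolvent kernel is symmetric. Following the strategy highlighted in the introduction, I would then isolate the outgoing/incoming wave by writing
$$(R_0^\pm(\lambda^{2m})\varphi)(x)=e^{\pm\i\lambda|x|}W^\pm(\lambda,x),$$
so that $K_n^{\pm,l}(t,x,y)$ becomes a one-dimensional oscillatory integral
$$\int_0^\infty e^{-\i t\lambda^{2m}\pm\i\lambda(|x|+|y|)}\chi(\lambda)\,\frac{\alpha\lambda^{2m-1}}{1+\alpha F^\pm(\lambda^{2m})}\,W^\pm(\lambda,x)\,W^\pm(\lambda,y)\,d\lambda,$$
in which $(x,y)$ enter only as parameters of the phase and amplitude.

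The analytic core is then uniform pointwise control of the form $|\partial_\lambda^l W^\pm(\lambda,x)|\lesssim \lambda^{a-l}\,g(x)$ on $(0,\lambda_0)$, where the $x$-factor $g(x)$ captures any spatial decay inherited from the resolvent kernel (for instance $|x|^{-(n-2m)}$ for $n>2m$, or $\langle x\rangle^{-N}$ after extracting a locally integrable tail in low dimensions), and the exponent $a$ depends on the dimensional regime. For $n>2m$, Lemma \ref{lm-expansion-high}(ii) directly gives such a factorisation. For $n\le 2m$, the expansions in Lemmas \ref{lm2.0} and \ref{lem-expansion-with-theta} produce a leading behaviour like $\lambda^{n-2m}$ (with a log in even dimensions), and one must exploit any available vanishing moment $\int x^\gamma\varphi\,dx=0$ to improve $a$. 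Analogous derivative bounds on $(1+\alpha F^\pm(\lambda^{2m}))^{-1}$ follow from the Leibniz-type identity already used in Lemma \ref{lm2.1} together with the expansion of $F^\pm$ from Lemma \ref{lm2.0}. After the $x,y$-decay is extracted as a prefactor $g(x)g(y)$, the remaining scalar amplitude satisfies $\sup_{x,y}|\partial_\lambda^l\psi|\lesssim\lambda^{b-l}$ for an appropriate $b$.

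Applying Lemma \ref{lm2.4} produces $|t|^{-(1+b)/(2m)}(t^{-1/(2m)}(|x|+|y|))^{-\mu_b}$ in the regime $t^{-1/(2m)}(|x|+|y|)>1$ (and a correspondingly simpler bound otherwise). Combining with the prefactor $g(x)g(y)$ and integrating in the spatial variables via Lemma \ref{lm2.8}, as in the high-energy treatment, yields the target shape $|t|^{-n/(2m)}(1+|t|^{-1/(2m)}\langle x\rangle)^{-n(m-1)/(2m-1)}$. Assuming by the $x\leftrightarrow y$ symmetry of the kernel that $|x|\ge|y|$, one has $|x-y|\lesssim\langle x\rangle$, so the desired estimate \eqref{eq3.22.11} follows. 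The difference $K_n^{+,l}-K_n^{-,l}$ is handled by noting that the real, polynomial-in-$\lambda$ pieces in the expansions of Lemmas \ref{lm2.0} and \ref{lem-expansion-with-theta} cancel between $+$ and $-$, while the genuinely oscillatory remainder is governed by Lemma \ref{lem-expasion-R+-R-1}.

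The main obstacle is the low-dimensional regime $n\le 2m$: the resolvent $R_0^\pm(\lambda^{2m})\varphi$ does not decay, and may even blow up, as $\lambda\to 0$, so one must carefully track which polynomial pieces are real (hence cancel in $R_0^+-R_0^-$), which integrate to zero against $\varphi$ by vanishing-moment hypotheses, and which contribute genuine remainder to $W^\pm$. The cleanest route is a case analysis depending on the parity of $n$ and on the order of the first non-vanishing moment of $\varphi$, yielding the uniform pointwise bounds on $W^\pm$ and on $(1+\alpha F^\pm)^{-1}$ that are precisely those forward-referenced in the introduction. Once this delicate bookkeeping is carried out, the final application of Lemma \ref{lm2.4} combined with Lemma \ref{lm2.8} is routine and parallels the high-energy analysis.
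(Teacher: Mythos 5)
Your proposal follows essentially the same route as the paper: factor out $e^{\pm\i\lambda|x|}$ from $R_0^\pm(\lambda^{2m})\varphi$ so that $K_n^{\pm,l}$ becomes a one-dimensional oscillatory integral with $(x,y)$ as parameters, prove uniform $\lambda$-derivative bounds on the amplitude $W^\pm$ (Propositions \ref{lm-res-low-nl2m-1}, \ref{lm-n<2m-odd-vanish}, \ref{lm-n=2m-odd-vanish} in the paper) and on $(1+\alpha F^\pm)^{-1}$ (Lemmas \ref{lm3.3}-\ref{lm3.7}), split by dimension and by the order of the first non-vanishing moment of $\varphi$, and apply Lemma \ref{lm2.4}. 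Two points in your write-up are imprecise and would need fixing. First, once you have factored out $e^{\pm\i\lambda|x|}\,e^{\pm\i\lambda|y|}$ there is \emph{no} remaining spatial integration: Lemma \ref{lm2.8} enters only inside the proof of the pointwise bound on $W^\pm$ (to convert $\int|x-y|^{-\sigma}|\varphi(y)|\,dy$ into $\langle x\rangle^{-\sigma}$), not as a post-processing step after Lemma \ref{lm2.4}; your sentence ``integrating in the spatial variables via Lemma \ref{lm2.8}, as in the high-energy treatment'' conflates the two regimes, which have structurally different kernels. Second, for $n\le 2m$ the ``cancellation of real polynomial pieces between $+$ and $-$'' is not the mechanism: the coefficients $a_j^\pm$ are genuinely complex and do not cancel, and the paper does not use the $R_0^+-R_0^-$ split there at all; instead the key point is that the loss $\lambda^{n-2m+\mathbf k_0}$ in each $W^\pm$ factor is \emph{exactly} compensated by the gain $\lambda^{2m-n-2\mathbf k_0}$ in $(1+\alpha F^\pm)^{-1}$, both governed by the same first non-vanishing moment $\mathbf k_0$ (Lemma \ref{lm3.4}). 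The $R_0^+-R_0^-$ split (via \eqref{eq.l7} and Lemma \ref{lem-expasion-R+-R-1}) is needed only for $n\ge 2m$ in the region $t^{-1/2m}(|x|+|y|)\le 1$, where its purpose is to gain the factor $\lambda^{n-2m}$, not to cancel polynomial pieces. With those repairs the outline is sound and matches the paper's argument.
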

		
		In view of \eqref{equ-2-low}, we first investigate some properties of the function
		\begin{equation*}
			{F}_{\pm }^{\alpha,l}(\lambda^{2m}):=\frac{\chi (\lambda )}{1+\alpha {F}^{\pm }({\lambda }^{2m})}.
		\end{equation*}
		These properties play important roles  during the proof of Theorem \ref{thm3.2}, which will be established  in subsection \ref{sec-rank-one-F}.
		It turns out that the asymptotic behavior of  ${F}_{\pm}^{\alpha ,l}$ near zero  varies depending on whether  $n>2m$ or $n\le 2m$.
		Following this analysis, we proceed to  prove Theorem \ref{thm3.2} for the cases where  $n>2m$, $1\le n < 2m$ and   $n=2m$ respectively.
		
		\subsection{Properties of $F_{\pm }^{\alpha,l}$.}\label{sec-rank-one-F}\

		We first state the result for $n>2m$.
		
		\begin{lemma}\label{lm3.3}
			Let $n>2m$ and $\varphi$ satisfy Assumption \ref{assumption-1}. Then there exists a small constant $\lambda_0\in (0,1)$ such that for $0\le l\le \left [\frac{n}{2m}\right ]+1$, we have
			\begin{equation}\label{equ 2.4.11.48}
				\left|\frac{{d}^{l}}{d{\lambda }^{l}}{F}_{\pm }^{\alpha ,l}({\lambda }^{2m})\right|\le C{\lambda }^{-l},\ 0<\lambda<\lambda_0,
			\end{equation}
			and
			\begin{equation}\label{eq3.11.48}
				\left| \frac{{d}^{l}}{d{\lambda }^{l}}({F}_{+}^{\alpha ,l}({\lambda }^{2m})-{F}_{-}^{\alpha ,l}({\lambda }^{2m}))\right|\le C{\lambda }^{n-2m-l}, \ 0<\lambda<\lambda_0.
			\end{equation}
		\end{lemma}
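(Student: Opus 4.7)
The plan is to first establish derivative bounds on $F^{\pm}(\lambda^{2m})$ itself, then transfer them to $F_{\pm}^{\alpha,l}$ using the spectral assumption \eqref{eq1.4}. Writing $F^{\pm}(\lambda^{2m})=\iint R_0^{\pm}(\lambda^{2m})(x-y)\varphi(x)\varphi(y)\,dx\,dy$, I would decompose the free-resolvent kernel via Lemma \ref{lm-expansion-high}(\romannumeral2) (valid since $n>2m$) into a near piece $K_0$ supported in $\{\lambda|x-y|\le 1\}$ and a far piece $K_1$ supported in $\{\lambda|x-y|\ge 1/2\}$. Setting $\Psi_0(z):=e^{\pm\i z}W_0^{\pm}(z)$, the near part is $K_0=|x-y|^{-(n-2m)}\Psi_0(\lambda|x-y|)$, so $\partial_\lambda^l K_0=|x-y|^{l-(n-2m)}\Psi_0^{(l)}(\lambda|x-y|)$; one then distinguishes $l\le n-2m$ (where the power $|x-y|^{l-(n-2m)}$ remains locally integrable and the decay of $\varphi$ makes the double integral $O(1)$) from $l>n-2m$ (where the support restriction $|x-y|\le \lambda^{-1}$ converts the positive power $|x-y|^{l-(n-2m)}$ into $\lambda^{n-2m-l}\le \lambda^{-l}$, using $n>2m$). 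For $K_1$, Leibniz differentiation of $\lambda^{(n+1)/2-2m}|x-y|^{-(n-1)/2}\Psi_1(\lambda|x-y|)$ generates terms of the form $\lambda^{(n+1)/2-2m-l+j}|x-y|^{j-(n-1)/2}\Psi_1^{(j)}(\lambda|x-y|)$; I would use the support bound $|x-y|\ge (2\lambda)^{-1}$ when $j<(n-1)/2$ and the decay $\delta>n+3/2$ of $\varphi$ when $j\ge (n-1)/2$ to bound each contribution by $\lambda^{n-2m-l}\le \lambda^{-l}$. Summing gives $|\partial_\lambda^l F^{\pm}(\lambda^{2m})|\lesssim \lambda^{-l}$ for all $0\le l\le [n/(2m)]+1$ and $0<\lambda<\lambda_0$.

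Estimate \eqref{equ 2.4.11.48} then follows via the Faà di Bruno expansion \eqref{equ2.2.1.2-leb} applied to $(1+\alpha F^{\pm}(\lambda^{2m}))^{-1}$: the spectral assumption \eqref{eq1.4} keeps each denominator bounded below by $c_0$, each derivative of $F^{\pm}$ costs at most one power of $\lambda^{-1}$ by the previous step, and the outer Leibniz expansion against the smooth cutoff $\chi(\lambda)$ is harmless since $\chi$ and all its derivatives are bounded (with $|\partial_\lambda^j\chi|$ vanishing outside a compact interval).

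For the difference bound \eqref{eq3.11.48}, I would use the algebraic identity
\[
F_+^{\alpha,l}(\lambda^{2m})-F_-^{\alpha,l}(\lambda^{2m})=-\frac{\alpha\,\chi(\lambda)\bigl(F^+(\lambda^{2m})-F^-(\lambda^{2m})\bigr)}{(1+\alpha F^+(\lambda^{2m}))(1+\alpha F^-(\lambda^{2m}))}
\]
to isolate the small factor $F^+-F^-$. Lemma \ref{lem-expasion-R+-R-1} furnishes the representation $(R_0^+-R_0^-)(\lambda^{2m})(x-y)=\lambda^{n-2m}[e^{\i\lambda|x-y|}U_0^+(\lambda|x-y|)-e^{-\i\lambda|x-y|}U_0^-(\lambda|x-y|)]$, and Leibniz differentiation in $\lambda$ together with the uniform bound \eqref{eq-est-psi0} and the decay of $\varphi$ yields $|\partial_\lambda^l(F^+-F^-)(\lambda^{2m})|\lesssim \lambda^{n-2m-l}$, the $|x-y|^j$ factors arising from differentiation of the oscillating exponentials being absorbed by $\varphi\otimes\varphi$. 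Inserting this estimate together with the denominator bounds from the preceding paragraph into Leibniz's rule produces \eqref{eq3.11.48}.

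The principal technical difficulty is the first step: on the near piece the singularity $|x-y|^{-(n-2m)}$ of the free resolvent must be balanced against the number of $\lambda$-derivatives (the case $l>n-2m$ only closes because $n>2m$), while on the far piece the growing powers $|x-y|^j$ generated by differentiating the oscillating factor must be absorbed by $\varphi\otimes\varphi$, which is permissible for $l\le [n/(2m)]+1$ precisely under the stated hypothesis $\delta>n+3/2$.
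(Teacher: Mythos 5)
Your overall strategy coincides with the paper's: reduce \eqref{equ 2.4.11.48} to a derivative bound $|\partial_\lambda^l F^\pm(\lambda^{2m})|\lesssim\lambda^{-l}$ fed into the Fa\`a di Bruno expansion \eqref{equ2.2.1.2-leb} together with the spectral lower bound \eqref{eq1.4}, and reduce \eqref{eq3.11.48} to the algebraic identity isolating $F^+-F^-$ plus the representation \eqref{eq-R+-R-}. The paper's two-sentence proof leaves the low-energy derivative bound on $F^\pm$ itself entirely implicit; you supply it via the kernel decomposition of Lemma \ref{lm-expansion-high}(\romannumeral2), which is exactly the right tool for $n>2m$.

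Two places, however, need more care than your write-up gives. \emph{Near piece.} In $\partial_\lambda^l K_0=|x-y|^{\,l-(n-2m)}\Psi_0^{(l)}(\lambda|x-y|)$ with $\Psi_0(z)=e^{\pm\i z}W_0^{\pm}(z)$, the factor $\Psi_0^{(l)}$ is not uniformly bounded in $z$: the estimate \eqref{eq-expansion-high-6} only gives $|\Psi_0^{(l)}(z)|\lesssim |z|^{-l}$ as $z\to 0$. The correct uniform pointwise bound is therefore $|\partial_\lambda^l K_0|\lesssim \lambda^{-l}|x-y|^{-(n-2m)}$ for every $l$, \emph{not} $\lambda^{n-2m-l}$ when $l>n-2m$; integration against $\varphi\otimes\varphi$ then gives $\lambda^{-l}$ via local integrability of $|x-y|^{-(n-2m)}$, and your case split on $l\lessgtr n-2m$ is both unnecessary and, as you stated it, incorrect in its intermediate claim. \emph{Difference bound.} For \eqref{eq3.11.48}, if one uses only the stated estimate $|\partial_z^s U_0^\pm(z)|\lesssim |z|^{-s}$ from Lemma \ref{lem-expasion-R+-R-1}, then Leibniz differentiation of $\lambda^{n-2m}e^{\pm\i\lambda|x-y|}U_0^\pm(\lambda|x-y|)$ yields terms $\lesssim\lambda^{n-2m-l}(\lambda|x-y|)^j$ on $\lambda|x-y|\gtrsim 1$, and absorbing the growth $|x-y|^l$ by $\varphi\otimes\varphi$ requires $\delta>n+l$, which the hypothesis $\delta>n+\frac32$ does not guarantee once $l\ge 2$. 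The gap is filled by the sharper structure $U_0^\pm(z)=z^{-\frac{n-1}{2}}\tilde\psi_>^\pm(z)$ for $z\gtrsim 1$ (from the proof, not the statement, of Lemma \ref{lem-expasion-R+-R-1}), which gives $|\partial_z^s U_0^\pm(z)|\lesssim \langle z\rangle^{-\frac{n-1}{2}-s}$; since $[\tfrac{n}{2m}]+1\le\frac{n-1}{2}$ for all $n>2m$, $m\ge 2$, it is the decay of $U_0^\pm$ at large argument, not the decay of $\varphi$, that tames every $|x-y|^j$. Both of these points are equally glossed over in the paper's extremely terse proof, so they do not invalidate your strategy, but your claim that $\delta>n+\frac32$ suffices "precisely" is not justified as written.
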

		
		\begin{proof}
			The estimate (\ref{equ 2.4.11.48}) follows directly from the expression \eqref{equ2.2.1.2-leb} and the spectral assumption (\ref{eq1.4}). Next we prove (\ref{eq3.11.48}). Write
			\begin{equation*}
				{F}_{+}^{\alpha ,l}({\lambda }^{2m})-{F}_{-}^{\alpha ,l}({\lambda }^{2m})=\frac{\alpha \chi (\lambda )({F}^{-}({\lambda }^{2m})-{F}^{+}({\lambda }^{2m}))}{(1+\alpha {F}^{+}({\lambda }^{2m}))(1+\alpha {F}^{-}({\lambda }^{2m}))},
			\end{equation*}
			it follows from \eqref{eq-R+-R-} that
			\begin{equation}\label{equ-n>2m-F+-}
				|\frac{{d}^{l}}{d{\lambda }^{l}}({F}^{+}({\lambda }^{2m})-{F}^{-}({\lambda }^{2m}))|\le C{\lambda }^{n-2m-k}.
			\end{equation}
			Combining this and the spectral assumption (\ref{eq1.4}), we prove (\ref{eq3.11.48}).
		\end{proof}
		
		When $n\le 2m$, the behavior of ${F}_{\pm}^{\alpha ,l }({\lambda }^{2m})$ is also related to the vanishing moment conditions of $\varphi$ (due to Lemma \ref{lm2.0}), where $\varphi$ is given in \eqref{eq1.1} and fulfills Assumption \ref{assumption-1}. More precisely, we present the following three lemmas.
		
		\begin{lemma}\label{lm3.4}
			Let $1\leq n<2m$ be odd. There exists a small constant $\lambda_0 \in (0,1)$ depending on $\varphi$ such that the following results hold:
			
			\noindent $(i)$ If there exists a  $\kappa_0 \in \N_0^{n}$ with $|\kappa_0|=\mathbf{k}_0\in [0, m-\frac{n+1}{2}]$ such that
			$\int_{{\mathbb R}^{n}}^{}{x}^{\kappa_0}\varphi(x)\d x\neq0$, but for all $0 \le |\beta|<\mathbf{k}_0$, 
			\begin{align}\label{eq4-vanish}
				\int_{{\mathbb R}^{n}}^{}{x}^{\beta}\varphi(x)dx=0, 
			\end{align}
			then for  $0\le l\le 1$,
			\begin{equation}\label{equ4-F1}
				\left | \frac{{d}^{l}}{d{\lambda }^{l}}\left({\lambda }^{n-2m+2\mathbf{k}_0}{F}_{\pm}^{\alpha ,l }({\lambda }^{2m})\right)\right |\lesssim \lambda ^{-l},\qquad 0<\lambda<\lambda_0.
			\end{equation}
			\noindent $(ii)$ If \eqref{eq4-vanish} holds for all $0 \le |\beta|\le m-\frac{n+1}{2}$,
			then for  $0\le l\le 1$,
			\begin{equation}\label{equ4-F2}
				\left | \frac{{d}^{l}}{d{\lambda }^{l}}{F}_{\pm}^{\alpha ,l }({\lambda }^{2m})\right |\lesssim \lambda ^{-l},\qquad 0<\lambda<\lambda_0.
			\end{equation}
		\end{lemma}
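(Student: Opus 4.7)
The starting point is to pair the low-energy expansion of the free resolvent from Lemma~\ref{lm2.0}(i) against $\varphi\otimes\varphi$, which yields
\[
F^{\pm}(\lambda^{2m})=\sum_{j=0}^{m-\frac{n+1}{2}}a_j^{\pm}\lambda^{n-2m+2j}\int\!\!\int|x-y|^{2j}\varphi(x)\varphi(y)\,\d x\,\d y + b_0\langle|x-y|^{2m-n}\varphi,\varphi\rangle + \lambda^{n-2m}\langle r_{2m-n+1}^{\pm}(\lambda|\cdot|)\varphi,\varphi\rangle.
\]
The decay hypothesis $\delta>2m+1$ in Assumption~\ref{assumption-1} guarantees absolute convergence of every pairing. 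The key simplification comes from the polynomial identity $|x-y|^{2j}=\sum_{|\alpha|+|\beta|=2j}C_{\alpha,\beta}(-1)^{|\beta|}x^{\alpha}y^{\beta}$ (see Lemma~\ref{lm-finiterank-5.5}): under the vanishing hypothesis \eqref{eq4-vanish}, every pairing $\int\!\!\int|x-y|^{2j}\varphi(x)\varphi(y)\,\d x\,\d y$ with $j<\mathbf{k}_0$ vanishes, since each monomial $x^{\alpha}y^{\beta}$ with $|\alpha|+|\beta|=2j<2\mathbf{k}_0$ forces $|\alpha|<\mathbf{k}_0$ or $|\beta|<\mathbf{k}_0$.

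For part~(i), I would then identify the leading coefficient
\[
c_{\mathbf{k}_0}:=\int\!\!\int|x-y|^{2\mathbf{k}_0}\varphi(x)\varphi(y)\,\d x\,\d y,
\]
so that $F^{\pm}(\lambda^{2m})=a_{\mathbf{k}_0}^{\pm}c_{\mathbf{k}_0}\lambda^{n-2m+2\mathbf{k}_0}+O(\lambda^{n-2m+2\mathbf{k}_0+2})+O(1)$ as $\lambda\to 0^+$. A critical sub-step is to verify $c_{\mathbf{k}_0}\ne 0$: only the terms with $|\alpha|=|\beta|=\mathbf{k}_0$ survive, so $c_{\mathbf{k}_0}$ is a quadratic form in the $\mathbf{k}_0$-th order moments $\{m_{\alpha}=\int x^{\alpha}\varphi:|\alpha|=\mathbf{k}_0\}$ that, up to the overall sign $(-1)^{\mathbf{k}_0}$, is a sum of squares of linear combinations of these moments; in particular it vanishes only when all $\mathbf{k}_0$-th order moments vanish, contradicting $m_{\kappa_0}\ne 0$. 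Combined with $a_{\mathbf{k}_0}^{\pm}\in\mathbb{C}\setminus\mathbb{R}$ and $n-2m+2\mathbf{k}_0\le -1<0$, I get the lower bound
\[
|1+\alpha F^{\pm}(\lambda^{2m})|\gtrsim |\Im(\alpha a_{\mathbf{k}_0}^{\pm}c_{\mathbf{k}_0})|\lambda^{n-2m+2\mathbf{k}_0}\gtrsim \lambda^{n-2m+2\mathbf{k}_0}
\]
for some $\lambda_0>0$ and all $\lambda\in(0,\lambda_0)$. Dividing $\chi(\lambda)\lambda^{n-2m+2\mathbf{k}_0}$ by this lower bound yields the $l=0$ case of \eqref{equ4-F1}, and the $l=1$ case follows from the quotient rule and termwise differentiation of the expansion, with the derivative of $\lambda^{n-2m}\langle r_{2m-n+1}^{\pm}(\lambda|\cdot|)\varphi,\varphi\rangle$ controlled by the remainder estimate \eqref{eq3.5.0-rem}.

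Part~(ii) is then easier: under the stronger vanishing hypothesis every $\lambda^{n-2m+2j}$ coefficient in the expansion vanishes, so $F^{\pm}(\lambda^{2m})$ reduces to the constant $b_0\langle|x-y|^{2m-n}\varphi,\varphi\rangle$ plus a $C^1$-bounded remainder on $(0,\lambda_0)$ by \eqref{eq3.5.0-rem}. The spectral assumption \eqref{eq1.4} then immediately gives $|1+\alpha F^{\pm}(\lambda^{2m})|\ge c_0$, from which \eqref{equ4-F2} follows via the quotient rule. The principal obstacle in the whole argument is the nonvanishing of $c_{\mathbf{k}_0}$ in part~(i): absent the sum-of-squares structure, subtle cancellations among different $\mathbf{k}_0$-th order moments could make $c_{\mathbf{k}_0}=0$ even when $m_{\kappa_0}\ne 0$, which would destroy the required lower bound on $|1+\alpha F^{\pm}(\lambda^{2m})|$ and invalidate \eqref{equ4-F1}.
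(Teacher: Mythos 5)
Your proof follows essentially the same route as the paper's: expand $F^\pm(\lambda^{2m})$ via Lemma~\ref{lm2.0}, use the vanishing moments to annihilate the $j<\mathbf{k}_0$ terms in the sum, establish that the leading coefficient $c_{\mathbf{k}_0}=\int\!\!\int|x-y|^{2\mathbf{k}_0}\varphi(x)\varphi(y)\,\d x\,\d y$ is nonzero, and use $a_{\mathbf{k}_0}^\pm\in\mathbb{C}\setminus\mathbb{R}$ to extract the lower bound $|1+\alpha F^\pm(\lambda^{2m})|\gtrsim\lambda^{n-2m+2\mathbf{k}_0}$ for $\lambda$ small. Your two departures are both genuine improvements rather than a different route: for part~(i) you supply the sum-of-squares verification of $c_{\mathbf{k}_0}\neq 0$ (the degree-$(\mathbf{k}_0,\mathbf{k}_0)$ part of $(|x|^2-2x\cdot y+|y|^2)^{\mathbf{k}_0}$ pairs against $\varphi\otimes\varphi$ to give $(-1)^{\mathbf{k}_0}\sum_{2a+|\gamma|=\mathbf{k}_0}\frac{\mathbf{k}_0!\,2^{\mathbf{k}_0-2a}}{a!\,a!\,\gamma!}\bigl(\int|x|^{2a}x^\gamma\varphi\,\d x\bigr)^2$, whose $a=0$ contributions alone detect any nonzero $\mathbf{k}_0$-th moment) where the paper simply asserts nonvanishing, and for part~(ii) you invoke the spectral assumption~\eqref{eq1.4} directly in place of the paper's Plancherel computation, which is an equally valid shortcut.
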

		
		\begin{proof}
			We first prove \eqref{equ4-F1}. By the definition of ${F}^{\pm }({\lambda }^{2m})$ and \eqref{eq.2-low-odd} in Lemma \ref{lm2.0}, we have 
			\begin{equation*}
				\begin{split}
					{F}^{\pm }({\lambda }^{2m})&=
					\langle {R}_{0}^{\pm }({\lambda }^{2m})\varphi ,\varphi \rangle\\
					&=\displaystyle\sum_{j=0}^{m-\frac{n+1}{2}}{a}_{j}^{\pm} {\lambda }^{n-2m+2j}\int_{\R^{2n}}{\left | x-y\right |^{2j}\varphi (y)\varphi (x)}\d y\d x+{b}_{0} \int_{\R^{2n}}{\left | x-y\right |^{2m-n}\varphi (y)\varphi (x)\d y\d x}\\
					&+\lambda^{n-2m}\int_{\R^{2n}}{ {r}_{2m-n+1}^{\pm }(\lambda\left | x-y\right |)\varphi (y)\varphi (x)\d y\d x}.
				\end{split}
			\end{equation*}
			Note that 
			\begin{equation}\label{eq-expansion-x-y}
				|x-y|^{2j}=\sum_{\substack{\eta,\gamma\in \N_0^{n}\\ |\eta|+|\gamma|\le 2j}}C_{\eta,\gamma}\, x^\eta y^\gamma, \quad j\in \mathbb{N}_0,
			\end{equation}
			where $C_{\eta,\gamma}$ are absolute constants.
			This, together with the assumption \eqref{eq4-vanish}, yields that  
			$$\int_{\R^{2n}}{\left | x-y\right |^{2j}\varphi (y)\varphi (x)}\d y\d x=0, \ \ \ 2j<2\k_0.$$ 
			Then we obtain that
			\begin{align}\label{n<2m-F+--expansion}
				{F}^{\pm }({\lambda }^{2m})
				&=\displaystyle\sum_{j=\k_0}^{m-\frac{n+1}{2}}{a}_{j}^{\pm} {\lambda }^{n-2m+2j}\int_{\R^{2n}}{|x-y|^{2j}\varphi (y)\varphi (x)}\d y\d x+{b}_{0} \int_{\R^{2n}}{\left | x-y\right |^{2m-n}\varphi (y)\varphi (x)\d y\d x} \nonumber\\	&\quad+\lambda^{n-2m}\int_{\R^{2n}}{ {r}_{2m-n+1}^{\pm }(\lambda\left | x-y\right |)\varphi (y)\varphi (x)\d y\d x}.
			\end{align}
			The estimate \eqref{eq3.5.0-rem} and our decay assumption \eqref{eq1.2} on $\varphi$ indicate that for $l=0,1$, 
			\begin{align}\label{est-for-r-F}
				\left| \frac{d^l}{d\lambda^l}\lambda^{n-2m}\int_{\R^{2n}}{ {r}_{2m-n+1}^{\pm }(\lambda\left | x-y\right |)\varphi (y)\varphi (x)\d y\d x}\right| 
				\lesssim & \lambda^{1-l}\left|\int_{\R^{2n}}  \left | x-y\right |^{2m-n+1}\varphi (y)\varphi (x)\d y\d x\right| \nonumber\\
				\lesssim& \lambda^{1-l}.
			\end{align}
			Then, for $l=0,1$, the following estimate holds:
			\begin{align}\label{est-F-1}
				\Big|\frac{d^l}{d\lambda^l} {F}^{\pm }({\lambda }^{2m})\Big|\lesssim \lambda ^{n-2m+2\k_0-l}, \qquad\,\, ~~0<\lambda<1.
			\end{align}
			Meanwhile,  note that by \eqref{eq-def-A} and \eqref{eq5.48}, we have $a_j^\pm\neq 0$ for $j=0,1,\cdots, m-\frac{n+1}{2}$.
			Furthermore, the integral   $\int_{\R^{2n}}{|x-y|^{2\k_0}\varphi (y)\varphi (x)}\d y\d x\neq 0$ by the assumption \eqref{eq4-vanish}.
			Consequently, there exists a small $\lambda_0<1$ depending on $m,\, n, \, V, \, \alpha$ such that 
			\begin{equation}\label{eq-4.14}
				\Big|1+\alpha{F}^{\pm }({\lambda }^{2m}) \Big|\ge \frac{1}{2}{\lambda }^{n-2m+2\k_0},\qquad\, 0<\lambda<\lambda_0,
			\end{equation}
			indicating a gain of ${\lambda }^{n-2m+2\k_0}$ compared with the spectral assumption \eqref{eq1.4}.
			The above inequality, together with \eqref{equ2.2.1.2-leb}, \eqref{est-F-1} and the Leibniz rule, yields that for $l=0,1$, 
			\begin{equation*}
				\begin{aligned}
					\left|\frac{d^l}{d\lambda^l} {F}_{\pm}^{\alpha ,l }({\lambda }^{2m}) \right|  
					\lesssim \lambda ^{2m-n-2\k_0-l},\qquad 0<\lambda<\lambda_0.      
				\end{aligned}
			\end{equation*}
			Hence, we obtain \eqref{equ4-F1}.
			
			Next, we shall establish \eqref{equ4-F2}. Now \eqref{n<2m-F+--expansion} becomes 
			\begin{equation*}
				{F}^{\pm }({\lambda }^{2m})
				={b}_{0} \int_{\R^{2n}}{\left | x-y\right |^{2m-n}\varphi (y)\varphi (x)\d y\d x} +\int_{\R^{2n}}{ {r}_{2m-n+1}^{\pm }(\lambda ,\left | x-y\right |)\varphi (y)\varphi (x)\d y\d x}.    
			\end{equation*}
			This and  \eqref{est-for-r-F} imply that for $l=0,1$,  
			\begin{align}\label{est-F-2}
				\left|\frac{d^l}{d\lambda^l} {F}^{\pm }({\lambda }^{2m})\right|\lesssim \lambda ^{-l}, \qquad \,\,~~0<\lambda<1.
			\end{align}
			By the Plancherel theorem, the expression \eqref{eq.2-low-odd} and the fact that $\varphi\ne 0$, we derive that
			\begin{equation}\label{eq-equal-laplacian-m}
				{b}_{0} \int_{\R^{2n}}{\left | x-y\right |^{2m-n}\varphi (y)\varphi (x)\d y\d x}= \langle (-\Delta)^{-m}\varphi,\, \varphi\rangle
				=\int_{\R^{n}} |\xi|^{-2m}|\hat{\varphi} (\xi)|^2 \d \xi >0.
			\end{equation}
			This, along with \eqref{est-for-r-F}, indicates that there exists some small $0<\lambda_0<1$ such that $1+\alpha{F}^{\pm }({\lambda }^{2m})\ge 1/2$ holds for all $0<\lambda<\lambda_0$. Therefore, following the same argument to the proof of \eqref{equ4-F1},  we obtain  \eqref{equ4-F2}.
		\end{proof}
		
		\begin{lemma}\label{lm3.6}
			Let $1\leq n<2m$ be even. There exists some small constant $\lambda_0 \in (0,1)$ 
			such that the following results hold:
			
			\noindent (i) 
			If there exists a $\kappa_0 \in \N_0^{n}$ with $|\kappa_0|=\mathbf{k}_0\in [0,m-\frac{n}{2}-1]$ such that
			$\int_{{\mathbb R}^{n}}^{}{x}^{\kappa_0}\varphi(x)\d x\neq0$, but \eqref{eq4-vanish} holds for all $0 \le |\beta|<\mathbf{k}_0$. Then for $0\le l\le 1$,
			\begin{equation}\label{eq3.6.1}
				\left | \frac{{d}^{l}}{d{\lambda }^{l}}\left({\lambda }^{n-2m+2\mathbf{k}_0}{F}_{\pm}^{\alpha ,l }({\lambda }^{2m})\right)\right |\lesssim \lambda ^{-l},\qquad 0<\lambda<\lambda_0.
			\end{equation}
			
			\noindent (ii) If there exists a $\kappa_0 \in \N_0^{n}$ with $|\kappa_0|=m-\frac{n}{2}$  such that
			$\int_{{\mathbb R}^{n}}^{}{x}^{\kappa_0}\varphi(x)\d x\neq0$, but \eqref{eq4-vanish} holds for all $0 \le |\beta|<\mathbf{k}_0$. Then for $0\le l\le 1$,
			\begin{equation}\label{eq3.6.2}
				\left | \frac{{d}^{l}}{d{\lambda }^{l}}\left(\log\lambda\ {F}_{\pm }^{\alpha ,l}({\lambda }^{2m})\right)\right |\lesssim \lambda ^{-l},\qquad 0<\lambda<\lambda_0.
			\end{equation}
			
			\noindent (iii) If \eqref{eq4-vanish} holds for all $0 \le |\beta|\le m-\frac{n}{2}$, then for $0\le l\le 1$,
			\begin{equation}\label{eq3.6.3}
				\left | \frac{{d}^{l}}{d{\lambda }^{l}}{F}_{\pm}^{\alpha ,l }({\lambda }^{2m})\right |\lesssim \lambda ^{-l},\qquad 0<\lambda<\lambda_0.
			\end{equation}
		\end{lemma}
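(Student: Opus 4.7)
The plan is to follow the strategy of Lemma \ref{lm3.4} with the odd-dimensional expansion \eqref{eq.2-low-odd} replaced by the even-dimensional version \eqref{eq.2-low-even} from Lemma \ref{lm2.0}~(ii). The new ingredient is the logarithmic term $b_0|x-y|^{2m-n}\log(\lambda|x-y|)$, which produces a $\log\lambda$ contribution in $F^\pm(\lambda^{2m})$ and dictates the borderline behaviour treated in part (ii).

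Substituting \eqref{eq.2-low-even} into $F^\pm(\lambda^{2m})=\langle R_0^\pm(\lambda^{2m})\varphi,\varphi\rangle$ and expanding $|x-y|^{2j}$ via \eqref{eq-expansion-x-y}, each monomial $x^\eta y^\gamma$ with $|\eta|+|\gamma|\le 2j$ contributes $C_{\eta,\gamma}\int x^\eta\varphi\,\d x\int y^\gamma\varphi\,\d y$. If $j<\mathbf{k}_0$, then $|\eta|+|\gamma|<2\mathbf{k}_0$, so $\min(|\eta|,|\gamma|)<\mathbf{k}_0$, and the vanishing hypothesis kills the corresponding product; hence every polynomial term with $j<\mathbf{k}_0$ drops out. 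Since $|x-y|^{2m-n}$ is itself an even polynomial of degree $2(m-n/2)$, the same combinatorial reduction applies to the logarithm-free factor of the log term. The remainder $\lambda^{n-2m}r_{2m-n+1}^\pm(\lambda|x-y|)$ is controlled through \eqref{eq3.5.0-rem} and the decay of $\varphi$, contributing $O(\lambda^{1-l})$ after $l$ derivatives, exactly as in \eqref{est-for-r-F}.

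For part (i) with $\mathbf{k}_0\le m-n/2-1$, the leading nonvanishing term is $a_{\mathbf{k}_0}^\pm\lambda^{n-2m+2\mathbf{k}_0}\int|x-y|^{2\mathbf{k}_0}\varphi(x)\varphi(y)\,\d x\,\d y$, whose exponent is negative and coefficient nonzero (the latter follows from the hypothesis together with $a_{\mathbf{k}_0}^\pm\ne 0$ from \eqref{eq-def-A}--\eqref{eq5.48}). As in Lemma \ref{lm3.4}, this yields $|1+\alpha F^\pm(\lambda^{2m})|\gtrsim \lambda^{n-2m+2\mathbf{k}_0}$ on $(0,\lambda_0)$, and \eqref{eq3.6.1} follows via \eqref{equ2.2.1.2-leb} and the Leibniz rule. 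For part (ii) with $\mathbf{k}_0=m-n/2$, all $a_j$-terms with $j<m-n/2$ vanish, so
\[
F^\pm(\lambda^{2m})=\bigl(a_{m-n/2}^\pm+b_0\log\lambda\bigr)A+b_0\!\int\!|x-y|^{2m-n}\log|x-y|\,\varphi(x)\varphi(y)\,\d x\,\d y+O(\lambda),
\]
where $A:=\int|x-y|^{2m-n}\varphi(x)\varphi(y)\,\d x\,\d y$. A Fourier-side argument (writing $A$ as the pairing of $|\hat\varphi|^2$ with a distribution supported at the origin and using that, under the hypothesis, $\hat\varphi(\xi)$ has a nonzero leading homogeneous part of degree $m-n/2$) shows $A\ne 0$. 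Since $b_0\in\R$ and $A\in\R$, this forces $|1+\alpha F^\pm(\lambda^{2m})|\gtrsim|\log\lambda|$ for small $\lambda$, and \eqref{eq3.6.2} follows from the Leibniz rule applied to $\log\lambda\cdot F_\pm^{\alpha,l}(\lambda^{2m})$. For part (iii), the same combinatorial count additionally gives $A=0$, so the $\log\lambda$ coefficient vanishes and $F^\pm(\lambda^{2m})$ converges as $\lambda\to 0$ to $b_0\int|x-y|^{2m-n}\log|x-y|\,\varphi(x)\varphi(y)\,\d x\,\d y$, which by Plancherel equals (up to a positive constant) $\langle(-\Delta)^{-m}\varphi,\varphi\rangle=\int|\hat\varphi(\xi)|^2|\xi|^{-2m}\d\xi>0$. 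Thus $1+\alpha F^\pm$ stays uniformly away from zero and \eqref{eq3.6.3} follows as in Lemma \ref{lm3.4}~(ii).

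The main obstacle is the nonvanishing $A\ne 0$ in part (ii): the hypothesis supplies only one nonzero moment $\int x^{\kappa_0}\varphi\,\d x$ of length $m-n/2$, whereas $A$ is a quadratic form in the full tuple of such moments, so one must exclude accidental cancellations via the Fourier picture above, identifying $A$ with a positive pairing determined by the leading homogeneous part of $\hat\varphi$ at the origin. A secondary point is that, in the Leibniz expansion \eqref{equ2.2.1.2-leb} for $F_\pm^{\alpha,l}$, the derivatives of $F^\pm(\lambda^{2m})$ produce factors of the form $\lambda^{-1}$ (from differentiating $\log\lambda$) while $(1+\alpha F^\pm)^{-(w+1)}$ carries $|\log\lambda|^{-(w+1)}$; the bookkeeping mirrors that of Lemma \ref{lm3.4}, with powers of $\lambda$ replaced by powers of $\log\lambda$.
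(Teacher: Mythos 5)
Your proposal follows essentially the same route as the paper: substitute the even-dimensional expansion \eqref{eq.2-low-even} of the free resolvent into $F^{\pm}(\lambda^{2m})$, use the vanishing moments \eqref{eq4-vanish} to identify the leading term and the remainder bound analogous to \eqref{est-for-r-F}, lower-bound $1+\alpha F^{\pm}(\lambda^{2m})$ accordingly, and conclude via the Fa\`a di Bruno/Leibniz expansion \eqref{equ2.2.1.2-leb}. You correctly flag one point that the paper leaves implicit in part (ii): the bound $|1+\alpha F^{\pm}(\lambda^{2m})|\gtrsim|\log\lambda|$ presupposes that the $\log\lambda$-coefficient $b_0 A$ with $A:=\int_{\R^{2n}}|x-y|^{2m-n}\varphi(x)\varphi(y)\,\d x\,\d y$ is nonzero, which the paper only establishes later (Proposition \ref{Thm-I+PRP-even}) by a multinomial expansion of $|x-y|^{2m-n}$ exhibiting $\langle G_{2m-n}f,f\rangle$, restricted to the relevant vanishing class, as a single-sign sum of squares, one of which is $|\int x^{\kappa_0}\varphi|^2>0$. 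Your Fourier-side sketch is morally the same and can be made precise by noting that $A$ equals a positive multiple of $(-\Delta)^{m-\frac n2}\big(|\hat{\varphi}|^2\big)(0)$, which is a positive pairing of the nonzero degree-$(m-\frac n2)$ leading homogeneous Taylor polynomial of $\hat{\varphi}$ with itself.
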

		
		\begin{proof}
			We begin by pointing out that if there exists a $\kappa_0 \in \N_0^{n}$ such that \eqref{eq4-vanish} holds for all $0 \le |\beta|<\mathbf{k}_0$,
			then by  the expansion \eqref{eq.2-low-even} and the identity \eqref{eq-expansion-x-y}, we have 
			\begin{equation}\label{equ4-R_0-even}
				\begin{split}
					{F}^{\pm }({\lambda }^{2m})&=
					\langle {R}_{0}^{\pm }({\lambda }^{2m})\varphi ,\varphi \rangle
					=\sum_{j=\mathbf{k}_0}^{m-\frac{n}{2}}{a}_{j}^{\pm} {\lambda }^{n-2m+2j}\int_{\R^{2n}}{\left | x-y\right |^{2j}\varphi (y)\varphi (x)}\d y\d x\\
					&+{b}_{0} \int_{\R^{2n}}{{\left | x-y\right |}^{2m-n}\log(\lambda \left | x-y\right |)\varphi (y)\varphi (x)}\d y\d x\\
					&+\lambda^{n-2m}\int_{\R^{2n}}{{r}_{2m-n+1}^{\pm }(\lambda ,\left | x-y\right |)\varphi (y)\varphi (x)}\d y\d x,
				\end{split}
			\end{equation}
			moreover,  for $0 \le l\le 1$,  the remainder term satisfies
			\begin{equation}\label{eq-est-r-even}
				\left| \frac{d^l}{d\lambda^l}\left(\lambda^{n-2m}\int_{\R^{2n}}{ {r}_{2m-n+1}^{\pm }(\lambda\left | x-y\right |)\varphi (y)\varphi (x)\d y\d x}\right)\right| 
				\lesssim \lambda^{1-l},\ 0<\lambda<1.
			\end{equation}
			
			We first prove \eqref{eq3.6.1}. If $\mathbf{k}_0\in [0,m-\frac{n}{2}-1]$,  thus we have $n-2m+2\mathbf{k}_0<0$. This implies that  $|\log\lambda|\le {\lambda }^{n-2m+2\mathbf{k}_0} $ and $\lambda \le {\lambda }^{n-2m+2\mathbf{k}_0}$  when $0<\lambda<1$. Hence we have
			\begin{equation}\label{equ-n<2m-even-F+-1}
				\left | \frac{{d}^{l}}{d{\lambda }^{l}}{F}^{\pm }({\lambda }^{2m})\right |\lesssim {\lambda }^{n-2m+2\mathbf{k}_0-l},\quad \,\, 0<\lambda<1.
			\end{equation}
			Meanwhile, similar to \eqref{eq-4.14}, there also exists a small $\lambda_0<1$ depending on $m,\, n, \, V, \, \alpha$ such that 
			$$
			\Big|1+\alpha{F}^{\pm }({\lambda }^{2m}) \Big|\ge \frac{{\lambda }^{n-2m+2\mathbf{k}_0}}{2},\quad \,\,0<\lambda<\lambda_0.
			$$
			This, together with \eqref{equ2.2.1.2-leb} and \eqref{equ-n<2m-even-F+-1}, yields \eqref{eq3.6.1}.
			
			Next we prove \eqref{eq3.6.2}, if $\k_0=m-\frac{n}{2}$, thus $n-2m+2\k_0=0$. Then by \eqref{equ4-R_0-even},  there  exists a small $\lambda_0<1$ depending on $m,\, n, \, V, \, \alpha$ such that 
			$$
			\Big|1+\alpha{F}^{\pm }({\lambda }^{2m}) \Big|\ge \frac{|\log \lambda|}{2},\ 0<\lambda<\lambda_0.
			$$
			and
			\begin{equation*}\label{eq-kappa=m-n/2}
				\left | \frac{d}{d \lambda}F^{\pm }(\lambda^{2m})\right |\lesssim \lambda^{-1},\ \  0<\lambda<{\lambda }_{0}<1.
			\end{equation*}
			Combining these two facts with \eqref{equ2.2.1.2-leb}, we obtain \eqref{eq3.6.2}. 
			
			Finally, if \eqref{eq4-vanish} holds for all $0 \le |\beta|\le m-\frac{n}{2}$, then the first term in the right hand side of \eqref{equ4-R_0-even} vanishes. Furthermore, we have
			$${b}_{0} \log\lambda\int_{\R^{2n}}{|x-y|^{2m-n}\varphi (y)\varphi (x)\d y\d x}=0.$$
			Similar to \eqref{eq-equal-laplacian-m}, we deduce that
			$${b}_{0} \int_{\R^{2n}}{|x-y|^{2m-n}\log |x-y|\varphi (y)\varphi (x)\d y\d x}>0,$$
			thus by \eqref{eq-est-r-even}, there exists a fixed small $\lambda_0<1$ such that
			$$\left|1+\alpha{F}^{\pm }({\lambda }^{2m})\right|\ge 1/2,\quad\, 0<\lambda<\lambda_0,$$
			and for $0\le l\le 1$, we have
			\begin{equation*}
				\left | \frac{d^l}{d \lambda^l}F^{\pm }(\lambda^{2m})\right |\lesssim \lambda^{-l},\quad\, \,\,0<\lambda<\lambda_0.
			\end{equation*}
			Combining these with \eqref{equ2.2.1.2-leb}, we obtain \eqref{eq3.6.3}. Therefore the proof is complete.
		\end{proof}
		
		The following lemma concerns the properties of  ${F}_{+}^{\alpha ,l}({\lambda }^{2m})-{F}_{-}^{\alpha ,l}( {\lambda }^{2m})$ and ${F}_{-}^{\alpha ,l}( {\lambda }^{2m})$ when $n=2m$. The proof of this lemma is similar to Lemma \ref{lm3.6}, so we omit the detailed proof for brevity.
		
		\begin{lemma}\label{lm3.7}
			Let  $n=2m$.  There exists some small constant $\lambda_0 \in (0,1)$ 
			such that the following results hold:
			
			(i) If $\int_{}^{} \varphi \left ( x\right )\d x=0$, then for $0\le l\le 2$, we have
			\begin{equation}\label{eq-n=2m-F+_F-}
				\left | \frac{d^l}{d\lambda^l}\left [ {F}_{+}^{\alpha ,l}({\lambda }^{2m})-{F}_{-}^{\alpha ,l}({\lambda }^{2m})\right ]\right|\lesssim\lambda ^{-l},\quad 0<\lambda<\lambda_0,
			\end{equation}
			\begin{equation}\label{eq-n=2m-F-}
				\left | \frac{d^l}{d\lambda^l}{F}_{-}^{\alpha ,l}( {\lambda }^{2m})\right|\lesssim\lambda ^{-l},\qquad\qquad\qquad\, \,\,0<\lambda<\lambda_0.
			\end{equation}
			
			(ii) If $\int_{}^{} \varphi \left ( x\right )\d x\ne 0$, then for $0\le l\le 2$, we have
			\begin{equation}\label{eq-n=2m-F+F-_log}
				\left | \frac{d^l}{d\lambda^l}\left [{\left ( \log_{}{\lambda }\right )}^{2} ({F}_{+}^{\alpha ,l}({\lambda }^{2m})-{F}_{-}^{\alpha ,l}({\lambda }^{2m}))\right ]\right|\lesssim\lambda ^{-l},\quad 0<\lambda<\lambda_0,
			\end{equation}
			\begin{equation}\label{eq-n=2m-F-log}
				\left | \frac{d^l}{d\lambda^l}(\left ( \log_{}{\lambda } \right ){F}_{-}^{\alpha ,l}( {\lambda }^{2m}))\right|\lesssim\lambda ^{-l},\qquad\qquad\qquad\,\,\,0<\lambda<\lambda_0.
			\end{equation}
		\end{lemma}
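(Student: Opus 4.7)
The plan is to mirror the strategy used in the proof of Lemma \ref{lm3.6}, adapted to the case $n=2m$ where the free resolvent expansion \eqref{eq.2-low-even} collapses since the power $|x-y|^{2m-n}$ reduces to $1$. Concretely, Lemma \ref{lm2.0}(ii) with $n=2m$ yields
\begin{equation*}
R_{0}^{\pm}(\lambda^{2m})(x-y) \;=\; a_0^{\pm} \;+\; b_0 \log(\lambda|x-y|) \;+\; r_{1}^{\pm}(\lambda|x-y|),
\end{equation*}
with $a_0^\pm\in\mathbb{C}\setminus\mathbb{R}$, $b_0\in\mathbb{R}\setminus\{0\}$, and $|\frac{d^l}{dz^l} r_1^\pm(z)|\lesssim |z|^{1-l}$ for $0\le l\le 2$ (since $2m-(n-1)/2 = m+1/2 \ge 2$ for $m\ge 2$). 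Pairing against $\varphi\otimes\varphi$ gives
\begin{equation*}
F^{\pm}(\lambda^{2m}) \;=\; a_0^{\pm}\Bigl(\textstyle\int\varphi\Bigr)^{2} + b_0 (\log\lambda)\Bigl(\textstyle\int\varphi\Bigr)^{2} + b_0\iint \log|x-y|\,\varphi(y)\varphi(x)\,\d y\d x + E^{\pm}(\lambda),
\end{equation*}
where $E^\pm(\lambda):=\iint r_1^\pm(\lambda|x-y|)\varphi(y)\varphi(x)\,\d y\d x$ satisfies $|\partial_\lambda^l E^\pm(\lambda)|\lesssim\lambda^{1-l}$ for $0\le l\le 2$ by the decay hypothesis on $\varphi$ together with Lemma \ref{lm2.8}.

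In case (i), where $\int\varphi=0$, the first two terms vanish, so $F^\pm(\lambda^{2m})$ equals a finite real constant plus $E^\pm(\lambda)$; in particular it is bounded with $|\partial_\lambda^l F^\pm(\lambda^{2m})|\lesssim \lambda^{1-l}$ for $l\ge 1$. The spectral assumption \eqref{eq1.4} gives $|1+\alpha F^\pm(\lambda^{2m})|\ge c_0$ uniformly on $(0,\lambda_0)$. Moreover, the cancellation $F^+-F^-=(a_0^+-a_0^-)(\int\varphi)^2 + (E^+-E^-)=E^+-E^-$ produces the additional smallness $|\partial_\lambda^l(F^+-F^-)|\lesssim \lambda^{1-l}$. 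Inserting these into
\begin{equation*}
F_{\pm}^{\alpha,l}(\lambda^{2m})=\frac{\chi(\lambda)}{1+\alpha F^{\pm}(\lambda^{2m})},\qquad F_{+}^{\alpha,l}-F_{-}^{\alpha,l}=\chi(\lambda)\,\frac{\alpha(F^{-}-F^{+})}{(1+\alpha F^{+})(1+\alpha F^{-})},
\end{equation*}
and applying the Leibniz rule analogue of \eqref{equ2.2.1.2-leb} together with the smoothness of $\chi$, one obtains \eqref{eq-n=2m-F+_F-}--\eqref{eq-n=2m-F-}; the bound is in fact of order $\lambda^{1-l}$, which is stronger than the stated $\lambda^{-l}$ on $(0,\lambda_0)$.

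In case (ii), where $\int\varphi\neq 0$, the leading singularity is $b_0(\int\varphi)^2\log\lambda$, a real quantity that escapes to $-\infty$ (or $+\infty$) as $\lambda\downarrow 0$. Hence there exists a small $\lambda_0\in(0,1)$ such that
\begin{equation*}
|1+\alpha F^{\pm}(\lambda^{2m})|\;\gtrsim\; |\log\lambda|\qquad\text{on } (0,\lambda_0),
\end{equation*}
while $|\partial_\lambda^l F^{\pm}(\lambda^{2m})|\lesssim \lambda^{-l}$ (the $\log\lambda$ contributes $\lambda^{-1}$ upon differentiation, and further derivatives of $E^\pm$ are dominated by $\lambda^{1-l}$). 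Therefore $|\partial_\lambda^l F_{-}^{\alpha,l}|\lesssim |\log\lambda|^{-1}\lambda^{-l}$ and multiplying by $\log\lambda$ gives \eqref{eq-n=2m-F-log}. For the difference, the $\log\lambda$ terms cancel (as $b_0\in\mathbb{R}$), so $|\partial_\lambda^l(F^+-F^-)|\lesssim\lambda^{1-l}$; combined with the denominator lower bound $(\log\lambda)^2$, we obtain $|\partial_\lambda^l(F_{+}^{\alpha,l}-F_{-}^{\alpha,l})|\lesssim|\log\lambda|^{-2}\lambda^{-l}$, yielding \eqref{eq-n=2m-F+F-_log} after multiplication by $(\log\lambda)^2$.

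The main technical point--and the only step that requires real care--is verifying the chain rule bookkeeping when applying \eqref{equ2.2.1.2-leb} in case (ii): each derivative can fall on either the numerator $(F^-$ or $F^+)$, on the denominator factors $(1+\alpha F^\pm)$ (which now carry a $\log\lambda$ growing weight), or on $(\log\lambda)^{k}$ for $k\in\{1,2\}$ in the final multiplication. Keeping track of the worst combination shows that the overall bound indeed telescopes to $\lambda^{-l}$ for $0\le l\le 2$, with no term worse than this produced. Everything else--the expansions, the remainder estimate, the nonvanishing of $b_0$ and the constant $b_0\iint\log|x-y|\varphi\varphi$--is established exactly as in Lemma \ref{lm3.6}.
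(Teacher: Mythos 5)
Your proposal is correct and follows exactly the approach the paper intends: the paper omits the detailed proof of Lemma \ref{lm3.7} and points to the proof of Lemma \ref{lm3.6} as the model, which is precisely what you reproduce (expansion \eqref{eq.2-low-even} with $n=2m$, isolating the $\log\lambda$ singularity, using the spectral assumption in case (i) and the lower bound $|1+\alpha F^{\pm}|\gtrsim|\log\lambda|$ in case (ii), then the Leibniz/quotient bookkeeping via \eqref{equ2.2.1.2-leb}). The only imprecision is that in case (ii) the claim $|\partial_\lambda^l(F^+-F^-)|\lesssim\lambda^{1-l}$ fails at $l=0$, since the constant $(a_0^+-a_0^-)(\int\varphi)^2$ does not cancel; the correct bound there is $\lesssim 1=\lambda^{-l}$, which is all that is needed and still produces \eqref{eq-n=2m-F+F-_log} after dividing by the $(\log\lambda)^2$ coming from the denominator.
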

		
		
		
		\subsection{The proof of Theorem \ref{thm3.2} for $n>2m$}\label{sec4.1}\
		
		We begin by rewriting $R_0^{\pm}(\lambda^{2m})\varphi$ in an appropriate form that allows us to isolate the oscillatory component. More precisely, we have 
		
		\begin{proposition}\label{lm-res-low-nl2m-1}
			Suppose that the decay assumption \eqref{eq2} is satisfied for $\varphi$. Then, for $0<\lambda<1$ and  $0\le l\le \left [\frac{n}{2m}\right ]+1$, the following statements hold:\\
			\noindent (i) When  $2m<n<4m$, we have
			\begin{equation}\label{eq-res-low-nl2m-1-0}
				\int_{\mathbb{R}^n } R_0^{\pm}(\lambda^{2m})(x-y) \varphi(y)\d y=e^{ \pm \i\lambda|x|}W_{1,1}^\pm(\lambda, x),
			\end{equation}
			and
			\begin{equation} \label{eq-res-low-nl2m-1}
				\left|  \partial_\lambda ^lW_{1,1}^\pm(\lambda, x) \right|\lesssim \min\{\lambda^{-l}, \  \lambda^{\frac{n+1}{2}-2m-l}\langle x \rangle^{-\frac{n-1}{2}}\}.
			\end{equation}
			
			\noindent (ii) When  $n\ge 4m$, we have
			\begin{equation}\label{eq-res-low-nl2m-2-0}
				\int_{\mathbb{R}^n } R_0^{\pm}(\lambda^{2m})(x-y) \varphi(y)\d y=e^{ \pm \i\lambda|x|}W_{0,0}^\pm(\lambda, x)+e^{ \pm \i\lambda|x|}W_{0,1}^\pm(\lambda, x),
			\end{equation}
			and 
			\begin{equation} \label{eq-res-low-nl2m-2}
				\left| \partial_\lambda ^l W_{0,0}^\pm(\lambda, x) \right|\lesssim \lambda^{-l}\langle x \rangle^{-(n-2m)},
			\end{equation}
			\begin{equation} \label{eq-res-low-nl2m-3}
				\left| \partial_\lambda ^l W_{0,1}^\pm(\lambda, x)\right|\lesssim \lambda^{\frac{n+1}{2}-2m-l}\langle x \rangle^{-\frac{n-1}{2}}.
			\end{equation}
			(iii) Further, for  $n>2m$, we have
			\begin{equation}\label{eq-res-low-nl2m-4-0}
				\int_{\mathbb{R}^n } \left[R_0^{+}(\lambda^{2m})(x-y)-R_0^{-}(\lambda^{2m})(x-y)\right]\varphi(y)\d y =e^{\i\lambda|x|} U_{0,0}^+(\lambda,x)-e^{-\i\lambda|x|} U_{0,0}^-(\lambda,x),  
			\end{equation}
			and
			\begin{equation} \label{eq-res-low-nl2m-4}
				\sup_x\left| \partial_\lambda ^l \widetilde{U}_0^\pm (\lambda,x) \right|\lesssim \lambda^{n-2m-l}. 
			\end{equation}
		\end{proposition}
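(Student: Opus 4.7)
The plan is to feed the kernel decompositions from Lemma \ref{lm-expansion-high} and Lemma \ref{lem-expasion-R+-R-1} directly into the integral against $\varphi$, factor out $e^{\pm\i\lambda|x|}$ via the identity $e^{\pm\i\lambda|x-y|}=e^{\pm\i\lambda|x|}e^{\pm\i\lambda(|x-y|-|x|)}$, and verify the derivative estimates by differentiating under the integral sign. Concretely, for (i) I apply \eqref{eq-expansion-high-1} and set
\begin{equation*}
W_{1,1}^\pm(\lambda,x)=\lambda^{\frac{n+1}{2}-2m}\int_{\R^n}\frac{e^{\pm\i\lambda(|x-y|-|x|)}}{|x-y|^{\frac{n-1}{2}}}U_1^\pm(\lambda|x-y|)\varphi(y)\,\d y;
\end{equation*}
for (ii) I split $R_0^\pm(\lambda^{2m})(x-y)$ via \eqref{eq-decom-r-nl2m} into the piece supported in $\{\lambda|x-y|\le1\}$ and the piece in $\{\lambda|x-y|\ge1/2\}$, producing $W_{0,0}^\pm$ and $W_{0,1}^\pm$ respectively; for (iii) I apply \eqref{eq-R+-R-} and factor identically, setting $\widetilde U_0^\pm(\lambda,x)=\lambda^{n-2m}\int e^{\pm\i\lambda(|x-y|-|x|)}U_0^\pm(\lambda|x-y|)\varphi(y)\,\d y$.

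Applying $\partial_\lambda^l$ under the integral, each derivative produces one of three kinds of factors: a phase-derivative factor $\pm\i(|x-y|-|x|)$ with $||x-y|-|x||\le|y|$; a derivative of $U_1^\pm,W_0^\pm,W_1^\pm$ or $U_0^\pm$ controlled by $(\lambda|x-y|)^{-l}$ using \eqref{eq-expansion-high-4}, \eqref{eq-expansion-high-6}, \eqref{eq-est-psi0}; or a drop in the explicit $\lambda^a$ prefactor. Combined with the decay $|\varphi(y)|\lesssim\langle y\rangle^{-\delta}$ from \eqref{eq1.2}-\eqref{eq2} and Lemma \ref{lm2.8}, these give all bounds of the form $\lambda^{a-l}$, in particular \eqref{eq-res-low-nl2m-4}, \eqref{eq-res-low-nl2m-3}, and the second branch of the minimum in \eqref{eq-res-low-nl2m-1}. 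The spatial weight $\langle x\rangle^{-(n-1)/2}$, respectively $\langle x\rangle^{-(n-2m)}$ for $W_{0,0}^\pm$, is obtained by splitting the domain as $\{|y|\le|x|/2\}\cup\{|y|>|x|/2\}$: on the first set $|x-y|\sim\langle x\rangle$, so the spatial factor is pulled out of the integral, while on the second set the rapid decay of $\varphi$ controls everything.

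The main obstacle is the bound $|\partial_\lambda^lW_{1,1}^\pm|\lesssim\lambda^{-l}$ in (i), uniform in $x$: the asymptotic form alone only yields $\lambda^{\frac{n+1}{2}-2m-l}$, which is worse than $\lambda^{-l}$ precisely when $\frac{n+1}{2}-2m<0$. To recover the stronger bound I return to the splitting identity \eqref{equ2.1.1.1}, which expresses $R_0^\pm(\lambda^{2m})$ as a linear combination of Laplacian resolvents at $\omega_k\lambda^2$; because $n>2m$, the Green's function $|x-y|^{2m-n}$ is locally integrable, so $R_0^\pm(\lambda^{2m})\varphi$ is smooth in $\lambda$ down to $\lambda=0$ (with at most logarithmic or fractional-power singularities captured by Lemma \ref{lm2.0} when $n\le2m$, but not here), and its derivatives obey $\lambda^{-l}$ by inspection of \eqref{equ2.1.1.1} combined with $|\partial_\lambda^l\mathfrak R_0^\pm(\omega_k\lambda^2)\varphi(x)|\lesssim\lambda^{-l}$. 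Since multiplication by $e^{\mp\i\lambda|x|}$ preserves this bound, the claim for $W_{1,1}^\pm$ follows. Part (iii) is then immediate because the explicit prefactor $\lambda^{n-2m}$ with $n>2m$ already absorbs any singular behavior, and only the derivative estimate \eqref{eq-res-low-nl2m-4} is needed, which follows from the Leibniz computation above.
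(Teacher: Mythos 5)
Your high-level plan (factor out $e^{\pm\i\lambda|x|}$, differentiate under the integral, use the bounded residual phase $||x-y|-|x||\le|y|$ and the decay of $\varphi$) is the right one and matches the paper for the branches you obtain from explicit $\lambda^a$ prefactors --- in particular parts (ii), (iii), and the second branch $\lambda^{\frac{n+1}{2}-2m-l}\langle x\rangle^{-\frac{n-1}{2}}$ of (i) go through as you sketch. The gap is in your patch for the $\lambda^{-l}$ branch of (i).

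You write that by ``inspection of \eqref{equ2.1.1.1} combined with $|\partial_\lambda^l\mathfrak R_0^\pm(\omega_k\lambda^2)\varphi(x)|\lesssim\lambda^{-l}$'' one gets the claim, and that ``multiplication by $e^{\mp\i\lambda|x|}$ preserves this bound.'' Both steps fail pointwise. The $\lambda$-derivative of the Laplacian resolvent kernel hits the oscillating factor $e^{\pm\i\sqrt{\omega_k}\lambda|x-y|}$ and produces a power of $|x-y|$, not of $\lambda^{-1}$. After convolving with $\varphi$, which is essentially concentrated near $y=0$, this gives factors of order $\langle x\rangle$, so $\partial_\lambda^l\mathfrak R_0^\pm(\omega_k\lambda^2)\varphi(x)$ grows like $\langle x\rangle^l$ and is certainly not $O(\lambda^{-l})$ uniformly in $x$. (The bound $\lesssim\lambda^{-l}$ is true as a $\|\cdot\|_{L^2_\sigma\to L^2_{-\sigma}}$ operator-norm statement via the limiting absorption principle, which is where the intuition may come from, but that is not what is needed here.) Likewise, multiplying by $e^{\mp\i\lambda|x|}$ does not ``preserve'' derivative bounds: $\partial_\lambda^l\bigl(e^{\mp\i\lambda|x|}g(\lambda,x)\bigr)$ picks up factors $|x|^{l-j}$ from Leibniz. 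The whole point of the proposition --- and the reason the phase is factored out inside the integral, before differentiating --- is precisely that this cancellation cannot be recovered after the fact.

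The simplest correct route is the one you already use for part (ii): Lemma \ref{lm-expansion-high}(\romannumeral2) gives the split form \eqref{eq-decom-r-nl2m} for all $n>2m$, not just $n\ge4m$. Use it in (i) as well, writing
$W_{1,1}^\pm(\lambda,x)=\int e^{\pm\i\lambda(|x-y|-|x|)}\bigl(|x-y|^{2m-n}W_0^\pm(\lambda|x-y|)+\lambda^{\frac{n+1}{2}-2m}|x-y|^{-\frac{n-1}{2}}W_1^\pm(\lambda|x-y|)\bigr)\varphi(y)\,\d y$.
Differentiating under the integral, the residual phase contributes $\langle y\rangle^l$, while the $W_0$ and $W_1$ derivatives contribute $\lambda^{-l}$ by \eqref{eq-expansion-high-6}; since $W_0$ is supported where $\lambda|x-y|\le1$ and $W_1$ where $\lambda|x-y|\gtrsim1$, the two terms collectively are $\lesssim\lambda^{-l}\min\{|x-y|^{2m-n},\,\lambda^{\frac{n+1}{2}-2m}|x-y|^{-\frac{n-1}{2}}\}\langle y\rangle^l$. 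Integrating against $\varphi$ and invoking Lemma \ref{lm2.8} with the decay \eqref{eq2} yields both branches of the minimum in \eqref{eq-res-low-nl2m-1} simultaneously, with no need to return to the splitting identity. This is the paper's proof.
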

		\begin{proof}
			We first prove statement (i). 
			It follows from \eqref{eq-decom-r-nl2m} that
			\begin{equation*}\label{eq-int-for-res-10}
				\begin{split}
					&\int_{\mathbb{R}^n } R_0^{\pm}(\lambda^{2m})(x-y) \varphi(y)\d y \\
					=&\int_{\mathbb{R}^n } e^{\pm \i \lambda |x-y|}\left(\frac{1}{|x-y|^{n-2m}}W_{0}^{\pm}(\lambda |x-y|)+\frac{\lambda^{\frac{n+1}{2}-2m}}{|x-y|^{\frac{n-1}{2}}}W_{1}^{\pm}(\lambda |x-y|)\right) \varphi(y)\d y \\  
					=&e^{\pm \i \lambda |x|}\int_{\mathbb{R}^n } e^{\pm \i \lambda (|x-y|-|x|)}\left(\frac{1}{|x-y|^{n-2m}}W_{0}^{\pm}(\lambda |x-y|)+\frac{\lambda^{\frac{n+1}{2}-2m}}{|x-y|^{\frac{n-1}{2}}}W_{1}^{\pm}(\lambda |x-y|)\right) \varphi(y)\d y \\
					:=&e^{\pm \i \lambda |x|} W_{1,1}^\pm(\lambda, x).
				\end{split}
			\end{equation*}
			Note that $\text{\supp}W_{0}^{\pm}(\cdot) \subset[0, 1]$, $\text{\supp}W_{1}^{\pm}(\cdot) \subset[\frac12, \infty)$ and by \eqref {eq-expansion-high-6}, we have 
			\begin{equation*}
				\begin{split}
					\left| \partial_\lambda^l \left(\frac{1}{|x-y|^{n-2m}}W_{0}^{\pm}(\lambda |x-y|)+\frac{\lambda^{\frac{n+1}{2}-2m}}{|x-y|^{\frac{n-1}{2}}}W_{1}^{\pm}(\lambda |x-y|)) \right) \right| 
					\lesssim_l \lambda^{-l}\min\left\{|x-y|^{2m-n}, \,  \frac{\lambda^{\frac{n+1}{2}-2m}}{|x-y|^{\frac{n-1}{2}}}\right\},  
				\end{split}
			\end{equation*}
			where $l\in \N_0$, and  $2m<n<4m$.
			Meanwhile, a direct computation shows that
			\begin{equation*}
				\left| \frac{d^l}{d\lambda^l} \left(e^{\pm \i \lambda (|x-y|-|x|)} \right) \right| \lesssim_l   |y|^{l}.
			\end{equation*}
			Thus, by \eqref{eq2.20},  we deduce that \eqref{eq-res-low-nl2m-1} holds when $0\le l\le \left [\frac{n}{2m}\right ]+1\le 2$ and $2m<n<4m$. By Lemma \ref{lem-expasion-R+-R-1} and Lemma \ref{lm-expansion-high}, the results in  (ii) and (iii) are derived through analogous arguments.  This completes the proof.
		\end{proof}

		
		\begin{remark}\label{rmk-low-sym}
			We derive the pointwise decay   \eqref{eq3.22.11}  by examining
			two distinct regions:  $t^{-\frac{1}{2m}}(|x|+|y|)>1$ and $t^{-\frac{1}{2m}}(|x|+|y|)\leq 1$. Moreover, as highlighted in Remark \ref{sec2.1}, we may, without loss of generality, assume that $|x|\ge |y|$ during the proof.
		\end{remark}
		
		
		\subsubsection{In the region where $t^{-\frac{1}{2m}}(|x|+|y|)>1$ .}
		
		In light of  \eqref{eq-expansion-high-1} and \eqref{eq-decom-r-nl2m}, we further divide the proof into the following subcases. 
		
		$\bullet$ {\emph{Subcase 1 : $2m<n\le 4m-1$.}}
		
		Using \eqref{equ-2-low} and \eqref{eq-res-low-nl2m-1-0}, $K_n^{\pm,l}(t,x,y)$ can be written as
		\begin{equation*}
			\begin{split}
				K_n^{\pm,l}(t,x,y)=&-\frac{m\alpha }{\pi {\rm i}}\int_{0}^{+\infty}{e}^{-{\rm i}t{\lambda }^{2m}\pm{\rm i}\lambda (|x|+|y|)}{F}_{\pm}^{\alpha ,l}(\lambda^{2m})W_{1,1}^\pm(\lambda, x)W_{1,1}^\pm(\lambda, y)\lambda^{2m-1} {\rm d}\lambda.
			\end{split}
		\end{equation*}
		By \eqref{equ4-F1} and \eqref{eq-res-low-nl2m-1}, we deduce that for $ 0\le l\le \left [ \frac{n}{2m}\right]+1,$
		\begin{align}\label{eq-symbol-n-1/2}
			\left | \partial_\lambda^l \left(\lambda^{2m-1} {F}_{\pm}^{\alpha ,l}(\lambda^{2m})W_{1,1}^\pm(\lambda, x)W_{1,1}^\pm(\lambda, y)\right)\right |\lesssim \lambda^{\frac{n-1}{2}-l}\langle x\rangle^{-\frac{n-1}{2}}, \ \ \ 0 \le \lambda \le 1.
		\end{align}
		We apply \eqref{eq-osc-g-1} with $b=\frac{n-1}{2}$ to obtain
		\begin{equation*}
			\begin{split}
				|K_n^{\pm,l}(t,x,y)|&\lesssim t^{-\frac{n+1}{4m}} ({t}^{-\frac{1}{2m}}\left ||x|+|y|\right |)^{-\frac{m-1-\frac{n-1}{2}}{2m-1}}\langle x\rangle ^{-\frac{n-1}{2}}\lesssim t^{-\frac{n}{2m}}(1+t^{-\frac{1}{2m}}|x|)^{-\frac{n(m-1)}{2m-1}}\\
				&\lesssim {t}^{-\frac{n}{2m}}(1+{t}^{-\frac{1}{2m}}|x-y|)^{-\frac{n(m-1)}{2m-1}},
			\end{split}
		\end{equation*}
		where in the second inequality above, we used the identity
		$$\frac{n(m-1)}{2m-1}=\frac{n-1}{2}+\frac{m-1-\frac{n-1}{2}}{2m-1}$$ 
		and the assumption $t^{-\frac{1}{2m}}|x|\ge \frac12$; in the last inequality, we used the fact  $|x-y|\lesssim  2|x|$ since we have assumed that $|x|\ge |y|$.

		$\bullet$ {\emph{Subcase 2 : $n\ge 4m$.}}
		
		By \eqref{equ-2-low} and \eqref{eq-res-low-nl2m-2-0}, we have
		\begin{equation*}
			\left | K_n^{\pm,l}(t,x,y) \right | \lesssim \sum_{s,j=0}^{1} \left | K_{n,s,j}^{\pm,l}(t,x,y) \right |,
		\end{equation*}
		where
		\begin{equation*}
			\begin{split}
				K_{n,s,j}^{\pm,l}(t,x,y)=-\frac{m\alpha }{\pi {\rm i}}\int_{0}^{+\infty}{e}^{-{\rm i}t{\lambda }^{2m}\pm{\rm i}\lambda (|x|+|y|)}{F}_{\pm}^{\alpha ,l}(\lambda^{2m})W_{0,s}^\pm(\lambda, x)W_{0,j}^\pm(\lambda, y)\lambda^{2m-1} {\rm d}\lambda.
			\end{split}
		\end{equation*}
		Observe that $\frac{n+1}{2}-2m\ge 0$ when $n\ge 4m$. Thanks to \eqref{equ4-F1}, \eqref{eq-res-low-nl2m-2} and \eqref{eq-res-low-nl2m-3}, when $s=1$, ${F}_{\pm}^{\alpha ,l}(\lambda^{2m})W_{0,s}^\pm(\lambda, x)W_{0,j}^\pm(\lambda, y)\lambda^{2m-1}$ also satisfies the estimate in \eqref{eq-symbol-n-1/2}. Thus
		$K_{n,s,j}^{\pm,l}(t,x,y)$ with $s=1$ satisfies \eqref{eq3.22.11} by the same arguments used in Subcase 1. We omit the details here.
		
		Next, it remains to consider $K_{n,0,j}^{\pm,l}(t,x,y)$ ($j=0,1$). Similarly, using \eqref{equ4-F1},  \eqref{eq-res-low-nl2m-2} and \eqref{eq-res-low-nl2m-3}, we have for $0\le l\le \left [ \frac{n}{2m}\right]+1$,
		\begin{align*}
			\left | \partial_\lambda^l\left(\lambda^{2m-1} {F}_{\pm}^{\alpha ,l}(\lambda^{2m})W_{0,0}^\pm(\lambda, x)W_{0,j}^\pm(\lambda, y)\right)\right |\lesssim \lambda^{2m-1-l}\langle x\rangle^{-(n-2m)}, \ \  0 \le \lambda \le 1.
		\end{align*}
		It follows from \eqref{eq-osc-g-1} with $b=2m-1$ that
		\begin{align*}
			\left | K_{n,0,j}^{\pm,l}(t,x,y)\right|\lesssim t^{-1} ({t}^{-\frac{1}{2m}}\left ||x|+|y|\right |)^{\frac{m}{2m-1}}\langle x\rangle ^{-(n-2m)}\lesssim t^{-\frac{n}{2m}}(1+t^{-\frac{1}{2m}}|x-y|)^{-\frac{n(m-1)}{2m-1}},
		\end{align*}
		where we have used the fact that $n-2m-\frac{m}{2m-1}\ge {\frac{n(m-1)}{2m-1}}$.
		
		\subsubsection{In the region where $t^{-\frac{1}{2m}}(|x|+|y|)\le 1$.}
		
		In this case, we shall take advantage of the difference $R_0^{+}(\lambda^{2m})-R_0^{-}(\lambda^{2m})$,  which provides a gain of $\lambda^{n-2m}$. Specifically, by the following algebraic identity
		\begin{equation*}\label{eq.17.10}
			\prod_{k=1}^3{A_k^+}-\prod_{k=1}^3{A_k^-}=(A_1^+-A_1^-)A_2^+A_3^++A_1^-A_2^-(A_3^+-A_3^-)+A_1^-(A_2^+-A_2^-)A_3^+,
		\end{equation*}
		we break $K_n^{+, l}-K_n^{-, l}$ into
		\begin{align}\label{eq.l7}
			K_n^{+, l}-K_n^{-, l}&=
			-\frac{m\alpha }{\pi {\rm i}}\int_{0}^{+\infty }{e}^{-{\rm i}t{\lambda }^{2m}}({F}_{+}^{\alpha ,l}({\lambda }^{2m})-{F}_{-}^{\alpha ,l}({\lambda }^{2m})){R}_{0}^{+}({\lambda }^{2m})\varphi\left\langle{R}_{0}^{+}({\lambda }^{2m})\cdotp ,\ \varphi  \right\rangle{\lambda }^{2m-1}\rm d \lambda\nonumber\\
			&-\frac{m\alpha }{\pi {\rm i}}\int_{0}^{+\infty }{e}^{-{\rm i}t{\lambda }^{2m}}{F}_{-}^{\alpha ,l}({\lambda }^{2m}){R}_{0}^{-}({\lambda }^{2m})\varphi\left\langle({R}_{0}^{+}({\lambda }^{2m})-{R}_{0}^{-}({\lambda }^{2m}))\cdotp ,\ \varphi \right\rangle{\lambda }^{2m-1}\rm d \lambda\nonumber\\
			&-\frac{m\alpha }{\pi {\rm i}}\int_{0}^{+\infty }{e}^{-{\rm i}t{\lambda }^{2m}}{F}_{-}^{\alpha ,l}({\lambda }^{2m})
			({R}_{0}^{+}({\lambda }^{2m})-{R}_{0}^{-}({\lambda }^{2m}))\varphi \left\langle{R}_{0}^{+}({\lambda }^{2m})\cdotp ,\ \varphi \right\rangle{\lambda }^{2m-1}\rm d \lambda\nonumber\\
			&:=-\frac{m\alpha }{\pi {\rm i}}\left(K_{n, 1}^{ l}+K_{n, 2}^{ l}+K_{n, 3}^{ l}\right).
		\end{align}
		We remark that  $K_{n, 2}^{l}(t,x,y)$ and $K_{n,3}^{l}(t,x,y)$ can be estimated by the same way, thus we only consider $K_{n, 1}^{l}(t,x,y)$ and $K_{n, 2}^{l}(t,x,y)$ here.
		
		{\bf Step 1: Estimate for $K_{n,1}^{l}(t,x,y)$.}
		
		$\bullet$ {\emph{Subcase 1 : $2m<n\le 4m-1$.}}
		
		Using \eqref{eq-res-low-nl2m-1-0}, $K_{n,1}^{\pm,l}(t,x,y)$ can be written as (up to constants)
		\begin{equation*}
			\begin{split}
				K_{n,1}^{l}(t,x,y)=&\int_{0}^{+\infty}{e}^{-{\rm i}t{\lambda }^{2m}+{\rm i}\lambda (|x|+|y|)}({F}_{+}^{\alpha ,l}({\lambda }^{2m})-{F}_{-}^{\alpha ,l}({\lambda }^{2m}))W_{1,1}^+(\lambda, x)W_{1,1}^+(\lambda, y)\lambda^{2m-1} {\rm d}\lambda.
			\end{split}
		\end{equation*}
		It follows from the estimate \eqref{eq3.11.48} and \eqref{eq-res-low-nl2m-1} that for $0\le l\le \left [ \frac{n}{2m}\right]+1$ and $0 \le \lambda \le 1$, we have
		\begin{align}\label{eq-symbol-n-1}
			\left | \partial_\lambda^l \left(\lambda^{2m-1} ({F}_{+}^{\alpha ,l}({\lambda }^{2m})-{F}_{-}^{\alpha ,l}({\lambda }^{2m}))W_{1,1}^+(\lambda, x)W_{1,1}^+(\lambda, y)\right)\right |\lesssim \lambda^{n-1-l}.
		\end{align}
		Note that $t^{-\frac{1}{2m}}|x-y|\le 1$ also holds, then it follows from \eqref{eq-osc-l-1} with $b=n-1$ that when $2m<n\le 4m-1$,
		\begin{align*}
			\left | K_{n,1}^{l}(t,x,y)\right|\lesssim t^{-\frac{n}{2m}} \lesssim t^{-\frac{n}{2m}}(1+t^{-\frac{1}{2m}}|x-y|)^{-\frac{n(m-1)}{2m-1}}.
		\end{align*}
		
		$\bullet$ {\emph{Subcase 2 : $n\ge 4m$.}}
		\begin{equation*}
			\left | K_{n,1}^{l}(t,x,y) \right | \lesssim \sum_{s,j=0}^{1} \left | K_{n,1,s,j}^{l}(t,x,y) \right |,
		\end{equation*}
		where
		\begin{equation*}
			\begin{split}
				K_{n,s,j}^{l}(t,x,y)=&\int_{0}^{+\infty}{e}^{-{\rm i}t{\lambda }^{2m}+{\rm i}\lambda (|x|+|y|)} ({F}_{+}^{\alpha ,l}({\lambda }^{2m})-{F}_{-}^{\alpha ,l}({\lambda }^{2m}))W_{0,s}^+(\lambda, x)W_{0,j}^+(\lambda, y)\lambda^{2m-1} {\rm d}\lambda.
			\end{split}
		\end{equation*}
		\eqref{eq-res-low-nl2m-2} and \eqref{eq-res-low-nl2m-3} indicate that $\lambda^{2m-1}({F}_{+}^{\alpha ,l}({\lambda }^{2m})-{F}_{-}^{\alpha ,l}({\lambda }^{2m}))W_{0,s}^+(\lambda, x)W_{0,j}^+(\lambda, y)$ still satisfies \eqref{eq-symbol-n-1} for all $s,j\in \{0,1\}$. Thus, we have that in the dimensions $n\ge 4m$,
		\begin{equation*}
			\left | K_{n,1}^{l}(t,x,y)\right|\lesssim t^{-\frac{n}{2m}}(1+t^{-\frac{1}{2m}}|x-y|)^{-\frac{n(m-1)}{2m-1}}.   
		\end{equation*}
		
		{\bf Step 2: Estimate for $K_{n,2}^{l}(t,x,y)$.}
		
		$\bullet$ {\emph{Subcase 1 : $2m<n\le 4m-1$.}}
		
		Using \eqref{eq-res-low-nl2m-1-0} and \eqref{eq-res-low-nl2m-4-0}, $ K_{n,2}^{l}(t,x,y)$ can be written as a linear combination of  
		\begin{equation}\label{eq-kn2-2m}
			K_{n,2}^{\pm,l}(t,x,y)=\int_{0}^{+\infty}{e}^{-{\rm i}t{\lambda }^{2m}-{\rm i}\lambda (|x|\mp|y|)} {F}_{-}^{\alpha ,l}({\lambda }^{2m})W_{1,1}^-(\lambda, x)U_{0,0}^\pm(\lambda, y)\lambda^{2m-1} {\rm d}\lambda. 
		\end{equation}
		
		By \eqref{eq-res-low-nl2m-1} and \eqref{eq-res-low-nl2m-4}, we have that $\lambda^{2m-1}{F}_{-}^{\alpha ,l}({\lambda }^{2m})W_{1,1}^-(\lambda, x)U_{0,0}^\pm(\lambda, y)$ satisfies \eqref{eq-symbol-n-1}. Note that $t^{-\frac{n}{2m}}||x|\mp|y||\le 1$, then using \eqref{eq-osc-l-1} in Lemma \ref{lm2.4}, it follows that  
		\begin{equation*}
			\left | K_{n,2}^{l}(t,x,y)\right|\lesssim t^{-\frac{n}{2m}}\lesssim t^{-\frac{n}{2m}}(1+t^{-\frac{1}{2m}}|x-y|)^{-\frac{n(m-1)}{2m-1}}.   
		\end{equation*}
		
		$\bullet$ {\emph{Subcase 2 : $n\ge 4m$.}}
		
		In this subcase, it suffices to replace $W_{1,1}^-(\lambda, x)$ by $W_{0,s}^-(\lambda, x)$ for $s=0,1$ in \eqref{eq-kn2-2m}. By  \eqref{eq-res-low-nl2m-2} and \eqref{eq-res-low-nl2m-3}, it follows that $\lambda^{2m-1}{F}_{-}^{\alpha ,l}({\lambda }^{2m})W_{0,s}^-(\lambda. x)U_{0,0}^\pm(\lambda, y)$ still satisfies the estimate \eqref{eq-symbol-n-1}, thus we have the desired bound.
		
		Therefore the proof for $n\ge 2m$ is finished.
		\qed
		
		\subsection{The proof of Theorem \ref{thm3.2} for $1\leq n<2m$}\label{sec4.3}\
		
		Compared to the case $n>2m$, the situation for $n<2m$ is much more complicated due to the expansion of the free  resolvent (see Lemma \ref{lm2.0}).
		To proceed, we need the following lemma which deals with functions exhibiting specific vanishing moments. The proof can be found in \cite[Lemma 4.2]{CHHZ}, we  have included  a proof in Appendix \ref{app-3} for the sake of completeness.
		
		\begin{lemma}\label{lm-4.2-CHHZ}
			Assume that $\varphi (x) \in {L}_{\sigma }^{2}(\mathbb{R}^n)$,  where $\sigma >\frac{n}{2}+\mathbf{k}_0$, $\mathbf{k}_0\in\N^{+}$ and
			\begin{equation*}
				\langle x^{\beta},\varphi(x)\rangle = 0,\,\,\,\mbox{for all}\,\, \,0\le |\beta| \le \mathbf{k}_0-1.	\end{equation*} 
			Then, we have
			\begin{equation}\label{eqA.1.1}
				\left | \int_{\R^n} |x-y|^r\varphi (y)\d y \right |\lesssim  \| \varphi \|_{{L}_{\sigma }^{2}(\mathbb{R}^n)}\left \langle x\right \rangle^{r-\mathbf{k}_0},\ \ \ \ \ x \in \mathbb{R}^n,
			\end{equation}
			provided $r \in \mathbb{Z}$ and $-\frac{n-1}{2} \le r \le \mathbf{k}_0-1$.
		\end{lemma}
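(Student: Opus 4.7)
The plan is to exploit the $\mathbf{k}_0$ vanishing moments of $\varphi$ by subtracting from $|x-y|^r$ its Taylor polynomial of degree $\mathbf{k}_0-1$ as a function of $y$, expanded around $y=0$. Explicitly, I will write
\begin{equation*}
\int_{\R^n}|x-y|^r\varphi(y)\,\d y \;=\; \int_{\R^n}\mathcal{R}(x,y)\,\varphi(y)\,\d y, \quad \mathcal{R}(x,y):=|x-y|^r-\sum_{|\beta|\le \mathbf{k}_0-1}\frac{y^\beta}{\beta!}\,\partial^\beta_y|x-y|^r\Big|_{y=0},
\end{equation*}
which is legitimate precisely because $\int y^\beta\varphi(y)\,\d y=0$ for $|\beta|\le \mathbf{k}_0-1$. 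I then split the integration into the near-origin region $|y|\le |x|/2$ and the far region $|y|> |x|/2$ and estimate each piece separately. The case $|x|\le 1$ is handled directly by Cauchy--Schwarz and Lemma \ref{lm2.8}, producing $|I(x)|\lesssim \|\varphi\|_{L^2_\sigma}\sim\|\varphi\|_{L^2_\sigma}\langle x\rangle^{r-\mathbf{k}_0}$; the local integrability of $|x-y|^{2r}$ needed to apply that lemma is exactly what the hypothesis $r\ge-(n-1)/2$ secures. So one may assume $|x|\ge 1$, hence $\langle x\rangle\sim|x|$.

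On $|y|\le|x|/2$ one has $|x-ty|\sim|x|$ for every $t\in[0,1]$, so the integral form of Taylor's remainder together with the pointwise bound $|\partial^\beta_y|x-z|^r|\lesssim|x-z|^{r-|\beta|}$ yields $|\mathcal{R}(x,y)|\lesssim |x|^{r-\mathbf{k}_0}|y|^{\mathbf{k}_0}$. Cauchy--Schwarz against $\|\varphi\|_{L^2_\sigma}\langle y\rangle^{-\sigma}$, followed by the integrability of $\langle y\rangle^{2\mathbf{k}_0-2\sigma}$ (guaranteed by $\sigma>n/2+\mathbf{k}_0$), then produces the required $\|\varphi\|_{L^2_\sigma}\langle x\rangle^{r-\mathbf{k}_0}$.

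On $|y|>|x|/2$ I would drop the cancellation and treat the two summands of $\mathcal{R}(x,y)$ separately. The Taylor polynomial is bounded termwise by $\sum_{|\beta|\le\mathbf{k}_0-1}|x|^{r-|\beta|}|y|^{|\beta|}$, and Cauchy--Schwarz in the exterior region yields $|x|^{r-|\beta|}\langle x\rangle^{|\beta|-\sigma+n/2}\lesssim\langle x\rangle^{r-\mathbf{k}_0}$, once again using $\sigma>n/2+\mathbf{k}_0$. The main obstacle is the remaining piece $\int_{|y|>|x|/2}|x-y|^r|\varphi(y)|\,\d y$ when $r<0$, because the singularity at $y=x$ sits inside the integration domain and the weight $\langle y\rangle^{-\sigma}$ alone cannot control it; my plan is to apply Cauchy--Schwarz and then Lemma \ref{lm2.8} with parameters $k=-2r$ and $l=2\sigma$, noting that $0<k<n$ is precisely the content of the hypothesis $r\ge -(n-1)/2$, which delivers $\langle x\rangle^{r-\sigma+n/2}\lesssim \langle x\rangle^{r-\mathbf{k}_0}$ and completes the proof.
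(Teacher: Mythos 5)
Your Taylor-expansion strategy is genuinely different from the paper's and, modulo one missing step, it works. The paper does not Taylor-expand at all: for $r\ge 0$ even it expands $|x-y|^r$ as a polynomial in $x,y$ and kills the low-degree monomials with the moment hypothesis; for $r\ge 0$ odd it decomposes $|x-y|^r=(|x-y|^r-|x||x-y|^{r-1})+|x||x-y|^{r-1}$ and recursively manipulates the first term via an algebraic identity involving $(|x|+|x-y|)^{-1}$; and for $r<0$ it invokes the identity $|x-y|^{-k}=\sum_l C_l\frac{(|x|-|x-y|)^{\mathbf k_0}}{|x|^{\mathbf k_0+l}|x-y|^{k-l}}+\sum_l C_l'\frac{(|x|-|x-y|)^l}{|x|^{k+l}}$. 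Your single Taylor remainder with a $\{|y|\le|x|/2\}/\{|y|>|x|/2\}$ split treats all three cases at once, which is cleaner, at the price of needing the smoothness of $|x-y|^r$ in $y$ away from $y=x$ (harmless here since $y=x$ lies outside $\{|y|\le|x|/2\}$ for $|x|\ge 1$).

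The gap is in the far-region treatment of the term $\int_{|y|>|x|/2}|x-y|^r|\varphi(y)|\,\d y$ when $r<0$. You write that Cauchy--Schwarz followed by Lemma \ref{lm2.8} with $k=-2r$, $l=2\sigma$ ``delivers $\langle x\rangle^{r-\sigma+n/2}$''. But Lemma \ref{lm2.8} gives $\langle x\rangle^{-\min\{k,\,k+l-n\}}$, and here $k+l-n=-2r+2\sigma-n>-2r=k$ (since $\sigma>n/2$), so the minimum is $k=-2r$ and the lemma as stated yields only $\langle x\rangle^{r}$ after the square root --- not the bound you claim, and not nearly strong enough. In other words, you are reading off the wrong branch of the min. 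The conclusion you want \emph{is} true for the integral restricted to $|y|>|x|/2$, but it does not drop out of Lemma \ref{lm2.8} applied over $\R^n$. The fix is a one-line weight split: on $\{|y|>|x|/2\}$ write $\langle y\rangle^{-2\sigma}\lesssim\langle x\rangle^{-2\mathbf k_0}\langle y\rangle^{-2(\sigma-\mathbf k_0)}$, then apply Lemma \ref{lm2.8} with $k=-2r$ and $l=2(\sigma-\mathbf k_0)$ (still admissible since $\sigma-\mathbf k_0>n/2$ and $0<-2r<n$), obtaining
\begin{equation*}
\int_{|y|>|x|/2}|x-y|^{2r}\langle y\rangle^{-2\sigma}\,\d y\lesssim\langle x\rangle^{-2\mathbf k_0}\langle x\rangle^{2r},
\end{equation*}
and hence $\langle x\rangle^{r-\mathbf k_0}$ after Cauchy--Schwarz, exactly as required. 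With that insertion the argument closes.
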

		
		By invoking Lemma \ref{lm-4.2-CHHZ},  we establish the following proposition concerning the property of $R_0^{\pm}(\lambda^{2m}) \varphi$, which depends on the order of the vanishing  moment for $\varphi$ and  plays a key role in the proof. 
		
		\begin{proposition}\label{lm-n<2m-odd-vanish}
			Let $n<2m$ and let $\varphi$ satisfy Assumption \ref{assumption-1}. If  there exists $\mathbf{k}_0 \in \N_0$, $0\le \mathbf{k}_0 \le  m+1-[\frac{n+1}{2}]$ such that
			$$\int_{{\mathbb R}^{n}}^{}{x}^{\beta}\varphi (x)dx=0,\quad \text{for all} \,\,\, \ 0 \le |\beta|<\mathbf{k}_0.$$
			Then we have the following expression
			\begin{equation}\label{eq3.33.0}
				\int_{\mathbb{R}^n } R_0^{\pm}(\lambda^{2m})(x-y) \varphi(y)\d y=e^{ \pm \i\lambda|x|}W_{1,2}^\pm(\lambda, x),\,\,\, 0<\lambda<1.
			\end{equation}
			Moreover,  for $l=0,\, 1$, $W_{1,2}^\pm$ satisfies 
			\begin{equation}\label{eq3.33.1}
				\left| \partial_\lambda ^l W_{1,2}^\pm(\lambda, x) \right|\lesssim \min\left\{\lambda^{n-2m+\mathbf{k}_0-l},\, \lambda^{\frac{n+1}{2}-2m+\mathbf{k}_0-l}\langle x \rangle^{-\frac{n-1}{2}}\right\},\,\,\, 0<\lambda<1,
			\end{equation}
			except for  $n=2m-2$ and $\mathbf{k}_0=2$. While in this exceptional case, $W_{1,2}^\pm$ satisfies 
			\begin{equation}\label{eq3.33.1-1129}
				\left| \partial_\lambda ^l W_{1,2}^\pm(\lambda, x) \right|\lesssim \min\left\{\lambda^{-l}|\log{\lambda}|,\, \lambda^{-\frac{n-1}{2}-l}\langle x \rangle^{-\frac{n-1}{2}}\right\},\,\,\, 0<\lambda<1.
			\end{equation}
		\end{proposition}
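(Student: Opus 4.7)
The plan is to define $W_{1,2}^\pm(\lambda, x) := e^{\mp\i\lambda|x|}\int_{\R^n} R_0^\pm(\lambda^{2m})(x-y)\varphi(y)\,\d y$, making \eqref{eq3.33.0} tautological, and to establish the two bounds inside the $\min$ in \eqref{eq3.33.1} separately. The first bound $\lambda^{n-2m+\mathbf{k}_0-l}$ is sharper precisely when $\lambda\langle x\rangle \lesssim 1$, and the second $\lambda^{\frac{n+1}{2}-2m+\mathbf{k}_0-l}\langle x\rangle^{-\frac{n-1}{2}}$ when $\lambda\langle x\rangle \gtrsim 1$; the two regimes call for different representations of $R_0^\pm$, namely the low-energy expansion of Lemma \ref{lm2.0} for the first and the Hankel-type asymptotic of Lemma \ref{lm-expansion-high}(i) for the second, with the vanishing moment hypothesis on $\varphi$ playing the decisive role in both.

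For the first bound, I would substitute the expansion from Lemma \ref{lm2.0}. Each polynomial term $a_j^\pm\lambda^{n-2m+2j}|x-y|^{2j}$ is expanded via \eqref{eq-expansion-x-y} into monomials $x^\alpha y^\beta$; the hypothesis $\int y^\beta\varphi(y)\,\d y = 0$ for $|\beta| < \mathbf{k}_0$ annihilates every contribution with $|\beta| < \mathbf{k}_0$, forcing $2j \ge |\beta| \ge \mathbf{k}_0$ and producing at best a factor $\lambda^{n-2m+2\lceil\mathbf{k}_0/2\rceil} \lesssim \lambda^{n-2m+\mathbf{k}_0}$ since $0 < \lambda < 1$. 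The terms $b_0|x-y|^{2m-n}$ (and its log-twin in even $n$) and the remainder $\lambda^{n-2m}r_{2m-n+1}^\pm(\lambda|x-y|)$ are controlled via Lemma \ref{lm-4.2-CHHZ} together with \eqref{eq3.5.0-rem}. For $l = 1$, the derivative of $e^{\mp\i\lambda|x|}$ contributes a factor $|x|$, which in this regime is controlled by $\lambda^{-1}$, reproducing the desired $\lambda^{n-2m+\mathbf{k}_0-1}$.

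For the second bound, I would substitute the representation from Lemma \ref{lm-expansion-high}(i) and factor $e^{\pm\i\lambda|x-y|} = e^{\pm\i\lambda|x|}\cdot e^{\pm\i\lambda(|x-y|-|x|)}$, so that the outer $e^{\mp\i\lambda|x|}$ cancels and
$$W_{1,2}^\pm(\lambda, x) = \int_{\R^n} e^{\pm\i\lambda(|x-y|-|x|)}\, \frac{\lambda^{\frac{n+1}{2}-2m}}{|x-y|^{\frac{n-1}{2}}}\, U_1^\pm(\lambda|x-y|)\,\varphi(y)\,\d y.$$
The residual phase has $\lambda$-derivatives bounded by $|y|^l$, absorbed by the decay of $\varphi$, and $|x-y|^{-\frac{n-1}{2}}$ integrates against $\langle y\rangle^{-\delta}$ via Lemma \ref{lm2.8} to produce $\langle x\rangle^{-\frac{n-1}{2}}$. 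To manufacture the extra $\lambda^{\mathbf{k}_0}$ gain, I would split the $y$-integration via a smooth cutoff into $|y|\le |x|/2$ and its complement: on the first region, $|x-sy|\ge |x|/2$ for all $s\in[0,1]$, permitting a Taylor subtraction of the full amplitude in $y$ about $y = 0$ of order $\mathbf{k}_0 - 1$, whose integral against $\varphi$ vanishes by the moment hypothesis; the integral-form remainder contributes a $|y|^{\mathbf{k}_0}$ factor, and each $y$-derivative of the amplitude produces either $\lambda$ (from the oscillatory factors and from $U_1^\pm(\lambda|x-y|)$) or $|x-y|^{-1}\lesssim\lambda$ (since $\lambda|x-y|\gtrsim 1$ here), so $\mathbf{k}_0$ derivatives yield the desired $\lambda^{\mathbf{k}_0}$. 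The complementary region $|y|>|x|/2$ is absorbed by the decay $\langle y\rangle^{-\delta}\lesssim\langle x\rangle^{-\delta}$.

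The exceptional case $n = 2m-2$, $\mathbf{k}_0 = 2$ (which forces $n$ even) is singled out because the term $b_0|x-y|^{2m-n}\log(\lambda|x-y|)$ in Lemma \ref{lm2.0}(ii) becomes $b_0|x-y|^2\log(\lambda|x-y|) = b_0|x-y|^2\log\lambda + b_0|x-y|^2\log|x-y|$; expanding $|x-y|^2$ and using $\int\varphi\,\d y = 0$ and $\int y_i\varphi\,\d y = 0$ reduces the $\log\lambda$ piece to $b_0(\log\lambda)\int|y|^2\varphi(y)\,\d y$, which is generically nonzero and produces the $|\log\lambda|$ factor in \eqref{eq3.33.1-1129}; the second bound in \eqref{eq3.33.1-1129} does not pick up a log because it is derived from the uniform high-frequency representation. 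The main technical obstacle I anticipate is the bookkeeping in the Taylor remainder argument for the large-$x$ regime: ensuring that $\mathbf{k}_0$ $y$-derivatives on the amplitude produce exactly $\lambda^{\mathbf{k}_0}$ without degrading the $\langle x\rangle^{-\frac{n-1}{2}}$ factor, and tracking the $\log\lambda$ contribution consistently through both $l = 0$ and $l = 1$ in the exceptional case.
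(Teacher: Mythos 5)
Your overall strategy follows the paper: split into the regimes $\lambda\langle x\rangle\lesssim 1$ and $\lambda\langle x\rangle\gtrsim 1$, use the low-energy resolvent expansion in the first and the Hankel asymptotic (Lemma~\ref{lm-expansion-high}(i)) in the second, and gain $\lambda^{\mathbf{k}_0}$ from the vanishing moments by a Taylor argument. For the large-$\lambda\langle x\rangle$ regime, Taylor-expanding the full amplitude in $y$ about $y=0$ after a spatial cutoff is a workable alternative to the paper's one-variable Taylor expansion of $e^{\pm\i z}U_1^\pm(z)$ in $z$ about $z_0=\lambda|x|$; the bookkeeping is heavier but the idea is sound.

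There is, however, a concrete gap in your treatment of the small-$\lambda\langle x\rangle$ regime. You propose to substitute the \emph{untruncated} expansion of Lemma~\ref{lm2.0} and bound the term $b_0|x-y|^{2m-n}$ (and its $\log$ twin for even $n$) by Lemma~\ref{lm-4.2-CHHZ}. But Lemma~\ref{lm-4.2-CHHZ} requires $r\le\mathbf{k}_0-1$ for $r=2m-n$, i.e.\ $\mathbf{k}_0\ge 2m-n+1$, and this is \emph{incompatible} with the hypothesis $\mathbf{k}_0\le m+1-[\frac{n+1}{2}]$ for every $n<2m$: for odd $n$ the hypothesis forces $\mathbf{k}_0\le\frac{2m-n+1}{2}<2m-n+1$, and for even $n$ it forces $\mathbf{k}_0\le\frac{2m-n+2}{2}<2m-n+1$. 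So the cited lemma never applies to the $b_0$ term. The underlying bound $\big|\int|x-y|^{2m-n}\varphi\,\d y\big|\lesssim\langle x\rangle^{2m-n-\mathbf{k}_0}$ is true, but it requires a separate Taylor/decomposition argument of the same flavor as Case~2 of the proof of Lemma~\ref{lm-4.2-CHHZ}, adapted to $r>\mathbf{k}_0-1$. A related and subtler problem arises for even $n$: splitting $\log(\lambda|x-y|)=\log\lambda+\log|x-y|$ and estimating each term separately produces a spurious $|\log\lambda|$ factor in \emph{all} even cases, not just the exceptional one. The apparent $\log$-loss cancels only when one keeps $\log(\lambda|x-y|)$ intact (observe $z|\log z|\lesssim 1$ for $z=\lambda|x-y|\le 1$), and this cancellation is precisely what distinguishes the exceptional case $n=2m-2$, $\mathbf{k}_0=2$ (the leading log piece does not see any $x$-growth to trade against the $\log$) from, say, $n=2m-2$, $\mathbf{k}_0=1$ (where it does). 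The paper sidesteps both issues at once by quoting Lemma~\ref{lem-expansion-with-theta} with $\theta=\mathbf{k}_0$: the truncated expansion kills every polynomial term by the moment conditions and hides the $b_0$ term and the $\log$ inside a remainder $r_{\mathbf{k}_0}^\pm$ whose derivative bounds \eqref{eq-est-rthe-1} already encode the cancellation, so only a clean one-variable Taylor expansion of $r_{\mathbf{k}_0}^\pm$ about $z_0=\lambda|x|$ remains. You should either switch to Lemma~\ref{lem-expansion-with-theta} (with its stated restriction $\theta\le 2m-n-1$ for even $n$, which is what forces the exceptional case to be handled by hand exactly as the paper does), or supply the missing Taylor estimates for the $b_0$ and $\log$ pieces directly.
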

		\begin{proof}
			First,  \eqref{eq3.33.0} follows if we set
			\begin{equation*}
				W_{1,2}^\pm(\lambda, x)=\langle e^{\mp \i \lambda|x|}{R}_{0}^{\pm }({\lambda }^{2m})(x-\cdotp ),\varphi (\cdotp )\rangle.
			\end{equation*}
			Given that the proof of \eqref{eq3.33.1}  is  different depending on whether $\lambda\langle x\rangle \le 1$  or $\lambda\langle x\rangle> 1$, we divide the proof into two steps.
			
			\noindent {\bf Step 1: We prove  \eqref{eq3.33.1}  and \eqref{eq3.33.1-1129} when  $\lambda\langle x\rangle > 1$.}
			
			By \eqref{eq-expansion-high-1}, we have \begin{align}\label{eq-lambda x>1}
				\langle e^{\mp \i \lambda|x|}{R}_{0}^{\pm }({\lambda }^{2m})(x-\cdotp ),\varphi (\cdotp )\rangle=\left\langle \frac{{\lambda}^{\frac{n+1}{2}-2m}}{{|x-\cdot|}^{\frac{n-1}{2}}}e^{\mp \i \lambda|x|}{e}^{\pm {\rm i}\lambda|x-\cdot|}U_1^{\pm}(\lambda |x-\cdot|),\,\varphi (\cdot) \right\rangle.
			\end{align}
			Notice that \eqref{eq-expansion-high-4} implies 
			\begin{equation}\label{eq-4.92'}
				\left|  \partial_\lambda^l\left(e^{\mp \i \lambda|x|}{e}^{\pm {\rm i}\lambda|x-\cdot|}U_1^{\pm}(\lambda |x-\cdotp|)\right)\right|\lesssim_l \langle\cdot \rangle^{l} \lambda^{-l}, \quad 0<\lambda<1.
			\end{equation}
			Thus,  by\eqref{eq-lambda x>1} and \eqref{eq2.20}, we  obtain the following estimates for $|x|\le 1$ and $l=0,\, 1$:
			\begin{equation}\label{eq-|x|-small}
				\begin{aligned}
					\left|  \partial_\lambda^l\langle e^{\mp \i \lambda|x|}{R}_{0}^{\pm }({\lambda }^{2m})(x-\cdotp ),\varphi (\cdotp )\rangle \right|&\lesssim {\lambda}^{\frac{n+1}{2}-2m-l} \left|\langle {|x-\cdot|}^{-\frac{n-1}{2}},\, \langle\cdot \rangle \varphi (\cdot)\rangle \right| \\
					&\lesssim {\lambda}^{\frac{n+1}{2}-2m-l}  \langle x \rangle^{-\frac{n-1}{2}-\k_0}\\
					&\lesssim \min\left\{\lambda^{n-2m+\k_0-l},\, \lambda^{\frac{n+1}{2}-2m-l}\langle x \rangle^{-\frac{n-1}{2}}\right\}, 
				\end{aligned}
			\end{equation} 
			where in the second inequality, we applied Lemma \ref{lm-4.2-CHHZ}; and in the last inequality we used the fact that $\lambda\langle x\rangle > 1$.
			
			Now we consider the case $|x|>1$. In this case, note that $1 <\lambda \langle x \rangle\lesssim\lambda|x|$.
			Then, applying Taylor expansion to ${e}^{\pm {\rm i} z}U_1^{\pm }(z)$ at $z_0$, we deduce that
			\begin{equation*}
				\begin{split}
					{e}^{\pm {\rm i} z}U_1^{\pm }(z)
					=\displaystyle\sum_{j=0}^{\k_0-1}\frac{{(e^{\pm {\rm i}z}U_1^{\pm }(z))}^{(j)}(z_0)}{j!}{(z-z_0)}^{j}+r_{\k_0}^{\pm }(z,z_0),
				\end{split}
			\end{equation*}
			where
			$${r}_{\k_0}^{\pm }(z,z_0)=\frac{(z-z_0)^{\k_0}}{(\k_0-1)!}\int_{0}^{1}(1-u)^{\k_0-1} {(e^{\pm {\rm i}z}U_1^{\pm }(z))}^{(\k_0)}(z_0+u(z-z_0))\d u.$$
			Thus,  choosing $z=\lambda|x-\cdot|$ and  $z_0=\lambda|x|$ in \eqref{eq-lambda x>1}, we see that 
			\begin{equation*}
				\mbox{RHS of \eqref{eq-lambda x>1}}=T_1(\lambda,x)+T_2(\lambda,x)
			\end{equation*}
			where
			\begin{equation*}
				T_1(\lambda, x)=\sum_{j=0}^{\k_0-1} \left\langle \frac{{\lambda}^{\frac{n+1}{2}-2m}}{{|x-\cdot|}^{\frac{n-1}{2}}}e^{\mp \i \lambda|x|}\displaystyle\frac{{(e^{\pm {\rm i}z}U_1^{\pm }(z)}^{(j)}(\lambda|x|)}{j!}{(\lambda|x-\cdot|-\lambda|x|)}^{j},\varphi (\cdot) \right\rangle,
			\end{equation*}
			and 
			\begin{equation*}
				\begin{split}
					T_2(\lambda, x)=\left\langle \frac{{\lambda}^{\frac{n+1}{2}-2m}}{{|x-\cdot|}^{\frac{n-1}{2}}}e^{\mp \i \lambda|x|}r_{\k_0}^{\pm }(\lambda|x-\cdot|,\,\lambda|x|),\varphi (\cdot) \right\rangle.
				\end{split}
			\end{equation*}
			It follows from  the Leibniz rule that
			$${(e^{\pm {\rm i}z}U_1^{\pm }(z)}^{(l)}(z_0)=\sum_{s=0}^l (\pm {\rm i})^s\frac{l!}{s!(l-s)!}e^{\pm {\rm i}z_0}(U_1^{\pm})^{(l-s)}(z_0), \quad l\in \N_0.$$
			Then, by \eqref{eq-expansion-high-4}, we derive that for $0\le j\le \k_0-1$ and $0\le l\le 1$
			\begin{equation}\label{eq-sum term}
				\left |  \partial_\lambda^l\left [ e^{\mp \i \lambda|x|}\frac{{(e^{\pm {\rm i}z}U_1^{\pm }(z))}^{(j)}(\lambda|x|)}{j!}\right ]\right |\lesssim \lambda^{-l}
				,\,\,\,0<\lambda<1.
			\end{equation}
			
			For the term $T_1$, by invoking Lemma \ref{lm-4.2-CHHZ},  the binomial theorem, \eqref{eq-sum term} and the assumption $\lambda\left \langle x\right \rangle >1$, we obtain that for $0\le l\le 1$
			\begin{align}\label{eq-T1}
				\left | \partial_\lambda^lT_1(\lambda, x)\right |&\lesssim_l \displaystyle\sum_{j=0}^{\k_0-1} {\lambda }^{\frac{n+1}{2}-2m+j-l}{\left \langle x\right \rangle}^{-\frac{n-1}{2}+j-\k_0}\nonumber\\
				&\lesssim_l\min\left\{\lambda^{n-2m+\k_0-l},\, \lambda^{\frac{n+1}{2}-2m-l}\langle x \rangle^{-\frac{n-1}{2}}\right\}, \quad\,0<\lambda<1.
			\end{align}
			
			Regarding the term $T_2$, 
			we observe that 
			$$e^{\mp \i z_0}r_{\k_0}^{\pm }(z,\,z_0)=\sum_{s=0}^{\k_0} (\pm {\rm i})^s\frac{\k_0 \, (z-z_0)^{\k_0}}{s!(\k_0-s)!}\int_{0}^{1}(1-u)^{\k_0-1} {e^{\pm {\rm i}u(z-z_0)}(U_1^{\pm }})^{(s)}(z_0+u(z-z_0))\d u.$$
			It follows from  \eqref{eq-expansion-high-4} that when $l=0, 1$, 
			\begin{equation*}
				\left |  \partial_\lambda^l(e^{\mp \i \lambda|x|}r_{\k_0}^{\pm }\big(\lambda|x-\cdot|,\,\lambda|x|)\big)\right|\lesssim  \lambda^{\k_0-\frac12-l} |x-\cdot|^{-\frac 12} \langle\cdot \rangle^{\k_0+1},\qquad0<\lambda<1,\,\, |x|>1.
			\end{equation*}
			This implies that  for $|x|>1$ and $l=0, 1$, 
			\begin{equation*}\label{eq-claim-2}
				\begin{split}
					\left | \partial_\lambda^l T_2(\lambda, x)\right | &\lesssim \lambda^{\frac{n}{2}-2m+\k_0-l} \int_{\R^n} |x-y|^{-\frac n2} \langle y \rangle^{\k_0+1}  \varphi(y) \d y\\
					&\lesssim \lambda^{\frac{n}{2}-2m+\k_0-l}{\left \langle x\right \rangle}^{-\frac{n}{2}}\\
					&\lesssim\min\left\{\lambda^{n-2m+\k_0-l},\, \lambda^{\frac{n+1}{2}-2m-l}\langle x \rangle^{-\frac{n-1}{2}}\right\},\,\,\,0<\lambda<1,
				\end{split}
			\end{equation*}
			where in the second inequality, we used \eqref{eq2.20}.
			The above estimate, together with \eqref{eq-|x|-small}, \eqref{eq-T1} and the fact that $\lambda^{n-2m+\k_0-l}\lesssim \lambda^{n-2m+\k_0-l} |\log \lambda|$, implies \eqref{eq3.33.1} and \eqref{eq3.33.1-1129} when $\lambda\left \langle x\right \rangle>1$.

			\noindent {\bf Step 2: We prove  \eqref{eq3.33.1} and \eqref{eq3.33.1-1129} when $\lambda\langle x\rangle\le 1$.}
			
			We shall employ an alternative representation of ${R}_{0}^{\pm }({\lambda }^{2m})(x-\cdotp )$ as  stated in Lemma \ref{lem-expansion-with-theta}. In particular, we shall take $\theta=\k_0$ in \eqref{eq.2-low-odd-1}. However, recall that $\k_0 \le m+1-[\frac{n+1}{2}]$,  when $n<2m$, except for the specific case $n=2m-2$ and $\k_0=2$  we can apply the conditions of Lemma \ref{lem-expansion-with-theta};
			while  if $n=2m-2$ and $\k_0=2$, Lemma \ref{lem-expansion-with-theta} is not applicable because  $m+1-[\frac{n+1}{2}]> 2m-n-1$. Thus we need to analyze this situation separately.
			
			{\it Case 1.}  When $n<2m$ with the exception of $n=2m-2$ and $\k_0=2$, We are allowed to choose $\theta=\k_0$ in \eqref{eq.2-low-odd-1}. Consequently, this leads to the expression
			\begin{equation*}
				{R}_{0}^{\pm }({\lambda }^{2m})(x-y)=\displaystyle\sum_{j=0}^{[\frac{\k_0-1}{2}]}{a}_{j}^{\pm} {\lambda }^{n-2m+2j} {\left | x-y\right |}^{2j}+\lambda^{n-2m}{r}_{\k_0}^{\pm}(\lambda|x-y|).
			\end{equation*}
			The vanishing moment condition of $\varphi$ yields that
			\begin{equation*}
				\sum_{2j<\k_0}^{}{a}_{j}^{\pm }{\lambda }^{n-2m+2j}	\langle | x-\cdot|^{2j}, \varphi (\cdot)\rangle=0,
			\end{equation*}
			which implies that
			\begin{equation}\label{eq-lamda x<1-1}
				\langle e^{\mp \i \lambda|x|}{R}_{0}^{\pm}({\lambda }^{2m})(x-\cdot),\varphi (\cdot)\rangle=\langle e^{\mp \i \lambda|x|}\lambda^{n-2m}{r}_{\k_0}^{\pm}(\lambda|x-\cdot|),\, \varphi (\cdotp)\rangle.
			\end{equation}
			Now we apply Taylor expansion to ${{r}}_{\k_0}^{\pm}(z)$ to obtain the following expression:
			\begin{equation*}
				{r}_{\k_0}^{\pm}(z)=\displaystyle\sum_{j=0}^{\k_0-1}\frac{{({r}_{\k_0}^{\pm})}^{(j)}(z_0)}{j!}{(z-z_0)}^{j}+\tilde{r}_{\k_0}^{\pm}(z,z_0),	
			\end{equation*}
			where
			$$\tilde{r}_{\k_0}^{\pm }(z,z_0)=\frac{(z-z_0)^{\k_0}}{(\k_0-1)!}\int_{0}^{1}(1-u)^{\k_0-1} {({r}_{\k_0}^{\pm})}^{(\k_0)}(z_0+u(z-z_0))\d u.$$
			Letting $z=\lambda|x-\cdot|$ and $z_0=\lambda|x|$ in \eqref{eq-lamda x<1-1}, we see that 
			\begin{equation*}
				\mbox{RHS of \eqref{eq-lamda x<1-1}}=T_3(\lambda,x)+T_4(\lambda,x)
			\end{equation*}
			where
			\begin{equation*}
				T_3(\lambda,x)=e^{\mp \i \lambda|x|}\sum_{j=0}^{\k_0-1} {\lambda}^{n-2m+j}{\frac{{({r}_{\k_0}^{\pm})}^{(j)}(\lambda|x|)}{j!}} \left\langle {(|x-\cdot|-|x|)}^{j},\varphi (\cdot) \right\rangle,  
			\end{equation*}
			and 
			\begin{equation*}
				\begin{split}
					T_4(\lambda,x)={\lambda}^{n-2m}e^{\mp \i \lambda|x|}\left\langle \tilde{r}_{\k_0}^{\pm }(\lambda|x-\cdot|,\,\lambda|x|),\varphi (\cdot) \right\rangle.
				\end{split}
			\end{equation*}
			
			For the term $T_3$, by \eqref{eq-est-rthe-1} we obtain that when 
			$ 0\le j\le \k_0-1$ and $l=0,1$, 
			\begin{equation*}
				\left |  \partial_\lambda^l\left [ e^{\mp \i \lambda|x|}{({r}_{\k_0}^{\pm})}^{(j)}(\lambda|x|)/{j!}\right ]\right |\lesssim \lambda^{\k_0-j-l}|x|^{\k_0-j}
				,\quad 0<\lambda<1.
			\end{equation*}
			Meanwhile, by  Lemma \ref{lm-4.2-CHHZ}, we have
			\begin{equation*}
				\left|\langle (|x-\cdot|-|x|)^{j},\varphi (\cdot) \rangle\right|\le \sum_{k=0}^{j}|C_k||x|^{j-k}||\langle |x-\cdot|^{k},\varphi (\cdot) \rangle|\lesssim\langle x \rangle^{j-\k_0}.
			\end{equation*}
			The above two inequalities, as well as the expression of $T_3(\lambda,x)$, yield
			\begin{equation}\label{eq-T3}
				\left | \partial_\lambda^l T_3(\lambda,x)\right |\lesssim{\lambda }^{n-2m+\k_0-l}\lesssim \min\left\{\lambda^{n-2m+\k_0-l},\, \lambda^{\frac{n+1}{2}-2m-l}\langle x \rangle^{-\frac{n-1}{2}}\right\},
			\end{equation}
			where we  used the fact that $\lambda\langle x\rangle\le1$.
			
			For the term  $T_4$, similar to \eqref{eq-claim-2}, by using \eqref{eq-est-rthe-1}, a direct computation shows that when $\lambda \langle x\rangle \le 1$ and $l=0,1$,
			\begin{equation*}
				\left |  \partial_\lambda^l\Big( e^{\mp \i \lambda|x|}\tilde{r}_{\k_0}^{\pm }(\lambda|x-\cdot|,\,\lambda|x|)\Big)\right |\lesssim_l \lambda^{-l} \langle \cdotp \rangle^{\k_0+1},
			\end{equation*}
			where we used fact  $\big||x-y|-|x|\big|\le |y|$.
			Furthermore, when $l=0,1$, we  have 
			\begin{equation*}
				\left | \partial_\lambda^lT_4(\lambda,x)\right |\lesssim{\lambda }^{n-2m+\k_0-l}\lesssim \min\left\{\lambda^{n-2m+\k_0-l},\, \lambda^{\frac{n+1}{2}-2m-l}\langle x \rangle^{-\frac{n-1}{2}}\right\},\quad .
			\end{equation*}
			This, together with \eqref{eq-T3}, implies \eqref{eq3.33.1} when $\lambda\left \langle x\right \rangle\lesssim 1$.
			
			{\it Case 2.} The exceptional case: $n=2m-2$ and $\k_0=2$. We can deduce from the expansion  \eqref{eq.2-low-even}  that  
			\begin{equation*}
				\begin{split}
					{R}_{0}^{\pm }({\lambda }^{2m})(x-y)=a_0^\pm \lambda^{-2}+a_1^\pm |x-y|^2+b_0|x-y|^2\log(\lambda |x-y|)+\lambda^{-2}r_{3}^{\pm}(\lambda |x-y|),
				\end{split}
			\end{equation*}
			moreover, it follows from  \eqref{eq3.5.0-rem} that one has
			\begin{equation*}
				\left|\frac{{d}^{l}}{d{z}^{l}}{r}_{3}^{\pm }(z) \right|\lesssim z^{3-l},\quad\quad\,\,0\le l\le 1.
			\end{equation*}
			Since $\k_0=2$, it implies that $$\left \langle e^{\mp \i \lambda|x|}a_0^\pm \lambda^{-2}, \varphi(y) \right \rangle=0.$$ Additionally, based on  Lemma \ref{lm-4.2-CHHZ} and the assumption $\lambda\left \langle x\right \rangle\lesssim 1$, a straightforward computation yields
			\begin{equation}\label{eq-n=2m-2}
				\left | \partial_\lambda^l\left \langle e^{\mp \i \lambda|x|}|x-y|^2, \varphi(y) \right \rangle \right |\lesssim{\lambda }^{-l}\lesssim \min\left\{\lambda^{-l}|\log{\lambda}|,\, \lambda^{-\frac{n-1}{2}-l}\langle x \rangle^{-\frac{n-1}{2}}\right\}
			\end{equation}
			and 
			\begin{equation}\label{eq-n=2m-2'}
				\left | \partial_\lambda^l\left \langle e^{\mp \i \lambda|x|}\lambda^{-2}r_{3}^{\pm}(\lambda |x-y|), \varphi(y) \right \rangle \right |\lesssim \min\left\{\lambda^{-l}|\log{\lambda}|,\, \lambda^{-\frac{n-1}{2}-l}\langle x \rangle^{-\frac{n-1}{2}}\right\}
			\end{equation}
			
			It remains to consider the term $\partial_\lambda^l \left \langle e^{\mp \i \lambda|x|}|x-y|^2\log(\lambda|x-y|), \varphi(y) \right \rangle$. By applying a Taylor expansion to $\log(\lambda|x-y|)$ around $y=0$, we derive the following
			\begin{equation*}
				\log(\lambda|x-y|)=\log(\lambda|x|)+\frac{1}{\lambda|x|}(\lambda|x-y|-\lambda|x|)+r_2(\lambda|x-y|,\lambda|x|),
			\end{equation*}
			where the remainder term satisfies 
			\begin{equation*}
				r_2(\lambda|x-y|,\lambda|x|)=o\left ( \frac{1}{(\lambda|x|)^2}(\lambda|x-y|-\lambda|x|)^2 \right ), 
			\end{equation*}
			and its $l$-th derivative satisfies
			\begin{equation*}
				\left | \partial_\lambda^l r_2(\lambda|x-y|,\lambda|x|) \right |\lesssim \lambda^{-l}|x|^{-2}|y|^2.
			\end{equation*}
			
			On the one hand, using the condition $\lambda\left \langle x\right \rangle\le 1$, $\k_0=2$, and applying Lemma \ref{lm-4.2-CHHZ} along with the triangle inequality $\left| |x-y|-|x|\right |\le |y|$, we have
			\begin{equation*}
				\begin{split}
					\left | \partial_\lambda^l \left \langle e^{\mp \i \lambda|x|}|x-y|^2\log(\lambda|x|), \varphi(y) \right \rangle \right |\lesssim \left | \partial_\lambda^l \left \langle e^{\mp \i \lambda|x|}|x-y|^2|\log\lambda|, \varphi(y) \right \rangle \right |\lesssim {\lambda }^{-l}|\log \lambda|,
				\end{split}
			\end{equation*}
			\begin{equation}\label{1891}
				\left | \partial_\lambda^l \left \langle e^{\mp \i \lambda|x|}|x-y|^2|x|^{-1}(|x-y|-|x|), \varphi(y) \right \rangle \right |\lesssim {\lambda }^{-l},
			\end{equation}
			and 
			\begin{equation}\label{1894}
				\left | \partial_\lambda^l \left \langle e^{\mp \i \lambda|x|}|x-y|^2r_2(\lambda|x-y|,\lambda|x|), \varphi(y) \right \rangle \right |\lesssim {\lambda }^{-l}.
			\end{equation}
			The above three inequalities  imply
			\begin{equation}\label{eq-n=2m-2''}
				\left | \partial_\lambda^l \left \langle e^{\mp \i \lambda|x|}|x-y|^2\log(\lambda|x-y|), \varphi(y) \right \rangle \right |\lesssim {\lambda }^{-l}|\log \lambda|.
			\end{equation}
			
			On the other hand, notice that  $\log(\lambda|x|)\le (\lambda\left \langle x\right \rangle)^{-\delta}$ holds when $\lambda\left \langle x\right \rangle\le 1$ and $\delta>0$. Letting $\delta=\frac{n-1}{2}$, we deduce that
			\begin{equation*}
				\left | \partial_\lambda^l \left \langle e^{\mp \i \lambda|x|}|x-y|^2\log(\lambda|x|), \varphi(y) \right \rangle \right |\lesssim \lambda^{-\frac{n-1}{2}-l}\langle x \rangle^{-\frac{n-1}{2}}.
			\end{equation*}
			This, together with \eqref{1891}, \eqref{1894} and the fact that  ${\lambda }^{-l}\lesssim \lambda^{-\frac{n-1}{2}-l}\langle x \rangle^{-\frac{n-1}{2}}$, yields 
			\begin{equation}\label{eq-n=2m-2'''}
				\left | \partial_\lambda^l \left \langle e^{\mp \i \lambda|x|}|x-y|^2\log(\lambda|x-y|), \varphi(y) \right \rangle \right |\lesssim \lambda^{-\frac{n-1}{2}-l}\langle x \rangle^{-\frac{n-1}{2}}.
			\end{equation}
			
			By combining  \eqref{eq-n=2m-2}, \eqref{eq-n=2m-2'}, \eqref{eq-n=2m-2''} and \eqref{eq-n=2m-2'''}, we prove \eqref{eq3.33.1-1129} when $\lambda\left \langle x\right \rangle\le 1$.
			
			Therefore  the proof of the lemma is complete.
		\end{proof}
		
		We are in the position to estimate ${K}_{n}^{\pm ,l}(t,x,y)$ given by \eqref{equ-2-low}. By Proposition \ref{lm-n<2m-odd-vanish}, we rewrite  \eqref{equ-2-low} as 
		\begin{equation}\label{eq-K_n^l}
			\begin{split}
				{K}_{n}^{\pm ,l}(t,x,y)=&-\frac{m\alpha}{\pi \i}\int_{0}^{+\infty }{e}^{-{\rm i}t{\lambda }^{2m}\pm {\rm i}\lambda (|x|+|y|)}{F}_{\pm }^{\alpha ,l}({\lambda }^{2m})W_{1,2}^\pm(\lambda, x)W_{1,2}^\pm(\lambda,y){\lambda }^{2m-1}\d \lambda.
			\end{split}
		\end{equation}
		In view of the vanishing moment conditions in Lemma \ref{lm3.4} and Lemma \ref{lm3.6},  we first assume that {\it there exists a multi-index $\kappa_0$  ($|\kappa_0|=\mathbf{k}_0$) with $0\le \mathbf{k}_0\le m-[\frac{n+1}{2}]-1$ such that $\int_{{\mathbb R}^{n}}^{}{x}^{\kappa_0}\varphi (x)\d x \neq 0,$ but}
		\begin{equation}\label{eq-vani-1123}
			\int_{{\mathbb R}^{n}}^{}{x}^{\beta}\varphi (x)\d x=0,\, \, 0 \le |\beta|<\k_0.    
		\end{equation}
		We set
		\begin{equation}\label{eq-def-psi-1}
			\psi(\lambda,x )={F}_{\pm }^{\alpha ,l}({\lambda }^{2m})W_{1,2}^\pm(\lambda, x)W_{1,2}^\pm(\lambda,y){\lambda }^{2m-1},
		\end{equation}
		by \eqref{equ4-F1}, \eqref{eq3.33.1} and \eqref{eq3.33.1-1129}, we obtain that for $0\le l\le 1$,
		\begin{align}\label{eq-psi-n<2m-sub1-1124}
			\left |\partial_{\lambda}^l \psi(\lambda,x)\right |\lesssim \lambda^{n-1-l},
		\end{align}
		and
		\begin{align}\label{eq-psi-n<2m-sub1}
			{{\langle x \rangle}^{\frac{n-1}{2}}}\left | \partial_\lambda^l \psi(\lambda,x)\right |\lesssim \lambda ^{\frac{n-1}{2}-l}.
		\end{align}
		where the constant is uniformly with respect to $x$ and $\lambda$.
		
		Now we examine the kernel ${K}_{n}^{\pm ,l}(t,x,y)$ within two distinct regions: ${t}^{-\frac{1}{2m}}(|x|+|y|)\le 1$ and  ${t}^{-\frac{1}{2m}}(|x|+|y|)>1$.
		
		When ${t}^{-\frac{1}{2m}}(|x|+|y|)\le 1$,  by \eqref{eq-psi-n<2m-sub1-1124}, we
		apply (\ref{eq-osc-l-1}) with $b=n-1$ to obtain that
		\begin{equation}\label{n<2m-odd-1}
			\left |{K}_{n}^{\pm ,l}(t,x,y)\right |\lesssim(1+{t}^{\frac{1}{2m}})^{-n}\lesssim {t}^{-\frac{n}{2m}}(1+{t}^{-\frac{1}{2m}}\left | x-y\right |)^{-\frac{n(m-1)}{2m-1}},
		\end{equation}
		where we used the fact $|x-y|\le |x|+|y|$ and the assumption ${t}^{-\frac{1}{2m}}(|x|+|y|)\le 1$.
		
		While when ${t}^{-\frac{1}{2m}}(|x|+|y|)\ge 1$, by  \eqref{eq-psi-n<2m-sub1}, we apply (\ref{eq-osc-g-1}) with $b=\frac{n-1}{2}$ and derive the following estimate:
		\begin{equation}\label{n<2m-odd-2}
			\begin{split}
				\left|{K}_{n}^{\pm ,l}(t,x,y)\right|&\lesssim {t}^{-\frac{n+1}{4m}}({t}^{-\frac{1}{2m}}(\left | x\right |+\left | y\right |))^{-\frac{m-1-\frac{n-1}{2}}{n-1}}{\left \langle x\right \rangle}^{-\frac{n-1}{2}}\\
				&\lesssim {t}^{-\frac{n}{2m}}(1+{t}^{-\frac{1}{2m}}(\left | x\right |+\left | y\right |))^{-\frac{n(m-1)}{2m-1}}\\
				&\lesssim {t}^{-\frac{n}{2m}}(1+{t}^{-\frac{1}{2m}}\left | x-y\right |)^{-\frac{n(m-1)}{2m-1}},
			\end{split}
		\end{equation}
		where in the second and third inequality above, we used the fact that, given the assumption $|x|\gtrsim|y|$, the following chain of inequalities holds: $|x-y|\le |x|+|y|\lesssim |x|\lesssim\langle x \rangle $.
		
		Combining \eqref{n<2m-odd-1} and \eqref{n<2m-odd-2}, we prove the desired estimate of ${K}_{n}^{\pm ,l}(t,x,y)$ in this case.
		
		To finish the proof, we note that {\it 
			if there exists a multi-index $\kappa_0$ with $|\kappa_0|=\mathbf{k}_0=m-[\frac{n+1}{2}]$ such that $\int_{{\mathbb R}^{n}}^{}{x}^{\kappa_0}\varphi (x)\d x \neq 0,$ but
			\eqref{eq-vani-1123} holds} or {\it $\int_{{\mathbb R}^{n}}^{}{x}^{\beta}\varphi (x)\d x=0 \text{ holds for all }\ 0\le |\beta|\le m-[\frac{n+1}{2}]$.} The arguments are quite similar. Indeed, we let ${\psi}(\lambda,x)$ be given by \eqref{eq-def-psi-1}, then \eqref{eq-psi-n<2m-sub1-1124} and \eqref{eq-psi-n<2m-sub1} hold by \eqref{eq3.33.1}, \eqref{equ4-F2} and 
		\eqref{eq3.6.2}.
		Therefore the proof is complete.
		\qed


		
		\subsection{The proof of Theorem \ref{thm3.2} for $n=2m$}\
		
		In analogy with  Proposition \ref{lm-n<2m-odd-vanish}, we establish the following result for $n=2m$.
		\begin{proposition}\label{lm-n=2m-odd-vanish}
			Let  $n=2m$ and let $\varphi$ satisfy Assumption \ref{assumption-1}. Then we have the representation:
			\begin{equation}\label{eq4.54}
				\int_{\mathbb{R}^n } R_0^{\pm}(\lambda^{2m})(x-y) \varphi(y)\d y=e^{\pm \i\lambda|x|}W_{1,3}^\pm(\lambda, x), \qquad 0<\lambda<\frac{1}{2},
			\end{equation}
			where $W_{1,3}^\pm$ satisfies the  following bounds  for $0\le l\le 2$,
			\begin{equation}\label{eq3.33.8}
				{\left \langle x\right \rangle}^{\frac{n-1}{2}}\left|  \partial_\lambda^lW_{1,3}^\pm(\lambda, x)\right |\lesssim \lambda^{-\frac{n-1}{2}-l}, \qquad 0<\lambda<\frac{1}{2}.
			\end{equation}
			Moreover, if $\int_{\mathbb{R}^n}^{} \varphi(x) \d x \neq 0$, we have
			\begin{equation}\label{eq3.33.5}
				\left | \partial_\lambda^lW_{1,3}^\pm(\lambda, x)\right |\lesssim |\log \lambda| \lambda^{-l}, \qquad 0<\lambda<\frac{1}{2}.
			\end{equation}
			If $\int_{\mathbb{R}^n}^{} \varphi(x) \d x = 0$,
			\begin{equation}\label{eq3.33.7}
				\left | \partial_\lambda^lW_{1,3}^\pm(\lambda, x)\right |\lesssim \lambda^{-l}, \qquad\qquad 0<\lambda<\frac{1}{2}. 
			\end{equation}
			
		\end{proposition}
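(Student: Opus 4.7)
The plan is to set
\begin{equation*}
W_{1,3}^\pm(\lambda,x):=\langle e^{\mp\i\lambda|x|}R_0^\pm(\lambda^{2m})(x-\cdot),\varphi(\cdot)\rangle,
\end{equation*}
so that \eqref{eq4.54} holds by construction, and to establish the three derivative bounds by splitting on the size of $\lambda\langle x\rangle$, closely paralleling the two-regime argument used for Proposition \ref{lm-n<2m-odd-vanish}.

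In the regime $\lambda\langle x\rangle>1$, I would invoke the high-energy representation \eqref{eq-expansion-high-1} of Lemma \ref{lm-expansion-high}(i), which for $n=2m$ reads $R_0^\pm(\lambda^{2m})(x-y)=\lambda^{-(n-1)/2}|x-y|^{-(n-1)/2}e^{\pm\i\lambda|x-y|}U_1^\pm(\lambda|x-y|)$. Combining the phases as $e^{\mp\i\lambda|x|}e^{\pm\i\lambda|x-y|}=e^{\pm\i\lambda(|x-y|-|x|)}$ ensures that each $\partial_\lambda$ produces at worst a factor $\big||x-y|-|x|\big|\le|y|$, while the derivatives of $U_1^\pm$ are controlled by \eqref{eq-expansion-high-4}. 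Pairing the singular kernel $|x-y|^{-(n-1)/2}$ against the decaying $\varphi$ via Lemma \ref{lm2.8} yields $|\partial_\lambda^l W_{1,3}^\pm|\lesssim\lambda^{-(n-1)/2-l}\langle x\rangle^{-(n-1)/2}$, which is precisely \eqref{eq3.33.8}; since $\lambda\langle x\rangle>1$ gives $\langle x\rangle^{-(n-1)/2}\lesssim\lambda^{(n-1)/2}$, this same estimate dominates both $|\log\lambda|\lambda^{-l}$ and $\lambda^{-l}$, handling the high-energy regime in one stroke.

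In the regime $\lambda\langle x\rangle\le 1$, I would expand via Lemma \ref{lm2.0}(ii) with $n=2m$; since $m-n/2=0$ the polynomial sum collapses to the single constant $a_0^\pm$, producing
\begin{equation*}
R_0^\pm(\lambda^{2m})(x-y)=a_0^\pm+b_0\log(\lambda|x-y|)+r_1^\pm(\lambda|x-y|),
\end{equation*}
with $|\partial_\lambda^k r_1^\pm(\lambda|x-y|)|\lesssim\lambda^{1-k}|x-y|$ from \eqref{eq3.5.0-rem}. Splitting $\log(\lambda|x-y|)=\log\lambda+\log|x-y|$ decomposes $W_{1,3}^\pm$ into
\begin{equation*}
e^{\mp\i\lambda|x|}\Bigl[(a_0^\pm+b_0\log\lambda)\langle 1,\varphi\rangle+b_0\langle\log|x-y|,\varphi(y)\rangle+\langle r_1^\pm(\lambda|x-y|),\varphi(y)\rangle\Bigr].
\end{equation*}
Since $|x|\lesssim\lambda^{-1}$ in this regime, the Leibniz rule together with $\partial_\lambda^j e^{\mp\i\lambda|x|}=O(|x|^j)=O(\lambda^{-j})$ yields $|\partial_\lambda^l W_{1,3}^\pm|\lesssim|\log\lambda|\lambda^{-l}$: the first bracketed piece contributes $|\log\lambda|\lambda^{-l}$ directly, the second piece is uniformly bounded in $x$ by $1+\log\langle x\rangle\lesssim|\log\lambda|$ (since $\langle x\rangle\le\lambda^{-1}$), and the remainder contributes at most $\lambda^{-l}$ by a Leibniz expansion combined with $\lambda\langle x\rangle\le 1$. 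This gives \eqref{eq3.33.5} as well as \eqref{eq3.33.8} (the latter after observing that $\lambda\langle x\rangle\le 1$ lets the factor $\langle x\rangle^{-(n-1)/2}$ be absorbed). When $\int\varphi=0$, the first bracketed piece vanishes identically, and one rewrites $\langle\log|x-y|,\varphi\rangle=\langle\log(|x-y|/|x|),\varphi\rangle$, which is a bounded function of $x$ by the rapid decay of $\varphi$ (using $\log(|x-y|/|x|)=O(|y|/\langle x\rangle)$ when $|y|\le|x|/2$ and direct integration on the complementary region); this kills the $|\log\lambda|$ loss and establishes \eqref{eq3.33.7}.

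The main obstacle is the logarithmic singularity $b_0\log(\lambda|x-y|)$ intrinsic to the even-dimensional low-energy expansion, which is responsible for the $|\log\lambda|$ loss when $\int\varphi\ne 0$ and must be cancelled in the vanishing-moment case by subtracting $\log|x|$ inside the $\varphi$-pairing and verifying a uniform-in-$x$ residual bound. The hypothesis $m>1$ of the main theorem ensures $l=2\le 2m-(n-1)/2=m+\tfrac12$ throughout, so \eqref{eq3.5.0-rem} remains in its first (non-logarithmic) branch for all derivatives considered, and no endpoint complications arise in handling the second derivative bound required here (which is one order higher than in Proposition \ref{lm-n<2m-odd-vanish}).
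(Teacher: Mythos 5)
Your overall architecture (set $W_{1,3}^\pm=\langle e^{\mp\i\lambda|x|}R_0^\pm(\lambda^{2m})(x-\cdot),\varphi\rangle$, split on $\lambda\langle x\rangle\gtrless 1$, use Lemma~\ref{lm-expansion-high}(i) in the high-$\lambda\langle x\rangle$ regime and the even-dimensional low-energy expansion of Lemma~\ref{lm2.0}(ii) in the other) follows the paper exactly, and your high-$\lambda\langle x\rangle$ computation is correct and identical to the paper's.

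However, there is a genuine gap in your low-$\lambda\langle x\rangle$ treatment of the weighted bound \eqref{eq3.33.8} when $\int\varphi\neq 0$. You split $\log(\lambda|x-y|)=\log\lambda+\log|x-y|$, which pushes a term proportional to $(\log\lambda)\,\langle 1,\varphi\rangle$ into the expansion. That term carries \emph{no} decay in $\langle x\rangle$. Multiplying your derived bound $|\partial_\lambda^l W_{1,3}^\pm|\lesssim|\log\lambda|\,\lambda^{-l}$ by $\langle x\rangle^{\frac{n-1}{2}}$ and using $\langle x\rangle\le\lambda^{-1}$ gives at best $|\log\lambda|\,\lambda^{-\frac{n-1}{2}-l}$, which overshoots the required $\lambda^{-\frac{n-1}{2}-l}$ by a factor $|\log\lambda|$; the logarithm is not ``absorbed'' by the weight as you assert. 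The paper sidesteps this by \emph{not} separating the logarithm, instead using the asymmetric pointwise bound $|\log a|\lesssim a^{1/2}+a^{-1/2}$ applied to $a=\lambda|x-y|$ and then pairing with $\varphi$ via Lemma~\ref{lm2.8}, which produces $\lambda^{-1/2}\langle x\rangle^{-1/2}+\lambda^{1/2}\langle x\rangle^{1/2}$; the extra $\langle x\rangle^{-1/2}$ from the singular branch is precisely what makes \eqref{eq3.33.8} close (with the domination $\lesssim\lambda^{-\frac{n-1}{2}}\langle x\rangle^{-\frac{n-1}{2}}$ using $\lambda\langle x\rangle\le 1$). Your split is fine for proving \eqref{eq3.33.5} and \eqref{eq3.33.7}, but you need a different estimate for the logarithmic term to obtain \eqref{eq3.33.8}. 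A secondary, easily patched point: in the $\int\varphi=0$ case you normalize by $|x|$ rather than $\langle x\rangle$, which is problematic for $|x|\le 1$; the paper uses $\log(|x-y|/\langle x\rangle)$ to avoid the singularity at $x=0$.
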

		
		\begin{proof}
			The representation \eqref{eq4.54} follows immediately if we  set
			\begin{equation*}
				W_{1,3}^\pm(\lambda, x)=\langle e^{\mp \i \lambda|x|}{R}_{0}^{\pm }({\lambda }^{2m})(x-\cdot ),\ \varphi (\cdot )\rangle.
			\end{equation*}
			
			First, we consider the case $\lambda\langle x\rangle > 1$. Note that \eqref{eq-expansion-high-1} is still valid when $n=2m$, thus by \eqref{eq-lambda x>1} and \eqref{eq-4.92'}, we have
			\begin{equation*}
				\begin{split}
					\left|  \partial_\lambda^lW_{1,3}^\pm(\lambda, x)\right|&\lesssim {\lambda}^{\frac{n+1}{2}-2m-l} \left|\langle {|x-\cdot|}^{-\frac{n-1}{2}}, \left \langle \cdot \right \rangle\varphi (\cdot)\rangle \right| \\
					&\lesssim {\lambda}^{\frac{n+1}{2}-2m-l}  \langle x \rangle^{-\frac{n-1}{2}}\\
					&\lesssim \lambda^{-l},
				\end{split}
			\end{equation*}
			where in the last inequality, we used the assumption $\lambda\langle x\rangle > 1$.
			Moreover,  note that $\lambda^{-l}\lesssim |\log \lambda| \cdot \lambda^{-l}$ holds for $0<\lambda<\frac 12$. Thus \eqref{eq3.33.8}-\eqref{eq3.33.7} follows for $\lambda\langle x\rangle > 1$.
			
			Next, we consider the case $\lambda\langle x\rangle\le 1$. By \eqref{eq.2-low-even}, we have
			\begin{equation*}
				{R}_{0}^{\pm }({\lambda }^{2m})(x-y)=a_0^{\pm}+b_0 \log \lambda +b_0 \log |x-y|+r^{\pm}_{2m-n+1}(\lambda |x-y|),
			\end{equation*}
			where
			\begin{equation*}
				\left | \frac{d^l}{dz ^l}r^{\pm}_{2m-n+1}(z)\right |\lesssim |z|^{1-l},\ \ 0\le l\le 2, \quad z\in \R.
			\end{equation*}
			Now, we have 
			\begin{equation}\label{eq-exp-er}
				\begin{split}
					W_{1,3}^\pm(\lambda, x)&=e^{\mp \i \lambda|x|}\Big(a_0^{\pm} \int_{\mathbb{R}^n }^{} \varphi(y)\d y+b_0 \int_{\mathbb{R}^n }^{} \log (\lambda|x-y|)\varphi(y)\d y\\
					&+ \int_{\mathbb{R}^n }^{}  r^{\pm}_{2m-n+1}(\lambda |x-y|)\varphi(y)\d y\Big).
				\end{split}   
			\end{equation}
			For $l=0,1,2$, a direct computation yields 
			\begin{equation}\label{eq-est-a_0}
				\left | \partial_\lambda^l e^{\mp \i \lambda|x|}  \right|\lesssim |x|^l\lesssim  \lambda ^{-l},
			\end{equation}
			and
			\begin{equation}\label{eq-est-r_2m-n+1}
				\left |  \partial_\lambda^l \int_{\mathbb{R}^n }^{}  r^{\pm}_{2m-n+1}(\lambda |x-y|)\varphi(y)\d y\right|\lesssim  \lambda ^{-l}.
			\end{equation}
			
			
			
			We now proceed to evaluate the second term on the right hand side of \eqref{eq-exp-er}. The absolute value of the integral is bounded by
			\begin{equation*}
				\begin{split}
					\left |b_0 \int_{\mathbb{R}^n}^{} \log (\lambda |x-y|)\varphi(y) \d y\right|&\lesssim \left |\int_{\mathbb{R}^n}^{}(\lambda |x-y|)^{-\frac 12}\varphi(y) \d y\right|+\left |\int_{\mathbb{R}^n}^{} (\lambda |x-y|)^{\frac 12}\varphi(y) \d y\right|\\
					&\lesssim  \lambda^{-\frac 12}\left \langle x \right \rangle^{-\frac 12} + \lambda^{\frac 12}\left \langle x \right \rangle^{\frac 12} \\
					&\lesssim \lambda ^{-\frac{n-1}{2}}\left \langle x \right \rangle^{-\frac{n-1}{2} },
				\end{split}
			\end{equation*}
			where in the first inequality, we used the fact
			\begin{equation}\label{eq-log-in}
				|\log a|\lesssim |a|^{\frac12}+|a|^{-\frac12},\quad\,a>0,
			\end{equation}
			and in the second and the third inequality, we used \eqref{eq2.20} and the condition $\lambda\langle x\rangle\le 1$ respectively.
			Additionally, we observe that
			$$\sup_{x,y}|{\partial_\lambda ^l}\log (\lambda |x-y|)|\lesssim_l \lambda^{-l},\,\,\,l\ge1.$$ Thus, it follows that
			\begin{equation*}
				\left |  \partial_\lambda^l\left \langle b_0\log (\lambda |x-y|),\varphi(y) \right \rangle    \right|\lesssim  \lambda ^{-\frac{n-1}{2} -l}\left \langle x \right \rangle^{-\frac{n-1}{2} }
			\end{equation*}
			holds for $l=0,1,2$ and $\lambda\langle x\rangle\le 1$.
			This, together with \eqref{eq-exp-er}, \eqref{eq-est-a_0} and \eqref{eq-est-r_2m-n+1} 
			implies \eqref{eq3.33.8} for $\lambda\langle x\rangle\le 1$.
			
			Moreover, when $\int_{\mathbb{R}^n}^{} \varphi(x) \d x \neq 0$,  note that
			$$\left|\log |x-y|\right|\lesssim |x-y|^{-\frac{1}{2}}+\log(1+|x|)+\log(1+|y|),$$
			which indicates that
			\begin{equation*}
				\begin{split}
					\left | \int_{\mathbb{R}^n}^{} \log |x-y|\varphi(y) \d y\right|&\lesssim \left | \int_{\mathbb{R}^n} \left(|x-y|^{-\frac{1}{2}}+\log((1+|x|)(1+|y|))\right)|\varphi(y)| \d y\right|\\
					&\lesssim 1+\log(1+|x|) \lesssim |\log \lambda|.
				\end{split}
			\end{equation*}
			This, together with \eqref{eq-exp-er}, \eqref{eq-est-a_0} and \eqref{eq-est-r_2m-n+1}, implies \eqref{eq3.33.5} for $\lambda\langle x\rangle\le 1$.

			On the other hand, if $\int_{\mathbb{R}^n}^{} \varphi(x) \d x =0$, we have
			\begin{equation*}
				\begin{split}
					\left|\left \langle b_0\log |x-y|,\varphi (y) \right \rangle\right|&=\left|\left \langle b_0 \log\left(\frac{ |x-y|}{\langle x \rangle }\right),\varphi (y) \right \rangle\right|\\
					&\lesssim \int_{\mathbb{R}^n}^{}\left ( \frac{|x-y|}{\langle x \rangle } \right )^{\frac 12} \varphi (y) \d y + \int_{\mathbb{R}^n}^{}\left ( \frac{\langle x \rangle }{|x-y|} \right )^{\frac 12} \varphi (y) \d y \lesssim 1,
				\end{split}
			\end{equation*}
			where in the first inequality, we used \eqref{eq-log-in}
			; and in the second inequality, we used \eqref{eq2.20}. 
			Thus we obtain  \eqref{eq3.33.7} for $\lambda\langle x\rangle\le 1$.
			Therefore the proof is complete.
		\end{proof}
		
		Now we are ready to prove Theorem \ref{thm3.2} when $n=2m$. 
		Similar to the case $n<2m$, we analyze the kernel ${K}_{n}^{\pm ,l}(t,x,y)$ within two separate regions: ${t}^{-\frac{1}{2m}}(|x|+|y|)\le 1$ and  ${t}^{-\frac{1}{2m}}(|x|+|y|)>1$.
		
		When $t^{-\frac{1}{2m}}(|x|+|y|)>1$,
		we rewrite ${K}_{n}^{\pm ,l}(t,x,y)$  as  follows:
		\begin{equation}\label{eq-K_n^l'}
			\begin{split}
				{K}_{n}^{\pm ,l}(t,x,y)=&-\frac{m\alpha}{\pi \i}\int_{0}^{+\infty }{e}^{-{\rm i}t{\lambda }^{2m}\pm {\rm i}\lambda (|x|+|y|)}{F}_{\pm }^{\alpha ,l}({\lambda }^{2m})W_{1,3}^\pm(\lambda, x)W_{1,3}^\pm(\lambda,y){\lambda }^{2m-1}\d \lambda.
			\end{split}
		\end{equation}
		Set 
		\begin{equation}
			\psi(\lambda,x)={F}_{\pm }^{\alpha ,l}({\lambda }^{2m})W_{1,3}^\pm(\lambda, x)W_{1,3}^\pm(\lambda,y){\lambda }^{2m-1},
		\end{equation}
		we deduce that for $0\le l\le 2$,
		\begin{equation*}
			{{\langle x \rangle}^{\frac{n-1}{2}}}\left | \partial_\lambda^l \psi(\lambda,x )\right |\lesssim \lambda ^{\frac{n-1}{2}-l},
		\end{equation*}
		which is derived  from \eqref{eq-n=2m-F-log}, \eqref{eq3.33.8}, \eqref{eq3.33.5} if $\int_{\mathbb{R}^n}^{} \varphi(x) \d x \neq 0$ and from \eqref{eq-n=2m-F-}, \eqref{eq3.33.8}, \eqref{eq3.33.7} if $\int_{\mathbb{R}^n}^{} \varphi(x) \d x = 0$.
		Observe that this result coincides with the second inequality in \eqref{eq-psi-n<2m-sub1}. Consequently, following the same arguments as in \eqref{n<2m-odd-2},  it follows that
		\begin{equation*}
			\left| K_n^{\pm,l}(t,x,y) \right| \lesssim {t}^{-\frac{n}{2m}}(1+{t}^{-\frac{1}{2m}}|x-y|)^{-\frac{n(m-1)}{2m-1}}.
		\end{equation*}
		
		When $t^{-\frac{1}{2m}}(|x|+|y|)\le 1$,
		as per the expression \eqref{eq.l7}, it is sufficient to consider the kernels $K_{n,1}^{l}(t,x,y)$ and $K_{n,2}^{l}(t,x,y)$. Using \eqref{eq-res-low-nl2m-4-0}, they can be expressed as follows (up to constants):
		\begin{equation*}
			\begin{split}
				K_{n,1}^{l}(t,x,y)=\int_{0}^{+\infty }{e}^{-{\rm i}t{\lambda }^{2m}+\i \lambda(|x|+|y|)}({F}_{+}^{\alpha ,l}({\lambda }^{2m})-{F}_{-}^{\alpha ,l}({\lambda }^{2m})){\lambda }^{2m-1}W_{1,3}^+(\lambda, x)W_{1,3}^+(\lambda, y) {\rm d} \lambda,
			\end{split}
		\end{equation*}
		and
		\begin{equation*}
			\begin{split}
				K_{n, 2}^{l}(t,x,y)=\sum_{\pm}\mp \int_{0}^{+\infty }{e}^{-{\rm i}t{\lambda }^{2m}-\i \lambda(|x|\mp|y|)}{F}_{-}^{\alpha ,l}({\lambda }^{2m}){\lambda }^{2m-1}W_{1,3}^-(\lambda, x)U_{0,0}^\pm(\lambda,y){\rm d} \lambda .
			\end{split}
		\end{equation*}
		
		We first estimate $K_{n,1}^{l}(t,x,y)$.
		In light of  Proposition \ref{lm-n=2m-odd-vanish}, we shall examine two distinct situations:  the case where $\int_{\mathbb{R}^n}^{} \varphi(x) \d x\ne0$ and the case where it is zero.  We define
		\begin{equation*}
			\psi_1(\lambda,x)=({F}_{+}^{\alpha ,l}({\lambda }^{2m})-{F}_{-}^{\alpha ,l}({\lambda }^{2m})){\lambda }^{2m-1}W_{1,3}^+(\lambda, x)W_{1,3}^+(\lambda, y).
		\end{equation*}
		If $\int_{\mathbb{R}^n}^{} \varphi(x) \d x \neq 0$, then by \eqref{eq-n=2m-F+F-_log}, \eqref{eq3.33.5}, we deduce that
		\begin{equation}\label{eq-4.105}
			\left | \partial_\lambda^l \psi_1(\lambda,x)\right |\lesssim \lambda ^{n-1-l}.
		\end{equation}
		While if $\int_{\mathbb{R}^n}^{} \varphi(x) \d x = 0$, the estimate \eqref{eq-4.105} remains valid due to \eqref{eq-n=2m-F+_F-} and  \eqref{eq3.33.7}.
		Therefore, applying \eqref{eq-osc-l-1} with $b=n-1$, we obtain
		\begin{equation*}
			\left| K_{n,1}^{l}(t,x,y) \right|\lesssim {t}^{-\frac{n}{2m}}(1+{t}^{-\frac{1}{2m}}|x-y|)^{-\frac{n(m-1)}{2m-1}}.
		\end{equation*}
		
		Next, we estimate  $K_{n,2}^{l}(t,x,y)$.
		We define
		\begin{equation*}
			\psi^\pm_2(\lambda,x)={F}_{-}^{\alpha ,l}({\lambda }^{2m}){\lambda }^{2m-1}W_{1,3}^-(\lambda, x)U_{0,0}^\pm(\lambda,y).
		\end{equation*}
		It follows from \eqref{eq-res-low-nl2m-4-0} and Proposition \ref{lm-n=2m-odd-vanish} that $\psi^\pm_2(\lambda)$ satisfies \eqref{eq-4.105}, regardless of whether the integral $\int_{\mathbb{R}^n} \varphi(x)\d x$ vanishes or not. Consequently, we conclude that  $ K_{n,2}^{l}(t,x,y)$ satisfies \eqref{eq3.22.11} as desired.
		
		Therefore the proof when $n=2m$ is finished.
		\qed
		
		\begin{remark}\label{rmk3.4}
			We mention that during the proof of the low energy estimate \eqref{eq3.22.11}, all properties that we need for $F^{\alpha, l}_{\pm}(\lambda^{2m})$ are contained in Lemma \ref{lm3.3}-\ref{lm3.7}. These properties  will be used in the subsequent analysis of finite rank perturbations.
		\end{remark}
		
		\section{Proof of Theorem \ref{cor-finite-rank}}\label{sec5}

		In this section,  we consider the finite rank case \eqref{eq-high-per} and prove Theorem \ref{cor-finite-rank}. 
		By limiting absorption principle and the decay assumption (\ref{eq1.2}) of $\varphi_j\ (j=1,\cdots,N)$, we are allowed to  define
		\begin{equation}\label{def-fij}
			F_{N\times N}^{\pm}(\lambda^{2m})=\left(f_{ij}^{\pm}(\lambda^{2m})\right)_{N\times N},\,\,\,\text{where}   \,\,f_{ij}^{\pm}(\lambda^{2m}):=\langle R_0^{\pm}(\lambda^{2m})\varphi_j, \varphi_i\rangle,\,\,\,\,\,1\leq i, j\leq N.   
		\end{equation}
		Moreover, each $f_{ij}^{\pm}(\lambda^{2m})$ is continuous on $(0, \infty)$. Denote the boundary value of the matrix by
		\begin{equation}\label{eq-CHY4.11}
			A^{\pm}(\lambda^{2m}):=A(\lambda^{2m}\pm \i 0)=I_{N\times N}+F_{N\times N}^{\pm}(\lambda^{2m}),\qquad\,\,\,\lambda>0.
		\end{equation}
		By assumption \eqref{eq-low-spec}, it follows that $H$ only has the absolutely continuous spectrum  (see \cite[Lemma B.1]{CHZ}) .  
		Let $P=\displaystyle\sum_{j=1}^{N}P_j$, where $P_j=\langle \cdot,\,  \varphi_j\rangle \varphi_j$.
		By resolvent identity, we have the following Aronszajn-Krein type formula
		\begin{equation}\label{eq4.13}
			\begin{aligned}
				R^{\pm}(\lambda^{2m})&=R^{\pm}_{0}(\lambda^{2m})- R^{\pm}_{0}(\lambda^{2m})P\Big(I+PR^{\pm}_{0}(\lambda^{2m})P\Big)^{-1}PR^{\pm}_{0}(\lambda^{2m})\\
				&=R^{\pm}_{0}(\lambda^{2m})-\sum_{i,j=1}^N{g^{\pm}_{ij}(\lambda^{2m})}\cdot R^{\pm}_{0}(\lambda^{2m})\varphi_i\langle R^{\pm}_{0}(\lambda^{2m})\cdot,  \varphi_j\rangle, 
			\end{aligned}
		\end{equation}
		where
		\begin{equation}\label{eq4.13.1}
			R^{\pm}(\lambda^{2m})=(H_N-(\lambda^{2m} \pm \i 0))^{-1}, \quad  g_{i, j}^{\pm}(\lambda^{2m})=\Big\langle \Big(I+PR^{\pm}_{0}(\lambda^{2m})P\Big)^{-1}\varphi_j, \, \varphi_i \Big\rangle.
		\end{equation}
		Set $ G^{\pm}(\lambda^{2m}):=\left(g_{ij}^{\pm}(\lambda^{2m})\right)_{N\times N}$, then by \eqref{eq-CHY4.11}, we have 
		\begin{equation}\label{eq-gij}
			G^{\pm}(\lambda^{2m})=A^{\pm}(\lambda^{2m})^{-1}.
		\end{equation}

		Now we are ready to give a spectral representation  formula for the evolution $e^{-\i tH_N}$.  Based on the proof of \cite[Proposition 4.1]{CHZ}, we have 
		\begin{align}\label{eq-evolution-1}
			e^{-\i tH_N}-e^{-\i tH_{0}}&=-\frac{m}{\pi \i }\int_0^{\infty}e^{-\i t\lambda^{2m}}R^{+}_{0}(\lambda^{2m})P(I+PR^{+}_{0}(\lambda^{2m})P)^{-1}PR^{+}_{0}(\lambda^{2m}) \lambda^{2m-1}\,d\lambda\nonumber\\
			&+ \frac{m}{\pi \i }\int_0^{\infty}e^{-\i t\lambda^{2m}}R^{-}_{0}(\lambda^{2m})P(I+PR^{-}_{0}(\lambda^{2m})P)^{-1}PR^{-}_{0}(\lambda^{2m}) \lambda^{2m-1}\,d\lambda\\
			&:=\left(K^+_{n,N}-K^-_{n,N}\right)\nonumber.
		\end{align}
		By Stone's formula and \eqref{eq4.13}, we also have
		\begin{align}\label{eq4.13.2}
			e^{-\i tH_N}-e^{-\i tH_{0}}&=-\frac{m}{\pi \i }\sum_{i,j=1}^N\int_0^{\infty}e^{-\i t\lambda^{2m}}g^+_{ij}(\lambda^{2m})R^{+}_{0}(\lambda^{2m})\varphi_i\langle R^{+}_{0}(\lambda^{2m})\, \cdot, \varphi_j\rangle \lambda^{2m-1}\,\d\lambda\nonumber\\
			&+\frac{m}{\pi \i }\sum_{i,j=1}^N\int_0^{\infty}e^{-\i t\lambda^{2m}}g^-_{ij}(\lambda^{2m})R^{-}_{0}(\lambda^{2m})\varphi_i\langle R^{-}_{0}(\lambda^{2m})\, \cdot, \varphi_j\rangle \lambda^{2m-1}\,\d\lambda\nonumber\\
			&:=\sum_{i,j=1}^N{\left(K^+_{n, ij}-K^-_{n, ij}\right)}.
		\end{align}
		We remark that \eqref{eq4.13.2} plays the same role as \eqref{equ-rank1-st} in the  rank one case (with $\alpha=1$).
		
		As in the rank one case, we  first  choose a smooth cut-off function $\chi(\lambda)$ satisfying \eqref{equ-chi} where $\lambda_0$ is a small constant to be chosen later. Then we split \eqref{eq4.13.2} into the  high and low energy part respectively:
		\begin{align*}
			K^{\pm, h}_{n, ij}&:=-\frac{m}{\pi \i }\int_0^{\infty}e^{-\i t\lambda^{2m}}(1-\chi(\lambda))g^{\pm}_{ij}(\lambda^{2m})R^{\pm}_{0}(\lambda^{2m})\varphi_i\langle R^{\pm}_{0}(\lambda^{2m})\, \cdot, \varphi_j\rangle\lambda^{2m-1}\,\d\lambda,
		\end{align*}
		\begin{equation*}
			K^{\pm, l}_{n, ij}:=-\frac{m}{\pi \i }\int_0^{\infty}e^{-\i t\lambda^{2m}}\chi(\lambda)g^{\pm}_{ij}(\lambda^{2m})R^{\pm}_{0}(\lambda^{2m})\varphi_i\langle R^{\pm}_{0}(\lambda^{2m})\, \cdot, \varphi_j\rangle\lambda^{2m-1}\,\d\lambda.
		\end{equation*}
		
		We mention that by comparing \eqref{eq4.13.2} with \eqref{equ-rank1-st}, the main difference is that $\frac{1}{1+ F^{\pm}(\lambda^{2m})}$ is replaced by $g^{\pm}_{ij}(\lambda^{2m})$. Should we establish that $g^{\pm}_{ij}(\lambda^{2m})\ (1\le i,j \le N)$ shares the same properties with $\frac{1}{1+ F^{\pm}(\lambda^{2m})}$, the analysis could be reduced to the rank one case. However, Lemma \ref{lm3.4}-\ref{lm3.7} indicate that the vanishing moment conditions of $\varphi_j$ will affect the analysis of $g^{\pm}_{ij}(\lambda^{2m})$ in the low energy part when $n\le 2m$, so we will treat them separately.
		
		To this end, we divide the proof into the following four parts.
		\subsection{High energy estimate.}\label{sec5.1.1}\
		
		By Remark \ref{rmk3.1}, it's enough to prove that for any $\lambda_0>0$, there exists an absolute constant $C=C(N, \lambda_0, \varphi_1,\cdots, \varphi_N)>0$ such that for all $\lambda>\lambda_0$, and $1\leq i, j\leq N$, one has
		\begin{equation}\label{eq4.14}
			\begin{split}
				\left|\frac{d^l}{d\lambda^l}g_{ij}^{\pm}(\lambda^{2m})\right|\leq
				\begin{cases}C,\,\,\,\,\,\,\,\quad\quad  \mathrm{if}\,\,\,l=0,\\
					C\lambda^{1-2m}, \,\, \,\,\,\quad \mathrm{if}\,\,\,~l=1,\cdots,[\frac{n}{2m}]+1.
				\end{cases}
			\end{split}
		\end{equation}
		Indeed, we observe that similar to the proof of \eqref{equ2.1.1} in Lemma \ref{lm2.1}, it follows from the decay assumptions on $\varphi_j$ that for all $\lambda>\lambda_0$,
		\begin{equation}\label{eq4.14.1}
			\left|\frac{d^l}{d\lambda^l}f^{\pm}_{i,j}(\lambda^{2m})\right|\leq C\lambda^{1-2m}, \,\,\,\quad \mathrm{if}\,\,\,~l=0. 1,\cdots,[\frac{n}{2m}]+1.
		\end{equation}
		Recall that by \eqref{eq4.13.1},
		\begin{equation}\label{eq4.14.2}
			\left(g_{ij}^{\pm}(\lambda^{2m})\right)_{N\times N}=A^{\pm}(\lambda^{2m})^{-1}=\frac{1}{\det{(A^{\pm}(\lambda^{2m}))}}\text {adj}(A^{\pm}(\lambda^{2m})),
		\end{equation}
		where $\text{adj}(A^{\pm}(\lambda^{2m}))$ is the adjugate matrix of $A^{\pm}(\lambda^{2m})$ (given in \eqref{eq-CHY4.11}).
		Note that by assumption \eqref{eq-low-spec} one has $|\det{A^{\pm}(\lambda^{2m})}|\ge c_0>0$, thus by \eqref{eq4.14.1} and \eqref{eq4.14.2},  there are some positive constants depending on $N, \lambda_0, \varphi_1,\cdots, \varphi_N$ such that
		\begin{equation*}
			|g_{ij}^{\pm}(\lambda^{2m})|\leq C(N, \lambda_0, \varphi_1,\cdots, \varphi_N),
		\end{equation*}
		which proves \eqref{eq4.14} for $l=0$. Further, we notice that
		\begin{equation}\label{eq4.14.3}
				\frac{d^l}{d\lambda^l}A^{\pm}(\lambda^{2m})^{-1} = A^{\pm}(\lambda^{2m})^{-1}\sum_{s=1}^{l}\sum_{l_{m,1}+\cdots+l_{m,s}=l}C_{l_{m,s}}\prod_{m=1}^s\left[\frac{d^{l_{m,s}}}{d\lambda^{l_{m,s}}}(A^{\pm}(\lambda^{2m})) \cdot A^{\pm}(\lambda^{2m})^{-1}\right],
			\end{equation}
			where $C_{l_{m,s}}=C(l_{m,1},\cdots,l_{m,s})$ depends on $l_{m,1},\cdots,l_{m,s}$.
			This, together with \eqref{eq4.14.1}, yields \eqref{eq4.14} for $l=1,\cdots,[\frac{n}{2m}]+1$.
			
			\subsection{Low energy estimate for $n>2m$.}\label{sec5.1.2}\
			
			Now we turn to the low energy estimates. Before proceeding, we highlight a key fact that will be applicable for all $n\ge 1:$
			
			\textbf{Observation.} All the results about $F^{\pm}(\lambda^{2m})=\langle R^{\pm}_{0}(\lambda^{2m})\varphi,\varphi\rangle$ in Lemma \ref{lm3.3}-\ref{lm3.7} are still valid for $f^{\pm}_{i,j}=\langle R^{\pm}_{0}(\lambda^{2m})\varphi_i, \varphi_j\rangle$.
			
			In fact, this follows immediately since each $\varphi_j$ ($j=1,\cdots, N$)  has the same decay assumption as $\varphi$ in the rank one case.
			
			Let $\lambda_0$ be given in Lemma \ref{lm3.3} here and in the rest of this section. By Remark \ref{rmk3.4}, it suffices to prove that for $0\le l\le [\frac{n}{2m}]+1$,
			\begin{equation}\label{eq4.15}
				\left | \frac{d^l}{d\lambda^l}g^{\pm}_{i,j}(\lambda^{2m})\right|\lesssim\lambda ^{-l},\quad 0<\lambda<\lambda_0,
			\end{equation}
			and
			\begin{equation}\label{eq4.16}
				\left | \frac{d^l}{d\lambda^l}\left [ g^{+}_{i,j}(\lambda^{2m})-g^-_{i,j}(\lambda^{2m}) \right ] \right|\lesssim\lambda ^{n-2m-l},\quad 0<\lambda<\lambda_0.
			\end{equation}
			In order to prove these, we note that by \eqref{eq-low-spec}, \eqref{equ-n>2m-F+-} and the above  \textbf{Observation}, we have
			\begin{equation}\label{eq4.18.1}
				\left | \frac{d^l}{d\lambda^l}f^{\pm}_{i,j}(\lambda^{2m})\right|\lesssim\lambda ^{-l}, \quad 0<\lambda<\lambda_0,
			\end{equation}
			and
			\begin{equation}\label{eq4.18.2}
				\left | \frac{d^l}{d\lambda^l}\left [ f^{+}_{i,j}(\lambda^{2m})-f^-_{i,j}(\lambda^{2m}) \right ] \right|\lesssim\lambda ^{n-2m-l},\quad 0<\lambda<\lambda_0.
			\end{equation}
			Therefore the property \eqref{eq4.18.1}, together with \eqref{eq4.14.2}, \eqref{eq4.14.3} and the assumption \eqref{eq-low-spec}, yields \eqref{eq4.15}.
			And
			\eqref{eq4.16} follows from \eqref{eq4.18.2} and the following identity
			\begin{equation*}
				A^+(\lambda^{2m})^{-1}-  A^-(\lambda^{2m})^{-1}=  A^+(\lambda^{2m})^{-1}\left(A^{-}(\lambda^{2m})-A^{+}(\lambda^{2m})\right) A^-(\lambda^{2m})^{-1}.
			\end{equation*}
			This proves the low energy part for $n>2m$.

			\subsection{Low energy estimate for 
				$n<2m$ and odd $n$.}\ \label{sec-fin-rank-odd-low}
			
			By \eqref{eq.2-low-odd}, the Hilbert-Smith property and the decay assumption of $\varphi$(see \eqref{eq1.2}),
			$I+P{R}_{0}^{\pm}({\lambda }^{2m})P$ has the following expansion on $PL^2$,	\begin{equation}\label{expansion-G2j}
				\left(I+P{R}_{0}^{\pm}({\lambda }^{2m})P\right)f=\displaystyle\sum_{j=0}^{m-\frac{n+1}{2}}{a}_{j}^{\pm} {\lambda }^{n-2m+2j}P{G}_{2j}Pf+T_0f+{\mathcal{R}}_{2m-n+1}^{\pm}(\lambda)f, \quad f\in \mathcal{S}(\mathbb{R}^n),
			\end{equation}
			where
			\begin{align*}
				\begin{split}
					&{G}_{2j}f=\int_{{\mathbb{R}}^{n}}^{}{|x-y|}^{2j}f(y)dy,\ {G}_{2m-n}f=\int_{{\mathbb{R}}^{n}}^{}{|x-y|}^{2m-n}f(y)dy,\\ 
					&{\mathcal{R}}_{2m-n+1}^{\pm}(\lambda)f=\lambda^{n-2m} \int_{{\mathbb{R}}^{n}}P{r}_{2m-n+1}^{\pm}(\lambda|x-y|)Pf(y)dy,
				\end{split}
			\end{align*}
			and
			\begin{equation*}	
				T_0=I+b_0PG_{2m-n}P.
			\end{equation*}
			
			In contrast to case $n>2m$, the estimates \eqref{eq4.15} and \eqref{eq4.16} are not valid in general, and it is intricately linked to the vanishing moment condition of $\varphi_j$ for $j=1,\cdots, N$. To be more precise, the validity of these estimates depends on the fulfillment of the following vanishing condition: 
			\begin{equation}\label{eq-van-1129}
				P(x^\beta)=0, \,\,\,\mbox{for all}\,\,  0\le |\beta| \le m-\frac{n+1}{2}.  
			\end{equation}
			Thus we further divide the proof into two cases.
			
			\noindent{\it Case 1: The vanishing condition \eqref{eq-van-1129} holds.}
			In this case, we have 
			\begin{equation}\label{eq-vanish-condition-2}
				I+P{R}_{0}^{\pm}({\lambda }^{2m})P=T_0+{\mathcal{R}}_{2m-n+1}^{\pm}(\lambda) 
			\end{equation}
			holds on $PL^2$.
			Further, by \eqref{eq3.5.0-rem} and the decay assumption \eqref{eq2}, we have 
			\begin{equation}\label{eq-est-for-rinl2}
				\left\|\frac{d^l}{dz^l}{\mathcal{R}}_{2m-n+1}^{\pm}(\lambda)\right\|_{L^2-L^2}\lesssim \lambda^{1-l},\quad \text{for}\,\ l=0,\,1,\,\cdots\frac{n+1}{2}.
			\end{equation}
			Note that $T_0$ coincides with $I+P(-\Delta)^{-m}P$ on $PL^2$. Since $I+P(-\Delta)^{-m}P$ is strictly positive on $PL^2$, $T_0$ is invertible, moreover,
			it follows from \eqref{eq-est-for-rinl2} and the Neumann series that there exists a small $\lambda_0$ such that $I+P{R}_{0}^{\pm}({\lambda }^{2m})P$ is also invertible on $PL^2$ and 
			\begin{equation}\label{eq-vanish-condition-2'}
				\left\| \frac{d^l}{d\lambda^l}\left( I+P{R}_{0}^{\pm}({\lambda }^{2m})P\right)^{-1}\right\|_{L^2} \lesssim \lambda^{-l}, \quad 0<\lambda<\lambda_0.  
			\end{equation}
			Therefore, by \eqref{eq-gij}, the function $g_{i,j}^\pm(\lambda^{2m})$ satisfies 
			\begin{equation}\label{eq-finite-odd-low-gij}
				\left| \frac{d^l}{d\lambda^l}g_{i,j}^\pm(\lambda^{2m})\right|_{L^2-L^2} \lesssim \lambda^{-l}, \quad 0<\lambda<\lambda_0, 
			\end{equation}
			which is consistent with the estimate \eqref{equ4-F2} in rank one perturbation case. This shows that the proof of the estimate for low energy part $K^{\pm, l}_{n, ij}(t,x,y)$ can be reduced to the rank one case (see the arguments in Section \ref{sec4.3}).
			
			\noindent{\it Case 2: The vanishing condition \eqref{eq-van-1129} does not hold.}
			In this case, $ I+P{R}_{0}^{\pm}({\lambda }^{2m})P$ does not have a simple form as \eqref{eq-vanish-condition-2}.
			The main difficulty lies in the asymptotic expansion  of $(I+P{R}_{0}^{\pm}({\lambda }^{2m})P)^{-1}$ on $PL^2(\mathbb{R}^n)$ near the zero energy threshold. Our approach to obtain the inverse is based on a suitable decomposition of space $PL^2( \mathbb{R}^n)$.
			To this end, we introduce some notations that will be used throughout this section. First, denote
			$$J=\{0,\, 1,...,m-\frac{n+1}{2},m-\frac{n}{2}\}.$$
			Now we define some subspaces of $P{L}^{2}$. More precisely, let $Q_j: L^2\to Q_jL^2$ be the orthogonal projections with
			\begin{equation}\label{equ-5.27}
				{Q}_{0}{L}^{2}:=\mbox{span}\left\{\varphi_i, 1\le i\le N: \int{\varphi_i\d x}\ne 0\right\},
			\end{equation}
			and for $1\le j\le m-\frac{n+1}{2}$,
			\begin{align}\label{equ-5.27-1129}
				{Q}_{j}{L}^{2}:=\mbox{span}\Big\{\varphi_i, 1\le i\le N: \mbox{there exists $\kappa$ with $|\kappa|=j$}\nonumber\\
				\mbox{such that $\int{x^{\kappa}\varphi_i\d x}\ne 0$ and $\int{x^{\beta}\varphi_i\d x}=0$ for $|\beta|<j$}\Big\},
			\end{align}  
			finally,
			\begin{equation}\label{eq-projection-Qj}
				{Q}_{m-\frac n2}L^2:=\mbox{span}\left\{\varphi_i, 1\le i\le N: \int{x^{\beta}\varphi_i\d x}=0 \,\,\mbox{for all\,\, $|\beta|\le m-\frac{n+1}{2}$}\right\}.
			\end{equation}
			
			From the above definitions, we have the following decomposition \begin{equation}\label{eq-QjL2-1}
				PL^2={\bigoplus }_{j\in J}Q_jL^2,
			\end{equation}
			as well as the relation
			\begin{equation}\label{eq-QjP=Qj}
				Q_jP=Q_j.
			\end{equation}	
			We point out that the vanishing moment condition of $\varphi_j$ may vary from one to one.
			The following orthogonal property associated with $Q_j$ is needed.
			
			\begin{lemma}\label{lm-finiterank-5.1}
				For $i,j\in J$ and $l\in {\mathbb{N}}_{0}$, we have 		\begin{equation}\label{eq.5.30}
					Q_i{G}_{2l}Q_j=0,
				\end{equation}
				provided $2l\le [i+\frac{1}{2}]+[j+\frac{1}{2}]-1$.
			\end{lemma}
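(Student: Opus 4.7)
The approach is to exploit the polynomial identity \eqref{eq-expan-xy} for $|x-y|^{2l}$ together with the vanishing-moment conditions built into the definitions \eqref{equ-5.27}--\eqref{eq-projection-Qj} of the orthogonal projections $Q_i$. The core of the argument is bookkeeping: each monomial $x^\alpha y^\beta$ arising in the expansion has $|\alpha|+|\beta|=2l$, and under the hypothesis $2l\le [i+\frac12]+[j+\frac12]-1$ at least one of $|\alpha|,|\beta|$ must fall below the threshold for which the defining moment conditions of $Q_iL^2$ or $Q_jL^2$ force the corresponding integral to vanish.

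First I would record the following unifying statement about the spaces $Q_iL^2$: every $\psi\in Q_iL^2$ satisfies $\int_{\mathbb{R}^n}x^\alpha\psi(x)\,dx=0$ for every multi-index $\alpha$ with $|\alpha|<[i+\frac12]$. When $i\in\{0,1,\dots,m-\frac{n+1}{2}\}$ is an integer one has $[i+\frac12]=i$, so the defining property \eqref{equ-5.27-1129} forces all moments of orders $0,1,\dots,i-1$ to vanish; when $i=m-\frac n2$ (half-integer since $n$ is odd) one computes $[i+\frac12]=m-\frac{n-1}{2}$, so $[i+\frac12]-1=m-\frac{n+1}{2}$, which matches exactly the range of moment orders required to vanish in \eqref{eq-projection-Qj}. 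The case $i=0$ is vacuous ($[i+\frac12]=0$), as it should be.

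Next, for test functions $f,g\in\mathcal{S}(\mathbb{R}^n)$, I would expand the kernel via \eqref{eq-expan-xy} inside the definition of $G_{2l}$ and interchange sum and integrals to get
\begin{equation*}
\langle Q_iG_{2l}Q_jf,g\rangle=\sum_{|\alpha|+|\beta|=2l}C_{\alpha,\beta}(-1)^{|\beta|}\Big(\int_{\mathbb{R}^n}x^\alpha(Q_ig)(x)\,dx\Big)\Big(\int_{\mathbb{R}^n}y^\beta(Q_jf)(y)\,dy\Big).
\end{equation*}
Under the hypothesis $2l\le [i+\frac12]+[j+\frac12]-1$, for every $(\alpha,\beta)$ with $|\alpha|+|\beta|=2l$ one must have $|\alpha|<[i+\frac12]$ or $|\beta|<[j+\frac12]$, for otherwise $2l=|\alpha|+|\beta|\ge [i+\frac12]+[j+\frac12]$, contradicting the hypothesis. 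By the first step, at least one of the two moment integrals in each term vanishes, so the whole sum is zero and $Q_iG_{2l}Q_j=0$.

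There is no substantive obstacle in the argument; the only point requiring care is the unified reformulation of the three different defining conditions of $Q_i L^2$ (the case $i=0$, the generic integer case $1\le i\le m-\frac{n+1}{2}$, and the half-integer top case $i=m-\frac n2$) as the single statement "all moments of order $<[i+\frac12]$ vanish," which makes the counting against the bi-degree of the expansion of $|x-y|^{2l}$ transparent.
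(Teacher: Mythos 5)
Your proof is correct and follows essentially the same route as the paper's: both rest on the unified vanishing statement that $Q_i(x^\alpha)=0$ (equivalently, $\int x^\alpha\psi\,dx=0$ for all $\psi\in Q_iL^2$) whenever $|\alpha|\le [i+\frac12]-1$, combined with the expansion of $|x-y|^{2l}$ into monomials $x^\alpha y^\beta$ with $|\alpha|+|\beta|=2l$ and the pigeonhole observation that the hypothesis $2l\le[i+\frac12]+[j+\frac12]-1$ forces at least one factor to vanish. The only cosmetic difference is that you phrase the argument through the pairing $\langle Q_iG_{2l}Q_jf,g\rangle$ while the paper applies $Q_i$ and $Q_j$ directly to the monomials; these are equivalent since $Q_i$ is an orthogonal projection onto a finite-dimensional space spanned by rapidly decaying functions.
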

			\begin{proof}
				By the definitions of $Q_j$ above, we deduce that
				\begin{equation}\label{eq-1106-h1}
					Q_j(x^\alpha)=0,\,\,\,\mbox{if}\,\,\, |\alpha|\le [j+\frac{1}{2}]-1.   
				\end{equation}
				Since
				\begin{equation*}\label{eq-expansion-xy}
					|x-y|^{2l}=\sum_{|\alpha|+|\beta|=2l}C_{\alpha,\beta}(-1)^{|\beta|} x^{\alpha}y^{\beta}, \quad l\in \N_0,
				\end{equation*}
				we have
				\begin{equation*}
					Q_i{G}_{2l}Q_jf =  \sum_{|\alpha|+|\beta|=2l}C_{\alpha,\beta}(-1)^{|\beta|} Q_i(x^\alpha)\int_{\mathbb{R}^n}\big(Q_j(x^\beta)\big)(y)f(y)\d y.
				\end{equation*}
				Given that $|\alpha|+|\beta|=2l$, if $2l\le [i+\frac{1}{2}]+[j+\frac{1}{2}]-1$, then either $ |\alpha|\le [j+\frac{1}{2}]-1$ or $ |\beta|\le [j+\frac{1}{2}]-1$. Therefore \eqref{eq.5.30} follows from \eqref{eq-1106-h1}.
			\end{proof}
			
			
			\begin{proposition}\label{Thm-I+PRP}
				There exists a sufficiently small $\lambda_0\in (0, 1)$ such that for all $0<\lambda<\lambda_0$, the operator $I+P{R}_{0}^{\pm}({\lambda }^{2m})P$ is invertible on $PL^2$ and 	\begin{align}\label{eq-expansion-for-M-odd}
					(I+P{R}_{0}^{\pm}({\lambda }^{2m})P)^{-1}=\displaystyle\sum_{i,j\in J}^{}\lambda ^{2m-n-i-j}Q_i\Big(M_{i,j}^{\pm}+{\Gamma}_{i,j}^{\pm}(\lambda )\Big)Q_j,
				\end{align}
				where $Q_j$ ($j\in J$) is given by \eqref{equ-5.27}-\eqref{eq-projection-Qj}, and $M_{i,j}^{\pm}$, ${\Gamma}_{i,j}^{\pm}(\lambda)$ are bounded operators on $L^2$. Moreover, 
				\begin{align}\label{eq5.36}
					\left\|\frac{d^l}{d\lambda^l} {\Gamma}_{i,j}^{\pm}(\lambda)\right\|_{L^2-L^2}\lesssim\lambda^{\frac12-l}, \quad l=0,\,1,\,\cdots\frac{n+1}{2}.
				\end{align}
			\end{proposition}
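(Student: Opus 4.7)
The plan is to realise $I + PR_0^\pm(\lambda^{2m})P$ as a block operator with respect to the orthogonal decomposition $PL^2 = \bigoplus_{j \in J} Q_jL^2$ from \eqref{eq-QjL2-1}, identify its leading-order block structure in $\lambda$, and then invert via a Neumann series after a suitable diagonal rescaling. The input is the expansion \eqref{expansion-G2j}, which on $PL^2$ reads
$$
I + PR_0^\pm(\lambda^{2m})P = \sum_{j=0}^{m-\frac{n+1}{2}} a_j^\pm \lambda^{n-2m+2j} PG_{2j}P + T_0 + \mathcal{R}_{2m-n+1}^\pm(\lambda),
$$
with $T_0 = I + b_0 PG_{2m-n}P$ and, by \eqref{eq3.5.0-rem} together with the decay of the $\varphi_k$'s, $\|\partial_\lambda^l \mathcal{R}_{2m-n+1}^\pm(\lambda)\|_{L^2-L^2} \lesssim \lambda^{1-l}$ for $0 \le l \le (n+1)/2$.

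First I would examine each block $Q_i(I + PR_0^\pm(\lambda^{2m})P)Q_k$ for $i,k \in J$. Lemma \ref{lm-finiterank-5.1} forces $Q_i G_{2l} Q_k = 0$ whenever $2l \le [i+1/2]+[k+1/2]-1$, so the lowest-order surviving polynomial contribution to this block has the form $\lambda^{n-2m+i+k}$ times a $\lambda$-independent operator (blocks whose index sum has the "wrong" parity have their leading contribution pushed by one and are absorbed into remainders, which simply means the corresponding entry of the leading matrix vanishes). The block associated with the distinguished index $m-n/2$ is treated separately: the sharper vanishing built into $Q_{m-n/2}$ annihilates every $PG_{2l}P$ contribution with $l \le m-(n+1)/2$, leaving only $T_0 + \mathcal{R}_{2m-n+1}^\pm(\lambda)$, which is invertible on $Q_{m-n/2}L^2$ for small $\lambda$ by the Neumann series argument already used in Case~1 above, using the positivity of $I + b_0 PG_{2m-n}P$ via the Plancherel identity \eqref{eq-equal-laplacian-m}.

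The central step is to assemble these blocks into a single factorisation. Introducing the diagonal rescaling $D(\lambda) = \mathrm{diag}(\lambda^{\frac{n-2m}{2}+i})_{i \in J}$, with the entry for $i = m-n/2$ set to $1$, I would establish an identity of the form
$$
I + PR_0^\pm(\lambda^{2m})P = D(\lambda) \bigl(\mathcal{M}^\pm + \mathcal{E}^\pm(\lambda)\bigr) D(\lambda)
$$
on $PL^2$, where $\mathcal{M}^\pm$ is the $\lambda$-independent leading block matrix and $\mathcal{E}^\pm(\lambda)$, built from the next-order terms in the polynomial part together with $D(\lambda)^{-1}\mathcal{R}_{2m-n+1}^\pm(\lambda)D(\lambda)^{-1}$, satisfies $\|\partial_\lambda^l \mathcal{E}^\pm(\lambda)\|_{L^2-L^2} \lesssim \lambda^{1/2-l}$; the half-power gain is exactly what survives after extracting the leading $\lambda^{n-2m+i+k}$ from each block and accounting for the remainder bound on $\mathcal{R}_{2m-n+1}^\pm$. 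Comparing the entries of $\mathcal{M}^\pm$ with the coefficients \eqref{eq5.48} in the expansion of $R_0(-1)(x-y)$ supplied by Lemma \ref{lm-finiterank-5.5} identifies $\mathcal{M}^\pm$ as a concrete Gram-type operator associated to the coefficients of $R_0(-1)$. Its invertibility on $\bigoplus_{j\in J}Q_jL^2$ is the main obstacle, and I expect it to follow from Plancherel's theorem applied to $\int |\xi|^{-2m}|\widehat{u}(\xi)|^2\, d\xi$ for $u \in PL^2$, in direct analogy with \eqref{eq-equal-laplacian-m}.

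Granted this invertibility, choose $\lambda_0$ so small that $\|(\mathcal{M}^\pm)^{-1}\mathcal{E}^\pm(\lambda)\|_{L^2-L^2} < 1$ for all $0 < \lambda < \lambda_0$. A Neumann series then yields $(\mathcal{M}^\pm + \mathcal{E}^\pm(\lambda))^{-1} = (\mathcal{M}^\pm)^{-1} + \widetilde\Gamma^\pm(\lambda)$ with $\|\partial_\lambda^l \widetilde\Gamma^\pm(\lambda)\|_{L^2-L^2} \lesssim \lambda^{1/2-l}$ by the Leibniz rule. Conjugating back by $D(\lambda)^{-1}$ and reading off the $(i,k)$ entry of $Q_i\cdot Q_k$ produces precisely the expansion \eqref{eq-expansion-for-M-odd}, with $M_{i,k}^\pm$ the $(i,k)$ entry of $(\mathcal{M}^\pm)^{-1}$ and the bound \eqref{eq5.36} for $\Gamma_{i,k}^\pm(\lambda)$.
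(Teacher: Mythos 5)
Your overall strategy --- conjugating $I+PR_0^\pm(\lambda^{2m})P$ by a diagonal $\lambda$-rescaling adapted to the decomposition $PL^2 = \bigoplus_{j\in J}Q_jL^2$, identifying a $\lambda$-independent leading block operator, bounding the remainder by $\lambda^{1/2-l}$, and closing with a Neumann series --- is precisely the paper's strategy (the paper's operator $B=(\lambda^{-j}Q_j)_{j\in J}$ together with the global factor $\lambda^{2m-n}$ is the same rescaling as your $D(\lambda)^{-1}$). The block-diagonal structure you describe, with the distinguished $Q_{m-\frac n2}$ block decoupled from the rest because of parity of $i+j$, also matches the paper's matrix $D^\pm$.

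The gap is in the invertibility of the leading operator $\mathcal{M}^\pm$ (the paper's $D_0^\pm$), which you flag as ``the main obstacle'' but dispose of by proposing to apply Plancherel to $\int|\xi|^{-2m}|\widehat u(\xi)|^2\,\d\xi$ for $u\in PL^2$, ``in direct analogy with'' \eqref{eq-equal-laplacian-m}. That integral only converges when $\widehat u$ vanishes to high enough order at $\xi=0$; it is precisely the argument for the isolated $Q_{m-\frac n2}$ block (and is used in the paper for $Q_{m-\frac n2}T_0Q_{m-\frac n2}$), but for $u$ in any $Q_jL^2$ with $j<m-\frac n2$ it diverges, so it cannot establish positivity of $\mathcal{M}^\pm$ on the rest of $\bigoplus_jQ_jL^2$. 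The correct argument, which you gesture at but do not give, requires two ingredients that your sketch omits: (a) a full-rank factorization of the matrix $\widetilde D_0^\pm$ as $\mathrm{diag}(B_i)\,(A_{\alpha,\beta}^\pm)\,\mathrm{diag}(B_i)^*$, where the full row rank of each $B_i=\big(\int x^\alpha\varphi^i_s\,\d x\big)_{s,|\alpha|=i}$ is proved from the definition of $Q_i$ via a linear-independence argument; and (b) the positive definiteness of $(A_{\alpha,\beta})$, which comes from \eqref{eq5.48} identifying it as the Gram matrix of $\{\xi^\alpha/((2\pi)^{n/2}\alpha!(1+|\xi|^{2m})^{1/2})\}$ --- the weight here is $(1+|\xi|^{2m})^{-1}$ (from $R_0(-1)$), not $|\xi|^{-2m}$ --- together with a diagonal unitary congruence $E_0^\pm$ that removes the $\pm$-dependent phases in $a_j^\pm$ via \eqref{eq-def-A}. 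Without both of these steps the invertibility of $\mathcal{M}^\pm$ is not established.
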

			\begin{proof}
				The proof is based on the decomposition of the space $PL^2$ in \eqref{eq-QjL2-1}.
				More precisely, we define $B=(\lambda ^{-j}Q_j)_{j\in J}\ $: ${\bigoplus }_{j\in J}Q_jL^2\ \rightarrow L^2$ by 
				$$Bf=\displaystyle\sum_{j\in J}^{}\lambda ^{-j}Q_jf_j,$$
				where $f=(f_j)_{j\in J}\in {\bigoplus }_{j\in J}Q_jL^2$.  Without loss of generality, we assume that  $Q_j\ne0$ for all $j\in J$. Otherwise, let $J'$ be the set of indices $j$ for which $Q_j\ne 0$ and it suffices to replace $J$ by $J'$ in the following proof. 
				
				Let $B^*$ be the dual operator of $B$ and define $\mathbb{A}^{\pm}$ on ${\bigoplus }_{j\in J}Q_jL^2$ by
				\begin{align}\label{eq-A}
					\mathbb{A}^{\pm}=\lambda ^{2m-n}B^*(I+P{R}_{0}^{\pm}({\lambda }^{2m})P)B.
				\end{align}
				Notice that $B$ is a bijection between ${\bigoplus }_{j\in J}Q_jL^2$ and $PL^2$. Thus $I+P{R}_{0}^{\pm}({\lambda }^{2m})P$ is invertible on $PL^2$  if and only if $(\mathbb{A}^{\pm})^{-1}$ exists. Moreover, we have
				\begin{equation}\label{equ-I+PRP}
					(I+P{R}_{0}^{\pm}({\lambda }^{2m})P)^{-1}=\lambda ^{2m-n}B(\mathbb{A}^{\pm})^{-1}B^*,
				\end{equation}
				provided that the inverse exists.	
				
				Now we prove the existence of $(\mathbb{A}^{\pm})^{-1}$. By \eqref{expansion-G2j} and  the assumption in {\it Case 2}, we obtain
				\begin{equation}\label{eq-5.27}
					I+P{R}_{0}^{\pm}({\lambda }^{2m})P=\displaystyle\sum_{j=0}^{m-\frac{n+1}{2}}{a}_{j}^{\pm} {\lambda }^{n-2m+2j}P{G}_{2j}P+T_0+P{\mathcal{R}}_{2m-n+1}^{\pm}(\lambda)P.
				\end{equation}
				By Lemma \ref{lm-finiterank-5.1}, we have
				\begin{equation*}
					\begin{aligned}
						&\lambda^{-i}Q_i(I+P{R}_{0}^{\pm}({\lambda }^{2m})P)\lambda^{-j}Q_j\\
						=& \displaystyle\sum_{l=O_{i,j}}^{m-\frac{n+1}{2}}{a}_{l}^{\pm} {\lambda }^{n-2m+2l-i-j}Q_i{G}_{2j}Q_j+\lambda^{-i-j}Q_iT_0Q_j+\lambda^{-i-j}Q_i{\mathcal{R}}_{2m-n+1}^{\pm}(\lambda)Q_j,    
					\end{aligned}
				\end{equation*}
				where $O_{i,j}=[\frac{[i+\frac{1}{2}]+[j+\frac{1}{2}]}{2}]+1$ .
				Combining this and \eqref{eq-A}, we have
				\begin{equation}\label{eq-5.28}	\mathbb{A}^{\pm}=D^{\pm}+{({r}_{i,j}^{\pm}(\lambda))}_{i,j\in J},
				\end{equation}
				where
				$D^{\pm}$ is given by 
				\begin{equation}\label{equ3.64}
					d_{i,j}^\pm=\begin{cases}
						a_{\frac{i+j}{2}}^\pm Q_iG_{i+j}Q_j,\quad&\mbox{if}\,\,i+j\,\,\mbox{is even},\\
						Q_{m-\frac{n}{2}}T_0Q_{m-\frac{n}{2}},&\mbox{if}\,\,i=j=m-\frac{n}{2},\\
						0,&\mbox{else},
					\end{cases}
				\end{equation}
				and by \eqref{eq-est-for-rinl2} and \eqref{eq-5.27}-\eqref{eq-5.28}, for $0\le l\le \frac{n+1}{2}$, we have 
				\begin{align}\label{eq-rem-D}
					\left\|\frac{d^l}{d\lambda^l} {r}_{i,j}^{\pm}(\lambda)\right\|_{L^2-L^2}\lesssim \lambda^{\frac{1}{2}-l}, \quad \text{for}\ \lambda\in (0,\, 1).
				\end{align}
				Observe that $D^{\pm}$ can be written as the following matrix:
				\begin{equation*}
					D^{\pm}=\begin{pmatrix}
						{D}_{0}^\pm  & 0\\[0.2cm]
						0 & Q_{m-\frac{n}{2}}T_0Q_{m-\frac{n}{2}}
					\end{pmatrix}.
				\end{equation*}
				
				To proceed, we need the following 
				
				\underline{\it Claim 1}: $D^{\pm}$ is invertible on ${\bigoplus }_{j\in J}Q_jL^2$.
				
				\noindent {\it Proof of \underline{Claim 1}:} 
				First, we  note that only $d_{m-\frac{n}{2}, m-\frac{n}{2}}^\pm \neq 0$ exists in the last row or last column of $D^{\pm}$. Similar to \eqref{eq-equal-laplacian-m}, $Q_{m-\frac{n}{2}}T_0Q_{m-\frac{n}{2}} =I+ Q_{m-\frac n2}(-\Delta)^{-m}Q_{m-\frac n2}$  is invertible on $Q_{m-\frac n2}L^2$. Thus, to obtain the invertibility of $D^{\pm}$, it suffices to show the invertibility of $D_0^\pm ={({d}_{i,j}^{\pm})}_{i,j\in J\setminus\{m-\frac n2\}}$ on ${\bigoplus }_{j\in J\setminus\{m-\frac n2\}}Q_jL^2$.
				
				Next, we denote by $\sigma_j=\mbox{dim} Q_jL^2$ and let $\{\varphi^j_{i};\ i=0,1, \cdots, \sigma_j\}$ be the orthonormal basis of $Q_jL^2$, where $j\in\{0,1,\cdots,m-\frac{n+1}{2}\}$. From the definition of $Q_j$, we have 
				$$ \sigma_j\le \min\left\{N,\, \#\{\alpha \in \N_0^n;\, |\alpha|=j\}\right\}.$$
				Clearly, the invertibility of matrix operator $D_0^\pm$ is equivalent to the non-singularity of the matrix $$\widetilde{D}_0^\pm=\big(\, \widetilde{d}_{i,j}^{\pm}\, \big)_{i,j\in J\setminus\{m-\frac n2\}},$$ where the blocks $\widetilde{d}_{i,j}^{\pm}$ are defined as 
				$$\widetilde{d}_{i,j}^{\pm}=\Big(\big\langle d_{i,j}\varphi^j_{s}, \varphi^i_{t} \big\rangle\Big)_{\substack{0\le s \le \sigma_i \\ 0\le t \le  \sigma_j}}\, , \quad \text{for}\ i, \, j\in J\setminus\{m-\frac n2\}.  $$ 
				From \eqref{eq-expansion-xy} and the definition of ${d}_{i,j}^{\pm}$, if $i+j$ is even, we have that
				\begin{equation}
					\begin{aligned}
						\big\langle d_{i,j}\varphi^j_{s}, \varphi^i_{t} \big\rangle&=a_{\frac{i+j}{2}}^\pm \sum_{|\alpha|+|\beta|{i+j}}C_{\alpha,\beta}(-1)^{|\beta|}\int_{\mathbb{R}^n} x^{\alpha} \varphi^i_{s}(x)\d x \int_{\mathbb{R}^n} x^{\beta} \varphi^j_{t}(x)\d x\\
						&= \sum_{|\alpha|=i, \, |\beta|=j} A_{\alpha,\beta}^{\pm}\int_{\mathbb{R}^n} x^{\alpha} \varphi^i_{s}(x)\d x \int_{\mathbb{R}^n} x^{\beta} \varphi^j_{t}(x)\d x. \\   \end{aligned}
				\end{equation}
				Thus, the matrices $\widetilde{D}_0^\pm$ now can be expressed as
				$$
				\widetilde{D}_0^\pm=\text{diag}\big( B_i\big)_{i\in J\setminus\{m-\frac n2\}}\Big(A_{\alpha, \beta}^\pm\Big)_{|\alpha|, |\beta|\in J\setminus\{m-\frac n2\}}\text{diag}( B_i\big)_{i\in J\setminus\{m-\frac n2\}}^*,
				$$
				where the block $B_i$ is defined as
				$$B_i= \Big(\int_{\mathbb{R}^n} x^{\alpha} \varphi^i_{s}(x)\d x \Big)_{s=0,\cdots, \sigma_i,\,|\alpha|=i}$$
				which is a $(\sigma_i+1)\times \#\{\alpha \in \N_0^n;\, |\alpha|=i\}$ submatrix.
				We claim that  $\mbox{rank} (B_i)=\sigma_i+1$.
				In fact, it suffices to show that the vectors $\Big(\int_{\mathbb{R}^n} x^{\alpha} \varphi^i_{s}(x)\d x \Big)_{|\alpha|=i}$, $s=0,1, \cdots, \sigma_i$ are linearly independent. If there exist $t_0, \, t_1, \, \cdots, t_{\sigma_i}\in \mathbb{R}$ such that
				$$\sum_{s=0}^{\sigma_i}t_s\int_{\mathbb{R}^n} x^{\alpha} \varphi^i_{s}(x)\d x= \int_{\mathbb{R}^n} x^{\alpha} \left(\sum_{s=0}^{\sigma_i}t_s\varphi^i_s(x)\right)\d x=0, \quad \text{for all} \ |\alpha|=i,$$
				this, together with the definition of $Q_i$, implies that $\sum_{s=0}^{\sigma_i}t_s\varphi^i_s(x)\in (Q_iL^2)^{\perp}$ in $PL^2$. On the other hand, since $\varphi^i_s(x)\in Q_iL^2$,  we have $\sum_{s=0}^{\sigma_i}t_s\varphi^i_s(x)\in Q_iL^2$. Thus  it follows that $\sum_{s=0}^{\sigma_i}t_s\varphi^i_s(x)=0$. Moreover, since $\{\varphi^i_{s};\ s=0,1, \cdots, \sigma_i\}$ is the orthonormal basis of $Q_jL^2$, we derive that $t_0=t_1=\cdots=t_{\sigma_i}=0$. Hence the claim is proved.
				
				The above claim shows that $\text{diag}( B_i\big)_{i\in J\setminus\{m-\frac n2\}}$ is a row full rank matrix. Thus if the matrices $\Big(A_{\alpha, \beta}^\pm\Big)_{|\alpha|, |\beta|\in J\setminus\{m-\frac n2\}}$ are positive definite, then the  matrices $\widetilde{D}_0^\pm$ are  nonsingular. Note that by definition \eqref{eq-def-A} of $A_{\alpha, \beta}$, we  have 
				$$A_{\alpha, \beta}^\pm=A_{\alpha, \beta}  e^{\mp \frac{\i\pi(n-2m+|\alpha|+|\beta|)}{m}}.$$
				This yields that
				$$\widetilde{D}_0^\pm=E_0^\pm \big(A_{\alpha, \beta}\big)_{0\le |\alpha|, |\beta|\le m-\frac{n+1}{2}}E_0^\pm,$$
				where $E_0^\pm=\text{diag}\big( e^{\mp \frac{\i\pi(\frac n2-m+|\alpha|)}{m}}\big)_{0\le |\alpha|\le m-\frac{n+1}{2}}$. Thus, $\widetilde{D}_0^\pm$ is congruent to $\big(A_{\alpha, \beta}\big)_{|0\le |\alpha|, |\beta|\le m-\frac{n+1}{2}}$. Meanwhile, it follows from \eqref{eq5.48} that $\big(A_{\alpha, \beta}\big)_{0\le |\alpha|, |\beta|\le m-\frac{n+1}{2}}$ is the Gram matrix with respect to  the following vectors,
				$$\left\{\frac{\xi^\alpha}{(2\pi)^{\frac{n}{2}}\alpha!(1+|\xi|^{2m})^\frac12}\right\}_{0\le |\alpha|\le m-\frac{n+1}{2}}.$$
				This implies that the matrix $\big(A_{\alpha, \beta}\big)_{0\le |\alpha|, |\beta|\le m-\frac{n+1}{2}}$ is strictly positive definite, which shows that $\widetilde{D}_0^\pm$  is non-singular. 
				
				By \eqref{eq-5.28}, \eqref{eq-rem-D} and the above \underline{{\it Claim 1}},  we can apply the Neumann series to deduce that 
				there exists a small $\lambda_0>0$ such that $\mathbb{A}^{\pm}$ is invertible on ${\bigoplus}_{j\in J}Q_jL^2$ for $\lambda \in (0,\, \lambda_0)$ and 
				\begin{equation*}
					\big(\mathbb{A}^{\pm}\big)^{-1}=\left(I+(D^{\pm})^{-1}{({r}_{i,j}^{\pm}(\lambda))}_{i,j\in J}\right)^{-1} (D^{\pm})^{-1}=\sum_{l=0}^{\infty}\left(-(D^{\pm})^{-1}{({r}_{i,j}^{\pm}(\lambda))}_{i,j\in J}\right)^l (D^{\pm})^{-1}.   
				\end{equation*}
				This, together with \eqref{equ-I+PRP}, implies \eqref{eq-expansion-for-M-odd} and the  estimate \eqref{eq5.36}. Thus we complete the proof of the proposition.
			\end{proof}

			Thanks to Proposition \ref{Thm-I+PRP}, we are able to reduce the analysis to the rank one case. Indeed, since $P=\sum_{j\in J}Q_j$, then by \eqref{eq-expansion-for-M-odd}, there exists a small $\lambda_0$ such that for $0<\lambda<\lambda_0$, 
			\begin{equation}\label{eq-reso-expansion-new}
				\begin{aligned}
					R^{\pm}(\lambda^{2m})&= R^{\pm}_{0}(\lambda^{2m})- R^{\pm}_{0}(\lambda^{2m})P\Big(I+PR^{\pm}_{0}(\lambda^{2m})P\Big)^{-1} PR^{\pm}_{0}(\lambda^{2m}) \\  
					&= R^{\pm}_{0}(\lambda^{2m})-\sum_{i,j\in J}^{}\lambda ^{2m-n-i-j} R^{\pm}_{0}(\lambda^{2m})Q_i\Big(M_{i,j}^{\pm}+{\Gamma}_{i,j}^{\pm}(\lambda )\Big)Q_jR^{\pm}_{0}(\lambda^{2m}) \\
					&=R^{\pm}_{0}(\lambda^{2m})-\sum_{i,j\in J}\sum_{s=0}^{\sigma_i}\sum_{l=0}^{\sigma_j}g_{i,j}^{s,l,\pm}(\lambda) R^{\pm}_{0}(\lambda^{2m})\varphi^{i}_{s} \big\langle R^{\pm}_{0}(\lambda^{2m}) \cdot, \, \varphi^{j}_{l}\big\rangle,
				\end{aligned}
			\end{equation}
			where $\{\varphi^{j}_{l}\}_{0\le l\le \sigma_j}$  is the orthonormal basis of $Q_jL^2$ for each $j\in J$, and 
			$$g_{i,j}^{s,l,\pm}(\lambda)=\lambda ^{2m-n-i-j} \big\langle \Big(M_{i,j}^{\pm}+{\Gamma}_{i,j}^{\pm}(\lambda )\Big)\varphi_{i}^{s}, \, \varphi_{j}^{l}\big\rangle.$$
			\eqref{eq-reso-expansion-new} plays the same role as \eqref{A-K-for} in the  rank one case.
			Furthermore, 	by \eqref{eq5.36}, we have
			$$\left|\frac{d^k}{d\lambda^k} g_{i,j}^{s,l,\pm}(\lambda)\right|_{L^2}\lesssim\lambda^{2m-n-i-j-k}, \quad 0\le k\le \left [ \frac{n}{2m}\right ] +1\le\frac{n+1}{2},$$
			which is parallel to \eqref{equ4-F1}. Besides, observe that
			we have $$\int_{\R^n}x^{\alpha}\varphi^{j}_{l}(x) \d x=0,\qquad\mbox{for}\,\,|\alpha|<j,
			$$
			then by Proposition \ref{lm-n<2m-odd-vanish},  \eqref{eq3.33.0} and  \eqref{eq3.33.1} hold with $\varphi$ replaced by $\varphi^{j}_{l}$ and $\k_0$ replaced by $j$.
			Therefore, plugging \eqref{eq-reso-expansion-new} into  \eqref{eq-evolution-1}, we can reduce analysis of the estimate for low energy part $K^\pm_{n,N}(t,x,y)$ to the rank one case.
			
			
			\subsection{Low energy estimate for $n\le2m$ and  even $n$} \label{sec-fin-rank-odd-even}\
			
			Our approach is similar to that used in the previous subsection. However, the expression of $I+P{R}_{0}^{\pm}({\lambda }^{2m})P$ is more complicated due to the $\log$ factor in the resolvent. Indeed, by \eqref{eq.2-low-even}, it follows that
			\begin{equation}\label{expansion-G2j-even}
				I+P{R}_{0}^{\pm}({\lambda }^{2m})P=\displaystyle\sum_{j=0}^{m-\frac{n}{2}}{a}_{j}^{\pm} {\lambda }^{n-2m+2j}P{G}_{2j}P+b_0\log\lambda PG_{2m-n}P+T_0+{\mathcal{R}}_{2m-n+1}^{\pm}(\lambda)
			\end{equation}
			holds in  $PL^2$, where
			\begin{align*}
				{\mathcal{R}}_{2m-n+1}^{\pm}(\lambda)f=\lambda^{n-2m} \int_{{\mathbb{R}}^{n}}P{r}_{2m-n+1}^{\pm}(\lambda|x-y|)Pf(y)dy, \quad f\in \mathcal{S}(\mathbb{R}^n),
			\end{align*}
			and
			\begin{equation*}
				T_0f=I+b_0P\int_{{\mathbb{R}}^{n}}{|x-y|}^{2m-n}\log|x-y|Pf(y)dy, \quad f\in \mathcal{S}(\mathbb{R}^n).
			\end{equation*}
			Moreover, the remainder term ${\mathcal{R}}_{2m-n+1}^{\pm}(\lambda)$ also satisfies \eqref{eq-est-for-rinl2}.
			
			Now, similar to \eqref{eq-van-1129}, we consider the following vanishing condition:
			\begin{equation}\label{vanish-finiterank-even}
				P(x^\beta)=0\ \mbox{holds for all}\ 0\le |\beta| \le m-\frac{n}{2}.
			\end{equation}
			
			Now we divide the proof into two cases:
			
			{\it Case 1: The vanishing condition \eqref{vanish-finiterank-even} holds}.
			By repeating the arguments in \eqref{eq-vanish-condition-2}-\eqref{eq-vanish-condition-2'}, we deduce that 
			\begin{equation*}
				\left| \frac{d^l}{d\lambda^l}g_{i,j}^\pm(\lambda^{2m})\right|_{L^2-L^2} \lesssim \lambda^{-l}, \quad 0<\lambda<\lambda_0, 
			\end{equation*}
			which is consistent with \eqref{eq3.6.3} in rank one perturbation case, so we can reduce the proof of the estimate for low energy part $K^{\pm, l}_{n, ij}(t,x,y)$ to the rank one case (see the argument in Section \ref{sec4.3}). 
			
			{\it Case 2: The vanishing condition \eqref{vanish-finiterank-even} does not hold}. In order to obtain a suitable decomposition of $P{L}^{2}$ in this case, we denote
			$$\widetilde{J}=\{j: 0\le j\le m-\frac{n}{2}+1\}.$$
			Then we define ${Q}_{j}$ by
			\begin{equation}\label{equ-2626}
				{Q}_{0}{L}^{2}:=\mbox{span}\left\{\varphi_i: \int{\varphi_i\d x}\ne 0, 1\le i\le N\right\},
			\end{equation}
			and for $1\le j\le m-\frac{n}{2}$,
			\begin{align}\label{equ-2626'}
				{Q}_{j}{L}^{2}:=\mbox{span}\left\{\varphi_i: \mbox{there exists $\kappa$ with $|\kappa|=j$ such that } \int{x^{\kappa}\varphi_i\d x}\ne 0\right.\nonumber\\
				\left. \mbox{ and} \int{x^{\beta}\varphi_i\d x}=0\,\, \mbox{for all} \,\,|\beta|<j, 1\le i\le N\right\},
			\end{align}  
			finally,
			\begin{equation}\label{equ-2626''}
				{Q}_{m-\frac n2 +1}L^2:=\mbox{span}\left\{\varphi_i: \int{x^{\beta}\varphi_i\d x}=0 \,\,\mbox{for all\,\, $|\beta|\le m-\frac{n}{2}$}, 1\le i\le N\right\}.
			\end{equation}
			It immediately follows that 
			\begin{equation}\label{eq-QjL2-1-even}
				PL^2={\bigoplus }_{j\in \widetilde{J}}Q_jL^2,
			\end{equation}
			and
			\begin{equation}\label{eq-QjP=Qj-even}
				Q_jP=Q_j.
			\end{equation}
			
			We establish the following result which is analogous to Proposition \ref{Thm-I+PRP}.
			\begin{proposition}\label{Thm-I+PRP-even}
				There exists a sufficiently small $\lambda_0>0$ such that for all $0<\lambda<\lambda_0$, the operator $I+P{R}_{0}^{\pm}({\lambda }^{2m})P$ is invertible on $PL^2$ and 
				\begin{align}\label{expansion-even}
					(I+P{R}_{0}^{\pm}({\lambda }^{2m})P)^{-1}=\displaystyle\sum_{i,j\in \widetilde{J}}^{}\lambda ^{2m-n}\eta_i(\lambda)\eta_j(\lambda)Q_i\Big(M_{i,j}^{\pm}+{\Gamma}_{i,j}^{\pm}(\lambda )\Big)Q_j,
				\end{align}
				where $Q_j$ is defined in \eqref{eq-projection-Qj},
				\begin{equation}\label{eq-etaj-even}
					\eta_i(\lambda)=
					\begin{cases}
						\lambda^{-i}, &\quad \mbox{if} \,\,~ \k_0\le i<m-\frac n2,\\ 
						\lambda^{\frac n2-m} \left(a_{m-\frac n2}^\pm+b_0\log\lambda \right)^{-\frac 12}, &\quad \mbox{if} \,\,~  i=m-\frac n2,\\ 
						\lambda^{\frac n2-m}, &\quad \mbox{if}\,\, ~  i=m-\frac n2+1,
					\end{cases}
				\end{equation}
				$a_{m-\frac n2}^\pm,\, b_0$ are given in \eqref{expansion-G2j-even}, $M_{i,j}^{\pm}$, ${\Gamma}_{i,j}^{\pm}(\lambda)$ are bounded operators on $L^2$. Furthermore,  for $0\le l\le\frac{n}{2}+1$, we have 
				\begin{align}\label{eq5.36-even}
					\left\|\frac{d^l}{d\lambda^l} {\Gamma}_{i,j}^{\pm}(\lambda)\right\|_{L^2-L^2}\lesssim |\log\lambda|^{-\frac12} \lambda^{-l}, \qquad 0<\lambda<\lambda_0.
				\end{align}
			\end{proposition}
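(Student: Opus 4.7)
The plan is to mirror the proof of Proposition \ref{Thm-I+PRP}, with an adjusted scaling operator designed to absorb the logarithmic coefficient arising in \eqref{expansion-G2j-even}. Using the decomposition \eqref{eq-QjL2-1-even}, I would introduce
\begin{equation*}
B = \sum_{j \in \widetilde{J}} \eta_j(\lambda) Q_j \, : \, \bigoplus_{j \in \widetilde{J}} Q_j L^2 \to PL^2,
\end{equation*}
with $\eta_j$ as prescribed in \eqref{eq-etaj-even}, and set $\mathbb{A}^\pm = \lambda^{2m-n} B^* \bigl(I + P R_0^\pm(\lambda^{2m}) P\bigr) B$. Since $B$ is a bijection onto $PL^2$ on its nonzero $Q_jL^2$ factors, the invertibility of $I + P R_0^\pm P$ on $PL^2$ is equivalent to that of $\mathbb{A}^\pm$, and the expansion \eqref{expansion-even} follows from $(I + P R_0^\pm P)^{-1} = \lambda^{2m-n} B (\mathbb{A}^\pm)^{-1} B^*$.

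Substituting \eqref{expansion-G2j-even} into $\mathbb{A}^\pm$ and using the orthogonality Lemma \ref{lm-finiterank-5.1} together with $Q_j P = Q_j$, I would decompose $\mathbb{A}^\pm = D^\pm + (r_{i,j}^\pm(\lambda))_{i,j\in \widetilde{J}}$, where $D^\pm$ collects the leading blocks. The defining feature of the even-dimensional case is that at the position $i = j = m-\tfrac{n}{2}$ the combined coefficient $a_{m-n/2}^\pm + b_0\log\lambda$ is exactly canceled by $\lambda^{2m-n}\eta_{m-n/2}(\lambda)^2$, rendering the corresponding diagonal block a $\lambda$-independent multiple of $Q_{m-n/2} G_{2m-n} Q_{m-n/2}$; this is precisely why $\eta_{m-n/2}$ is chosen with the square root of $(a_{m-n/2}^\pm + b_0 \log \lambda)$. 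The block $i = j = m-\tfrac{n}{2}+1$ reduces to $Q_{m-n/2+1} T_0 Q_{m-n/2+1}$, since Lemma \ref{lm-finiterank-5.1} annihilates $G_{2l}$ for $l\le m-\tfrac{n}{2}$ on $Q_{m-n/2+1}L^2$. The remainder $\mathcal{R}_{2m-n+1}^\pm$, estimated via \eqref{eq-est-for-rinl2}, combined with the extra factor $\eta_i\eta_j\lambda^{2m-n}$ that supplies a $\lambda^{1/2}$ gain on generic blocks and a $|\log\lambda|^{-1/2}$ gain on blocks involving $m-\tfrac{n}{2}$, produces the bound \eqref{eq5.36-even}.

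The main obstacle is showing $D^\pm$ is invertible. The block $Q_{m-n/2+1} T_0 Q_{m-n/2+1}$ coincides with $I + b_0 Q_{m-n/2+1} G_{2m-n}(\log|\cdot|) Q_{m-n/2+1}$ restricted to $Q_{m-n/2+1}L^2$; a Plancherel-type computation analogous to \eqref{eq-equal-laplacian-m} (exploiting the vanishing moments built into $Q_{m-n/2+1}$) shows it is strictly positive, hence invertible. For the remaining block $D_0^\pm = (d_{i,j}^\pm)_{0\le i,j\le m-n/2}$ I would reuse the Gram-matrix argument of Claim 1 in Proposition \ref{Thm-I+PRP}: writing $D_0^\pm$ in the orthonormal bases of the $Q_j L^2$ factorizes it, up to a unitary diagonal, as $\operatorname{diag}(B_i) (A_{\alpha,\beta})_{0\le |\alpha|,|\beta|\le m-n/2}\operatorname{diag}(B_i)^*$, where the $B_i$ have full row rank by the construction \eqref{equ-2626}--\eqref{equ-2626''} of $Q_j$, and $(A_{\alpha,\beta})$ is the strictly positive Gram matrix from \eqref{eq5.48}. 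Once $D^\pm$ is invertible, a Neumann series in $(D^\pm)^{-1}(r_{i,j}^\pm(\lambda))$, a contraction for $\lambda$ small because each remainder entry is $O(\lambda^{1/2}) + O(|\log\lambda|^{-1/2})$, yields both the invertibility of $\mathbb{A}^\pm$ on $(0,\lambda_0)$ and the remainder bound \eqref{eq5.36-even}, completing the proof.
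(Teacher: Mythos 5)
Your overall architecture — the scaling operator $B$, the reduction to the invertibility of $D^{\pm}$, the Neumann series, and the Gram-matrix argument for the ``polynomial'' part — matches the paper's proof. The problem is in the treatment of the index $i=j=m-\tfrac n2$. You absorb it into the Gram-matrix factorization $\operatorname{diag}(B_i)\,(A_{\alpha,\beta})_{0\le|\alpha|,|\beta|\le m-n/2}\,\operatorname{diag}(B_i)^*$, but for even $n$ this factorization breaks down precisely at that index: the formula \eqref{eq5.48} for $A_{\alpha,\beta}$ requires $|\alpha|+|\beta|<2m-n$ for the defining integral $\int \xi^{\alpha+\beta}/(1+|\xi|^{2m})\,\d\xi$ to converge, which fails at $|\alpha|+|\beta|=2m-n$; this is exactly why Lemma \ref{lm-finiterank-5.5} only records $A_{\alpha,\beta}$ up to $|\alpha+\beta|\le 2m-n-2$ in the even case, with $a_{m-\frac n2}$ and the $\log$ term appearing separately at order $2m-n$. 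Moreover, after the $\eta$-scaling the off-diagonal entries $d_{i,\,m-\frac n2}^{\pm}$ ($i<m-\tfrac n2$) acquire a factor $(a_{m-n/2}^\pm+b_0\log\lambda)^{-1/2}=O(|\log\lambda|^{-1/2})$ and therefore go into the remainder, so $D^\pm$ genuinely has a block-diagonal structure with $Q_{m-\frac n2}G_{2m-n}Q_{m-\frac n2}$ as an isolated diagonal block — not the full Gram matrix you would get by extending the factorization through $m-\tfrac n2$.

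Consequently, you never prove that $Q_{m-\frac n2}G_{2m-n}Q_{m-\frac n2}$ is invertible on $Q_{m-\frac n2}L^2$; the Gram argument doesn't cover it and no substitute is supplied. The paper handles this block separately by showing the quadratic form $\langle Q_{m-\frac n2}G_{2m-n}Q_{m-\frac n2}f,\,f\rangle$ is definite: expanding $|x-y|^{2m-n}$ and using that $\int x^\alpha f\,\d x=0$ for all $|\alpha|<m-\tfrac n2$ when $f\in Q_{m-\frac n2}L^2$, it reduces the pairing to a sum, with coefficients of one sign, of squares $|\int x^\alpha|x|^{2l}f\,\d x|^2$, all of which vanish only when $f=0$. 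This positivity argument is the missing step; the Gram-matrix argument should instead be confined to the sub-block $D_0^\pm=(d_{i,j}^\pm)_{0\le i,j\le m-\frac n2-1}$, where the indices stay strictly below $m-\tfrac n2$ and \eqref{eq5.48} applies.
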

			\begin{proof}
				In the same spirit of  the proof for Theorem \ref{Thm-I+PRP},  we assume that  $Q_j\ne0$ for all $j\in \widetilde{J}$ and  define $B=(\lambda ^{-j}Q_j)_{j\in \widetilde{J}}\ $: ${\bigoplus }_{j\in \widetilde{J}}Q_jL^2\ \rightarrow L^2$ by 
				$$Bf=\displaystyle\sum_{j\in \widetilde{J}}\eta_j(\lambda) Q_jf_j,$$
				where $f=(f_j)_{j\in \widetilde{J}}\in {\bigoplus }_{j\in \widetilde{J}}Q_jL^2$. 
				Then, denote 
				\begin{align*}
					\mathbb{A}^{\pm}=\lambda ^{2m-n}B^*(I+P{R}_{0}^{\pm}({\lambda }^{2m})P)B.
				\end{align*}
				From which, we derive that $I+P{R}_{0}^{\pm}({\lambda }^{2m})P$ is invertible on $PL^2$ and
				$$(I+P{R}_{0}^{\pm}({\lambda }^{2m})P)^{-1}=\lambda ^{2m-n} B(\mathbb{A}^{\pm})^{-1}B^*,$$
				provided $(\mathbb{A}^{\pm})^{-1}$ exists. Similar to the arguments in \eqref{eq-5.27}-\eqref{eq-rem-D}, we have
				$$\mathbb{A}^{\pm}=D^{\pm}+{({r}_{i,j}^{\pm}(\lambda))}_{i,j\in \widetilde{J}}.$$
				where $D^{\pm}={({d}_{i,j}^{\pm})}_{i,j\in \widetilde{J}}$ and ${({r}_{i,j}^{\pm}(\lambda))}_{i,j\in \widetilde{J}}$ are  operators on ${\bigoplus }_{j\in \widetilde{J}}Q_jL^2$, and $D^{\pm}$ is given by 
				\begin{equation*}
					d_{i,j}^\pm=\begin{cases}
						a_{\frac{i+j}{2}}^\pm Q_iG_{i+j}Q_j,\quad&\mbox{if}\,\,i+j\,\,\mbox{is even and $i,\, j< m-\frac n2$},\\
						Q_{m-\frac{n}{2}}G_{2m-n}Q_{m-\frac{n}{2}},&\mbox{if}\,\,i=j=m-\frac n2,\\
						Q_{m-\frac{n}{2}+1}T_0Q_{m-\frac{n}{2}+1},&\mbox{if}\,\,i=j=m-\frac n2+1,\\
						0,&\mbox{else},
					\end{cases}
				\end{equation*}
				furthermore, ${r}_{i,j}^{\pm}(\lambda)$ satisfies that for $0\le l\le\frac{n}{2}+1$,
				\begin{align*}
					\left\|\frac{d^l}{d\lambda^l} {r}_{i,j}^{\pm}(\lambda)\right\|_{L^2}\lesssim |\log \lambda|^{-\frac 12} \lambda^{-l}, \quad  0<\lambda<\frac12.
				\end{align*}
				Thus, the problem is also reduced to  the invertibility of $D^{\pm}$, which can be expressed as
				\begin{equation*}
					D^{\pm}=\begin{pmatrix}
						{D}_{0}^\pm  & 0 & 0\\[0.2cm]
						0 & Q_{m-\frac{n}{2}}G_{2m-n}Q_{m-\frac{n}{2}} & 0\\[0.2cm]
						0 & 0 & Q_{m-\frac{n}{2}+1}T_0Q_{m-\frac{n}{2}+1}
					\end{pmatrix}.
				\end{equation*}
				Note that $Q_{m-\frac{n}{2}+1}T_0Q_{m-\frac{n}{2}+1}=Q_{m-\frac{n}{2}+1}(I+(-\Delta)^{-m})Q_{m-\frac{n}{2}+1}$ is invertible on $Q_{m-\frac{n}{2}+1}L^2$. For the operator $Q_{m-\frac{n}{2}}G_{2m-n}Q_{m-\frac{n}{2}}$,  since $Q_{m-\frac{n}{2}}(x^\alpha)=0$ for $|\alpha|<m-\frac n2$, it follows that for all $f\in Q_{m-\frac{n}{2}}L^2$, 
				\begin{equation*}
					\begin{aligned}
						\left\langle Q_{m-\frac{n}{2}}G_{2m-n}Q_{m-\frac{n}{2}}f,\,f  \right\rangle&= \left\langle\int_{\R^n}|\cdot-y|^{2m-n}f(y)\d y,\, f(\cdot)\right\rangle\\
						&=\left\langle\int_{\R^n}\sum_{l=0}^{[\frac m2-\frac n4]}C_{l}(-2xy)^{m-\frac n2-2l}|x|^{2l}|y|^{2l}f(y)\d y,\, f(x)\right\rangle\\
						&=\sum_{l=0}^{[\frac m2-\frac n4]}C_{l}(-2)^{m-\frac n2-2l}\sum_{|\alpha|\le m-\frac n2-2l} C_{\alpha} \left|\int_{\R^n} x^\alpha|x|^{2l}f(x)\d x \right|^2,
					\end{aligned}
				\end{equation*}
				where $C_l,\,C_{\alpha}>0$. Note that $(-2)^{m-\frac n2-2l}$ remains the same sign for all $l$. Thus if $\left\langle Q_{m-\frac{n}{2}}G_{2m-n}Q_{m-\frac{n}{2}}f,\,f  \right\rangle=0$, then 
				$$\left|\int_{\R^n} x^\alpha f(x)\d x \right|=0, \quad \text{for all} \,\, |\alpha|\le m-\frac n2,$$
				which indicates that $f=0$. Thus, $Q_{m-\frac{n}{2}}G_{2m-n}Q_{m-\frac{n}{2}}$ is invertible on $Q_{m-\frac{n}{2}}L^2$. Therefore it suffices to prove the invertibility of ${D}_{0}^\pm$. Following the same arguments as in Theorem \ref{Thm-I+PRP},  we obtain that ${D}_{0}^\pm$ is also invertible. Hence, the proof is complete. 
			\end{proof}
			Thanks to Proposition \ref{Thm-I+PRP-even}, we can proceed in the same manner as in Section \ref{sec-fin-rank-odd-low} and omit the detail.
			Therefore the proof is complete.
			
		
		\section{Proof of Corollary \ref{cor-Lp}}\label{sec6}
		
		We first consider the case that $({\frac 1p},{\frac 1q})$ lies in the line segment BC but $(p,q)\ne(1, \tau_m)$, i.e., $p=1$ and $q\in(\tau_m, +\infty]$ with $\tau_m=\frac{2m-1}{m-1}$. In this case one checks that $\frac{n(m-1)}{2 m-1}q>n$.
		If we set
		\begin{equation*}
			I(t,x-y)=|t|^{-\frac{n}{2 m}}\left(1+|t|^{-\frac{1}{2 m}}|x-y|\right)^{-\frac{n(m-1)}{2 m-1}},
		\end{equation*}
		then by Corollary \ref{cor-finite-rank}  and Young's inequality we have
		\begin{align*}
			\|e^{-\i tH_N}P_{ac}(H_N)\|_{L^1-L^q}&\lesssim\|I(t,\cdot)\|_{L^q}\\
			&\lesssim Ct^{-\frac{n}{2m}}\left(\int_{\mathbb{R}^n}{(1+|t|^{-1/{2m}}|x|)^{-\frac{n(m-1)}{2 m-1}q}\,dx}\right)^{\frac 1q}\lesssim |t|^{-\frac{n}{2m}(1-\frac{1}{q})}.
		\end{align*}
		At the endpoint $(p,q)=(1,\tau_m)$, the above estimate with $L^{\tau_m}$ replaced by $L^{\tau_m,\infty}$ (or $L^1$ replaced by $H^1$) follows from the weak Young's inequality (see e.g. \cite[p.22]{gra}) and the boundedness of the Riesz potential $f\mapsto|\cdot|^{-\frac{n(m-1)}{2 m-1}}*f$. 
		
		Next, if $(\frac1p,\frac1q)$ lies in the interior of triangle ABC, we deduce \eqref{equ-Lp-Lq} from the Riesz-Thorin interpolation and the fact that
		$\| e^{-\i tH_N}\|_{L^2-L^2}=1$.
		On the other hand, when $(\frac1p,\frac1q)$ lies in the edge AB, the above estimate follows from the Marcinkiewicz interpolation theorem (see e.g. \cite[p.31]{gra}). Therefore estimate \eqref{equ-Lp-Lq} is valid if $(\frac1p,\frac1q)$ lies in the triangle $ABC$, while the case of triangle $ADC$ follows by duality.

		\begin{appendix}\label{app-1}
			
			\section{Proof of the lemmas in Section \ref{sec2.2}}\label{app-01}
			
			We first establish the following asymptotic expansions for ${R}_0^{\pm}(\lambda^{2m})(x-y)$ when $\lambda|x-y|$ is small. 
			\begin{lemma}\label{lem-expan-R- small}
				Assume that  $0<\lambda|x-y|<\lambda_0$ for some small constant $\lambda_0>0$.
				
				\noindent(i) When $n$ is odd, we have
				\begin{equation}\label{eq-expansion-A-odd}
					{R}_0^{\pm}(\lambda^{2m})(x-y)=\lambda^{n-2m}\sum_{l=1}^{[\frac{n-1}{2m}]}b_{l-1}(\lambda |x-y|)^{2ml-n}+\lambda^{n-2m}\sum_{j=0}^{\infty}G_j^{1,\pm}(\lambda|x-y|),
				\end{equation}
				where
				\begin{align*}
					G_j^{1,\pm}=
					\begin{cases}
						a_{j/2}^\pm{(\lambda|x-y|)}^{j},&\text{if $j$ is even},\\[4pt]
						b_{\frac{j+n}{2m}-1}(\lambda|x-y|)^{j},&\text{if}\,\,  j+n\in 2m\Z,\\[4pt]
						0,&\text{else}.\\[4pt]
					\end{cases}
				\end{align*}
				(ii) When $n$ is even, we have
				\begin{equation}\label{eq-expansion-A-2}
					{R}_0^{\pm}(\lambda^{2m})(x-y)=\lambda^{n-2m}\sum_{l=1}^{[\frac{n-1}{2m}]}b_{l-1}(\lambda |x-y|)^{2ml-n}+\lambda^{n-2m}\sum_{j=0}^{\infty}G_j^{1,\pm}(\lambda|x-y|),
				\end{equation}
				where
				\begin{align*}
					G_j^{1,\pm}=
					\begin{cases}
						a_{j/2}^\pm{(\lambda|x-y|)}^{j},&\text{if $j$ is even}\  \text{and} \  j+n\in \Z \setminus 2m\Z ,\\[4pt]
						a_{j/2}^\pm(\lambda|x-y|)^{j}+b_{\frac{j+n}{2m}-1}(\lambda|x-y|)^{j}\log(\lambda|x-y|),&\text{if}\,\,  j+n\in 2m\Z,\\[4pt]
						0,&\text{else}.\\[4pt]
					\end{cases}
				\end{align*}
				Furthermore, the coefficients in  \eqref{eq-expansion-A-odd} and \eqref{eq-expansion-A-2} satisfy $a_j^\pm \in \mathbb{C}\setminus\R$ and $b_j\in \R$.
			\end{lemma}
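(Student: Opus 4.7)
The plan is to derive the expansion by substituting the standard small-argument series of the Hankel functions into the splitting identity \eqref{equ2.1.1.1} and then carefully summing over the $m$-th roots of unity. First, combining \eqref{equ2.1.1.1} with \eqref{eq2.2} and writing $z_k=\lambda_k|x-y|$ and $\nu=\tfrac{n}{2}-1$, one obtains
\[
R_0^\pm(\lambda^{2m})(x-y) = \frac{C_n}{m\lambda^{2m-2}|x-y|^{n-2}}\sum_{k\in I^\pm}\omega_k\, z_k^{\nu}H^\pm_\nu(z_k),
\]
so the problem reduces to expanding $z^\nu H^\pm_\nu(z)$ at $z=0$ and then collapsing the sums over $k$.

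For odd $n$, $\nu$ is a half-integer with $\cos(\nu\pi)=0$, so $H^\pm_\nu=J_\nu\mp iJ_{-\nu}/\sin(\nu\pi)$, giving two Taylor series in $z^2$. Substitution produces two families of summands: a ``singular'' family $\lambda_k^{2k'}|x-y|^{2k'-(n-2)}$ from $z^\nu J_{-\nu}$, and a ``polynomial'' family $\lambda_k^{n-2+2k'}|x-y|^{2k'}$ from $z^\nu J_\nu$. For the singular family, $\omega_k\lambda_k^{2k'}=\lambda^{2k'}\omega_k^{k'+1}$ and $\sum_{k\in I^\pm}\omega_k^{k'+1}$ vanishes unless $k'+1\equiv 0\pmod m$, producing exactly the powers $(\lambda|x-y|)^{2lm-n}$ with $l\ge 1$; splitting these according to whether $2lm-n<0$ (the finite sum in \eqref{eq-expansion-A-odd}) or $2lm-n\ge 0$ (absorbed into $G_j^{1,\pm}$ with $j+n\in 2m\mathbb{Z}$) yields the claimed structure. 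For the polynomial family, $\sum_{k\in I^\pm}\omega_k e^{ik(n-2+2k')\pi/m}$ is a geometric sum that never vanishes for $n$ odd and produces the $a_{j/2}^\pm(\lambda|x-y|)^j$ terms with $j=2k'$ even. The reality of $b_j$ and the non-reality of $a_j^\pm$ follow from a short sign count: the $\pm i/4$ prefactor in \eqref{eq2.2} combines with the purely imaginary coefficient of $J_{-\nu}$ to render each $b_{l-1}$ real and branch-independent, while the $J_\nu$ polynomial coefficients are multiplied by the complex geometric sum and remain non-real.

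For even $n$, $\nu$ is a nonnegative integer and the expansion of $H^\pm_\nu$ contains, in addition to the two power series above, logarithmic corrections $z^{2k+\nu}\log(z/2)$ from $\tfrac{2}{\pi}J_\nu(z)\log(z/2)$ and a finite singular sum $z^{2k-\nu}$ with $0\le k<\nu$ from the polynomial piece of $Y_\nu$. The non-logarithmic families are handled exactly as in the odd case and produce both the $(\lambda|x-y|)^{2lm-n}$ series and the even-$j$ coefficients $a_{j/2}^\pm$. For the logarithmic terms, one writes $\log z_k=\log(\lambda|x-y|)+ik\pi/m$: the first piece, paired against $\omega_k\lambda_k^{n-2+2k'}$, produces the $b_{(j+n)/(2m)-1}(\lambda|x-y|)^j\log(\lambda|x-y|)$ contribution in \eqref{eq-expansion-A-2} precisely when $j+n\in 2m\mathbb{Z}$ (via the same modular vanishing of the geometric sum), while the second piece is a constant imaginary shift absorbed into the $a_{j/2}^\pm$ coefficient. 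The main obstacle is the bookkeeping: organizing the three families (singular, polynomial, logarithmic) across the two branches $\pm$, matching each contributing pair $(k',l)$ to the correct summand in \eqref{eq-expansion-A-odd}--\eqref{eq-expansion-A-2}, and verifying that the reality of the $b_j$ and the non-reality of the $a_j^\pm$ are preserved under all the combinatorial sums.
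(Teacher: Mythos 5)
Your proposal is essentially the same argument as the paper's: both start from the splitting identity \eqref{equ2.1.1.1}, substitute the small-argument expansion of the second-order resolvent $\mathfrak{R}_0^\pm(\lambda_k^2)$, and then collapse the double sum by observing that $\sum_{k=0}^{m-1}\omega_k^{j}$ vanishes unless $m\mid j$, together with splitting $\log(\lambda_k|x-y|)=\log(\lambda|x-y|)+\i k\pi/m$ in the even case. The only difference is cosmetic: you re-derive the small-$z$ series from the Hankel-function expansion, whereas the paper directly cites the corresponding expansion of $\mathfrak{R}_0^\pm(\lambda_k^2)(x-y)$ (its $G_j^0$ series, from Jensen--Kato and related references) and plugs it into \eqref{eq-recall-1}. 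Your bookkeeping of the three families (singular, polynomial, logarithmic) and the reality/non-reality discussion of $b_j$ and $a_j^\pm$ matches what the paper does implicitly; in particular your observation that when $m\nmid(j+1)$ the residual imaginary constant $\sum_k k\,\omega_k^{j+1}$ from the log term is absorbed into the $a_{j/2}^\pm$ coefficient is a correct and necessary detail that the paper passes over quickly.
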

			\begin{proof}
				We only prove the expansions for ${R}_0^{+}(\lambda_k^{2m})$ since the proof for ${R}_0^{-}(\lambda_k^{2m})$ follows from the same arguments.  
				
				When $n\ge 2$, there exists a $\lambda_0>0$ small enough, such that for each $k=0,1,\cdots,m-1$, we have (see e.g. in \cite[Lemma 2.1]{GW17}, \cite{AT} and \cite{J80})
				\begin{equation}\label{eq-epansion-2od}
					\mathfrak{R}_0^{+}(\lambda_k^2)(x-y)=|x-y|^{2-n}\sum_{j=0}^{\infty}G_j^0(\lambda_k|x-y|),\quad  \ \ 0<\lambda|x-y|<\lambda_0,
				\end{equation}
				where ${\lambda }_{k}=\lambda{e}^{\frac{{\rm i}k\pi}{m}}$, and we have $\mu_j\in \mathbb{R}$,  $\upsilon_j\in \mathbb{C}\setminus\R$.
				Furthermore, if $n \ge 3$ is odd,
				\begin{align*}
					G_j^0=
					\begin{cases}
						\mu_j{(\lambda_k|x-y|)}^{2j},&\text{if}\,\, 0\le j\le \frac{n}{2}-2,\\[4pt]
						\upsilon_j(\lambda_k|x-y|)^{2j}+\mu_j(\lambda_k|x-y|)^{2j+1},&\text{if}\,\,j> \frac{n}{2}-2;\\[4pt]
					\end{cases}
				\end{align*}
				while if $n$ is even,
				\begin{align*}
					G_j^0=
					\begin{cases}
						\mu_j{(\lambda_k|x-y|)}^{2j},&\text{if}\,\, 0\le j\le \frac{n}{2}-2,\\[4pt]
						\upsilon_j(\lambda_k|x-y|)^{2j}+\mu_j(\lambda_k|x-y|)^{2j}\log(\lambda_k|x-y|),&\text{if}\,\,j\ge \frac{n}{2}-1.\\[4pt]
					\end{cases}
				\end{align*}
				When $n=1$, there exists a $\lambda_0>0$  small enough, such that for each $k=0,1,\cdots,m-1$,
				\begin{equation}\label{eq-app-1dim}
					\mathfrak{R}_0^{+}(\lambda_k^2)(x-y)=\lambda_k^{-1}\sum_{j=0}^{\infty}G_j^0(\lambda_k|x-y|),\quad  \ \ 0<\lambda|x-y|<\lambda_0,
				\end{equation}
				where  ${\lambda }_{k}=\lambda{e}^{\frac{{\rm i}k\pi}{m}}$, and
				\begin{align*}
					G_j^0=\upsilon_j(\lambda_k|x-y|)^{2j}+\mu_j(\lambda_k|x-y|)^{2j+1},\ \ j\ge 0.
				\end{align*}
				
				Recall that
				\begin{equation}\label{eq-recall-1}
					{R}_{0}^{\pm }(\lambda^{2m})=\frac{1}{m{\lambda }^{2m}}\displaystyle\sum_{k\in {I}^{\pm }}{{\lambda }_{k}}^{2}\mathfrak{R}_0^{\pm}({{\lambda }_{k}}^{2}).	
				\end{equation}
				Insert \eqref{eq-epansion-2od} and \eqref{eq-app-1dim} into \eqref{eq-recall-1}, we derive that 
				\begin{equation}\label{eq-expansion-A-1}
					{R}_0^{+}(\lambda^{2m})(x-y)=
					\begin{cases}
						\frac{1}{m|x-y|^{n-2}\lambda^{2m}}\sum_{j=0}^{\infty}\sum_{k=0}^{m-1}{{\lambda }_{k}}^{2}G_j^0(\lambda_k|x-y|),&\text{if}\,\, n\ge 2,\\[4pt]
						\frac{1}{m\lambda^{2m}}\sum_{j=0}^{\infty}\sum_{k=0}^{m-1}{{\lambda }_{k}}G_j^0(\lambda_k|x-y|),&\text{if}\,\,n=1.\\[4pt]
					\end{cases}
				\end{equation}
				Note that $$\sum_{k=0}^{m-1}\lambda_k^{2j}=0, \quad \text{for}\, j\in \Z \setminus (m\Z), $$
				and  $$\log(\lambda_k|x-y|)=\log\lambda+\frac{\i k\pi}{m}+\log\lambda|x-y|.$$
				Combining the above three identities, we obtain \eqref{eq-expansion-A-odd} and \eqref{eq-expansion-A-2} as well as the expressions of $G_j^{1,\pm}$, moreover, we have $a_j^\pm \in \mathbb{C}\setminus\R$ and $b_j\in \R$. Therefore the proof of the lemma is complete. 
			\end{proof}

			\begin{proof}[Proof of Lemma \ref{lm2.0}.]
				We first prove \eqref{eq.2-low-odd}, \eqref{eq3.5.0-rem} and \eqref{eq.2-low-even} when $0<\lambda|x-y|<\lambda_0$, where $\lambda_0>0$ is the same as the one in  Lemma \ref{app-01}.
				Set
				\begin{equation}\label{eq-remind-app}
					{r}_{2m-n+1}^{\pm}(\lambda|x-y|)=\sum_{j>m-\frac n2}^{\infty}G_j^{1,\pm}(\lambda|x-y|),    
				\end{equation}
				where $G_j^{1,\pm}(\lambda|x-y|)$ is defined in \eqref{eq-expansion-A-odd} and \eqref{eq-expansion-A-2}.
				Note that the first term on the right hand side of in \eqref{eq-expansion-A-odd} and \eqref{eq-expansion-A-2} does not exist since  $n\le 2m$.
				Thus, the expansion \eqref{eq.2-low-odd} and \eqref{eq.2-low-even} follow from  \eqref{eq-expansion-A-odd} and \eqref{eq-expansion-A-2}  respectively, provided that $0<\lambda|x-y|<\lambda_0$. The estimate \eqref{eq3.5.0-rem} follows from the definition $G_j^{1,\pm}$ in Lemma \ref{app-01}.
				
				It remains to prove that \eqref{eq.2-low-odd},  \eqref{eq3.5.0-rem} and \eqref{eq.2-low-even} hold when $\lambda|x-y|\ge \lambda_0$. By \eqref{eq-asmptotic-hankel} and \eqref{eq-recall-1}, we have  for all $m,n$, 
				\begin{equation}\label{eq-est-res-h-0}
					{R}_{0}^{\pm}({\lambda }^{2m})(x-y)=\frac{\lambda ^{\frac{n+1}{2}-2m}}{|x-y|^{\frac{n-1}{2}}}e^{\pm \i \lambda |x-y|}\tilde\psi_>^{\pm}(\lambda |x-y|),	\qquad \lambda|x-y|\ge \lambda_0,
				\end{equation} 
				where $\mbox{supp}\,\tilde\psi_>^{\pm}\subset[\lambda_0, \infty)$, and for $l\in \N_0$,
				\begin{equation}\label{eq-est-res-1}
					\left| \frac{d^l}{dz^l}\tilde{\psi}_>^\pm (z)\right| \lesssim_l \langle z\rangle^{-l}, \quad z \ge \lambda_0.
				\end{equation}
				If $n$ is odd, we define 
				$${r}_{2m-n+1}^{\pm}(\lambda|x-y|)=\lambda^{2m-n}{R}_{0}^{\pm }(\lambda^{2m})(x-y)  
				-\sum_{j=0}^{m-\frac{n+1}{2}}{a}_{j}^{\pm } (\lambda\left| x-y\right|)^{2j}-{b}_{0}(\lambda\left| x-y\right |)^{2m-n},$$
				and if $n$ is even, we define
				$${r}_{2m-n+1}^{\pm}(\lambda|x-y|)=\lambda^{2m-n}{R}_{0}^{\pm }(\lambda^{2m})(x-y)  
				-\sum_{j=0}^{m-\frac{n}{2}}{a}_{j}^{\pm } (\lambda\left| x-y\right|)^{2j}-{b}_{0}(\lambda\left| x-y\right |)^{2m-n}\log{\left ( \lambda \left | x-y\right |\right )}.$$
				Clearly, \eqref{eq.2-low-odd} and \eqref{eq.2-low-even} hold immediately. Furthermore, by \eqref{eq-est-res-h-0} and \eqref{eq-est-res-1}, it follows  that
				\begin{equation*}
					\lambda^{2m-n}{R}_{0}^{\pm }(\lambda^{2m})(x-y)=\frac{e^{\pm \i \lambda |x-y|}}{(\lambda|x-y|)^{\frac{n-1}{2}}}\,\tilde\psi_>^{\pm}(\lambda |x-y|),\qquad \lambda|x-y|\ge \lambda_0,	
				\end{equation*}
				and $z^{-\frac{n-1}{2}}e^{\pm \i z}\tilde\psi_>^{\pm}(z)$ satisfy that for each $l\in \N_0$,
				\begin{equation*}
					\left| \frac{d^l}{dz^l}\left( z^{-\frac{n-1}{2}}e^{\pm \i z}\,\tilde\psi_>^{\pm}(z)\right) \right|\lesssim_l z^{-\frac{n-1}{2}}, \quad \text{for}\ z\ge \lambda_0.	
				\end{equation*}
				Meanwhile, for each $l\in \N_0$,  if $n$ is odd, we have 
				$$\left| \frac{d^l}{dz^l}\left(\sum_{j=0}^{m-\frac{n+1}{2}}{a}_{j}^{\pm } z^{2j}+{b}_{0}z^{2m-n}\right) \right|\lesssim_l {z^{2m-n+1-l}}, \quad \text{for}\ z\ge \lambda_0, $$
				and if $n$ is even, we deduce that 
				$$\left| \frac{d^l}{dz^l}\left(\sum_{j=0}^{m-\frac{n}{2}}{a}_{j}^{\pm } z^{2j}+{b}_{0}z^{2m-n}\log{z} \right) \right|\lesssim_l {z^{2m-n+1-l}}, \quad \text{for}\ z\ge \lambda_0.$$
				Hence,  ${r}_{2m-n+1}^{\pm}(\lambda|x-y|)$ satisfy the estimate \eqref{eq3.5.0-rem}  when $0<\lambda|x-y|<\lambda_0$. Therefore the proof is complete.
			\end{proof}

			\begin{proof}[{Proof of Lemma \ref{lm-finiterank-5.5}.}]
				We only prove the even dimensional case, the arguments for odd dimensions $n$ follows analogously.
				By \eqref{equ2.1.1.1}, we have
				\begin{equation*}
					R_0(-\lambda^{2m})(x)=\frac{1}{-m\lambda^{2m}}\sum_{k\in I^{\pm}}(e^{\frac{\pm\pi\i}{m}}\lambda^2_k) \mathfrak{R}_0^{\pm}\left(e^{\frac{\pm\pi\i}{m}}\lambda_k^2\right)(x), \qquad \lambda>0.
				\end{equation*}
				Following the proof of Lemma \ref{lm2.0} and setting $\lambda=1$ in \eqref{eq.2-low-even}, we derive
				\begin{equation*}
					R_0(-1)(x-y)=\sum_{j=0}^{m-\frac{n}{2}-1}a_j|x-y|^{2j}+{a}_{m-\frac{n}{2}}{|x-y|}^{2m-n}+b_0\log(|x-y|){|x-y|}^{2m-n}
					+\tilde{r}_{2m-n+1}(|x-y|),
			\end{equation*}
			where the coefficients  $a_j^\pm$ are in \eqref{eq.2-low-even}, the above $a_j$ have the relation
			\begin{equation}\label{eq-A-8}
				a_j=e^{\frac{\pm\pi \i}{m}(n-2m+2j)} a_j^\pm,
			\end{equation}
			and the remainder term $\tilde{r}_{2m-n+1}$ also satisfies \eqref{eq3.5.0-rem}. Consequently, the expansion \eqref{eq5.47} follows directly from the identity
			\begin{equation*}
				|x-  y|^{2j}=\sum_{|\alpha|+|\beta|=2j}C_{\alpha,\beta}(-1)^{|\beta|}x^{\alpha}y^{\beta},
			\end{equation*}
			by defining
			\begin{equation}\label{equp3.18.alpha}
				A_{\alpha,\beta}=a_{\frac{|\alpha|+|\beta|}{2}}(-1)^{|\beta|} C_{\alpha,\beta}.
			\end{equation}
			
			Now  we prove \eqref{eq5.48}, which follows from a series of identities as follows:
			\begin{equation}\label{equ3.35}
				\begin{split}
					(-1)^{|\beta|} A_{\alpha,\beta}\alpha!\beta!
					&=\lim\limits_{x,y\to0}\partial_x^\alpha\partial_y^\beta R_0(-1)(x-y)
					\\&=\lim\limits_{x,y\to0}\frac{1}{(2\pi)^n}\partial_x^\alpha\partial_y^\beta\left(\int_{\R^n}\frac{e^{\i(x-y)\cdot\xi}}{|\xi|^{2m}+1}\,\d\xi\right)
					\\&=\frac{\i^{|\alpha|+|\beta|}(-1)^{|\beta|}}{(2\pi)^n}\int_{\R^n}\frac{\xi^{\alpha+\beta}}{|\xi|^{2m}+1}\, \d\xi,
				\end{split}
			\end{equation}
			where in the first equality presented above, we use  \eqref{eq5.47}; the second equality follows from the integral expression  of $ R_0(-1)$;  and the third equality is justified by the dominated convergence theorem, as well as the fact that $\xi^{\alpha+\beta}/{(|\xi|^{2m}+1)}\in L^1$, since $0\le |\alpha|, |\beta|\le m-\frac{n}{2}-1$.    
		\end{proof}	
		
		\begin{proof}[Proof of Lemma \ref{lem-expansion-with-theta}.]
			The proof closely parallels that of Lemma \ref{lm2.0}  except that we set
			\begin{equation*}
				{r}_{\theta}^{\pm}(\lambda|x-y|)=\sum_{j>[\frac{\theta-1}{2}]}^{\infty}G_j^{1,\pm}(\lambda|x-y|).
			\end{equation*}
			With this definition in place, the remainder of the proof proceeds along the same lines as the argument presented in Lemma \ref{lm2.0}. Therefore, we omit the details for the sake of brevity.
		\end{proof}
		
		\begin{proof}[{Proof of Lemma \ref{lem-expasion-R+-R-1}.}]
			We define 
			\begin{equation*}
				U_0^\pm(\lambda|x-y|)=e^{\mp \i \lambda|x-y|}\lambda^{2m-n}{R}_{0}^{\pm}(\lambda^{2m})(x-y).
			\end{equation*}
			Thus \eqref{eq-R+-R-} holds immediately. On the one hand, when  $\lambda|x-y|<\lambda_0$ (the same constant appeared in Lemma \ref{lem-expan-R- small}), 
			by \eqref{eq-expansion-A-odd} and \eqref{eq-expansion-A-2}, we obtain that 
			\begin{equation*}
				U_0^\pm(\lambda|x-y|)=e^{\mp \i \lambda|x-y|}\left( \sum_{l=1}^{[\frac{n-1}{2m}]}b_{l-1}(\lambda |x-y|)^{2ml-n}+\sum_{j=1}^{\infty}G_j^{1,\pm}(\lambda|x-y|) \right),
			\end{equation*}
			this implies that  \eqref{eq-est-psi0} holds when $\lambda|x-y|<\lambda_0$. On the other hand, when $\lambda|x-y|\ge \lambda_0$, it follows from \eqref{eq-est-res-h-0} that
			\begin{equation*}
				U_0^\pm(\lambda|x-y|)={(\lambda|x-y|)}^{-\frac{n-1}{2}}\tilde\psi_>^{\pm}(\lambda |x-y|),
			\end{equation*}
			this, together with \eqref{eq-est-res-1}, implies  that \eqref{eq-est-psi0} holds when $\lambda|x-y|\ge \lambda_0$.
			Therefore, the proof is complete.
		\end{proof}
		
		\begin{proof}[Proof of Lemma \ref{lm-expansion-high}.]
			First we prove \eqref{eq-expansion-high-1} and \eqref{eq-expansion-high-4}. Set
			\begin{align*}
				U_1^\pm(\lambda|x-y|):=e^{\mp \i \lambda|x-y|}\frac{|x-y|^\frac{n-1}{2}}{\lambda^{\frac{n+1}{2}-2m}}{R}_{0}^{\pm}(\lambda^{2m})(x-y).
			\end{align*}
			Then by Lemma \ref{lem-expan-R- small}, there is a $\lambda_0>0$ such that
			\begin{align*}
				U_1^\pm(\lambda|x-y|)
				=e^{\mp \i \lambda|x-y|}{(\lambda|x-y|)^\frac{n-1}{2}}\left( \sum_{l=1}^{[\frac{n-1}{2m}]}b_{l-1}(\lambda |x-y|)^{2ml-n}+\sum_{j=1}^{\infty}G_j^{1,\pm}(\lambda|x-y|) \right)
			\end{align*}
			holds for $\lambda|x-y|\le \lambda_0$. Moreover, when $1\le n\le 4m-1$, we have $2m-\frac{n+1}{2}>0$. Then
			a direct calculation implies
			\begin{align*}
				\left | \frac{d^l}{dz^l}U_{1}^{\pm}(z)\right | \lesssim_l\  |z|^{-l},\ \ 0<z\le \lambda_0.
			\end{align*}
			This, combined with \eqref{eq-est-res-h-0} and \eqref{eq-est-res-1}, shows \eqref{eq-expansion-high-1} and \eqref{eq-expansion-high-4}.
			
			Next, we prove \eqref{eq-decom-r-nl2m} and \eqref{eq-expansion-high-6}.  Write 
			$${R}_{0}^{\pm}(\lambda^{2m})(x-y)=\phi(\lambda|x-y|){R}_{0}^{\pm}(\lambda^{2m})(x-y)+(1-\phi)(\lambda|x-y|){R}_{0}^{\pm}(\lambda^{2m})(x-y),$$
			where $\phi\in C_c^\infty(\R)$ such that $\phi(t)=1$ for $|t|<\frac 12$ and $\phi(t)=0$ for $|t|>1$.
			Similar to the previous proof, applying Lemma \ref{lem-expan-R- small} to $\phi(\lambda|x-y|){R}_{0}^{\pm}(\lambda^{2m})(x-y)$ and applying \eqref{eq-est-res-h-0}, \eqref{eq-est-res-1} to $(1-\phi)(\lambda|x-y|){R}_{0}^{\pm}(\lambda^{2m})(x-y)$, we then have \eqref{eq-decom-r-nl2m} and \eqref{eq-expansion-high-6}.
			Thus we finish the proof.
		\end{proof}
		
		\section{Proof of Lemma \ref{lm2.4}}\label{app-4}
		Let $\Omega=(0,{\lambda }_{0})$ and $a(\lambda)= \lambda^{2m}$, we write $\Omega=\Omega_1 \cup \Omega_2$ where
		\begin{equation*}
			\Omega_1=\{\lambda\in\Omega;~\mbox{$|\frac{\partial }{\partial \lambda } a(\lambda)+\frac xt|<\frac12|\frac xt|$}\}=\{\lambda\in\Omega;~\mbox{$|2m\lambda^{2m-1}+\frac xt|<\frac12|\frac xt|$}\},
		\end{equation*}
		and
		\begin{equation*}
			\Omega_2=\{\lambda\in\Omega;~\mbox{$|\frac{\partial }{\partial \lambda } a(\lambda)+\frac xt|>\frac14|\frac xt|$}\}=\{\lambda\in\Omega;~\mbox{$|2m\lambda^{2m-1}+\frac xt|>\frac14|\frac xt|$}\}.
		\end{equation*}
		And we define
		\begin{equation*}
			\Phi_1(\lambda)=\Phi\left((2m\lambda^{2m-1}+\mbox{$\frac xt$})/\mbox{$\frac12|\frac xt|$}\right)~\mathrm{and}~\Phi_2(\lambda)=1-\Phi_1(\lambda),
		\end{equation*}
		where $\Phi\in C_0^\infty(\mathbb{R})$ such that $\Phi(\lambda)=1$ when $|\lambda|\leq\frac12$ and $\Phi(\lambda)=0$ when $|\lambda|\geq 1$.
		Now $I(t,x)$ is split into
		\begin{equation*}
			I_j(t,x)=\int_{\Omega_j}e^{\i (t\lambda^{2m}+ \lambda x)}\Phi_j(\lambda)\psi(\lambda,x) \d\lambda,~~j=1, 2.
		\end{equation*}
		We shall prove \eqref{eq-osc-g-1} and \eqref{eq-osc-l-1} with $I$ replaced by $I_j$, $j=1,2$. 
		
		{\bf Step 1. Estimates for $I_2$.}
		We denote two more cut-off functions as 
		$$\eta_1^r(\lambda)=\Phi(\lambda/r),\quad \eta_2^r(\lambda)=1-\Phi(\lambda/r),\quad r>0.$$
		We shall decompose the integral $I_2$ properly and use the integration-by-parts arguments. To proceed, we further split $I_{2}$ into
		\begin{equation*}
			\begin{split}
				I_{2}(t,x)
				&=\int_{\Omega_2}e^{\i(t\lambda^{2m}+\lambda x)}\eta_1^r(\lambda)\Phi_2(\lambda)\psi(\lambda,x) \d\lambda\\
				&+\int_{\Omega_2}e^{\i(t\lambda^{2m}+\lambda x)}\eta_2^r(\lambda)\Phi_2(\lambda)\psi(\lambda,x) \d\lambda\\
				&:= I_{21}(t,x)+I_{22}(t,x).
			\end{split}
		\end{equation*}
		We remark that $I_{22}(t,x)=0$ if $r>2\lambda_0$.
		Obviously, for the term $I_{21}(t,x)$, using the fact that $\sup\limits_x\left | \psi(\lambda,x)\right |\lesssim{\lambda}^{b}$ and $b>-1$,  we have
		\begin{equation}\label{eqC-5}
			|I_{21}|\leq C\int_{|\lambda|\leq r}\lambda^b \d\lambda\leq Cr^{1+b}.
		\end{equation}
		In order to estimate the term $I_{22}(t,x)$, we denote by
		\begin{equation*}
			Df:=\frac{1}{2mt\lambda^{2m-1}+x}\cdot\frac{d}{d\lambda}f,\qquad\quad D_{*}f:=-\frac{d}{d\lambda}(\frac{1}{2mt\lambda^{2m-1}+x}\cdot f).
		\end{equation*}
		Integration by parts $k$ times leads to
		\begin{equation}\label{eqC-3}
			I_{22}=\int_{\Omega_2}e^{\i(t\lambda^{2m}+\lambda x)}D_*^k\left(\eta_2^r(\lambda)\Phi_2(\lambda)\psi(\lambda,x)\right) \d\lambda.
		\end{equation}
		Set $h(\lambda)=t\cdot\frac{\partial }{\partial \lambda }a(\lambda)+x $, and note that $\partial^j (h^{-1}) \ (j \le k)$ can be written as (up to constants):
		\begin{equation}\label{C-2}
			\sum_{l=1}^j\sum_{\substack{\sigma_s\ge 1,\ s=1,\cdots,l \\\sigma_1+\cdots+\sigma_l=j}}\frac{\prod_{s=1}^l \partial^{\sigma_s} h(\lambda)}{(h(\lambda))^{1+l}}.
		\end{equation}
		Moreover, in $\Omega_2$,  we have
		\begin{equation}\label{C-1}
			|\partial a(\lambda)|<5|\partial a(\lambda)+\frac{x}{t}|,
		\end{equation}
		this, together with \eqref{C-2} and $|\frac{\partial }{\partial \lambda } a(\lambda)+\frac xt|>\frac14|\frac xt|$, shows that
		$$|\partial^j  (h^{-1}) |\lesssim |t|^{-1} |\lambda|^{1-2m-j},\ |\partial^j (h^{-1}) |\lesssim |x|^{-1}|\lambda|^{-j},$$
		and therefore
		$$|\partial^j (h^{-1})|\lesssim |t|^{-(1-\theta)} |x|^{-\theta} |\lambda|^{-(1-\theta)(2m-1)-j},$$
		holds for all $\theta \in [0,1]$. Then, it follows that 
		\begin{equation}\label{C-8}
			D_*^k\left(\eta_2^r(\lambda)\Phi_2(\lambda)\psi(\lambda,x)\right)\lesssim |t|^{-k(1-\theta)} |x|^{-k\theta} |\lambda|^{b-k(1-\theta)(2m-1)-k}.
		\end{equation}
		
		First, for \eqref{eq-osc-g-1} with $\mu_b\le 0$ and \eqref{eq-osc-l-1}, plugging \eqref{C-8} into \eqref{eqC-3}, and choose $\theta=0$, we obtain
		\begin{equation}\label{C-4}
			|I_{22}|\lesssim |t|^{-k}\int_{\{r/2<\lambda<\lambda_0\}} \lambda^{b-2mk} \d\lambda \lesssim |t|^{-k}r^{b+1-2mk},
		\end{equation}
		moreover, apply \eqref{eqC-5} and \eqref{C-4} with $r=|t|^{-\frac{1}{2m}}$, a direct computation yields
		\begin{equation}\label{C-10}
			|I_2| \lesssim |I_{21}| +|I_{22}| \lesssim|t|^{-\frac{1+b}{2m}},
		\end{equation}
		meanwhile, note that $|I_2|\lesssim 1$, the above estimate implies \eqref{eq-osc-g-1} with $\mu_b\le 0$ and \eqref{eq-osc-l-1}. 
		
		Next, we will prove that $I_2$ satisfies  \eqref{eq-osc-g-1} with $\mu_b> 0$. 
		Suppose ${t}^{-\frac{1}{2m}}\left | x\right |\ge 1$. If $b+1<k$, we set $\theta=1$, $r=|x|^{-1}$, thus it follows that 
		$$|I_{2}|\lesssim |x|^{-(1+b)}\lesssim |t|^{-\frac{1+b}{2m}}({t}^{-\frac{1}{2m}}\left | x\right |)^{-\frac{m(1+2b)+m-1-b}{2m-1}}\lesssim t^{-\frac{1+b}{2m}}({t}^{-\frac{1}{2m}}\left | x\right |)^{-\mu_b}.$$
		If $b+1\ge k$, we choose $r=(|t|^{-(1-\theta)}|x|^{-\theta})^{\frac{1}{(2m-1)(1-\theta)+1}}$ and $\theta>0$ in \eqref{C-8} such that $$b+1- (2m-1)k(1-\theta)-k=-\varepsilon,\quad 0<\varepsilon<<1.$$ 
		Now, it follows that 
		$$|I_{21}|\lesssim r^{1+b}.$$
		Meanwhile, we have 
		$$|I_{22}|\lesssim r^{-k}\int_{\{r/2<\lambda<\lambda_0\}} \lambda^{b-(2m-1)k(1-\theta)-k} \d\lambda\lesssim r^{1+b}.$$
		Thanks to $|t|^{-\frac {1}{2m}}|x|\le 1$ and the above two equalities, we have 
		\begin{align*}
			|I_{2}|&\lesssim r^{1+b}\lesssim (|t|^{-(1-\theta)}|x|^{-\theta})^{\frac{1+b}{(2m-1)(1-\theta)+1}}
			=\left(|t|^{-(1-\theta-\frac{\theta}{2m})}(|t|^{-\frac {1}{2m}}|x|)^{-\theta}\right)^{\frac{1+b}{(2m-1)(1-\theta)+1}}\\
			&=|t|^{-\frac{1+b}{2m}} (|t|^{-\frac {1}{2m}}|x|)^{-\theta\frac{1+b}{(2m-1)(1-\theta)+1}}=|t|^{-\frac{1+b}{2m}} (|t|^{-\frac {1}{2m}}|x|)^{-\frac{1+b}{b+1+\varepsilon}\frac{2mk-b-1}{2m-1}}\\
			&\lesssim |t|^{-\frac{1+b}{2m}}({t}^{-\frac{1}{2m}}\left | x\right |)^{-\mu_b}.
		\end{align*}
		
		{\bf Step 2. Estimates for $I_1$.}
		First, we have
		\begin{equation}\label{C-Omega_1}
			\Omega_1 \subset  \{\lambda \in \mathbb{R};\  (\frac{1}{4m})^{\frac{1}{2m-1}}r\le |\lambda|  \le (\frac{3}{4m})^{\frac{1}{2m-1}}r\  \} := \{\lambda \in \mathbb{R};\ c_1 r\le |\lambda|  \le c_2 r \},
		\end{equation}
		where $r=|\frac xt|^{\frac{1}{2m-1}}$. Thus, we assume $|t|^{-1}|x|\lesssim 1$ in this step, since otherwise $I_1 =0$.

		If ${t}^{-\frac{1}{2m}}\left | x\right |< 1$,  for $I_{1}$, we have
		\begin{equation}
			|I_{1}|\le \int_{\{ c_1 r\le |\lambda | \le c_2 r\} } |\lambda |^b \d\lambda \lesssim |t|^{-\frac{1+b}{2m-1}}|x|^{\frac{1+b}{2m-1}} \lesssim |t|^{-\frac{1+b}{2m}}({t}^{-\frac{1}{2m}}\left | x\right |)^{\frac{1+b}{2m-1}},
		\end{equation}
		which proves \eqref{eq-osc-l-1} in $\Omega_1$. 
		
		Next, we consider \eqref{eq-osc-g-1} in $\Omega_1$. The change of variables $\lambda \to \lambda r$ in $I_1$ leads to
		\begin{equation*}
			\begin{split}
				|I_{1}|&=\left|r \int_{\left \{ c_1\le |\lambda| \le c_2 \right \} }^{} e^{\i \left (tr^{2m}\lambda^{2m}+\lambda rx \right )}\Phi_1(\lambda r)\psi(\lambda r) \d\lambda \right|\\
				&=\left|r \int_{\left \{ c_1\le |\lambda| \le c_2 \right \} }^{} e^{\i tr^{2m} \left (\lambda^{2m}+\lambda\cdot sgn (x) \right )}\Phi_1(\lambda r)\psi(\lambda r) \d\lambda\right|\\
				&\lesssim r^{1+b}(r^{2m}t)^{-\frac 12}\lesssim |t|^{-\frac{1+b}{2m}}({t}^{-\frac{1}{2m}}\left | x\right |)^{\frac{1+b}{2m-1}},
			\end{split}
		\end{equation*}
		where in the second line, we have used $rx=tr^{2m}\cdot sgn (x)$;  and in the last line, given that $ c_1\le |\lambda| \le c_2$, thus  $\left | \partial _\lambda ^2(\lambda^{2m}+\lambda\cdot sgn (x) ) \right | \gtrsim 1$, then the inequality follows by applying  the van der Corput lemma and by using the fact $\sup\limits_x\left | \psi(\lambda,x)\right |\lesssim{\lambda}^{b}$.
		
		To sum up, we finish the proof of Lemma \ref{lm2.4}.

		\section{Proof of Lemma \ref{lm-4.2-CHHZ}}\label{app-3}
		When  $|x|\le 1$, applying the Schwartz inequality, in combination with the assumption $r-\sigma<-\frac{n}{2}$ and $\k_0\ge r+1$, we obtain the estimate \eqref{eqA.1.1}. Therefore it suffices to consider the case when $|x|\ge 1$ and we divide the proof into three cases.
		
		\noindent\emph{Case 1: $r\geq0$ is even.}\
		We write
		\begin{equation*}\label{eqA.2}
			|y-x|^{r}=\sum_{|\alpha|+|\beta|= r}(-1)^{|\beta|}C_{\alpha,\beta}x^{\alpha}y^{\beta},
		\end{equation*}
		and note that by our assumption,
		\begin{equation*}
			\begin{cases}
				\left| \langle x^{\alpha} y^{\beta}, \ \varphi(y) \rangle\right|
				\lesssim\left\| \varphi \right\|_{L_{\sigma}^2}\left\langle x\right\rangle ^{|\alpha|}\leq\left\|\varphi\right\|_{L_{\sigma}^2}\left\langle x\right\rangle ^{r-\k_0},\quad&\text{if}\,\,|\beta|>\k_0-1,\\
				\langle x^{\alpha}y^{\beta}, \ \varphi(y) \rangle=0,&\text{if}\,\,|\beta|\leq \k_0-1.
			\end{cases}
		\end{equation*}
		Thus we obtain \eqref{eqA.1.1} in this case.
		
		\noindent\emph{Case 2: $r \geq 0$ is odd.}\
		We express $|y-x|^{r}$ as 
		$$|x-y|^{r}=(|x-y|^{r}-|x||x-y|^{r-1})+|x||x-y|^{r-1}.$$ 
		For the term $|x||x-y|^{r-1}$, 
		since $r-1$ is even, it follows from \emph{Case 1} that
		\begin{equation}\label{equ4.707}
			\left| \langle |x||x-\cdot|^{r-1}, \ \varphi(\cdot) \rangle\right| \lesssim \left\langle x\right\rangle ^{r-\k_0}.
		\end{equation}
		For the first term, we further decompose it as
		\begin{align}\label{claimA.1-1202}
			|x-y|^{r}-|x||x-y|^{r-1}
			=\sum_{|\alpha|+|\beta|=r-1}(-1)^{|\beta|}C_{\alpha,\beta} \sum_{i=1}^{n} \left(\frac{2x^{\alpha}x_i}{|x|^{|\alpha|+1}}\frac{|x|^{|\alpha|+1}y^{\beta}{y_i}}{|x|+|x-y|}-\frac{x^{\alpha}}{|x|^{|\alpha|}}\frac{|x|^{|\alpha|}y^{\beta}{y_i}^2}{|x|+|x-y|} \right).
		\end{align}
		Therefore, it suffices to consider  the general form $\frac{|x|^ly^{\gamma}}{(|x|+|x-y|)}$ with  fixed $l \in \N_0$, $\gamma\in {\N_0}^n$ satisfying $r=|\gamma|+l-1\ge 0$, we verify by induction that
		\begin{equation}\label{claimA.1}
			\begin{aligned}
				\frac{|x|^ly^{\gamma}}{(|x|+|x-y|)} = \sum_{s=1}^{1+\k_0-|\gamma|} \sum_{\substack{|\alpha_1|\in\{\k_0, \k_0+1\} \\ |\alpha_1|+h-s=r}} L_{s,\alpha_1}(x) \frac{|x|^hy^{\alpha_1}}{(|x|+|x-y|)^s} 
				+ \sum_{\substack{|\alpha_2|\leq \k_0-1 \\ i+|\alpha_2|=r}} L_{\alpha_2}(x) |x|^{i}y^{\alpha_2},
			\end{aligned}
		\end{equation}		
		where $|L_{s,\alpha_1}(x)|,\ |L_{\alpha_2}(x)|\lesssim 1$.
		
		For the term $\frac{|x|^{|\alpha|+1}y^{\beta}{y_i}}{|x|+|x-y|}$, we apply \eqref{claimA.1} with $l=|\alpha|+1$, $\gamma=\beta+e_i$ and $q=1$;
		for the term $\frac{|x|^{|\alpha|}y^{\beta}{y_i}^2}{|x|+|x-y|}$, we apply \eqref{claimA.1} with $l=|\alpha|$, $\gamma=\beta+2e_i$ and $q=1$.  Note that from \eqref{claimA.1}, we have  $h-s=r-|\alpha_1|$. Moreover, since $|\alpha_1|\in\{\k_0, \k_0+1\}$, we deduce that
		\begin{align}\label{eq-first term}
			\left| \frac{|x|^hy^{\alpha_1}}{(|x|+|x-y|)^s}\right|\le |x|^{h-s}\langle y\rangle^{\alpha_1} \le \langle x\rangle^{r-\k_0}\langle y\rangle^{\k_0+1}.
		\end{align}
		By the assumption on $\varphi$,  we have
		\begin{equation*}
			\left \langle \frac{|x|^hy^{\alpha_1}}{(|x|+|x-y|)^s},\varphi(y) \right \rangle 
			\lesssim \left\| \varphi \right\|_{L_{\sigma}^2}\|\left\langle y\right\rangle^{-\sigma}\left\langle y\right\rangle^{\k_0+1} \|_{L^2} \left\langle x\right\rangle^{r-\k_0}
			\lesssim \left\| \varphi \right\|_{L_{\sigma}^2} \left\langle x\right\rangle^{r-\k_0},
		\end{equation*}
		and 
		\begin{equation*}
			\left \langle |x|^{i}y^{\alpha_2},\varphi(y) \right \rangle = 0.
		\end{equation*}
		The above two imply that
		\begin{equation*}
			\left| \left\langle |x-\cdot|^r-|x||x-\cdot|^{r-1}, \ \varphi(\cdot) \right\rangle\right| \lesssim  \left\| \varphi \right\|_{L_{\sigma}^2} \left\langle x\right\rangle^{r-\k_0}.
		\end{equation*}
		This, together with \eqref{equ4.707}, yields \eqref{eqA.1.1}.

		\noindent\emph{Case 3:} $\frac{1-n}{2}\le r < 0$.\
		In this case, we denote by $k=-r$, thus $0< k \le \frac{n-1}{2}$. For any fixed $ \k_0-1 \in\N_0$, we have the following identity
		\begin{equation}\label{eqA.3}
			|x-y|^{-k}=\sum_{l=0}^{k-1}C_{l}\frac{(|x|-|x-y|)^{\k_0}}{|x|^{\k_0+l}|x-y|^{k-l}}+\sum_{l=0}^{\k_0-1}C'_{l}\frac{(|x|-|x-y|)^{l}}{|x|^{k+l}},
		\end{equation}
		for some $C_{l}, C_{l}^{'}>0$.
		On the one hand, note that by Lemma \ref{lm2.8} and the assumption on $\sigma$, we have
		\begin{equation*}\label{eqA.333}
			\begin{aligned}
				\left| \left\langle \frac{(|x|-|x-\cdot|)^{\k_0}}{|x|^{\k_0+l}|x-\cdot|^{k-l}}, \ \varphi (\cdot) \right\rangle\right|
				\lesssim& \left\| \varphi \right\|_{L_{\sigma}^2}\left\langle x\right\rangle^{-\k_0 -l}\left(\int_{\mathbb{R}^n}{\langle y\rangle^{-2(\sigma-\k_0)}|x-y|^{-2(k-l)}\d y}\right)^{\frac12}\\
				\lesssim& \left\| \varphi \right\|_{L_{\sigma}^2} \left\langle x\right\rangle^{r-\k_0}.
			\end{aligned}
		\end{equation*}
		On the other hand, the binomial theorem gives
		$$\frac{(|x|-|x-y|)^{l}}{|x|^{k+l}}=\sum_{s=0}^{l}\frac{(-1)^{s}l!}{s!(l-s)!}\frac{|x|^{s}|x-y|^{l-s}}{|x|^{k+l}}.$$
		Then by \emph{Case 1} and \emph{Case 2}, we have for each $s\in\{0,\cdots,l\}$ that
		\begin{equation*}\label{eqA.3333}
			\begin{aligned}
				|x|^{s-k-l}\left| \left\langle |x-\cdot|^{l-s}, \ \varphi (\cdot) \right\rangle\right|
				\lesssim& \left\| \varphi \right\|_{L_{\sigma}^2} \left\langle x\right\rangle^{r-\k_0},
			\end{aligned}
		\end{equation*}
		In view of \eqref{eqA.3}, \eqref{eqA.1.1} follows by combining the above two estimates.

	\end{appendix}

	\section*{Acknowledgements}
	S. Huang was supported by the National Natural Science Foundation of China under the grants 12171178 and 12171442.


\begin{thebibliography}{plain}
		
		
		\bibitem{BHM}  A. Barton, S. Hofmann,  S. Mayboroda, The Neumann problem for higher order elliptic equations with symmetric
		coefficients. \textit{Math. Ann.} 371 (1-2) (2018) 297-336.
		
		\bibitem{CLM} R. Carles, W. Lucha, E. Moulay, Higher-order Schr\"{o}dinger and Hartree–Fock equations.\textit{ J. Math. Phys.}  56 (12) (2015) 122301.
		
		\bibitem{CM}   R.  Carles,  E. Moulay,  Higher order Schr\"{o}dinger equations. \textit{J. Phys. A: Math. Theor}. 45 (39) (2012) 395304.
		
		\bibitem{CHZ} H. Cheng, S. Huang, Q. Zheng, Dispersive estimates for the Schr\"{o}dinger equation with finite rank perturbations. \textit{ Adv. Math. } 426 (2023) 109105, 91pp.
		
		\bibitem{CHHZ} H. Cheng, S. Huang, T. Huang, Q. Zheng, Pointwise estimates for the fundamental solution of higher order Schr\"{o}dinger equations in odd dimensions. Preprint, \textit{arxiv.} 2401.04969.
		
		%
		%
		%
		%
		%
		%
		%
		\bibitem{EG10}  M. B. Erdo\v{g}an, W. R. Green, Dispersive estimates for the Schr\"{o}dinger equation for $C^{\frac{n-3}{2}}$ potentials in odd dimensions. \textit{Int. Math. Res. Not. IMRN}. 13 (2010) 2532-2565.
		
		\bibitem{EGG23} M. B. Erdo\v{g}an, M. Goldberg, and W. R. Green, Counterexamples to $L^p$ boundedness of wave operators for classical
		and higher order Schr\"{o}dinger operators. \textit{J. Funct. Anal.} 285 (5) (2023) 110008.
		
		\bibitem{EGG822} M. B. Erdo\v{g}an, M. Goldberg, W. R. Green,  Dispersive estimates for higher order Schr\"{o}dinger operators with scaling-critical potentials. Preprint, \textit{arXiv.} 2308.11745.
		
		\bibitem{EGT}  M. B. Erdo\v{g}an, W. R. Green,  E. Toprak, On the fourth order Schr\"{o}dinger equation in three dimensions: dispersive estimates and zero energy resonances. \textit{J. Differential Equations}. 271 (2021) 152-185.
		
		\bibitem{EG22} M. B. Erdo\v{g}an, W. R. Green,  The $L^p$-continuity of wave operators for higher order Schr\"{o}dinger operators.  \textit{Adv. Math.} 404 (2022) 108450.
		
		\bibitem{EG23}  M. B. Erdo\v{g}an, W. R. Green, A note on endpoint $L^p$-continuity of wave operators for classical and higher
		order Schr\"{o}dinger operators. \textit{J. Differential Equations}. 355 (2023) 144-161.
		
		
		\bibitem{FSY} H. Feng,  A. Soffer,  X. Yao, Decay estimates and Strichartz estimates of fourth-order Schr\"{o}dinger operator. \textit{J. Funct. Anal.} 274 (2) (2018) 605-658.
		
		\bibitem{FSWY} H. Feng, A. Soffer, Z. Wu,  X. Yao, Decay estimates for higher-order elliptic operators. \textit{Trans. Amer. Math. Soc.} 373 (4) (2020) 2805-2859.
		
		\bibitem{gra} L. Grafakos, Classical Fourier Analysis.  Graduate Texts in Mathematics, second ed., vol. 249, New York: Springer, 2008.
		
		
		%
		\bibitem{GV}  M. Goldberg, M. Visan, A counterexample to dispersive estimates for Schr\"{o}dinger operators in higher dimensions. \textit{Comm. Math. Phys.} 266 (2006) 211-238.
		
		\bibitem{GW15} M. Goldberg, W. R. Green, Dispersive estimates for higher dimensional Schr\"{o}dinger operators with threshold eigenvalues \uppercase\expandafter{\romannumeral1}: The odd dimensional case.  \textit{J. Funct. Anal.} 269 (3)  (2015) 633-682.
		
		\bibitem{GW17} M. Goldberg, W. R. Green, Dispersive estimates for higher dimensional Schr\"{o}dinger operators with threshold eigenvalues \uppercase\expandafter{\romannumeral2}. The even dimensional case. \textit{J. Spectr. Theory.} 7 (1) 
		(2017) 33-86.
		
		\bibitem{GT}  W. Green, E. Toprak, On the Fourth order Schr\"odinger equation in four dimensions: dispersive estimates and
		zero energy resonances. \textit{J. Differential Equations}. 267 (3) (2019) 1899-1954.
		
		
		\bibitem{H-Y-Z} S. Huang, X. Yao, Q. Zheng,  Remarks on $L^p$-limiting absorption principle of Schr\"{o}dinger operators and applications to spectral multiplier theorems. \textit{Forum Math.} 30 (1) (2018) 43-55.
		
		\bibitem{HHZ} T. Huang, S. Huang, Q. Zheng, Inhomogeneous oscillatory integrals and global smoothing effects for dispersive equations. \textit{J. Differential Equations}. 263 (12) (2017)   8606-8629.
		
		\bibitem{J80} A. Jensen, Spectral properties of Schr\"{o}dinger operators and time-decay of the wave functions results in $L^2(\mathbb{R}^m)$, $m\ge 5$. \textit{Duke Math. J.} 47 (1) (1980) 57-80.
		
		\bibitem{AT} A. Jensen, T. Kato, Spectral properties of Schr\"{o}dinger operators and timedecay of the wave functions. \textit{Duke Math. J.} 46 (3) (1979) 583-611.
		
		
		\bibitem{JSS} J.-L. Journ\'{e}, A. Soffer, C. D. Sogge, Decay estimates for Schr\"{o}dinger operators. \textit{Comm. Pure Appl. Math.} 44 (5) (1991) 573-604.
		
		
		\bibitem{KS}   V. Karpman, A.  Shagalov, Stability of solitons described by nonlinear Schr\"{o}dinger-type equations with higher-order dispersion. \textit{Phys. D} 144 (2000) 194-210.
		
		\bibitem{KT}  M. Keel,  T. Tao, Endpoint Strichartz estimates. \textit{Amer. J. Math.} 120 (5) (1998) 955-980.
		
		\bibitem{KPV} C.E. Kenig, G. Ponce, L. Vega,  Oscillatory integrals and regularity of dispersive equations. \textit{Indiana Univ. Math. J.} 40 (1991) 33–69. 
		
		\bibitem{KK}  A. Komech, E. Kopylova, Dispersion decay and scattering theory. John Wiley \& Sons, Inc., Hoboken, NJ, 2012.
		
		\bibitem{LSY}    P. Li, A. Soffer, X. Yao, Decay estimates for fourth-order Schr\"{o}dinger operators in dimension two. \textit{J. Funct. Anal.} 284 (2023) 109816.
		
		\bibitem{Liaw}  C. Liaw,  Rank one and finite rank perturbations-survey and open problems. Preprint, \textit{arXiv:} 1205.4376v1.
		
		\bibitem{MM}  S. Mayboroda,  V. Maz'ya, Regularity of solutions to the polyharmonic equation in general domains. \textit{Invent. Math.} 196 (2014) 1-68.
		
		\bibitem{Mi} A. Miyachi, On some estimates for the wave equation in $L^p$ and $H^p$.  \textit{J. Fac. Sci. Univ. Tokyo.} 27 (1980) 331-354.
		
		\bibitem{MWY}   H. Mizutani, Z. Wan, X. Yao, $L^p$-boundedness of wave operators for fourth-order Schr\"{o}dinger operators on
		the line. \textit{ Adv. Math.} 451 (2024) 109806, 68 pp.
		
		\bibitem{NS}   F. Nier, A. Soffer, Dispersion and Strichartz estimates for some finite rank perturbations of the Laplace operator.  \textit{J. Funct. Anal.} 198 (2) (2003) 511-535.
		
		
		\bibitem{RS} I. Rodnianski, W. Schlag, Time decay for solutions of Schr\"{o}dinger equations with rough and time-dependent potentials. \textit{Invent. Math.} 155 
		(3) (2004) 451-513.
		
		
		\bibitem{Sch} W. Schlag, Dispersive estimates for Schr\"{o}dinger operators in dimension two. \textit{Comm. Math. Phys.} 257 (1) (2005) 87-117.
		
		\bibitem{Sch07}  W. Schlag, Dispersive estimates for Schr\"{o}dinger operators: a survey. Mathematical aspects of nonlinear dispersive equations. \textit{Ann. of Math. Stud.} 163 (2007), 255-285.  Princeton Univ. Press, Princeton, NJ.
		
		\bibitem{Sch21} W. Schlag, On pointwise decay of waves. \textit{J. Math. Phys.} 62 (2021) 061509.
		
		\bibitem{SW} B. Simon, T. Wolff, Singular continuous spectrum under rank one perturbations and localization
		for random Hamiltonians. \textit{Comm. Pure Appl. Math.}  39 (1) (1986) 75-90.
		
		\bibitem{Simon}  B. Simon, Spectral analysis of rank one perturbations and applications, in: Mathematical Quantum
		Theory. \uppercase\expandafter{\romannumeral2}. Schr\"{o}dinger operators, Vancouver, BC, 1993, American Mathematical Society, Providence, RI, 1995, 109-149
		
		\bibitem{SWY} A. Soffer, Z. Wu, X. Yao, Decay estimates for bi-Schr\"{o}dinger operators in dimension one. \textit{Ann. Henri Poincar\'{e}}.
		23 (8) (2022) 2683-2744.
		
		
		
		\bibitem{Wey} H. Weyl, \"{U}ber gew\"{o}hnliche Differentialgleichungen mit Singularit\"{a}ten und die zugeh\"{o}rigen Entwicklungen willk\"{u}rlicher Funktionen. \textit{Math. Ann.}  68 (2) (1910) 220-269.
		
		\bibitem{Ya} K. Yajima, The $W^{k,p}$ continuity of wave operators for Schr\"{o}dinger operators. \textit{J. Math. Soc. Japan}. 47 (3) (1995) 551-581.
		
		
		
		
	\end{thebibliography}
\end{document}